\newtheorem{theorem}{Theorem}[section]
\newtheorem{proposition}[theorem]{Proposition}
\newtheorem{lemma}[theorem]{Lemma}
\newtheorem{corollary}[theorem]{Corollary}
\theoremstyle{remark}
\newtheorem{remark}[theorem]{Remark}
\newtheorem{definition}{Definition}
\numberwithin{equation}{section}
\newcommand{\Var}[2]{{\operatorname{Var}}\left. \left( #1 \right)\right|_{#2}}
\def\be{\begin{equation}}
\def\ee{\end{equation}}
\def\bes{\begin{equation*}}
\def\ees{\end{equation*}}
\newcommand{\vep}{\varepsilon}
\newcommand{\R}{{\mathbb{R}}}
\newcommand{\Z}{{\mathbb{Z}}}
\newcommand{\N}{{\mathbb{N}}}
\newcommand{\var}{\operatorname{Var}}
\newcommand{\ac}{\operatorname{AC}}
\newcommand{\bv}{\operatorname{BV}}
\newcommand{\ol}{\operatorname{LG}}
\newcommand{\logs}{\operatorname{LSG}}
\newcommand{\Int}{\operatorname{Int}}
\newcommand{\SL}{\operatorname{SL}}
\newcommand{\lv}{\mathcal{LV}}
\newcommand{\bl}{\mathcal{L}}
\newcommand{\as}{\mathcal{AS}}
\newcommand{\customlabel}[2]{\protected@write\@auxout{}{\string\newlabel{#1}{{#2}{\thepage}{#2}{#1}{}}}\hypertarget{#1}{#2}}
\begin{document}
\title[On  Birkhoff integrals for locally Hamiltonian flows]
{On the asymptotic growth of Birkhoff integrals for locally Hamiltonian flows and ergodicity of their extensions}
\author[K. Fr\k{a}czek]{Krzysztof Fr\k{a}czek}
\address{Faculty of Mathematics and Computer Science, Nicolaus
Copernicus University, ul. Chopina 12/18, 87-100 Toru\'n, Poland}
 \email{fraczek@mat.umk.pl}
\author[C.\ Ulcigrai]{Corinna Ulcigrai}
\address{Institut f\"ur Mathematik, Universit\"at Z\"urich, Winterthurerstrasse 190,
CH-8057 Z\"urich, Switzerland}
\email{corinna.ulcigrai@math.uzh.ch}\date{}

\subjclass[2000]{37E35, 37A40, 37A10, 37C83}  \keywords{}

\begin{abstract}
We consider smooth area-preserving flows (also known as \emph{locally Hamiltonian flows}) on surfaces of genus $g\geq 1$ and study ergodic integrals of smooth observables along the flow trajectories. We show that these integrals display a \emph{power deviation spectrum} and describe the cocycles that lead the pure power behaviour, { giving a new proof of results by Forni ({\sf Annals 2002}) and Bufetov ({\sf Annals 2014}) and generalizing them to observables which are non-zero at fixed points. This in particular completes the proof of the original formulation of the Kontsevitch-Zorich conjecture}. Our proof is based on building suitable \emph{correction operators} for cocycles with logarithmic singularities over a full measure set of interval exchange transformations (IETs), in the spirit of Marmi-Moussa-Yoccoz work on piecewise smooth cocycles over IETs. In the case of symmetric singularities, exploiting former work of the second author ({\sf Annals 2011}), we prove a tightness result for a finite codimension class of observables. We then apply the latter result to prove the existence of ergodic infinite extensions for a full measure set of locally Hamiltonian flows  with non-degenerate saddles in any genus $g\geq 2$.
\end{abstract}

\maketitle

\section{Introduction and main results}
In this paper we give a contribution to the study of ergodic theory of smooth area-preserving flows on higher genus surfaces (also known as locally Hamiltonian flows) as well as to the infinite ergodic theory of flow extensions.  The class of surface flows that we work with is introduced in \S~\ref{sec:locHamintro}. We study in particular \emph{deviations of ergodic averages}, by proving the existence of a \emph{power deviation} spectrum for the ergodic integrals along the flow. 
This extends and gives a new proof of results by Forni \cite{Fo2} and Bufetov \cite{Bu} for observables with compact support \emph{outside} a neighbourhood of the fixed points of the flow, to observables which have full support and are \emph{non-zero} at singularities. We then use our result to show the existence of infinite extensions of such flows which are \emph{ergodic} with respect to the natural infinite invariant measure. This result  generalizes to higher genus a classical result by Krygin \cite{Kr} in genus one and extends  a previous result in higher genus by the authors (see \cite{Fr-Ul}, where we showed the existence of ergodic extensions in any genus, but only for flows with self-similar foliations) to a  full measure set of flows.

\subsection{Locally Hamiltonian flows}\label{sec:locHamintro}
Let $M$ be a compact, connected, orientable (smooth) surface and let $g$ denote its genus.  We will assume throughout that $g\geq 1$. We will consider smooth flows  on  $M$ preserving a smooth measure $\mu$ (i.e.\ absolutely continuous measure with smooth positive density), see   \S~\ref{sec:locHam}. These flows, also known in the literature  as  \emph{multi-valued Hamiltonian}, are \emph{locally Hamiltonian} flows:
indeed, the flow $\psi_\R:= (\psi_t)_{t\in \R}$ is \emph{locally} Hamiltonian in the sense that around any point in $M$ one can find coordinates $(x_1,x_2)$ on $M$ in which $\psi_\R$ is locally given by
 the solution to the  equations
 \[
 \begin{cases}\dot{x}_1&={\partial H}/{\partial x_2},\\ \dot{x}_2& =-{\partial H}/{\partial x_1}\end{cases}
 \]
  for some smooth  real-valued Hamiltonian function $H$. A \emph{global}  Hamiltonian $ H$ cannot be in general defined (see \cite{NZ:flo}, \S~1.3.4), but one can think of  $\psi_\R$ as globally given by a \emph{multi-valued} Hamiltonian function. We will assume throughout this paper that the fixed points of $\psi_\R$ are \emph{non-degenerate} (also called \emph{Morse} fixed points), namely  that for every fixed point $p$ the local Hamiltonian $H$ is a Morse function at $p$.

The interest in the study of multi-valued Hamiltonians and the associated flows in higher genus ($g\geq 1$) and, in particular, in their ergodic and mixing properties, was highlighted
by Novikov \cite{No} in connection with problems arising in solid-state
physics as well as in pseudo-periodic topology (see e.g.\ the survey \cite{Zo:how} by A.~Zorich).
The simplest examples of locally Hamiltonian flows  with singularities on a torus, i.e.~flows with one center and one simple saddle (see Figure~\ref{Arnoldtorus}), were studied by V.~Arnold in \cite{Arn} and are nowadays often called \emph{Arnold flows}\footnote{More precisely, referring to the decomposition described in \S~\ref{sec:generic}, we call \emph{Arnold flow} the restriction to a minimal component obtained by removing the center and the disk filled by periodic orbits around it (called \emph{island}), which, as Arnold shows in  \cite{Arn}, is always bounded by a saddle loop.\label{Arnoldflowfootnote}}.

 \begin{figure}[h!]
 \subfigure[An Arnold flow ($g=1) $ \label{Arnoldtorus}]{ \includegraphics[width=0.3\textwidth]{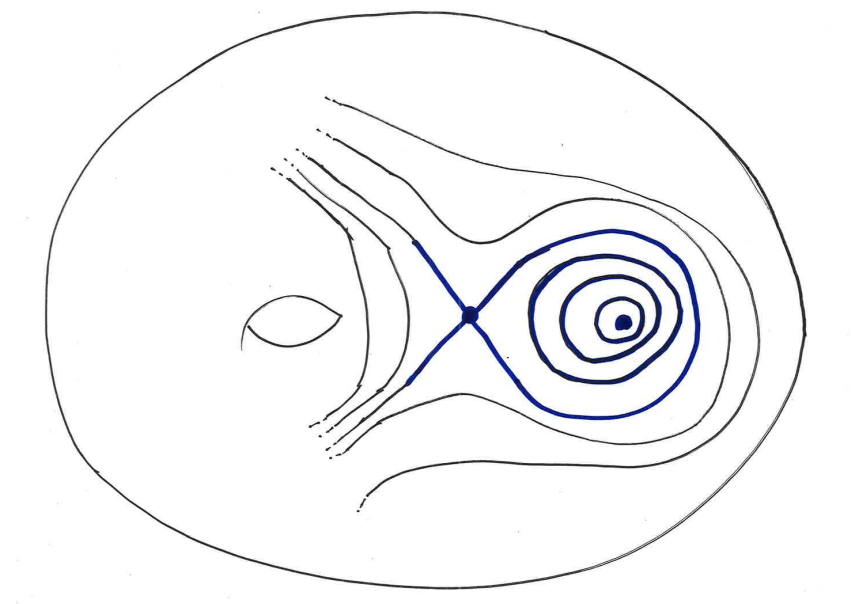}}
 \hspace{6mm}
 \subfigure[A flow on a surface of $g=3$\label{g3}]{ \includegraphics[width=0.56\textwidth]{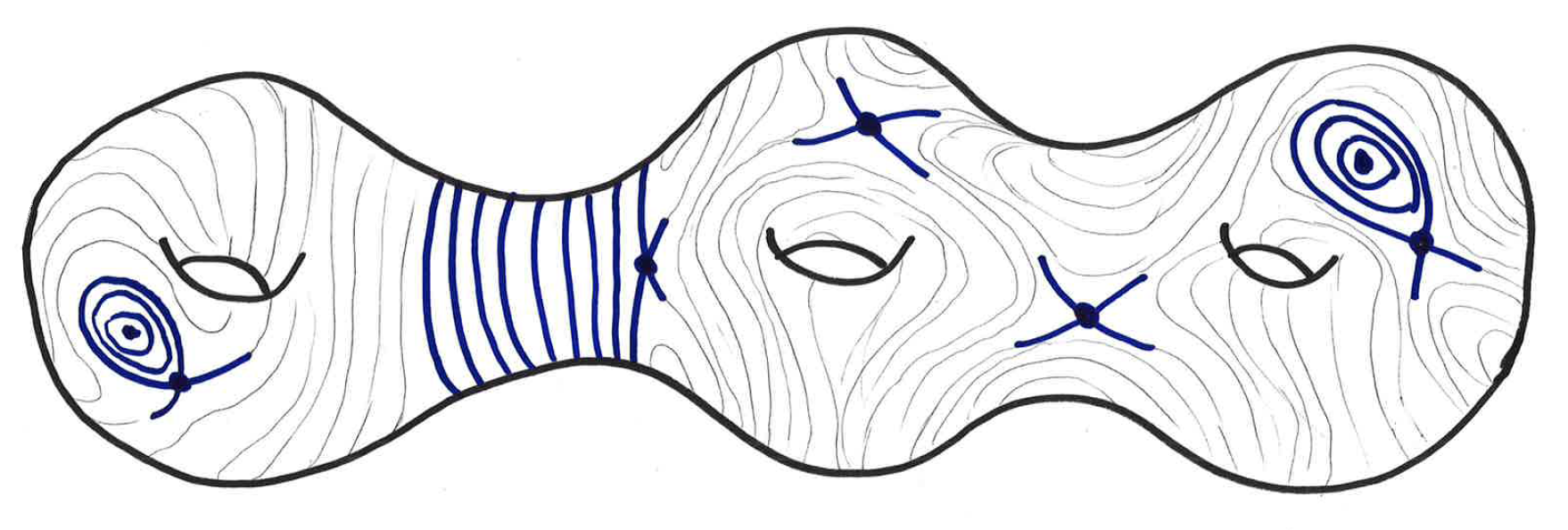}}
 \caption{Examples of locally Hamiltonian flows on a surfaces. \label{locHamflows}}
\end{figure}

On the space of locally Hamiltonian flows, one can define a \emph{topology} (see \S~\ref{sec:generic}) as well as a  measure class
(the \emph{Katok fundamental class}, see \S~\ref{sec:ergodicity}).  Our understanding of the \emph{typical} chaotic properties (in the measure theoretical sense) of these flows has advanced a lot in the last forty years.  While results concerning orbit properties, such as minimality or ergodicity, were known first, since they can be simply deduced\footnote{One can show  (see for example~\cite{Zo:how}) that every \emph{minimal} locally Hamiltonian flow on $M$ (as well as the restriction of a locally Hamiltonian flow to one of its
minimal components (see \S~\ref{sec:generic}) has the same \emph{trajectories} (up to time-reparametrization) as a  \emph{translation flow}.  \label{ftnt:rel} Thus, one can infer properties which depend only on trajectories as sets and not on their time-parametrization,  such as minimality and ergodicity, from the known properties of typical translation flows.} 
 from {classical results which were proved using Teichm{\"u}ller dynamics (see below as well as \S~\ref{sec:mixing})}, results on finer chaotic properties such as (weak or multiple) mixing, or recently spectral and disjointness results, were proved only in the last twenty years, since  they depend on the movement along trajectories (i.e.\ on time-reparametrization) and  require more delicate estimates exploiting the locally Hamiltonian parametrization of the orbits.
  We summarize some of the known results in \S~\ref{sec:mixing} below.

\smallskip
In the classification of chaotic behaviour in locally Hamiltonian flows it is crucial to distinguish between two  open sets (complementary, up to measure zero, see \S~\ref{sec:ergodicity} for more details): in the first open set, which we will denote by $\mathscr{U}_{min}$, the typical flow is \emph{minimal}, in the sense that the orbits of all points which are not fixed points are \emph{dense} in $M$. On the other open set, that we call $\mathscr{U}_{\neg min}$, the flow is not minimal, but one can decompose the surface into a finite number of subsurfaces with boundary $M_i$, $i=1,\dots ,N$ such that for each $i$ either $M_i$ is a \emph{periodic component}, i.e.~the interior of $M_i$ is foliated into closed orbits of $\psi_\R$ (in  Figure~\ref{locHamflows} (b) one can see three periodic components, namely two disks and one cylinder, all foliated by closed orbits), or $M_i$ is such that  the restriction of  $\psi_\R$ to $M_i$ is minimal in the sense above (two such subsurfaces are visible in the example in Figure~\ref{locHamflows} (b)). The latter are called \emph{minimal components} and there are at most $g$ of them (where $g$ is the genus of $M$), see \S~\ref{sec:mixing}.

{ The study of locally Hamiltonian flows is intertwined with the study another famous class of flows on surfaces, namely
\emph{translation} (\emph{linear}) \emph{flows}\footnote{Translation flows are \label{ftnt:tf} unit speed \emph{linear flows}  on \emph{translation surfaces}, namely surfaces which are locally Euclidean outside a finite number of conical singularities with cone angles of angle $2\pi k, k\in \mathbb{N}$. On these surfaces, one has a well defined notion of direction and for each $\theta\in S^1$ one can define a directional flow $h^\theta_\R=(h_t^\theta)_{t\in\R}$ which moves points along lines in direction $\theta$ at unit speed.} on translation surfaces, which are at the center of Teichm{\"u}ller dynamics. Each minimal component of a locally Hamiltonian flow $\psi_\R$ indeed can be seen as a \emph{time-reparametrization} (or a \emph{time-change}) of a translation flow. Notice though that the time-change is \emph{singular} at the fixed points $\mathrm{Fix}(\psi_\R)$ of $\psi_\R$ (see \S~\ref{sec:reductionsf} and Remark~\ref{rk:singularrep} for a more precise description of the relation).}
One of the results which can be inferred from classical results on translation flows (proved through Teichm{\"u}ller dynamics) is that  the \emph{typical} flow (in the measure theoretical sense) in $\mathscr{U}_{min}$ is \emph{ergodic}  (with respect to $\mu$)
and the typical flow  in $\mathscr{U}_{\neg min}$ is  ergodic when restricted to each minimal component (see \S~\ref{sec:mixing}); it also follows that the associated foliation into flow trajectories (or equivalently any Poincar{\'e} map of the flow) is \emph{uniquely} ergodic (i.e.~there is an unique invariant probability transverse measure, the transverse measure induced by $\mu$). Notice, though, that any locally Hamiltonian flow $\psi_\R$ with $Fix(\psi_\R)\neq \emptyset$ is \emph{not} uniquely ergodic (as a smooth flow on a compact manifold): indeed, in the presence of singularities, there are always trivial invariant measures (Dirac deltas) supported at singularities. The  presence of such measures and their effect on ergodic integrals  plays a key role in this work.

\subsection{Power deviations and asymptotic behaviour of ergodic averages.}\label{sec:dev}
Let $\psi_\R$ denote either a  locally Hamiltonian flow on $M$ in  $\mathscr{U}_{min}$ or the restriction of $\psi_\R$ in $\mathscr{U}_{\neg min}$   to
a minimal component $M_i$, that by abusing the notation we will again denote by $M$ here, and assume that $\psi_\R$ is ergodic (and the associated foliation is uniquely ergodic).
Thus, for every smooth observable  $f: M\to \R $
 and for {\emph{almost every}\footnote{{Equidistribution of almost very point follows simply by ergodicity and Birkhoff ergodic theorem. \emph{Unique} ergodicity yields a stronger conclusion \emph{if} the observable if supported \emph{outside} the set of fixed points $Fix(\psi_\R)$: in this case equidistribution, namely \eqref{nu}, holds for any \emph{regular} $p$, i.e.\ any $p$ such that its forward orbit is $(\psi_t(p))_{t\geq 0}$ is dense). One can show though, that this is not the case for observables $f$ which are non-zero at some fixed points, {namely there are regular points for which equidistribution does not hold.}} }} initial point $p\in M$, the ergodic averages of $f$ converge to the spatial averages, i.e.\
\begin{equation}\label{nu}
\lim_{T\to + \infty} \frac{I_T(f,p)}{T} = \int_M f  \mathrm{ d}\,  \mu, \ \text{where}\ I_T(f,p)=I_T(f,p,\psi_\R):= \int_0^T f(\psi_t(p)) \mathrm{ d}\,   t .
\end{equation}
With \emph{deviations of ergodic averages} one refers to the study of the \emph{oscillations} of the ergodic integrals $I_T(f,p)$  (or the related Birkhoff sum over an interval exchange map obtained as Poincar{\'e} section) of an observable $f:M \to \mathbb{R}$ of zero mean $\int_M f(p) \, \mathrm{d}  \mu=0$ over the orbit of (typical) point $p \in M $.  A distinctive phenomenon 
first discovered experimentally by A.~Zorich
 in the $1990$s (see \cite{Zor} and also \cite{Ko:Lya, Zor})
is that  deviations of ergodic averages have \emph{polynomial nature}, in the following sense: for a typical flow, for  suitable classes of observables, one can find an exponent $\nu=\nu(f)$ with $0<\nu<1$  such that,   $I_T(f,p)\sim O(T^\nu)$ for every regular point $p$, where we use the notation
\begin{equation}\label{bigOdef}
I_T(f,p)\sim O(T^\nu)\qquad \Leftrightarrow \qquad \limsup_{T\to \infty}\frac{\log I_T(f,p)}{\log T}=\nu.
\end{equation}
Kontsevich and Zorich explained this phenomenon heuristically  using renormalization and  conjectured that, at least in the case of
locally Hamiltonian flows with non-degenerate fixed points\footnote{This is the framework proposed in the paper \cite{Ko:Lya}, where Kontsevich (based on joint work with Zorich) formulates the conjecture on the existence of the deviation spectrum (which later became known as \emph{Kontsevich-Zorich conjecture}). They first state the result for homology classes (or equivalently characteristic functions over interval exchange transformations) and then suggest that the phenomenon should hold more generally if one considers, for simplicity, locally Hamiltonian flows with Morse saddles and the space of smooth functions.},  there is
 a  full \emph{deviation spectrum}, namely there are exactly $g$ positive
exponents  $0< \nu_{g}< \cdots < \nu_{2}< \nu_1:=1 $ and a corresponding filtration
of $H_{g+1} \subset H_{g}\subset \cdots \subset H_1$   of the space of smooth functions such that
if $f\in H_i\backslash H_{i+1}$, with $1\leq i\leq g$, then $I_T(f,p)\sim O(T^{\nu_i})$ (see \cite{Ko:Lya}). Zorich gave in \cite{Zo} a rigorous proof of this phenomenon for ergodic integrals of a special class of functions $f:M\to \R$, those  which represent cohomology classes\footnote{In the setting of \cite{Zo}, this class of functions reduces  to the study of Birkhoff sums of piecewise constant functions over interval exchange maps.}.
Forni proved most of this conjecture in \cite{Fo2} (with the exception of \emph{simplicity}, namely the strict inequalities between $\nu_g<\nu_{g-1} <\dots <\nu_1$, which was later proved by Avila and Viana in \cite{Av-Vi}, while the positivity of $\nu_g>0$ is a crucial part of \cite{Fo2})  
for smooth observables and typical flows in the closely related class of translation flows on translation surfaces (see footnote \ref{ftnt:tf}). In the setting of locally Hamiltonian flows, he considers the minimal case $\psi_\R\in \mathcal{U}_{min}$ and has the further assumption that the (smooth)  observable $f$ is \emph{compactly supported} outside of a neighbourhood of the finite set of fixed points $\mathrm{Fix}(\psi_\R)$ (or, more generally, in the Sobolev regularity setting, that at least the function $f$
 vanishes on $\mathrm{Fix}(\psi_\R)$, {see \cite{Fo2} as well as \cite{Fo21})}. 
 We comment below on the consequences of this assumption (see Remark~\ref{rk:nologsing}).

The \emph{power spectrum} of ergodic integrals is related in { \cite{Zo, Fo2} to \emph{Lyapunov exponents} of the Kontsevich-Zorich cocycle (so that in particular the strict inequalities $\nu_g< \nu_{g-1}< \cdots < \nu_1$ hold in view of  the simplicity of the Lyapunov spectrum, which is the result later  shown by Avila-Viana in \cite{Av-Vi} work);  the filtration is described by Forni in \cite{Fo2} } in terms of kernels of what we nowadays call \emph{Forni's invariant distributions}. We refer the interested reader to \cite{Zo:how, FoICM, Fo21, Av-Vi} for surveys of this phenomenon; in \cite{FoICM} other instances of \emph{parabolic} flows for which deviations can be studied via  \emph{renomalization} are also mentioned.

A finer analysis of the behaviour of Birkhoff sums or integrals, beyond the \emph{size} of oscillations, appears in the work {\cite{Bu} by Bufetov, as well as in the work \cite{MMY3} by Marmi, Moussa and Yoccoz}. 
 In \cite{Bu}, Bufetov studies limit theorems for ergodic integrals of translation flows (and describe weak limit distributions) in terms of objects that he calls \emph{H\"older cocycles} (or, in the  more general context of Markov compacta, \emph{finitely-additive measures}) and turn out to be \emph{dual} to Forni's invariant distributions (see \cite{Bu} for details).
 In particular, he shows that for a full measure set of translation flows $h_\R:=(h_t)_{t\in\mathbb{R}}$ (with respect to the Masur-Veech measure),
 there exists  $g-1$ cocycles\footnote{Here $\Phi_i(t,x)$ is a \emph{cocycle} over the flow $h_\R$ in the sense that $\Phi_i(t+s,x)= \Phi_i(t,x) + \Phi_i(s,h_t(x))$} $\Phi_i(t,x): \R\times M \to \R$, for $i=2,\dots, g$ (closely related to the \emph{limit shapes} introduced { independently at the same time by Marmi, Moussa and Yoccoz} in \cite{MMY3}), each of which has a \emph{pure power} growth, i.e.\ such that $|\Phi_i(T,x)|\sim O(T^{\nu_i})$ (in the sense of \eqref{bigOdef} above), which, together with the trivial cocycle $\Phi_1(t,x)=t$, encode the \emph{asymptotic behaviour} of the ergodic integrals along the flow, by providing an \emph{asymptotic expansion} up to subpolynomial terms, i.e.\ such that
 \[
 I_T(f,p,h_\R)=\int_{0}^T f(h_s(p)) \mathrm{d} s = c_1 T + c_2 \Phi_2(T,p) + \dots + c_g \Phi_g(T,p) + err(f,T,p),
 \]
where the \emph{error term} $err(f,T,p)$ is subpolynomial, i.e.\ for any $\epsilon>0$ there exists $C_\epsilon>0$ such that $|err(f,T,p)|\leq C_\epsilon T^\epsilon$. The constant of the linear leading term is $c_1=\int f \mathrm{d}\omega$, where $\omega $  is the underlying translation surface area form, and the other coefficients can be computed evaluating invariant distributions $D_i$ for $i=1,\dots,g$, i.e. $c_i=D_i(f)$.


\subsection{Ergodicity of extensions.}
A classical way to visualize and study the behaviour of ergodic averages of an observable $f:M\to \R$ along the flow $\psi_\R$ on $M$ is to
consider the flow on $M\times \R$ given by coupling $\psi_\R$ with the differential equation on $\R$
\[
\frac{\mathrm{d}\,\! y}{\mathrm{d}\,\! t}= f(\psi_t(p)), \qquad y\in \R,\ \quad t\in \R.
\]
One can see that the solution is given by the flow $\Phi^f_\R:=(\Phi^f_t)_{t\in\mathbb{R}}$  on $M\times \R$ given by the formula
\begin{equation}\label{extension}
\Phi^f_t(p,y)=\left(\psi_t(p), \ y+\int_0^{t}f(\psi_s (p))\,\mathrm{d}s\right),\qquad p\in M,\ y\in \R, \ t\in \R.
\end{equation}
Thus, the  flow  {$\Phi^f_\R$} is a \emph{skew product} and provides an \emph{extension}  to $M\times \R$ of the flow $\psi_\R$ on $M$ (i.e.~it  \emph{projects} on the $M$ coordinate to the flow $\psi_\R$). The motion in the $\R$ fiber is determined by the oscillations of
 the \emph{ergodic integrals} of $f$ along $\psi_\R$.
Notice that  $\Phi^f_\R$ preserves the \emph{infinite} product
measure $ \mu\times Leb$, where $\mu$ is the invariant measure
for $\psi_\R$ and $Leb$ denotes the Lebesgue measure on
${\R}$.

The study of these type of skew products goes back to Poincar{\'e} \cite{P} and his work on differential equations on $\mathbb{R}^3$ (in the case when $\psi_\R$ is a smooth flow on the torus); the study of infinite skew product extensions in greater generality became later a central topic in \emph{infinite ergodic theory}, see for example the monographs \cite{Aa, Sch}.
 A basic question is whether the flow $\Phi^f_\R$ is ergodic (see \S~\ref{sec:extensionsdef}) or, if not, what  is a description
of ergodic components.  A necessary condition for ergodicity is that $f$ has zero mean, i.e.~$\int_M  f \mathrm{d}\mu=0$, since otherwise $\Phi^f_\R$ has a \emph{drift} and is not even \emph{recurrent} (see \S~\ref{sec:extensionsdef}).
In the setting of extensions, a property completely opposite to
ergodicity is \emph{reducibility}. If the skew product on $M\times \R$ is  \emph{reducible} (see \S~\ref{sec:extensionsdef} for the definition), $M\times \R$ is foliated into invariant sets for $\Phi^f_\R$, on which the dynamics is conjugated to $\psi_\R$ on $M$.

Taking a suitably chosen Poincar{\'e} section 
(see \S~\ref{sec:reductionsp} for details), the ergodicity of $\Phi^f_\R$ is equivalent to the ergodicity of a skew product automorphism $T_\varphi$ of the strip $I\times \R$, where $I=[0,1)$, of the form
\begin{equation}\label{IETskew}
T_\varphi (x,y)= (T(x), y+ \varphi(x)), \qquad x\in I, \quad y\in \R,
\end{equation}
where  $T:I\to I$ is a \emph{rotation} (i.e.~the map $T(x)=x+\alpha \mod 1$) when $M$ is a torus ($g=1$), or more in general, for any $g\geq 1$, an \emph{interval exchange transformation} (see \S~\ref{sec:IETs}), while $\varphi:I\to \overline{\R}$ is a function with
\emph{singularities} (i.e.~points where the function blows up) which are are \emph{logarithmic} (see \S~\ref{sec:sing} for the precise definition) whenever $\psi_\R$ has only non-degenerate saddles (while \emph{polynomial} in presence of a degenerate saddle).

{
\begin{remark}\label{rk:nologsing}
 Notice also that if $f$ is compactly supported in $M \backslash \mathrm{Fix}(\psi_\R)$ (or, more generally, it vanishes on $\mathrm{Fix}(\psi_\R)$, see \S~\ref{sec:cocycleproperties}, in particular Proposition~\ref{prop:oper},
 then the function $\varphi$ in \eqref{IETskew} is piecewise absolutely continuous (or even piecewise smooth), in particular does not have logarithmic singularities. Thus, the singularities are a combined  effect of the nature of the locally Hamiltonian parametrization, \emph{together with} the assumption that (the jet of) $f$ does not vanish identically zero at $\mathrm{Fix}(\psi_\R)$.
\end{remark}}
{We stress that the problem of {ergodicity of skew product extensions over IETs} is currently actively researched, but still widely open. See for example \cite{Hu-We, Co-Fr, Fr-Ul_Ehr, Fr-Hu,  Ho, Ra-Tr1, Ra-Tr2, Ch-R} for some results in particular settings.}

\smallskip
In the \emph{genus one} case, the  existence of ergodic skew products was first discovered by Krygin, in \cite{Kr}, in the case where the flow $\psi_\R$ has no singularities. Ergodicity of extensions of typical
 \emph{Arnold flows}\footnote{Recall that an Arnold flow is the restriction to the minimal component of a locally Hamiltonian flow in genus one with one saddle and one center, see Figure~\ref{Arnoldtorus}.} (or, correspondingly, of skew products of the form \eqref{IETskew} where $T$ is a rotation and $\varphi$ has one \emph{asymmetric} logarithmic singularity, see \S~\ref{sec:sing} for definitions),
 was proved  by Fayad and Lema\'nczyk in \cite{FL:ont}, where they proved ergodicity for a full measure set of rotation numbers. This case is particularly delicate since the underlying  Arnold flows are mixing; in a related easier case (namely the case when $T$ is a rotation but $\varphi$ in \eqref{IETskew} has one  \emph{symmetric} logarithmic singularity, see \S~\ref{sec:sing}), ergodicity was proved previously by Lema\'nczyk and the first author, see \cite{Fr-Le}.

Very little is understood in the  case of infinite skew product extensions (i.e.~extensions by a non-compact fiber, for which the natural invariant measure is infinite)  of locally Hamiltonian flows in higher genus $g\geq 2$, { even
 in the case when $f:M\to \R$ has compact support in $M\backslash\mathrm{Fix}(\psi_\R)$ and the cocycle $\varphi$ is piecewise-smooth (see Remark~\ref{rk:nologsing}) or even piecewise-constant. Some specific results for piecewise constant or piecewise absolutely continuous cocycles over IETs with $d>2$} were proved for example in \cite{Co-Fr, Fr-Ul_Ehr,  Fo1, Ma-Mo-Yo}.

We considered the case of a locally Hamiltonian flow $\psi_\R$ with non-degenerate saddles and a  general observable $f:M\to \R$ and, correspondingly, of a cocycle $\varphi$ with \emph{logarithmic} (symmetric) \emph{singularities}
in our previous joint work \cite{Fr-Ul},
where we showed the \emph{existence} of ergodic extensions in any genus, but for a very restrictive class of locally Hamiltonian flows.
More precisely, in  \cite{Fr-Ul} we could treat only the special (measure zero) class of locally Hamiltonian flows in $\mathscr{U}_{min}$ for which the Poincar{\'e} section can be chosen to be a \emph{self-similar} interval exchange transformation\footnote{These IETs are also known as \emph{periodic-type} IETs in the literature, see for example \cite{SU}. In \cite{Fr-Ul} we further assume that the periodic-type IET is of \emph{hyperbolic type}, see \cite{Fr-Ul} for details. Explicit examples of locally  Hamiltonian flow  of
{hyperbolic} periodic type  were constructed in  \cite{Co-Fr}.} and restrict the observable $f$ to belong to an infinite dimensional (but finite codimension $g$) space.  For extensions of flows in this special class, though, we could provide a complete description of the  ergodic behavior and  prove a dichotomy between ergodicity and reducibility.
One of the main results of this paper is to show that
 this dichotomy actually holds also for a \emph{full measure} set of such {minimal} locally Hamiltonian flows  (see the Main Theorem~\ref{main1} below).

 \subsection{Main results}
One of the main results of this paper is that  infinite \emph{ergodic} extensions  \emph{exist} in any genus $g\geq 1$ for a \emph{full measure} set of (minimal) locally Hamiltonian flows
with non-degenerate fixed points (with respect to the Katok fundamental class for each \emph{stratum}, see \S~\ref{sec:generic}). More precisely, we are able to extend the result previously proved in \cite{Fr-Ul} only for a \emph{measure zero class} of self-similar IETs to a \emph{full measure} set of locally Hamiltonian flows, by proving the following \emph{dichotomy} for the dynamics of the extensions:

\begin{theorem}[{\bf Ergodic or reducible extensions of locally Hamiltonian flows}]\label{main1}
For  a \emph{full measure} set of locally Hamiltonian flows $\psi_\R$ with non-degenerate saddles in $\mathcal{U}_{min}$, for any $\epsilon>0$, for any $f$ in a infinite dimensional (finite codimension) subspace $K \subset \mathscr{C}^{2+\epsilon}(M)$,
 we have the following dichotomy:
\begin{itemize}
\item If $\sum_{x\in \mathrm{Fix}(\psi_\R)} |f(x)|\neq 0$ then the extension $\Phi^f_{\R}$ is
ergodic;
\item If $\sum_{x\in \mathrm{Fix}(\psi_\R)} |f(x)| = 0$ then the extension $\Phi^f_{\R}$ is
reducible.
\end{itemize}
\end{theorem}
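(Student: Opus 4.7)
The plan is to first reduce the question to the ergodicity/reducibility of an $\R$-valued skew product $T_\varphi$ of the form \eqref{IETskew} over an IET $T$, via the standard Poincaré section construction sketched in \S~\ref{sec:reductionsp}. In this reduction, as recalled in Remark~\ref{rk:nologsing}, the cocycle $\varphi$ has \emph{symmetric} logarithmic singularities (because the saddles are non-degenerate) exactly at those discontinuities of $T$ which correspond to fixed points $x$ of $\psi_\R$ with $f(x)\neq 0$; if on the contrary $f$ vanishes on all of $\mathrm{Fix}(\psi_\R)$, then $\varphi$ is piecewise absolutely continuous.

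The second ingredient is the correction operator for cocycles over a full measure set of IETs constructed earlier in the paper in the Marmi, Moussa and Yoccoz spirit but extended to cocycles with logarithmic singularities. The subspace $K\subset\mathscr{C}^{2+\epsilon}(M)$ is characterized as the common kernel of the finitely many invariant distributions that obstruct cohomological triviality modulo pure power-growth cocycles. Thus, for $f\in K$, the operator gives a decomposition
\[
\varphi \;=\; \varphi_{\log} + u\circ T - u,
\]
where $u$ is a measurable transfer function and $\varphi_{\log}$ carries only the logarithmic singular behaviour coming from the fixed points at which $f$ is non-zero.

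In the \emph{reducible branch}, when $\sum_{x\in\mathrm{Fix}(\psi_\R)}|f(x)|=0$, the cocycle $\varphi$ is piecewise absolutely continuous and $\varphi_{\log}\equiv 0$, so the decomposition above collapses to $\varphi=u\circ T-u$. Then the measurable change of variable $(x,y)\mapsto(x,y-u(x))$ conjugates $T_\varphi$ to the trivial skew product $(x,y)\mapsto(T(x),y)$, yielding reducibility. In the \emph{ergodic branch}, when $\sum_{x\in\mathrm{Fix}(\psi_\R)}|f(x)|\neq 0$, the term $\varphi_{\log}$ is genuinely non-trivial with symmetric logarithmic singularities. I would then invoke Schmidt's essential values criterion and prove that the closed subgroup $E(\varphi)\subset\R$ of essential values coincides with $\R$. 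For this I would use the tightness statement proved earlier in the paper for symmetric logarithmic cocycles in $K$ (obtained by combining the new correction operator with the cancellation estimates of the second author's Annals 2011 work): along a suitable sequence of Rauzy--Veech rigidity times $(n_k)$, the distributions of the Birkhoff sums $S_{n_k}\varphi$ restricted to Rohlin towers are tight, and by choosing the towers appropriately one sees these pushforwards accumulate on a non-degenerate compact interval containing $0$. This produces non-zero essential values arbitrarily close to $0$; since $E(\varphi)$ is a closed subgroup of $\R$, it must equal all of $\R$, whence ergodicity of $T_\varphi$ and thus of $\Phi^f_\R$.

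The main obstacle is the ergodic branch. In the earlier work \cite{Fr-Ul}, tightness and ergodicity of extensions were obtained only under the very restrictive self-similarity (periodic-type) assumption on the IET $T$, where an exact self-similar renormalization is available. Upgrading to a full measure set of IETs requires the quantitative correction operator for logarithmic cocycles constructed in the present paper to be effective and uniform enough to control the Birkhoff sums along generic Rauzy--Veech towers; marrying this quantitative correction with the symmetry-based cancellation mechanisms from \cite{Fr-Ul} and producing from them essential values accumulating at $0$ is the delicate technical heart of the argument.
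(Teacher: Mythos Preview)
Your overall strategy matches the paper's: reduce to a skew product $T_{\varphi_f}$ over an IET, define $K$ as the kernel of the composition $f\mapsto \mathfrak{h}(\varphi_f)$ of the correction operator with the Poincar\'e-section map, and then split into the ergodicity and reducibility branches. The ergodic branch is also broadly right in spirit (essential values via tightness plus oscillation from the logarithmic singularity), though you omit one preparatory step the paper needs: before applying the oscillation estimates (Lemma~\ref{lem:''}), one must first replace $\varphi_f$ by a cohomologous $\bar\varphi$ with \emph{piecewise linear} regular part $g_{\bar\varphi}$, via the cohomological reduction Theorem~\ref{theorem:coblog}; otherwise $g''_\varphi$ is not controlled.

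The genuine gap is in your description of what the correction operator produces and, consequently, in the reducibility branch. The operator $\mathfrak{h}$ does \emph{not} give a decomposition $\varphi = \varphi_{\log} + u\circ T - u$ splitting off the logarithmic part modulo a coboundary. What it gives (Theorem~\ref{operatorcorrection}) is a piecewise-constant vector $\mathfrak{h}(\varphi)\in H(\pi)$ such that $\varphi-\mathfrak{h}(\varphi)$ has controlled $L^1$-renormalized special Birkhoff sums; for $f\in K$ this means $\mathfrak{h}(\varphi_f)=0$, i.e.\ $\varphi_f$ \emph{itself} has subexponential (and, under \ref{SUDC} with $\mathcal{AS}=0$, bounded along a subsequence) $L^1$ growth. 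This is far from saying $\varphi_f$ is a coboundary. In the reducibility branch the paper must still do real work: from $\varphi_f\in\ac$, $\varphi_f'\in\ol$, $\partial_\pi(\varphi_f)=0$ and $\mathfrak{h}(\varphi_f)=0$ it invokes the cohomological reduction (Corollary~\ref{cor:cohom}, extending Marmi--Moussa--Yoccoz Theorem~A to derivatives in $\ol$) and a Gottschalk--Hedlund argument to obtain a \emph{bounded} transfer map with at most countably many discontinuities (Proposition~\ref{thm:cobo}), which is what Proposition~\ref{prop:reduc} requires for reducibility of $\Phi^f_\R$. A merely measurable transfer function, as you write, would not suffice for the (topological) reducibility in the statement.
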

We will comment later on the full measure set, which is explicitly described by a new \emph{Diophantine-type} condition (see \S~\ref{sec:UDCdefsec} for the definition) as well as on the infinite dimensional (invariant) subspace $K$ (which will be defined as the kernel of $g$ invariant
distributions, 
 see \S~\ref{devspectrum:sec}).

The proof of this ergodicity result takes as starting point our results on deviations of ergodic averages\footnote{In particular, to prove ergodicity we need to show a form of \emph{tightness} of Birkhoff sums, which, combined with enough \emph{oscillations} thanks to the presence of logarithmic singularities, allows to apply classical \emph{essential values} (see \cite{Sch}).} of $f$, which is of independent interest and we now state.  As it is clear from the dichotomy, to produce ergodic extensions one needs to study observables $f:M\to \R$ which \emph{do not vanish} at (at least one) the saddle points\footnote{Since we are here assuming that $\psi_\R \in \mathcal{U}_{min}$ has only non-degenerate fixed points, $\mathrm{Fix}(\psi_\R)$ consists of simple saddles only, see \S~\ref{sec:generic}.} in $\mathrm{Fix}(\psi_\R)$.

\smallskip
For ergodic integrals of  (typical) minimal locally Hamiltonian flows in {$\mathcal{U}_{min}$ (see Theorem~\ref{mainthm:BS}), as well as for minimal components of (typical) locally Hamiltonian flows in $\mathcal{U}_{\neg min}$ (see Theorem~\ref{mainthm:BSN})}, we give asymptotic descriptions of the deviation spectrum, as follows.
{
\begin{theorem}[{\bf Asymptotic power spectrum of ergodic integrals (minimal case)}]\label{mainthm:BS}
For a full measure set of locally Hamiltonian flows on $M$ in $\mathscr{U}_{min}$ with non-degenerate saddles,
there exist a \emph{power spectrum} $0<\nu_g<\cdots < \nu_2<\nu_1:=1$,
where $g$ is the genus of the surface $M$ and,
for any $\epsilon>0$, \emph{invariant distributions} $D_i:C^{2+\epsilon}(M)\to\R$, $i=1,\ldots,g$,
such that, for every $f\in C^{2+\epsilon}(M)$, we have the \emph{asymptotic expansion}:
\begin{equation}\label{eq:asexp}
\int_0^Tf(\psi_t(x))\,dt=\sum_{i=1}^gD_i(f)u_i(T,x)+ \sum_{\sigma\in \mathrm{Fix}(\psi_\R)}f(\sigma)u_\sigma(T,x)+err_b(f,T,x),
\end{equation}
where, for $1\leq i\leq g$, $u_i$ are smooth cocycles  $u_i:\R\times M\to\R$ over the flow $\psi_\R$ such that
\begin{equation}
\label{eq:explambdai}
\limsup_{T\to+\infty}\frac{\log\|u_i(T,\,\cdot\,)\|_{L^{\infty}(M)}}{\log T}={\nu_i}, 
\end{equation}
while, for $\sigma\in \mathrm{Fix}(\psi_\R)$, $u_\sigma$ are smooth cocycles $u_\sigma:\R\times M\to\R$ over $\psi_\R$ which grow sub-polynomially pointwise and in $L^p$ norm for every $p\geq 1$, i.e.
 such that
\begin{equation}\label{eq:expusigma}
\limsup_{T\to+\infty}\frac{\log|u_\sigma(T,x)|}{\log T}=0\text{ for a.e. }x\in M, \quad \text{and} \quad \limsup_{T\to+\infty}\frac{\log\|u_\sigma(T,\,\cdot\,)\|_{L^{p}(M)}}{\log T}=0,  \text{for all}\ p\geq 1,
\end{equation}
and $err_b$  is a uniformely bounded \emph{error} term,  i.e.\
\begin{equation}\label{eq:boundederr}\sup_{t\in\R} \|err_b(f,t,\,\cdot\,)\|_{L^\infty}<+\infty.
\end{equation}
Furthermore, for every $\sigma\in \mathrm{Fix}(\psi_\R)$ and for $\mu$-almost every $x\in M$, the values of the cocyle  $t\mapsto u_\sigma(t,x)$
are \emph{equidistributed} on $\R$, i.e.\ for any pair of intervals $J_1,J_2\subset\R$ we have
\begin{equation}\label{equid:usigma}\lim_{T\to+\infty}\frac{Leb\{t\in[0,T]:u_\sigma (t,x)\in J_1\}}{Leb\{t\in[0,T]:u_\sigma( t,x)\in J_2\}}=\frac{|J_1|}{|J_2|}.\end{equation}
Finally, if we set
\begin{equation}\label{errdef} err(f,t,x):=\sum_{\sigma\in \mathrm{Fix}(\psi_\R)}f(\sigma)u_\sigma(t,x)+err_b(f,t,x), \end{equation}
as soon as $f$ does not vanish identically on $\mathrm{Fix}(\psi_\R)$, for $\mu$-almost every $x\in M$ also the values of the cocyle  $t\mapsto err(f,t,x)$
are \emph{equidistributed} on $\R$.
\end{theorem}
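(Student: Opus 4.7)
The plan is to pass to a Poincaré section and reduce the flow ergodic integral to a Birkhoff sum of an IET cocycle with logarithmic singularities, then recover the asymptotic expansion \eqref{eq:asexp} by building correction operators that handle such singularities. Concretely, I fix a transverse interval $I \subset M$ and let $T\colon I \to I$ be the first return map of $\psi_\R$, which on a full measure set of flows in $\mathscr{U}_{min}$ is an IET whose Rauzy--Veech/Zorich cocycle realises the Kontsevich--Zorich cocycle. A standard ``Kac section'' argument lets me write
\[
\int_0^T f(\psi_t(x))\, dt \;=\; S_{N(T,x)}\varphi_f(y) \;+\; e(f,T,x),
\]
where $y\in I$ is the first return of $x$, $N(T,x)$ is the number of returns before time $T$, the boundary contribution $e(f,T,x)$ is controlled by $\|f\|_\infty$ times the roof function and is uniformly bounded, and $\varphi_f \colon I \to \R$ is the induced observable. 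Since $\psi_\R$ has only non-degenerate saddles, $\varphi_f$ decomposes canonically as $\varphi_f = \varphi_{\mathrm{BV}} + \sum_{\sigma\in \mathrm{Fix}(\psi_\R)} f(\sigma)\, \varphi_\sigma$, where $\varphi_{\mathrm{BV}}$ is piecewise $C^{1+\epsilon}$ of bounded variation and each $\varphi_\sigma$ is a normalised cocycle with symmetric logarithmic singularities at the preimages of $\sigma$, with coefficients that are intrinsic to the local structure of $\psi_\R$ near $\sigma$ (not to $f$).

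For the piecewise smooth part $\varphi_{\mathrm{BV}}$, I build correction operators in the spirit of Marmi--Moussa--Yoccoz using the full measure Diophantine-type condition (the UDC) introduced later in the paper. This produces a $g$-dimensional obstruction space, spanned by distributions $D_1,\dots, D_g$ (with $D_1(f) = \int_M f\,d\mu$), dual to a filtered family of cocycles $u_1^{\mathrm{IET}},\dots,u_g^{\mathrm{IET}}$ over $T$ with pure power growth rates $1=\nu_1 > \nu_2 > \dots > \nu_g > 0$; these exponents are the positive Lyapunov exponents of the KZ cocycle (positivity by Forni, simplicity by Avila--Viana), and the $u_i^{\mathrm{IET}}$ agree, up to coboundaries, with the finitely additive cocycles of Bufetov. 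The outcome is
\[
S_N \varphi_{\mathrm{BV}}(y) \;=\; \sum_{i=1}^g D_i(f)\, u_i^{\mathrm{IET}}(N,y) \;+\; O(1).
\]

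Each singular piece $\varphi_\sigma$ has infinite $L^\infty$ norm, yet its Birkhoff sums grow only sub-polynomially: combining a Denjoy--Koksma-type inequality for symmetric logarithmic cocycles (from the second author's 2011 Annals paper) with the Diophantine control coming from the UDC yields $|S_N \varphi_\sigma(y)| = O((\log N)^{1+\epsilon})$ pointwise a.e.\ and in every $L^p$, so that after interpolation along flow trajectories I obtain the cocycles $u_\sigma$ satisfying \eqref{eq:expusigma}. For the equidistribution \eqref{equid:usigma} I exploit the precise cancellation between approaches of orbits to $\sigma$ from opposite sides: a Rokhlin-tower / renormalisation argument over Rauzy--Veech times shows that the values $\{S_N\varphi_\sigma(y)\}_{N\leq M}$ visit every bounded interval of $\R$ with frequency proportional to its length, and this passes through interpolation to the flow coordinates.

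Finally, I reassemble: smoothly interpolating each IET cocycle along flow orbit segments between consecutive returns to $I$ defines the cocycles $u_i(t,x)$ and $u_\sigma(t,x)$ on $\R \times M$, and collecting the $O(1)$ remainder from the $\varphi_{\mathrm{BV}}$ correction, the interpolation errors, and the boundary term $e(f,T,x)$ yields $err_b$, satisfying \eqref{eq:boundederr}. Equidistribution of $err$ in \eqref{errdef} follows because if $f$ does not vanish identically on $\mathrm{Fix}(\psi_\R)$, at least one summand $f(\sigma) u_\sigma$ is unbounded and equidistributed, and adding a uniformly bounded error cannot destroy equidistribution on~$\R$. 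The main obstacle is the singular step: unlike the piecewise smooth MMY setting, logarithmic singularities break uniform Denjoy--Koksma control, so the construction of correction operators requires new quantitative estimates on returns of IET orbits to shrinking neighbourhoods of the singularities, which in turn is exactly what the new UDC condition is tailored to provide on a full measure set of IETs.
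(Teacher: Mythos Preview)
Your overall architecture --- reduce to an IET section, split $\varphi_f$ into a bounded-variation part plus normalised logarithmic pieces $\varphi_\sigma$, and correct the BV part \`a la Marmi--Moussa--Yoccoz --- has a genuine gap in the treatment of the singular cocycles. You assert that $S_N\varphi_\sigma$ is automatically $O((\log N)^{1+\epsilon})$ by symmetry plus a Denjoy--Koksma argument from the 2011 paper. This is false: a generic symmetric logarithmic cocycle has Birkhoff sums growing \emph{polynomially}, because it has nonzero components along the unstable directions of the Kontsevich--Zorich cocycle. The 2011 cancellations control only the \emph{variation} of the renormalised sums (i.e.\ $(\varphi')^{(Q_\beta(k))}$), not their size. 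In the paper the cocycle $u_\sigma$ is built from a bump function $\bar\xi_\sigma$ at $\sigma$ \emph{minus its own correction} $\sum_i D_i(\bar\xi_\sigma)f_i$, so that $\mathfrak{h}(\varphi_{\xi_\sigma})=0$; only then does sub-polynomial growth follow (Corollary~\ref{cor:corrected} and Proposition~\ref{thm:specflowzeroexp}). Correspondingly, the distributions $D_i(f)$ must come from applying the correction operator to the \emph{full} $\varphi_f$, logarithmic singularities included --- extending $\mathfrak{h}$ from $\bv$ to ${\ol}(\sqcup_{\alpha\in\mathcal{A}} I_\alpha)$ is precisely the technical heart of Theorem~\ref{operatorcorrection}, and cannot be bypassed by your BV/singular splitting (note also that $g'_{\varphi_f}$ itself has logarithmic singularities by Proposition~\ref{prop:oper}, so your ``piecewise $C^{1+\epsilon}$'' description of $\varphi_{\mathrm{BV}}$ is already too optimistic for a direct MMY application).

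The equidistribution argument also fails. Your final step --- that equidistribution of $err$ follows because some $f(\sigma)u_\sigma$ is equidistributed and a bounded perturbation cannot destroy this --- is wrong on two counts: the sum $\sum_\sigma f(\sigma)u_\sigma$ of several correlated equidistributed cocycles need not be equidistributed, and adding a bounded but $t$-dependent term does \emph{not} preserve equidistribution in the ratio sense (take $b(t)=-\{u(t)\}$ to see this). The paper takes a completely different route: it proves (Theorem~\ref{thm:erg}, which needs the stronger \ref{SUDC} condition, not only the \ref{UDC}) that the skew product $T_{\varphi_\xi}$ on $I\times\R$ is \emph{ergodic} whenever $\mathfrak{h}(\varphi_\xi)=0$ and $\mathcal{L}(\varphi_\xi)>0$, via an essential-values criterion (Proposition~\ref{thm:erg1}) combining tightness of corrected Birkhoff sums with large second-derivative oscillations forced by the logarithmic singularity. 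Equidistribution of both $u_\sigma$ and $err$ then follows at once from the ratio ergodic theorem applied to the ergodic infinite-measure flows $\Phi^{\xi_\sigma}_\R$ and $\Phi^{f_e}_\R$. Likewise the uniform bound on $err_b$ is not a mere collection of boundary terms: it requires showing that $\varphi_{f_{eb}}$ is a genuine coboundary (Proposition~\ref{thm:cobo}), which rests on a cohomological reduction (Theorem~\ref{theorem:coblog}) extending MMY to cocycles whose \emph{derivative} has logarithmic singularities.
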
}
{ Main Theorem~\ref{mainthm:BS} completes in particular the proof of the Kontsevich-Zorich conjecture, in its original formulation for smooth functions over locally Hamiltonian flows with non-degenerate saddles (as formulated in \cite{Ko:Lya}, see the above \S~\ref{sec:dev})}. The result should be seen as a generalization (for smooth\footnote{The class of functions {considered by Forni \cite{Fo2, Fo21}  and Bufetov  \cite{Bu} is more general:  smoothness is not required, but only a Sobolev condition in \cite{Fo2} (see also \cite{Fo21} for a more general result on the cohomological equation)} and a \emph{weak Lipschitz property} in Bufetov's work, see \cite{Bu} for details.}
 functions) of both the results by Forni \cite{Fo2}  (since it proves the existence of a  power deviation spectrum) and Bufetov  \cite{Bu} (since we show the existence of asymptotic cocycles).
 While the observables in both Forni's \cite{Fo2} and Bufetov's \cite{Bu} works vanish on $\mathrm{Fix}(\psi_\R)$, {  we allow the observables to be non-zero at  singularities in $\mathrm{Fix}(\psi_\R)$.  This leads to the presence in the  asymptotic expansion of    $k$ new cocycles, where $k$ is the cardinality of $\mathrm{Fix}(\psi_\R)$, one for each saddle $\sigma \in \mathrm{Fix}(\psi_\R)$. We will call these $u_\sigma$ \emph{singular cocycles}, since they describe the fluctuations of the ergodic averages due to the presence of singularities. }
 While these cocycles $u_\sigma$ have sub-polynomial deviations, as shown by \eqref{eq:expusigma}, they are \emph{not} uniformly bounded. 

\smallskip
\noindent{\it Comparison to Forni's and Bufetov's works.}
To further compare the result with Forni's \cite{Fo2} and Bufetov's \cite{Bu} works, let us consider the global  error  term  $ err (f, t, \cdot )$  defined as in \eqref{errdef} combining the bounded error $err_b (f, t, \cdot )$ together with the cocycles $u_\sigma$, $\sigma\in \mathrm{Fix}(\psi_\R)$. Then one can see that  $ err (f, t, \cdot )$ has always \emph{sub-polynomial} pointwise growth (in view of \eqref{eq:expusigma} combined with \eqref{eq:boundederr}), but we have a \emph{dichotomy}: on one hand, if $f$ does  vanish identically on $\mathrm{Fix}(\psi_\R)$,  $ err (f, t, \cdot )$ coincides with  $ err_b (f, t, \cdot )$ and is uniformly bounded. { In this case,  the $g$ cocycles $u_i$, which lead the power growth, can be shown \emph{a posteriori} to coincide with the \emph{Bufetov functionals} in \cite{Bu} up to a bounded error}. On the other hand, as soon as $f$ does \emph{not} vanish identically on $\mathrm{Fix}(\psi_\R)$,  $ err(f, t, x)$  \emph{cannot} be controlled \emph{uniformly}:
for $\mu$-almost every $x$,  the function
$t \mapsto  err (f, t, x )$ is unbounded,
in view of the equidistribution of $ err (f, t, \cdot )$ in this case (see the final part of Theorem~\ref{mainthm:BS},  which
follows directly  from the ergodicity of the extensions proved in the Main Theorem~\ref{main1}, more precisely from an application of the \emph{ratio} ergodic theorem in infinite ergodic theory).

This novel phenomenon is an effect of the presence of infinite \emph{tails}, due to the assumption that $f$ is non-zero at (some) singularities and the slowing down of trajectories near Hamiltonian saddles. We are nevertheless able to control the error term  $ err (f, t, \cdot )$  \emph{pointwise} almost everywhere (in view of \eqref{eq:expzero1})  and \emph{in average}, in any $L^p$ norm with $p\geq 1$, in view of \eqref{eq:expzero2}.

\medskip
\noindent {{\it Minimal components in the non-minimal setting.} { Another novelty of our work is that,  while Forni and Bufetov in \cite{Fo2,Bu} study only minimal flows, we prove the existence of an asymptotic expansion also for ergodic integrals of non-mimimal flows in $\mathcal{U}_{\neg min}$. More precisely, we prove the following result for a minimal component $M_0\subset M$ of a  \emph{typical} flow on $\mathcal{U}_{\neg min}$.

\begin{theorem}[{\bf Asymptotic power spectrum for non minimal components}]\label{mainthm:BSN}
For a full measure set of locally Hamiltonian flows on $M$ in $\mathscr{U}_{\neg min}$ with non-degenerate saddles,
for any minimal component $M_0\subset M$ of $\psi_\R$, if $g_0$ denotes the genus of $M_0$, there exist  a \emph{power spectrum} $0<\nu_{g_0}<\cdots < \nu_2<\nu_1:=1$
 and, for any $\epsilon>0$, $g_0$ \emph{invariant distributions} $D_i:C^{2+\epsilon}(M_0)\to\R$, $i=1,\ldots,g_0$,  and $g_0$ smooth cocycles $u_i:\R\times M_0\to\R$, for $i=1,\ldots,g_0$, each of which satisfies \eqref{eq:explambdai},
such that   for every $f\in C^{2+\epsilon}(M_0)$ we have an  asymptotic expansion
\begin{equation*}
\int_0^T f(\psi_t(x))\,dt=\sum_{i=1}^{g_0} D_i(f)u_i(T,x)+err(f,T,x),
\end{equation*}
where, { if $f$ vanishes on $\mathrm{Fix}(\psi_\R)\cap M_0$,   the error term  $err(f, T, \cdot)$ satisfies
\begin{equation}\label{eq:experrb}
\limsup_{T\to+\infty}\frac{\log\|err(f,T,\,\cdot\,)\|_{L^{\infty}(M_0)}}{\log T}\leq 0,
\end{equation}
while if  $f$ is not identically zero on $\mathrm{Fix}(\psi_\R)\cap M_0$ then
\begin{equation}
\label{eq:expzero1}
\limsup_{T\to+\infty}\frac{\log|err(f,T,x)|}{\log T}= 0\text{ for $\mu$-almost every }x\in M_0,
\end{equation}
and furthermore
\begin{equation}
\label{eq:expzero2}
\limsup_{T\to+\infty}\frac{\log\|err(f,T,\,\cdot\,)\|_{L^p(M_0)}}{\log T}= 0\text{ for  every }p\geq 1.
\end{equation}}
\end{theorem}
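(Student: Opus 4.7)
My plan is to reduce Theorem~\ref{mainthm:BSN} to the already established Theorem~\ref{mainthm:BS} by restricting the flow $\psi_\R$ to the minimal component $M_0$. According to the structural decomposition of flows in $\mathscr{U}_{\neg min}$ (cf.\ \S~\ref{sec:generic}), the minimal component $M_0\subset M$ is an open subsurface bounded by a finite union of saddle connections of $\psi_\R$, and the restriction $\psi_\R|_{M_0}$ is itself a minimal locally Hamiltonian flow with non-degenerate saddles on the closed surface obtained from $M_0$ by collapsing boundary saddle connections in the standard way (see footnote~\ref{Arnoldflowfootnote}). The resulting closed surface has genus $g_0$ and belongs to a stratum of lower (or equal) complexity than the ambient one.

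Once $\psi_\R|_{M_0}$ has been realized as a minimal locally Hamiltonian flow on a closed surface of genus $g_0$, I would invoke Theorem~\ref{mainthm:BS} on it. For any observable $f\in C^{2+\epsilon}(M_0)$, this immediately provides $g_0$ invariant distributions $D_i$, $g_0$ cocycles $u_i$ satisfying the power-growth estimate \eqref{eq:explambdai}, singular cocycles $u_\sigma$ (one for each $\sigma\in\mathrm{Fix}(\psi_\R)\cap M_0$) satisfying \eqref{eq:expusigma}, and a uniformly bounded term $err_b(f,T,x)$, altogether yielding the asymptotic expansion \eqref{eq:asexp} for the restricted flow. Defining the error in Theorem~\ref{mainthm:BSN} by
\[
err(f,T,x):=\sum_{\sigma\in\mathrm{Fix}(\psi_\R)\cap M_0} f(\sigma)\,u_\sigma(T,x)+err_b(f,T,x),
\]
the estimate \eqref{eq:experrb} follows at once when $f$ vanishes on $\mathrm{Fix}(\psi_\R)\cap M_0$, since then only the bounded term $err_b$ survives. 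Conversely, when $f$ does not vanish identically on $\mathrm{Fix}(\psi_\R)\cap M_0$, the subpolynomial estimates \eqref{eq:expusigma} for the $u_\sigma$ combined with the uniform bound \eqref{eq:boundederr} on $err_b$ yield both the pointwise estimate \eqref{eq:expzero1} and the $L^p$ estimate \eqref{eq:expzero2}.

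The main obstacle I anticipate is measure-theoretic: one must verify that the full measure hypothesis on $\psi_\R$ in the Katok fundamental class of its ambient stratum in $\mathscr{U}_{\neg min}$ descends to the full measure hypothesis needed on $\psi_\R|_{M_0}$ in the (in general different) stratum of its minimal component. The approach I would take is a Fubini-type argument: parametrize flows in $\mathscr{U}_{\neg min}$ by the combinatorial type of their periodic/minimal decomposition, the heights of the periodic cylinders, and the modular parameters of each minimal component. The Diophantine-type condition to be introduced in \S~\ref{sec:UDCdefsec} for $\psi_\R|_{M_0}$ depends only on the Rauzy--Veech renormalization of a Poincar\'e section IET inside $M_0$, hence only on the modular parameters of the minimal component and not on the other components. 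Combined with the absolute continuity of the disintegration of the Katok fundamental class along this parametrization, this yields that the ambient set of flows for which $\psi_\R|_{M_0}$ satisfies the Diophantine condition has full measure, which is precisely what is needed to apply Theorem~\ref{mainthm:BS} and conclude.
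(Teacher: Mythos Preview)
Your reduction strategy has a genuine gap: the restricted flow $\psi_\R|_{M_0}$, even after any collapsing procedure, is \emph{not} a flow in $\mathscr{U}_{min}$ in the sense required by Theorem~\ref{mainthm:BS}. The paper makes this explicit in \S\ref{sec:reductionsf}: the special flow representation of a minimal component of a typical $\psi_\R\in\mathscr{U}_{\neg min}$ has \emph{asymmetric} logarithmic singularities, whereas flows in $\mathscr{U}_{min}$ are characterized precisely by having \emph{symmetric} ones (the condition \eqref{zerosymweak}). This asymmetry is caused by the saddle loops bounding $M_0$ and does not disappear under any formal collapsing; it is an intrinsic feature of the Poincar\'e return data. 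Consequently, the hypotheses of Theorem~\ref{mainthm:BS} are not satisfied, and in particular you cannot invoke the uniform bound \eqref{eq:boundederr} on $err_b$: that bound is established only in \S\ref{sec:proofBSerror} under the \ref{SUDC} (via the cancellation estimates of \S\ref{symmetric:sec} and the reducibility argument of Proposition~\ref{thm:cobo}), all of which require symmetric singularities. Without \eqref{eq:boundederr}, your derivation of \eqref{eq:experrb} collapses.

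The paper proceeds differently and in fact proves Theorem~\ref{mainthm:BSN} and the first part of Theorem~\ref{mainthm:BS} \emph{simultaneously} (see \S\ref{sec:proofmain}), working directly with the Poincar\'e IET of the minimal component and using only the weaker \ref{UDC}. The correction operator (Theorem~\ref{operatorcorrection}) and the subpolynomial/pure-power deviation estimates (Propositions~\ref{thm:specflowzeroexp} and~\ref{thm:specflownonzero}, Corollary~\ref{cor:passac}) are all valid for general $\varphi\in\ol(\sqcup_\alpha I_\alpha)$ without any symmetry assumption; this is what yields \eqref{eq:experrb}, \eqref{eq:expzero1}, and \eqref{eq:expzero2} in the asymmetric setting. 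The sharper conclusions specific to Theorem~\ref{mainthm:BS} (uniform boundedness of $err_b$, equidistribution of $u_\sigma$) are proved separately afterward, exploiting the extra symmetric structure. So rather than deducing Theorem~\ref{mainthm:BSN} from Theorem~\ref{mainthm:BS}, the logical flow is the reverse: the common machinery gives Theorem~\ref{mainthm:BSN}, and Theorem~\ref{mainthm:BS} is then a strengthening available only in the symmetric case.
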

Notice that in this case, when restricting to a minimal component of $\psi_\R\in \mathcal{U}_{\neg min}$, we only claim that $err(f, T, \cdot )$ grows sub-polynomially {(which is the same type of estimate proved by Bufetov for the error term in the symmetric case). This result is in particular an extension of Bufetov's work \cite{Bu} to the restriction to a  minimal component in the non minimal case  $\psi_\R\in \mathcal{U}_{\neg min}$. }

\smallskip
 { Thus, Theorems~\ref{mainthm:BS} and~\ref{mainthm:BSN} complete the study of deviations of ergodic averages of smooth functions over locally Hamiltonian flows with \emph{non-degenerate} saddles. The study of locally Hamiltonian flows with \emph{degenerate-saddles} leads to other new phenomena and additional  polynomial terms in the asymptotic expansion and is treated in an upcoming paper by M.~Kim and the first author \cite{Fr-Kim}. }

\medskip
\medskip
\noindent{\it On the proof and the Diophantine-like conditions.}
The proof {of the asymptotic expansion in Theorem~\ref{mainthm:BS}, which will be proved at the same time than Theorem~\ref{mainthm:BSN},} follows a completely different approach to both Forni's \cite{Fo2} and Bufetov's \cite{Bu} works and is inspired by Marmi-Moussa-Yoccoz work  \cite{Ma-Mo-Yo} on solving the cohomological equation for (Roth-type) interval exchange transformations (and the follow up work \cite{Ma-Yo} by Marmi and Yoccoz). We comment in detail on this strategy below in \S~\ref{proofmethod}.

An advantage of this different approach is that it allows to give a \emph{description} of the full measure set of locally Hamiltonian flows for which the result holds in terms of a \emph{Diophantine-type} condition. Furthermore, it  also provides a  different  construction of the cocycles which describe the asymptotic behaviour of ergodic integrals in terms of the correction operators.

The full measure \emph{Diophantine-like} conditions (which are different for Theorem~\ref{mainthm:BS} and Theorem~\ref{mainthm:BSN} respectively)  are expressed more precisely on the interval exchange transformations which arise as Poincar{\'e} sections of the flows. We introduce (in \S~\ref{sec:DC}) two such conditions, both of which we show to be of full measure.
{ The first,  that we call \emph{Uniform Diophantine Condition} (or \ref{UDC}), is used to prove the existence of the asymptotic expansion in both Theorem~\ref{mainthm:BS} and Theorem~\ref{mainthm:BSN} up to a subpolynomial error. In the case of minimal flows in $\mathcal{U}_{min}$, to improve the estimates on the error and show in particular that the error is equidistributed (see the second part of Theorem~\ref{mainthm:BS}), we need to assume  a more restrictive condition, namely the \emph{Symmetric Uniform Diophantine Condition} (or \ref{SUDC}). For this result indeed  we also need to crucially exploit the cancellations proved by the second author in \cite{Ul:abs} to prove typical absence of mixing and these require further assumptions on the IET to hold.

Both {Diophantine-like} conditions} expressed in terms of  the matrices of
the Rauzy-Veech cocycle, which often plays the role of \emph{multi-dimensional continued fraction} in the study of IETs.
These conditions, similarly to the Roth-type condition for IETs introduced by
Marmi-Moussa and Yoccoz in \cite{Ma-Mo-Yo} (and its variations, see for example
\cite{Ma-Mo-Yo, Ma-Mo-Yo:lin, Ma-Yo, Ma-Ul-Yo}), impose constraints both on the growth of the matrices of (an
acceleration of) the cocycle, as well as requests on the hyperbolic behaviour of the matrix product, in the form of Oseledets genericity requests.
In addition, we require \emph{effective} Oseledets control, which in turns allow to control certain \emph{Diophantine series} (see \S~\ref{sec:Dseries}). We point out that similar conditions also appear in the recent work \cite{Gh-Ul} on rigidity of generalized interval exchanges.

\subsection{Correction of cocycles with logarithmic singularities}\label{proofmethod}
We comment now on the methods and the proofs. First of all we work  with Poincar{\'e} maps, both to study the flow $\psi_\R$ and its extensions $\Phi^f_\R$; it is well known that Poincar{\'e} maps of area-preserving flows, in suitably chosen coordinates, are \emph{interval exchange transformations} (for short IETs), namely,  piecewise-isometries of the interval $I=[0,1)$ (the definition is recalled in \S~\ref{sec:IETs}). Moreover, any minimal locally Hamiltonian flow admits a representation as \emph{special flow} over the IET $T:I\to I$ which arise as Poincar{\'e} map (see \S~\ref{sec:specialflowdef} for definitions). The \emph{roof function} $r:I\to \overline{\R^+}$ which arise from this representation has \emph{singularities} at the discontinuities of $T$, which, in case of simple (non-degenerate) saddles, are of \emph{logarithmic type} (formally defined in \S~\ref{sec:sing}), i.e.~as $x\to x_i^{\pm}$ approaches a discontinuity $x_i\in I$ of $T$ from the right or left, $r(x)$ blows up as $C_i^{\pm}|\log (x-x_i)|$ , where the constants $C_i^\pm $ are \emph{positive} and are globally \emph{symmetric}, namely $\sum {C}_i^+= \sum {C}_i^-$, for typical flows in $\mathcal{U}_{min}$, while \emph{asymmetric} for minimal components of typical flows in $\mathcal{U}_{\neg min}$.

Fix now an observable $f:M\to \R$ which is non-zero on $\mathrm{Fix}(\psi_\R)$. To study ergodic integrals, we build the extension $\Phi^f_\R$ on $M\times \R$ (given by \eqref{extension}). Choosing a  Poincar{\'e} section for the extension which projects on $I$, namely of the form $I\times \R$, the Poincar{\'e} first return map of $\Phi^f_\R$ (in suitable coordinates) turns out to be a \emph{skew product} over the IET $T$ of the form \eqref{IETskew}, in which the \emph{cocycle} $\varphi$ has \emph{logarithmic singularities} (where the constants $C_i^\pm$ here can be positive or negative, { or zero if the function is zero on $\mathrm{Fix}(\psi_\R)$, in which case there are no singularities, see Remark~\ref{rk:nologsing}).} We have now reduced the study of ergodic integrals and ergodicity of extensions to the study of Birkhoff sums of cocycles with logarithmic singularities over IETs and ergodicity of skew products over IETs with logarithmic singularities.

Under the new Diophantine-type conditions that we introduce in \S~\ref{sec:DC},  for every function with logarithmic  singularities, we prove the existence of a \emph{correction} operator, namely an operator which, removing the projection on a finite dimensional space (which corresponds morally to the projection on the unstable space of renormalization), allows to get a better control of the behaviour of Birkhoff sums of functions with  logarithmic singularities (see Theorem~\ref{operatorcorrection} for the precise statement). The result provides an extension 
of the main result in the work of Marmi-Moussa-Yoccoz \cite{Ma-Mo-Yo}. In the latter, in order to solve  the cohomological equation for IETs of Roth-type, correction operators are constructed for (piecewise) \emph{absolutely continuous} cocycles.

While the main steps of our correction procedure are inspired by the construction introduced  in \cite{Ma-Mo-Yo} (and later developed in \cite{Ma-Yo}), there are considerable differences and difficulties. Notably, while the authors of \cite{Ma-Mo-Yo} were interested in controlling the growth of the sequence of Birkhoff sums
$(S(k)\varphi)_{k\in \mathbb{N}}$ of a piecewise absolutely continuous functions $\varphi$ using the \emph{uniform norm} (in order to keep them bounded after correction and be able to apply Gottschalk-Hedlund theorem, see \cite{Ma-Mo-Yo} for details), for functions with \emph{logarithmic singularities}, the uniform norm \emph{cannot} be used (since functions with logarithmic singularities are always unbounded). The key idea to treat cocycles with logarithmic singularities in this paper is to exploit instead the $L^1$-norm and to build correction operators which allow to bound or control the $L^1$-norm of the sequence $(S(k)\varphi)_{k\in \mathbb{N}}$. The use of the  $L^1$-norm has already appeared in our previous work  \cite{Fr-Ul}, where we had considered the correction problem\footnote{In \cite{Fr-Ul}, for IETs of hyperbolic periodic type, we build correction operators for cocycles with \emph{symmetric} logarithmic singularities and we then exploit the result to build ergodic extensions, but we do not work out the full deviation spectrum and asymptotic cocycles formalism.} for the (measure zero set of) IETs of hyperbolic periodic type.  It turns out that to  extend the result to  almost every IET  requires once again  changes in the basic  step of construction, as well as the introduction of the above mentioned delicate Diophantine-type condition on the IET.
We refer the interested reader to \S~\ref{correction:sec} (and in particular the outline of the strategy to build the correction operators given  in \S~\ref{sec:outline}) for  further details on the differences and the steps in the construction of the correction operators.

The construction of the asymptotic cocycles $u_i:\R\times M\to \R$ which lead to understanding the behaviour of ergodic integrals (see the statement of Main Theorem~\ref{mainthm:BS}) is strictly connected to the  finite dimensional space of \emph{corrections}. Indeed, corrections can be realized by subtracting \emph{piecewise constant cocycles}, which, through the correspondence between extensions and skew-products, allow to define the asymptotic cocycles $u_i$.

In the case of  minimal locally Hamiltonian flows in $\mathcal{U}_{min}$ (which give rise to \emph{symmetric} logarithmic singularities), we also exploit the delicate \emph{cancellations} among contributions of singularities which were proved by the second author in \cite{Ul:abs} and, introducing the \ref{SUDC} Diophantine-type condition, we are able to prove that, after corrections, a subsequence of Birkhoff sums $(S(k)\varphi)_{k\in\mathbb{N}}$ is \emph{tight}. Tightness, combined with partial rigidity of the IET in the base (a result which dates back to Katok \cite{Ka}) and the \emph{presence} of logarithmic singularities (which comes from the assumption that $f$ is non identically zero on $\mathrm{Fix}(\psi_\R)$), allows to apply a quite standard ergodicity criterium based on the existence of \emph{essential values} (see Proposition \ref{thm:erg1} for the precise incarnation of the criterium which we use in this paper). This allows to prove ergodicity of the corresponding extensions.

 \subsection*{Structure of the paper}  In \S~\ref{sec:background} we recall basic definitions and background material on locally Hamiltonian flows and their extensions. We also summarize their typical ergodic properties and explain the reductions to special flows and skew products over IETs. In \S~\ref{sec:IETDC}, after recalling the required definitions and properties of the Rauzy-Veech induction procedure and the associated cocycle, we define the two Diophantine conditions (the \ref{UDC} and the \ref{SUDC} conditions) and prove that they have full measure.

 In \S\S~\ref{sec:cocycles}, \ref{renormalization:sec} and \ref{correction:sec} we study cocycles with logarithmic singularities over IETs. After  giving definitions and proving elementary properties in \S~\ref{sec:cocycles}, we proceed in \S~\ref{renormalization:sec} at investigating the renormalization process induced on such cocycles by performing Rauzy-Veech induction. The correction operators are constructed in \S~\ref{correction:sec} (where the above mentioned Theorem~\ref{operatorcorrection} about existence and properties of the correction operators is proved).

The asymptotic deviation spectrum (see the first part of Main Theorem~\ref{mainthm:BS}) is proved in \S~\ref{devspectrum:sec}, where the asymptotic of ergodic integrals is recovered from the cocycles associated to the correction operators. In \S~\ref{sec:ergskew} we state the ergodicity criterium that we then apply to prove ergodicity of extensions. After discussing also the reducibility case, we then prove Main Theorem~\ref{main1}, as well as the second part of Main Theorem~\ref{mainthm:BS}. Some technical but standard proofs in this part are relegated to the  Appendix (in particular the proofs of the ergodicity criterium and of a cohomological reduction result which is needed for the reducibility part).


\section{Definitions, background material and reductions}\label{sec:background}
In this section we recall some basic definitions and background material concerning locally Hamiltonian flows (\S~\ref{sec:locHam}) and their extensions (\S~\ref{sec:extensionsdef}), including a brief summary in \S~\ref{sec:mixing} of our current knowledge of their typical chaotic properties. We also the definition of special flows (see \S~\ref{sec:specialflowdef}) and skew-products (in \S~\ref{sec:IETextensions}) over interval exchange transformations (defined in \S~\ref{sec:IETs}). We finally recall in \S~\ref{sec:reductions} the representation of locally Hamiltonian flows to special flows (see \S~\ref{sec:reductionsf})  with logarithmic singularities (defined in \S~\ref{sec:sing}) and the reduction of the study of their extensions to skew products over IETs, see \S~\ref{sec:reductionsp}.

\subsection{Locally Hamiltonian flows}\label{sec:locHam}
Let $(M, \omega)$ be a surface with a fixed smooth area form  $\omega$. A \emph{smooth area preserving flow}     $\psi_\R=(\psi_t)_{t\in\mathbb{R}}$ on $M$ is  a smooth flow on $M$ which preserves the measure $\mu$ associated to  $\omega$. These flows are also called \emph{locally Hamiltonian flows} or \emph{multi-valued Hamiltonian flows} in the literature, in view of their interpretation as flows locally given by Hamiltonian equations, see the introduction.

It turns out that such smooth area preserving flows on $M$  are in one-to-one correspondence  with smooth closed real-valued differential $1$-forms as follows.  Given a smooth, closed, real-valued differential $1$-form $\eta$, let $X$ be the vector field determined by $\eta = i_X \omega$ where $i_X$ denotes the contraction operator, i.e. $i_X \omega =\omega( \eta, \cdot )$ and consider the flow $\psi_\R$ on $M$ given by $X$. Since $\eta$ is closed, the transformations $\psi_t$, $t \in \mathbb{R}$, are  area-preserving. Conversely, every smooth area-preserving flow can be obtained in this way.

Let $\mathrm{Fix}(\psi_\R)$ denote the set of \emph{fixed points} (also called \emph{singularities}) of the flow $\psi_\R$. We will always require that $\mathrm{Fix}(\psi_\R)$ is a \emph{finite set}, so in particular singularities are \emph{isolated}. Remark that when $g\geq 2$,  $\mathrm{Fix}(\psi_\R)$ is always not empty, thus singularities are isolated. Since $\psi_\R$ is area-preserving,  \emph{singularities} in $\mathrm{Fix}(\psi_\R)$, as shown in Figure~\ref{sing_types}, can be either centers (Fig.~\ref{center}), simple saddles (Fig.~\ref{simplesaddle}) or multi-saddles (i.e.~saddles with $2k$ pronges, $k\geq 2$, see Fig.~\ref{multisaddle} for $k=3$). 
 For $g=1$, i.e.~on a torus, if there is a singularity then there has to be another one and we get an Arnold flow as in Figure~\ref{Arnoldtorus}.

 \begin{figure}[h!]
  \subfigure[center \label{center}]{
  \includegraphics[width=0.23\textwidth]{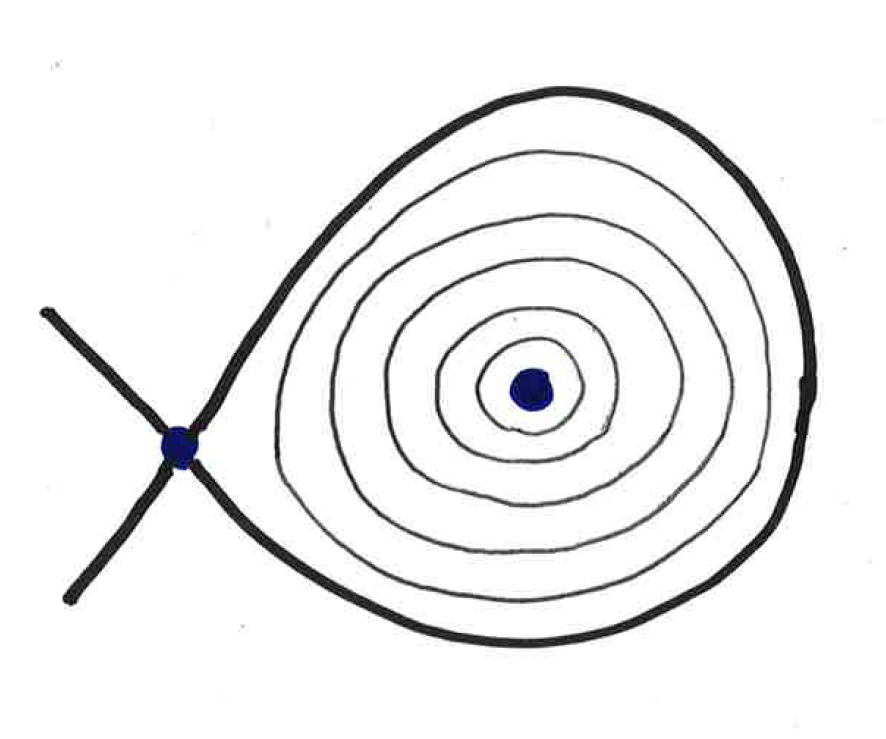} 	} \hspace{6mm} \subfigure[simple saddle \label{simplesaddle}]{ \includegraphics[width=0.18\textwidth]{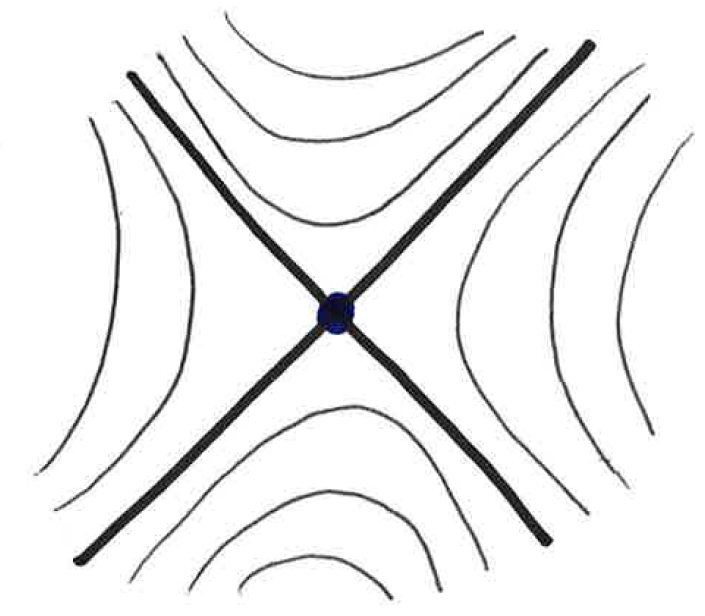}}\hspace{9mm}
\subfigure[multisaddle \label{multisaddle}]{
\includegraphics[width=0.18\textwidth]{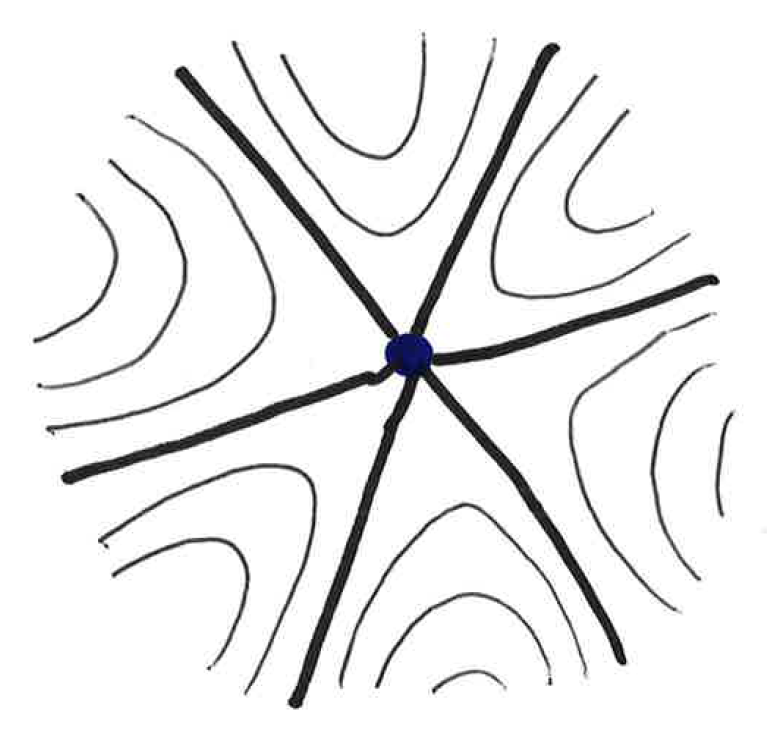}	\hspace{9mm}  }
{\caption{Type of singularities of a locally Hamiltonian flow.\label{sing_types}}}
\end{figure}

  We call  \emph{saddle connection} a flow trajectory from a saddle to a saddle and a \emph{saddle loop} a saddle connection from a saddle to the same saddle (see Fig.~\ref{components}).  A \emph{periodic component} is either a (maximal) punctured disk or a (maximal) cylinder filled with closed (i.e.~periodic) trajectories (see Fig.~\ref{island} and Fig.~\ref{cylinder} respectively). A \emph{minimal component} is a subsurface $M'\subset M$, possibly with boundary, such that any trajectory different than a fixed point is \emph{dense} in $M'$.
Periodic and minimal components are bounded by union of saddle connections.

 \begin{figure}[h!]
 \subfigure[periodic island \label{island}]{
 \includegraphics[width=0.25\textwidth]{Island} 	} \hspace{6mm}
 \subfigure[periodic cylinder \label{cylinder}]{
	\includegraphics[width=0.25\textwidth]{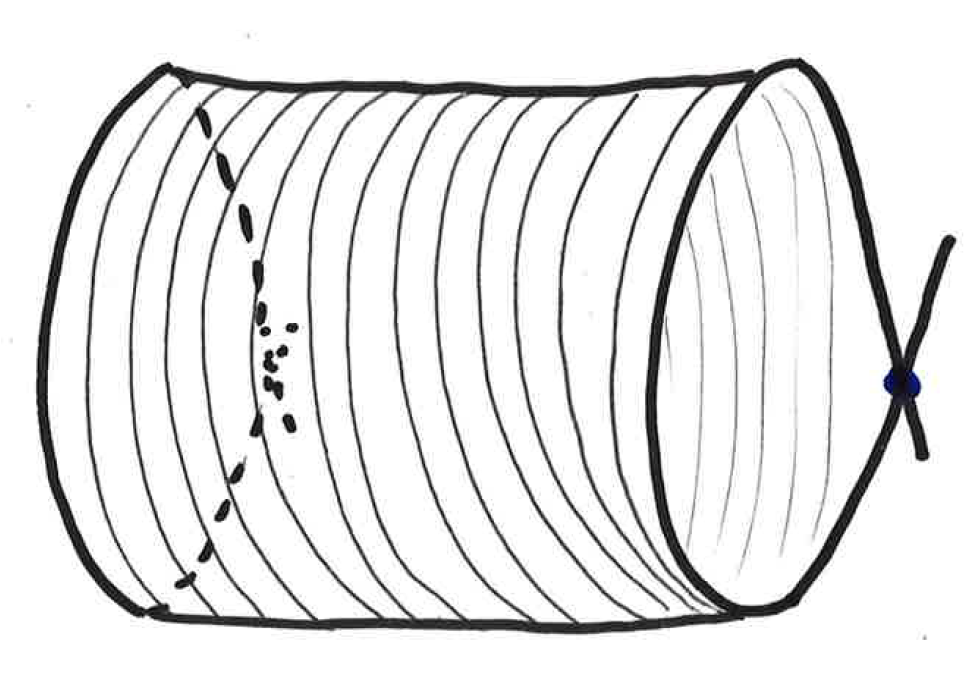}} \hspace{6mm}
\subfigure[$g=2$ minimal component  \label{minimalcomp}]{\includegraphics[width=0.33\textwidth]{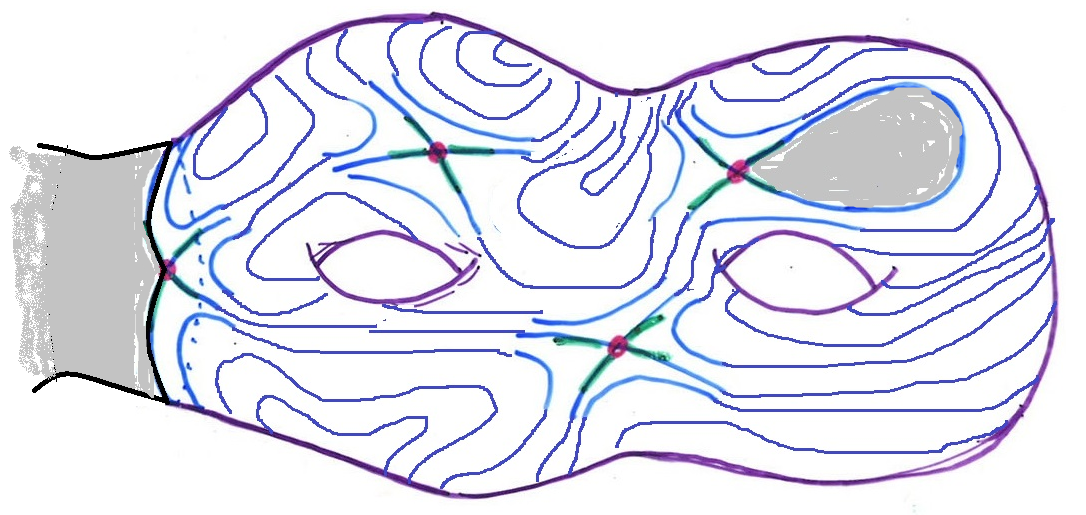}   }	
 \caption{Periodic and minimal components
.\label{components}}
\end{figure}


	


\subsubsection{Open sets, genericity and minimality} \label{sec:generic}
Let us denote by $\mathcal{F}$ the set of smooth closed $1$-forms on $M$ (i.e.~locally Hamiltonian flows) with
isolated zeros.
 One can define a \emph{topology} on $\mathcal{F}$ by considering perturbations of closed smooth $1$-forms by (small) closed  smooth $1$-forms\footnote{Let $\eta$, $\eta'$ be two smooth closed $1$-forms. We say
that $\eta'$ is an $\epsilon$-perturbation of $\eta$ if for any $x\in M$ there exists coordinates on a simply connected neighbourhood  $U$ of $x$, such that  $\eta |_U=dH $ and $(\eta'-\eta)|_U= dh$ where $\Vert h\Vert_{C^\infty}<\epsilon \Vert H\Vert_{C^\infty}$.}. We say that a condition is \emph{generic} (in the sense of Baire) if it holds for
flows described by an open and dense set of forms with respect to this topology.

Let $\mathcal{A} \subset \mathcal{F}$ be the subset of Morse $1$-forms (adopting the notation introduced by Ravotti \cite{Rav}), namely forms which are locally the differential of a \emph{Morse function} (i.e.~ a function that has \emph{non-degenerate} zeros, so that the Hessian at every fixed point is non-degenerate). The set $\mathcal{A}$ of Morse  $1$-forms is then \emph{generic}.
Locally Hamiltonian flows corresponding to forms in $\mathcal{A}$ have only \emph{non-degenerate fixed points}, i.e.\ \emph{centers}  and \emph{simple saddles} (see Figures~\ref{center} and ~\ref{simplesaddle}), as opposed to degenerate \emph{multi-saddles} (as in Fig.~\ref{multisaddle}).
We denote by $\mathcal{A}_{s,c}$ the set of $1$-forms in $\mathcal{A}$ with $s$ saddle points and $c$ centers. By the Poincare-Hopf Theorem, $c-s=2-2g$. Furthermore,   each $\mathcal{A}_{s,l}$ is open and their union $\mathcal{A}$ is dense in $\mathcal{F}$ (see e.g.\ Lemma~2.3 in \cite{Rav}).

For every $1$-form in $\mathcal{A}$, the surface $M$ splits into periodic components and (up to $g$) minimal components (as proved independently by Maier \cite{Ma:tra}, Levitt \cite{Le:feu} and Zorich \cite{Zo:how}). Notice that if there is a unique minimal component (which is equal to the whole surface $M$), then $c=0$ (since if there is a center is associated to a periodic component) and $s=2g-2$.

Moreover, one can show that if the flow  $\psi_\R$ given by a closed $1$-form $\eta$ has a \emph{saddle loop homologous to zero} (i.e.\ the saddle loop is a \emph{separating} curve on the surface), then the saddle loop is persistent under small perturbations (see \S~2.1 in \cite{Zo:how} or Lemma 2.4 in \cite{Rav}). In particular, the set of locally Hamiltonian flows which have at least one saddle loop is an open set, which consists of \emph{non-minimal} flows. The set  $\mathscr{U}_{\neg min}$ mentioned in the introduction is an open and dense set of this open set (where the open condition guarantees  \emph{asymmetry} in the special flow representation recalled in \S~\ref{sec:reductionsf}, we refer to \cite{Rav} for the precise definition, see Notation 3.3 in \S~3.1 of \cite{Rav}). The set $\mathscr{U}_{min}$ is given by the interior   (which one can show to be non-empty) of the complement of  $\mathscr{U}_{\neg min}$, i.e.\ the set of locally Hamiltonian flows without  saddle loops homologous to zero\footnote{Note that saddle loops \emph{non-homologous} to zero (as well as saddle connections) disappear after arbitrarily small perturbations; therefore neither the set of 1-forms with saddle loops (or more generally saddle connections) non-homologous to zero,  nor its complement are open (see \cite{Rav} for details).}. 


\subsubsection{Measure class and typicality}\label{sec:ergodicity}
Let us fix an open set $\mathcal{A}_{s,c}$ of closed 
$1$-forms with $c$ centers and $s$ (simple) saddles.
 A \emph{measure-theoretical notion of  typical} on $\mathcal{A}_{s,c}$ can be defined  on each  $\mathcal{A}_{s,c}$
 as follows, by using the \emph{Katok fundamental class} (introduced by Katok in \cite{Ka0}, see also \cite{NZ:flo}),
 i.e.\ the cohomology class of the 1-form $\eta$  which defines the flow.
Let $\gamma_1, \dots, \gamma_n$ be a base of the relative homology $H_1(M, \mathrm{Fix}(\psi_\R), \mathbb{R})$, where $n=2g+s+c-1$,
and consider the period map
\[\Theta(\eta)=\Big(\int_{\gamma_1}\eta,\ldots,\int_{\gamma_n}\eta\Big)\in\R^n.\]
The map $\Theta$ is well defined in a neighbourhood of $\eta$ in $\mathcal{A}_{s,c}$ and one can show that it is a complete isotopy invariant (see \cite{Ka0}, or also  Prop.\ 2.7 in \cite{Rav}).

The pull-back $Per_* Leb$ of the Lebesgue measure class (i.e.\ class of sets with zero measure) by the period map gives the desired measure class on closed $1$-forms in  $\mathcal{A}_{s,c}$. When we use the expression \emph{typical}  below (or typical in $\mathcal{U}_{min}$ or  $\mathcal{U}_{\neg min}$)  we mean full measure in each $\mathcal{A}_{s,c}$  with respect to this  measure class on each $\mathcal{A}_{s,c}$ (or on each open subset of $\mathcal{A}_{s,c}$  contained in the union $\mathcal{U}_{min}$ or  $\mathcal{U}_{\neg min}$).


\subsubsection{Ergodicity and reducibility of extensions}\label{sec:extensionsdef}
Let $\Phi^f_\R:=(\Phi^f_t)_{t\in\mathbb{R}}$  on $M\times \R$ denotes the extension of  an ergodic flow $\psi_\R$ on $M$ by $f:M\to \R$  given by the formula \eqref{extension}. Recall that, if $\psi_\R$ preserves a measure $\mu$,  $\Phi^f_\R$ preserves the (infinite) measure $\mu\times Leb$.
The flow $\Phi^f_\R$ is \emph{recurrent} if $\mu\times Leb$-almost every point is recurrent. A result by Atkinson \cite{At} (which holds for $1$-dimensional extensions of ergodic flows) shows that $\Phi^f_\R$ is recurrent if and only if $\int_M f\mathrm{d}\mu=0$.

We recall that $\Phi^f_{\R}$  is \emph{ergodic} with respect to the (infinite) measure $\mu\times Leb$ if
for any measurable set $A$ which is invariant, i.e.~such that
$\mu\times Leb(A)=\mu\times Leb(\Phi^f_t A) $ for all $t\in \mathbb{R}$, either $\mu\times Leb(A)=0$ or $\mu\times Leb(A^c)=0$, where $A^c$ denotes the complement.

Remark that if $f = 0$, the phase space $M \times \R$
for the corresponding trivial extension given by $\Phi^f_t (x,y)= (\psi_t (x),y) $ is foliated in
invariant sets of the form $M \times \{y \}$, $y \in \R$. In this sense, the dynamics is reduced
to the dynamics of the surface flow $\psi_\R$. We say that $\Phi^f_\R$
is (topologically)
reducible if it is isomorphic to $\Phi^0_\R$
and the isomorphism $G : M \times \R \to M \times \R$
is of the form $G(x,y) = (x,y + g(x))$, where $g : M \to \R$ is continuous.
 So the reducibility of $\Phi^f_\R$ is equivalent to asking that
 $$\int_0^t f(\psi_s x)\,ds=g(x)-g(\psi_t x)$$
for every regular point $x\in M$ and any $t\in\R$.
In this case,
the phase space is again foliated into invariant sets for
$\Phi^f_\R$ of the form $\{(x, y+ g(x)),\ x\in M \}$, $\
y \in \R $. On each leaf the action of $\Phi^f_{\R}$ is
conjugated to $\psi_{\R}$ on $M$.

\subsubsection{Typical chaotic properties of locally Hamiltonian flows}\label{sec:mixing}
Let us briefly summarize the key chaotic properties of locally Hamiltonian flows and some of the recent works on this topic. We already recalled in the introduction, in view of the relation between locally Hamiltonian flows and translation flows {(see also Remark~\ref{rk:singularrep}),} the seminal works by Keane \cite{Kea:int} and Masur \cite{Ma:erg} and Veech \cite{Ve:erg}
show that a full measure set of locally Hamiltonian flows in $\mathcal{U}_{min}$ are minimal and ergodic  and that almost every flow in $\mathcal{U}_{\neg min}$, the restriction to each minimal component is ergodic (and in both cases the underlying foliation in uniquely ergodic).

Mixing depends crucially on the type of singularities of the flow. For a (non-generic) locally Hamiltonian flow with at least one \emph{degenerate saddle} (see e.g.\ Figure~\ref{multisaddle}), mixing was proved in the $1970s$ (by Kochergin in \cite{Ko:mix}). When $\eta \in\mathcal{A}$ and all saddles are \emph{simple}, one has the following dichotomy: in $\mathscr{U}_{min}$, the typical locally Hamiltonian flow is \emph{weakly mixing, but it is \emph{not} mixing} in view of work \cite{Ul:wea, Ul:abs} by the second author  (see also \cite{Ko:abs, Ko:abs2} and \cite{Sch:abs} for previous special cases of this result). There exist nevertheless  exceptional mixing flows, see the work by \cite{CW}, which produces sporadic  examples in $g=5$. If $\eta \in \mathcal{U}_{\neg min}$, the restriction of the typical locally Hamiltonian flow $\psi_\mathbb{R}$ on each of its minimal components is mixing (as proved by Ravotti \cite{Rav} extending previous work by the second author \cite{Ul:mix}). Ravotti also shows in \cite{Rav} \emph{subpolynomial} bounds for the speed of mixing.

Further recent work (see \cite{KKU}) also shows that locally Hamiltonian flows in $\mathcal{U}_{\neg min}$
display a \emph{quantitative shearing} property inspired by the \emph{Ratner property} which plays a crucial role in the theory of unipotent flows (or more precisely a variation introduced in ~\cite{FK} to deal with the presence of singularities).
 From this property, one can deduce  that the restriction of a typical locally Hamiltonian flow $\psi_\R$ in $\mathscr{U}_{\neg min}$  on its minimal components is not only mixing, but \emph{mixing of all orders}, see \cite{KKU}.   Arnold flows in genus one were also recently shown (by A.~Kanigowski and M.~Lema{\'{n}}czyk and the second author, see \cite{KLU}) to typically have \emph{disjointess\footnote{The notion of \emph{disjointness} in ergodic theory was introduced in the $1970s$ by H.~Furstenberg, see in particular \cite{Fur}.} of rescalings}, a property which in particular implies Sarnak M\"obius orthogonality conjecture \cite{Sa} to hold (see \cite{KLU} for details  and \cite{surveyMoebius} for a nice survey on the conjecture and progress toward it).

The spectral theory of locally Hamiltonian flows is still largely not understood. Examples\footnote{These examples are known as \emph{Blokhin examples} and are essentially built glueing genus one flows. This allows to study them using (special flows over)  rotations. On the other hand, they are highly non typical.} of locally Hamiltonian flows on surfaces of any genus $\geq 1$ with  singular continuous spectrum were build by  M. Lema{\'{n}}czyk and the first author (see \cite[Theorem 1]{Fr-Le}). For some flows in genus one with a \emph{degenerate} singularity (sometimes known as \emph{Kochergin flows}),  Forni, Fayad and Kanigowski could recently, prove in  \cite{FFK} that the spectrum is \emph{countably Lebesgue}.
The first typical spectral result  for surfaces of higher genus, namely $g \geq 2$ was recently proved by Chaika, Kanigowski and the authors, who showed in \cite{CFKU} that a {\it typical} locally Hamiltonian flow on a \emph{genus two} surface with two isomorphic {\it simple saddles} has {\emph{purely singular}}  {spectrum}.

\subsection{IETs, special flows and extension.}\label{sec:basicobjects}
Let us now introduce the notation that we will use for interval exchange transformations (\S~\ref{sec:IETs}) and recall the definition of two basic constructions, special flows (\S~\ref{sec:specialflowdef}) and extensions  of IETs (\S~\ref{sec:IETextensions}).

\subsubsection{Interval exchange transformations.}\label{sec:IETs}
 Let $\mathcal{A}$ be a $d$-element alphabet and let
$\pi=(\pi_0,\pi_1)$ be a pair of bijections
$\pi_\vep:\mathcal{A}\to\{1,\ldots,d\}$ for $\vep=0,1$. We adopt
the notation from \cite{Vi0}. Denote by
$\mathcal{S}^0_{\mathcal{A}}$ the subset of irreducible pairs,
i.e.\ such that
$\pi_1\circ\pi_0^{-1}\{1,\ldots,k\}\neq\{1,\ldots,k\}$ for $1\leq
k<d$.
\smallskip

\noindent For any
$\lambda=(\lambda_\alpha)_{\alpha\in\mathcal{A}}\in
\R_{>0}^{\mathcal{A}}$ let
\[|\lambda|=\sum_{\alpha\in\mathcal{A}}\lambda_\alpha,\quad
I=\left[0,|\lambda|\right)\] and define
\[I_{\alpha}=[l_\alpha,r_\alpha),\text{ where
}l_\alpha=\sum_{\pi_0(\beta)<\pi_0(\alpha)}\lambda_\beta,\;\;\;r_\alpha
=\sum_{\pi_0(\beta)\leq\pi_0(\alpha)}\lambda_\beta.\] Then
$|I_\alpha|=\lambda_\alpha$. Denote by $\Omega_\pi$ the matrix
$[\Omega_{\alpha\,\beta}]_{\alpha,\beta\in\mathcal{A}}$ given by
\[\Omega_{\alpha\,\beta}=
\left\{\begin{array}{cl} +1 & \text{ if
}\pi_1(\alpha)>\pi_1(\beta)\text{ and
}\pi_0(\alpha)<\pi_0(\beta),\\
-1 & \text{ if }\pi_1(\alpha)<\pi_1(\beta)\text{ and
}\pi_0(\alpha)>\pi_0(\beta),\\
0& \text{ in all other cases.}
\end{array}\right.\]
Given $(\pi,\lambda)\in
\mathcal{S}^0_{\mathcal{A}}\times\R_{>0}^\mathcal{A}$ let
$T_{(\pi,\lambda)}:[0,|\lambda|)\rightarrow[0,|\lambda|)$
 stand for  the {\em interval
exchange transformation} (IET) on $d$ intervals $I_\alpha$,
$\alpha\in\mathcal{A}$, which are rearranged according to the
permutation $\pi^{-1}_1\circ\pi_0$, i.e.\
$T_{(\pi,\lambda)}x=x+w_\alpha$ for $x\in I_\alpha$, where
$w=\Omega_\pi\lambda$.

\smallskip
\noindent {\it Keane condition.} Let $End(T)$ stand for the set of end points of the intervals
$I_\alpha:\alpha\in\mathcal{A}$.  A pair ${(\pi,\lambda)}$
satisfies the {\em Keane condition} if $T_{(\pi,\lambda)}^m
l_{\alpha}\neq l_{\beta}$ for all $m\geq 1$ and for all
$\alpha,\beta\in\mathcal{A}$ with $\pi_0(\beta)\neq 1$. Keane \cite{Kea:int} showed that an  IET with an irreducible permutation that satisfy the Keane condition is \emph{minimal}.
\smallskip

\noindent We record here two remarks that will be useful later.

\begin{remark}\label{rk:endpointspartition}
Note that for every $\alpha\in\mathcal{A}$ with $\pi_0(\alpha)\neq
1$ there exists $\beta\in\mathcal{A}$ such that $\pi_0(\beta)\neq
d$ and $l_\alpha=r_\beta$. It follows that
\begin{equation*}
\{l_\alpha:\alpha\in\mathcal{A},\;\pi_0(\alpha)\neq 1\}=
\{r_\alpha:\alpha\in\mathcal{A},\;\pi_0(\alpha)\neq d\}.
\end{equation*}
\end{remark}

\begin{remark}\label{rk:endpoints}
Denote by $\widehat{T}_{(\pi,\lambda)}:(0,|I|]\to(0,|I|]$   the
exchange of the intervals
$\widehat{I}_\alpha:=(l_\alpha,r_\alpha]$, $\alpha\in\mathcal{A}$,
i.e.\ $T_{(\pi,\lambda)}x=x+w_\alpha$ for $x\in
(l_\alpha,r_\alpha]$.
Note that for every $\alpha\in\mathcal{A}$
with $\pi_1(\alpha)\neq 1$ there exists $\beta\in\mathcal{A}$ such
that $\pi_1(\beta)\neq d$ and
$T_{(\pi,\lambda)}l_\alpha=\widehat{T}_{(\pi,\lambda)}r_\beta$.
\end{remark}


\subsubsection{Special flow definition}\label{sec:specialflowdef}
Let $T:I\to I$ be an (ergodic) IET and let $r:I\to\R_{>0}\cup\{+\infty\}$ be an integrable function such that $\underline{r}=\inf_{x\in I}r(x)>0$. The
\emph{special flow} over $T$ under the \emph{roof function} $r$ is the flow $T^r_\R:=(T^r_t)_{t\in\R}$ acting on
\[I^r:=\{(x,s)\in I\times \R:0\leq s<r(x)\},\]
so that $T^r_t(x,s)=(x,s+t-r^{(n)}(x))$, where $r^{(n)}(x)$ denote the Birkhoff sums cocycle\footnote{Here $r^{(n)}(x)$ denotes the additive cocycle defined by $r^{(n)}(x):= \sum_{0\leq k<n}r(T^kx)$ if $n\geq 0$ and $r^{(n)}(x):= -\sum_{n\leq k<0} r(T^k(x))$ if $n<0$.} associated to $r$ and $n$ is the unique integer number with $r^{(n)}(x)\leq s+t<r^{(n+1)}(x)$. It describes the motion of a point in $(x,s)\in I^r\subset I\times \R$ along vertical trajectories, modulo the identification of each point $(x,r(x))$, $x\in I$, with the point $(T x, 0)$.

\subsubsection{Skew product extensions}\label{sec:IETextensions}
Given an IET $T:I\to I$ and a function $\varphi:I\to \R$   the  \emph{extension}  of $T$ by $\varphi$ is the skew-product map $T_\varphi:I\times \R\to I\times \R$ defined as in \eqref{IETskew} by $T_\varphi(x,y)= (T(x), y+ \varphi(x))$. Notice that, for $n\geq 0$, the iterates of $T_\varphi$ have the form
\[
T^n_\varphi(x,y)= (T^n(x), y+ \varphi^{(n)}(x)), \qquad \text{where}\quad  \varphi^{(n)}(x):=
\sum_{k=0}^{n-1} \varphi(T^k(x)).
\]
Remark that the Birkhoff sums $ \varphi^{(n)} (\cdot )$ are a (additive) \emph{cocycle} over $T$ in view of the \emph{cocycle relation} $ \varphi^{(m+n)} (x)= \varphi^{(m)} (T^n x )+ \varphi^{(n)} (x )$.


\subsection{Reduction to special flows and skew-product presentations}\label{sec:reductions}
We recall two classical results that show that locally Hamiltonian flows and their extensions can be reduced respectively to the study of special flows and skew-product extensions over IETs, with roof functions or, respectively, cocycles, with logarithmic singularities.

\subsubsection{Logarithmic singularities}\label{sec:sing}
We say that a   function (or cocycle) $\varphi: I\to \R$
 for an IET $T_{(\pi,
\lambda)}$ has \emph{logarithmic singularities} if there exist
constants $C_\alpha^+,C_\alpha^-\in \mathbb{R}$, $\alpha \in
\mathcal{A}$,  and a function $g_\varphi$ absolutely continuous on the interior of each interval $I_\alpha$, $\alpha\in\mathcal{A}$ (i.e.~with the notation that we will introduce later, a function $g_\varphi \in \ac(\sqcup_{\alpha\in \mathcal{A}} I_{\alpha})$) such that
\begin{equation}\label{def:logsing}
\begin{aligned}
\varphi(x)=&-\sum_{\alpha\in\mathcal{A}}C^+_\alpha\log\big(|I|\{(x-l_\alpha)/|I|\}\big)
- \sum_{\alpha\in\mathcal{A}} C^-_\alpha\log\big(|I|\{(r_\alpha-x)/|I|\}\big)+g_\varphi(x).
\end{aligned}
\end{equation}
We refer to Figure~\ref{logsing} for some examples. We say that the logarithmic singularities are \emph{of geometric
type} if at least one among $ C_{\pi_0^{-1}(d)}^{-}$ and $
C_{\pi_1^{-1}(d)}^{-}$ is zero and  at least one among $
C_{\pi_0^{-1}(1)}^{+}$ or $C_{\pi_1^{-1}(1)}^{+}$ is zero (as shown in the examples in  Figure~\ref{logsing}). We
denote by $\ol(\sqcup_{\alpha\in \mathcal{A}} I_{\alpha})$ the
space of  functions with logarithmic singularities of geometric
type.
We define also the subspace
$\logs(\sqcup_{\alpha\in \mathcal{A}} I_{\alpha})$ $\subset \ol
(\sqcup_{\alpha\in \mathcal{A}} I_{\alpha})$ of functions
satisfying the symmetry condition
\begin{equation}
\label{zerosymweak}
 \sum_{\alpha\in\mathcal{A}} C^-_{\alpha}-
\sum_{\alpha\in\mathcal{A}}C^+_{\alpha}=0.
\end{equation}

 \begin{figure}[h!]
 \subfigure[Roof function $r\in \ol (\sqcup_{\alpha\in \mathcal{A}} I_{\alpha})$ \label{logroof}]{
 \includegraphics[width=0.4\textwidth]{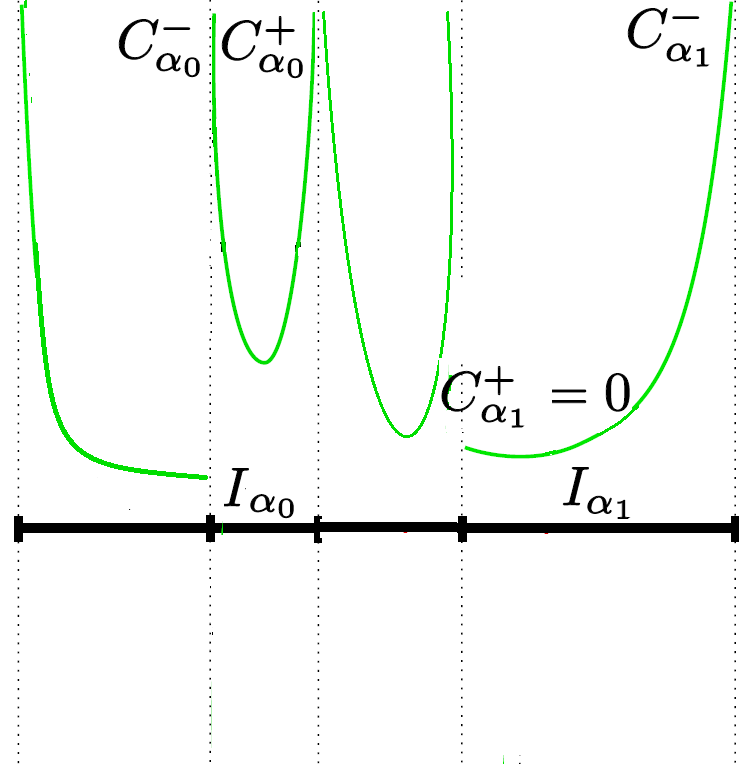} 	} \hspace{6mm}
 \subfigure[Cocycle $\varphi \in \ol(\sqcup_{\alpha\in \mathcal{A}} I_{\alpha})$ \label{logcocycle}]{
  \includegraphics[width=0.4\textwidth]{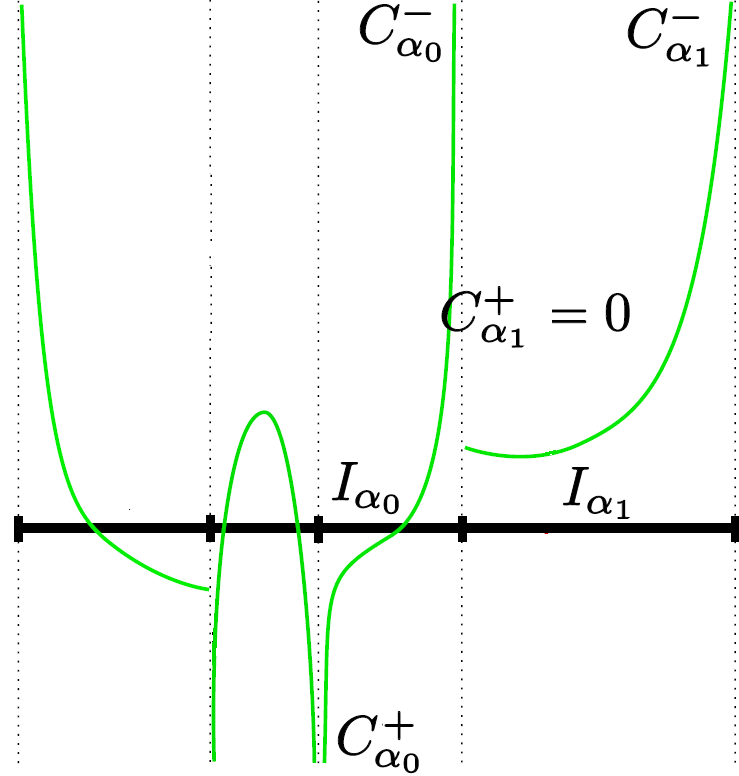} 	} \hspace{6mm} 	
 \caption{Examples  of functions with geometric logarithmic singularities in $\ol
(\sqcup_{\alpha\in \mathcal{A}} I_{\alpha})$.\label{logsing}}
\end{figure}

\subsubsection{Special flow representations of locally Hamiltonian flows}\label{sec:reductionsf}
It is well known that locally Hamiltonian flows can be represented as special flows as follows (see for example \cite{Ul:abs, Rav, Co-Fr, Fr-Ul}).
Consider either a minimal locally Hamiltonian flow $\psi_\R$  on $M$ or the restriction of
 a locally Hamiltonian flow on $M$ to a minimal component $M'\subset M$.  Let $\eta$ be the associated closed $1$-form and assume that $\eta \in\mathcal{A}$, i.e.\ $\eta$ is Morse. Then $\psi_\R$ can be shown to be (measure theoretically) \emph{isomorphic} to a \emph{special flow} $T^r: I^r\to I^r$ over an interval exchange transformation $T:I\to I$ of $d\geq 1$ intervals and under a roof $r\in \ol(\sqcup_{\alpha\in \mathcal{A}} I_{\alpha})$.
The number of exchanged intervals is $d=2g+s-1$ in the case when $\psi_\R$ is minimal and $s$ is the number of simple saddles, or, for a minimal component $M'$,
$d=2g'+s'-1$, where $g'$ is the genus of $M'$ and  $s'$ is the number of saddles in the closure of $M'$.
	Furthermore, if $\eta \in \mathcal{U}_{min}$, the logarithmic singularities are \emph{symmetric}, i.e.\ $\varphi \in \logs(\sqcup_{\alpha\in \mathcal{A}} I_{\alpha})$ (while they are  \emph{asymmetric} for special flows representations of minimal components of typical $\eta \in \mathcal{U}_{\neg min}$).

{
\begin{remark}\label{rk:singularrep}
We recall for contrast that also \emph{translation flows} can be seen as special flows over an interval exchange map, but under a  roof function $r$ which is \emph{piecewise-constant} (and constant on each continuity interval of the IET). One can therefore see from these special representations that minimal (components of) locally Hamiltonian flows are time-changes of translation flows via a \emph{singular} reparametrization.\end{remark}}


\medskip
\subsubsection{Reduction to skew products.}\label{sec:reductionsp}
The study of (ergodic properties of) extensions can be reduced to the study of skew-products over IETs as follows.

\begin{proposition}[Reduction of ergodicity of extensions to skew products]\label{prop:redtoskew}
Consider a Morse closed one-form $\eta \in \mathcal{A}$ on $M$ and let $\psi_\R$ on $M$ be the associated locally Hamiltonian flow. Consider its minimal component $M'\subset M$. For every $C^{2+\epsilon}$-map $f:M'\to\R$ ($\epsilon>0$), the extension $\Phi^f_\R$ of $\psi_\R$ on $M'$ has a Poincar{\'e} map which, in suitable coordinates, is given by
a skew-product of the form
\begin{equation}\label{skform}
(x,y)\mapsto T_{\varphi_f}(x,y):= (Tx, y+\varphi_f(x)), \qquad (x,y)\in I\times \R.
\end{equation}
where $T=T_{(\pi,\lambda)}$ with $\pi$ irreducible  
 and the cocycle  $\varphi_f:I\to\R$
has logarithmic singularities, i.e $\varphi_f\in \ol(\sqcup_{\alpha\in \mathcal{A}} I_{\alpha})$, where $(I_\alpha)_{\alpha\in \mathcal{A}}$ are intervals exchanged by $T$.  

Moreover, the extension $\Phi^f_\R$ on $M'\times \R$ is ergodic with respect to $\mu\times Leb$ if and only if $T_{\varphi_f}: I\times \R\to I\times \R$ is ergodic with respect to the (restriction of) the $2$-dimensional Lebesgue measure on $I\times \R$.
\end{proposition}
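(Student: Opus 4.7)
The plan is to combine the special flow representation of $\psi_\R|_{M'}$ recalled in \S\ref{sec:reductionsf} with a direct computation of the first return map of the extension $\Phi^f_\R$ to a natural transverse section. Fix the measure-theoretic isomorphism identifying $M'$ with $I^r$, where $T=T_{(\pi,\lambda)}$ is an IET on $d=2g'+s'-1$ intervals with irreducible permutation $\pi$ and $r\in\ol(\sqcup_{\alpha\in\mathcal{A}} I_\alpha)$ is the associated roof. Take as Poincar\'e section for $\Phi^f_\R$ the base $I\times\R\subset I^r\times\R\simeq M'\times\R$. Since $\Phi^f_\R$ projects to $\psi_\R$ on the first coordinate and $\psi_\R$ on $I^r$ corresponds to the vertical motion of the special flow, the first return time of $(x,y)\in I\times\R$ to the section is exactly $r(x)$, while the $\R$-coordinate evolves according to \eqref{extension}. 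This immediately yields the Poincar\'e map in the form \eqref{skform} with
\begin{equation*}
\varphi_f(x):=\int_0^{r(x)} f(\psi_s(p_x))\,\ud s,\qquad p_x\in M'\text{ corresponding to }(x,0)\in I^r.
\end{equation*}

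The main analytic step, and the hardest part of the proof, is to verify that $\varphi_f\in\ol(\sqcup_{\alpha\in\mathcal{A}} I_\alpha)$. Each discontinuity $l_\alpha$ or $r_\alpha$ of $T$ corresponds to an orbit of $\psi_\R$ that hits a simple saddle $\sigma\in\mathrm{Fix}(\psi_\R)\cap\ov{M'}$ (forward in time for one endpoint of $I_\alpha$, backward for the other), and $r$ has a logarithmic singularity at that endpoint precisely because orbits slow down along the saddle passage. In Morse coordinates where the local Hamiltonian has the form $H(x_1,x_2)=x_1x_2$, an orbit passing at signed transverse distance $\delta$ from the incoming separatrix contributes $C|\log|\delta||+O(1)$ to the return time, with $C>0$ determined by the local Hamiltonian; since $\eta\in\mathcal{A}$, only such non-degenerate saddle passages occur. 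Using the Taylor expansion $f(\psi_s(\cdot))=f(\sigma)+O(\mathrm{dist}(\psi_s(\cdot),\sigma))$ (valid as $f\in C^{2+\vep}(M')$) and the fact that the error is integrable over a saddle passage, the restriction of the integral defining $\varphi_f$ to such a passage equals $f(\sigma)\cdot C|\log|\delta(x)||+O(1)$, with the $O(1)$ remainder absolutely continuous in $x\in I_\alpha$. Summing the contributions of the (at most two) singular endpoints of each $I_\alpha$ yields the decomposition \eqref{def:logsing} with constants $C_\alpha^\pm$ each a real multiple of some value $f(\sigma)$ at the corresponding saddle; the geometric-type condition is a standard feature of the IET Poincar\'e sections arising from locally Hamiltonian flows, since no saddle separatrix approaches the extreme endpoints $l_{\pi_0^{-1}(1)}=0$ and $r_{\pi_0^{-1}(d)}=|I|$ (and their $\pi_1$-counterparts) from the outside. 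Smoothness of $f$ and of $r$ away from the discontinuities of $T$ places the global remainder $g_{\varphi_f}$ in $\ac(\sqcup_{\alpha\in\mathcal{A}} I_\alpha)$.

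The equivalence of ergodicity is then a standard suspension argument. The infinite-measure space $(M'\times\R,\mu\times\mathrm{Leb})$ is, up to measure-preserving isomorphism, the $r$-suspension of $(I\times\R,\mathrm{Leb}_{I\times\R})$ with base map $T_{\varphi_f}$; the suspension map $A\mapsto \{\Phi^f_t(p_x,y):(x,y)\in A,\ 0\leq t<r(x)\}$ provides a bijection between $T_{\varphi_f}$-invariant sets in $I\times\R$ and $\Phi^f_\R$-invariant sets in $M'\times\R$ which preserves null and conull sets (since $r$ is integrable and bounded below). Hence ergodicity of $\Phi^f_\R$ with respect to $\mu\times\mathrm{Leb}$ is equivalent to ergodicity of $T_{\varphi_f}$ with respect to two-dimensional Lebesgue measure on $I\times\R$, completing the proof. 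The bulk of the real work lies in the local saddle analysis of the second paragraph; the remaining steps are essentially bookkeeping.
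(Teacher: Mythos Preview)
Your proof is correct and follows essentially the same route as the paper's: choose the section $I\times\R$ coming from the special-flow representation, read off the Poincar\'e map as $T_{\varphi_f}$ with $\varphi_f(x)=\int_0^{r(x)}f(\psi_s(x))\,ds$, and conclude the ergodicity equivalence from the suspension (Poincar\'e map/induced measure) principle. The only difference is that you actually sketch the local saddle analysis showing $\varphi_f\in\ol(\sqcup_\alpha I_\alpha)$, whereas the paper simply invokes the explicit formula for $\varphi_f$ and defers the details to \cite{Fr-Ul}; your Morse-chart computation with the Taylor expansion $f=f(\sigma)+O(\mathrm{dist})$ is exactly the argument carried out there.
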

\noindent We give here only a brief sketch of the proof, referring to the proof in \cite{Fr-Ul} for details.
\begin{proof}
Fix a segment $\gamma\subset M'\subset M$ transverse to the flow $\psi_\R$, containing no fixed points and
whose endpoints lie on outgoing separatrices of saddles.
It is well known
(see for example \cite[Section 4.4]{Yo}) that one can choose a  parametrization $t\in I \to \gamma(t)$ of $\gamma$ by the unit interval $I=[0,1)$ so that  the Poincar{\'e}
first return map $T : I\to I$  of the flow $\psi_\R$ to $\gamma$ is an IET, which is minimal by assumption. It follows that $\pi$ is irreducible.

Denote by $r:I\to\R_{>0}$ the first return time map for the flow $(\psi_t)_{t\in\R}$ on $M'$. Then the isomorphism between the restriction of $\psi_\R$ to $M'$ and a special flow $T^r$ on $I^r$
 is given by
\[I^r\ni (x,r)\mapsto \psi_r(x)\in M'.\]
As recalled in the previous \S~\ref{sec:reductionsf}, $r\in {\ol}(\sqcup_{\alpha\in \mathcal{A}}
I_{\alpha})$ and moreover,  if $\psi_\R\in \mathcal{U}_{min}$, i.e.~$M'=M$, then $r\in {\logs}(\sqcup_{\alpha\in \mathcal{A}}
I_{\alpha})$, see e.g.\ \cite{Rav}.

Consider now the extension $\Phi^f_\R$ of $\psi_\R$ on $M'$ given by a bounded function $f:M'\to \R$. The Poincar{\'e} map of $\Phi^f_\R$
on $M'\times \R$ to the section $\gamma \times \R$ in the parametrization by $I\times \R$  is by construction an extension of the Poincar{\'e} map $T$ of $\psi_\R$ to $I$, with return time function $r(x,y)=r(x)$ (i.e.\ the return time only depends on the return to $I$ in the first coordinate, by definition of the section which has full fiber). Moreover, if we  consider
 the cocycle
\begin{equation}\label{Poincarecocyle}
\varphi_f(x):=\int_0^{r(x)}f(\psi_t(x))\,dt \end{equation}
(which gives the value of the ergodic integrals of $f$ along the trajectory from $x$ until the first return time to the section), one can then see that the first return Poincar{\'e} map of the extension $\Phi^f_\R$ has the form \eqref{skform}.  If $f$ is a $C^{2+\epsilon}$-map, from the explicit expression \eqref{Poincarecocyle} and the properties of $r$, one can then show  that also $\varphi_f\in \ol(\sqcup_{\alpha\in \mathcal{A}}I_\alpha)$ (see \cite{Fr-Ul} for details) and  $\varphi_f \in {\logs}(\sqcup_{\alpha\in \mathcal{A}}
I_{\alpha})$ if $\eta \in \mathcal{U}_{min}$.

The  final statement is simply a consequence that ergodicity of a minimal flow is equivalent to ergodicity of its Poincar{\'e} map with respect to the induced measure, together  with the remark that, under the isomorphism described above, the measure induced on the section $\gamma \times \R$ by the invariant measure $\mu\times Leb$  is mapped to the Lebesgue measure on $I\times \R$.
\end{proof}

\noindent
The following result shows that not only ergodicity, but also reducibility of the extension $\Phi^f_\R$  can be reduced to a property of the skew product $T_{\varphi_f}$ given by Proposition~\ref{prop:redtoskew}.

\begin{proposition}[Reduction of reducibility to skew products, \cite{Fr-Ul}]\label{prop:reduc}
For every minimal locally Hamiltonian flow $\psi_\R$ on $M$ with non-degenerate saddles and any $f\in C^{2+\epsilon}(M)$  vanishing on $\mathrm{Fix}(\psi_\R)$,  the associated flow $\Phi^f_\R$ is reducible if and only if the cocycle $\varphi_f:I\to\R$ is a coboundary with a bounded transfer map having at least one continuity point, i.e.\ there exists
a bounded $g:I\to\R$ such that $\varphi_f=g-g\circ T$ and $g$ has at least one continuity point.
\end{proposition}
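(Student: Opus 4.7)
The proof splits into two directions. The forward direction is essentially immediate from the definitions. If $\Phi^f_\R$ is reducible, by the definition recalled in \S~\ref{sec:extensionsdef} there exists a continuous $\tilde g:M\to\R$ satisfying $\int_0^t f(\psi_s x)\,ds=\tilde g(x)-\tilde g(\psi_t x)$ for every regular $x\in M$ and all $t\in\R$. Restricting to $x$ on the transverse cross-section $I\subset M$ built in the proof of Proposition~\ref{prop:redtoskew} and specializing to $t=r(x)$, the definition \eqref{Poincarecocyle} of $\varphi_f$ gives $\varphi_f(x)=\tilde g(x)-\tilde g(Tx)$. Setting $g:=\tilde g|_I$ we obtain the required coboundary representation with a bounded (by compactness of $M$) transfer map, which is in fact continuous everywhere on $I$, hence \emph{a fortiori} has a continuity point.

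For the converse, assume $\varphi_f=g-g\circ T$ with $g:I\to\R$ bounded and having at least one continuity point $x_0\in I$. Since $f$ vanishes on $\mathrm{Fix}(\psi_\R)$, Remark~\ref{rk:nologsing} ensures that $\varphi_f$ has no logarithmic singularities and is piecewise absolutely continuous. The natural candidate for the transfer map on $M$ is
\[
G(\psi_s x):=g(x)-\int_0^s f(\psi_u x)\,du,\qquad x\in I,\ 0\leq s<r(x),
\]
which defines $G$ on the full-measure set swept out by the section until the first return. Consistency at $s=r(x)$ is automatic: $G(Tx)=g(x)-\varphi_f(x)=g(Tx)$, matching the value given by the definition from the side of $Tx$. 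Granting that $G$ extends continuously to all of $M$, the defining identity $\int_0^t f(\psi_s x)\,ds=G(x)-G(\psi_t x)$ holds by construction, and reducibility follows.

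The key step is therefore the continuity of $G$, to be proved via two ingredients. First, one upgrades $g$ from ``continuous at one point'' to ``continuous away from the orbit of the endpoints of $(I_\alpha)_{\alpha\in\mathcal{A}}$'' by iterating the cocycle relation $g(T^n x)=g(x)-\varphi_f^{(n)}(x)$: since $\varphi_f^{(n)}$ is continuous outside the finite set of pre-images under $T^k$, $0\leq k<n$, of the discontinuities of $T$, continuity of $g$ at $x_0$ propagates along the forward and backward orbit of $x_0$, which is dense in $I$ by minimality. Combined with boundedness of $g$, a standard cohomological argument (in the spirit of the corresponding step in \cite{Fr-Ul}, deferred to the appendix) yields continuity of $g$ on each interval $I_\alpha$ with matching one-sided limits at the endpoints. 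Second, one must check continuity of $G$ at the saddle points $\sigma\in\mathrm{Fix}(\psi_\R)$: here the hypothesis $f(\sigma)=0$, together with $f\in C^{2+\epsilon}$, yields a local vanishing estimate for $f$ near $\sigma$ that, combined with the structure of the flow near a Morse saddle, forces $\int_0^s f(\psi_u x)\,du$ to have matching continuous limits along all trajectories approaching $\sigma$.

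The main obstacle lies precisely in this last step: one needs to handle simultaneously the transverse continuity of $g$ across separatrices (which appear as discontinuities of $T$) and the continuity of $G$ at the saddles, and the compatibility between these two pieces of data is exactly what the coboundary equation $\varphi_f=g-g\circ T$ is meant to encode. This is where the assumption that $f$ vanishes on $\mathrm{Fix}(\psi_\R)$ is crucially used, ensuring that no logarithmic obstruction appears either in $\varphi_f$ or in the integrals defining $G$ near the saddles.
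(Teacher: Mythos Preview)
The paper does not give its own proof of this proposition: it simply refers the reader to the proof of Lemma~6.3 in \cite{Fr-Ul} (with a footnote clarifying that the \emph{statement} there was slightly off, but the \emph{proof} is correct and establishes the present formulation). Your sketch follows exactly the strategy carried out in \cite{Fr-Ul}: the forward direction is the obvious restriction to the section, and for the converse one defines the candidate transfer map $G$ on $M$ by flowing from the section and integrating $f$, then argues continuity by (a) propagating the single continuity point of $g$ along the dense orbit via the cocycle relation, and (b) handling neighbourhoods of the saddles using the vanishing of $f$ on $\mathrm{Fix}(\psi_\R)$. So your approach and the paper's (i.e.\ \cite{Fr-Ul}'s) are the same.

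That said, you explicitly leave the heart of the argument incomplete: you write ``Granting that $G$ extends continuously'' and later ``a standard cohomological argument \dots\ deferred to the appendix'', and you yourself identify the compatibility at the separatrices/saddles as the ``main obstacle'' without resolving it. In particular, the step where a \emph{single} continuity point of $g$ is upgraded to continuity on the interior of each $I_\alpha$ with matching one-sided limits is not just density plus boundedness---one needs the specific structure of $\varphi_f$ (piecewise absolutely continuous, since $f$ vanishes at the saddles) together with a Gottschalk--Hedlund type argument on the Cantor extension of the IET, and the uniqueness of bounded solutions up to constants by ergodicity. Your outline is correct, but as written it is a proof \emph{plan} rather than a proof; the actual work is precisely what is done in \cite{Fr-Ul}.
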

\noindent The statement of the Proposition is proved in the proof\footnote{Note that the statement of  Lemma~6.3 in \cite{Fr-Ul} claims incorrectly that reducibility requires the existence of  transfer function continous at \emph{every} point, while a the \emph{existence} of a point of continuity is sufficient. Nevertheless, the \emph{proof} of Lemma~6.3 in \cite{Fr-Ul} is correct and gives a proof of the statement of Proposition~\ref{prop:reduc} here above.}  of Lemma~6.3 in \cite{Fr-Ul}.

\section{Rauzy-Veech induction and Diophantine-type conditions}\label{sec:IETDC}
In this section we define the Diophantine-type condition on IETs which we will use to prove our main results on deviations of ergodic averages and ergodicity of extensions. The condition is described in terms of  Rauzy-Veech induction, an algorithm introduced by Rauzy and Veech in \cite{Ra, Ve:erg} which is now a well established tool to study IETs as well to impose Diophantine conditions on them (see e.g.~\cite{Av-Fo, Av-Vi, Bu,  Ma-Mo-Yo, Ma-Mo-Yo:lin, Ul:abs, Ul:mix, Zor} and many more). We first recall some basic background material concerning Rauzy-Veech induction in \S~\ref{sec:RV}. The condition, that we call \emph{Uniform Diophantine Condition}, or for short \ref{UDC}, is defined in \S~\ref{sec:DC} (see Definition~\ref{def:UDC} in \S~\ref{sec:UDCdefsec}). In \S~\ref{sec:fullmeasure} we also prove that this condition is satisfied by a full measure set of IETs (see Theorem~\ref{thm:UDC}).

\subsection{Rauzy-Veech induction}\label{sec:RV}
We recall here some basic definitions and notation related to Rauzy-Veech induction that  will be used throughout the paper, including
how it acts on Rokhlin towers (\S~\ref{sec:Rokhlintowers}) and on Birkhoff sums (\S~\ref{sec:RVBS}), as well as   the definition of natural extension (\S~\ref{sec:natextension}). We recall also Oseledets theorem (\S~\ref{sec:Oseledets}). 


\subsubsection{Elementary step of RV induction}\label{sec:RVelementary}
 Let $T=T_{(\pi,\lambda)}$,
$(\pi,\lambda)\in\mathcal{S}^0_{\mathcal{A}}\times\R_{>0}^{\mathcal{A}}$
be an IET satisfying Keane's condition. Then
$\lambda_{\pi_0^{-1}(d)}\neq\lambda_{\pi_1^{-1}(d)}$. Let
\[\tilde{I}=\left[0,\max\left({l}_{\pi_0^{-1}(d)},{l}_{\pi_1^{-1}(d)}\right)\right)\]
and denote by $\mathcal{R}(T)=\tilde{T}:\tilde{I}\to\tilde{I}$ the
first return map of $T$ to the interval $\tilde{I}$. Set
\[\vep(\pi,\lambda)=\left\{
\begin{array}{ccl}
0&\text{ if }&\lambda_{\pi_0^{-1}(d)}>\lambda_{\pi_1^{-1}(d)},\\
1&\text{ if }&\lambda_{\pi_0^{-1}(d)}<\lambda_{\pi_1^{-1}(d)}.
\end{array}
\right.\]  Let us consider a pair
$\tilde{\pi}=(\tilde{\pi}_0,\tilde{\pi}_1)\in\mathcal{S}^0_{\mathcal{A}}$,
where
\begin{eqnarray*}\tilde{\pi}_\vep(\alpha)&=&\pi_\vep(\alpha)
\text{ for all }\alpha\in\mathcal{A}\text{ and }\\
\tilde{\pi}_{1-\vep}(\alpha)&=&\left\{
\begin{array}{cll}
\pi_{1-\vep}(\alpha)& \text{ if
}&\pi_{1-\vep}(\alpha)\leq\pi_{1-\vep}\circ\pi^{-1}_\vep(d),\\
\pi_{1-\vep}(\alpha)+1& \text{ if
}&\pi_{1-\vep}\circ\pi^{-1}_\vep(d)<\pi_{1-\vep}(\alpha)<d,\\
\pi_{1-\vep}\pi^{-1}_\vep(d)+1& \text{ if
}&\pi_{1-\vep}(\alpha)=d.\end{array} \right.
\end{eqnarray*}
As it was shown by Rauzy in \cite{Ra}, $\tilde{T}$ is also an IET
on $d$-intervals
\begin{equation}\label{def:Theta}
\tilde{T}=T_{(\tilde{\pi},\tilde{\lambda})}\text{ with }
\tilde{\lambda}=A^{-1}(\pi,\lambda)\lambda,
\end{equation}
where
\[A(T)=A(\pi,\lambda)=I+E_{\pi_{\vep}^{-1}(d)\,\pi_{1-\vep}^{-1}(d)}\in\SL(\Z^{\mathcal{A}}).\]
Moreover,
\begin{equation}\label{eq:omega}
A^t(\pi,\lambda)\Omega_{\pi}A(\pi,\lambda)=\Omega_{\tilde{\pi}}.
\end{equation}
It follows that
$\ker\Omega_{{\pi}}=A(\pi,\lambda)\ker\Omega_{\tilde{\pi}}$.
Thus taking
$H(\pi)=\Omega_{{\pi}}(\R^{\mathcal{A}})=\ker\Omega_{{\pi}}^\perp$
we get
\begin{equation}\label{omega}
H(\tilde{\pi})=A^t(\pi,\lambda)H(\pi).
\end{equation}
Moreover,
$\dim H(\pi)=2g$ and $\dim \ker\Omega_{{\pi}}=\kappa(\pi)-1$, where
$\kappa(\pi)$ is the number of singularities and $g$ is the genus of
the translation surfaces  associated to $\pi$.

\subsubsection{Renormalized induction}\label{sec:RVinduction}

Let $\mathcal{G}\subset \mathcal{S}^0_{\mathcal A}$ be any Rauzy class, i.e.\
a minimal subset of $\mathcal{S}^0_{\mathcal A}$ for which $\mathcal{G}\times\R_{>0}^{\mathcal{A}}$
is $\mathcal{R}$-invariant. Let
\[\Delta^{\mathcal{A}}:=\{\lambda\in\R_{>0}^{\mathcal{A}}:|\lambda|=1\}.\]
Then we can define the normalized Rauzy-Veech renormalization
\[\widetilde{\mathcal{R}}:\mathcal{G}\times\Delta^{\mathcal{A}}\to\mathcal{G}\times\Delta^{\mathcal{A}},\quad \widetilde{\mathcal{R}}(\pi,\lambda)=(\tilde{\pi},\tilde{\lambda}/|\tilde{\lambda}|).\]
Veech in \cite{Ve:erg} proved the existence of an $\widetilde{\mathcal{R}}$-invariant ergodic measure $\mu_{\mathcal{G}}$
($\widetilde{\mathcal{R}}$ is recurrent with respect to $\mu_{\mathcal{G}}$)
which is equivalent to the product of the counting measure on $\mathcal{G}$ and the Lebesgue measure on $\Delta^\mathcal{A}$.

\smallskip

For every $T$ satisfying the Keane condition, the IET $\tilde{T}$ fulfills the Keane condition as
well. Therefore we can iterate the renormalization procedure and
generate a sequence of IETs $(\mathcal{R}^n(T))_{n\geq 0}$.
For every $n\geq 1$ let
\[A^{(n)}(T)=
A(T)\cdot A(\mathcal{R}(T))\cdot\ldots\cdot A(\mathcal{R}^{n-1}(T)).\]

In what follows, the norm of a vector is defined as the sum of
the absolute value of coefficients and for any matrix
$B=[B_{\alpha\beta}]_{\alpha,\beta\in\mathcal{A}}$ we set
$\|B\|=\max_{\alpha\in\mathcal{A}}\sum_{\beta\in\mathcal{A}}|B_{\alpha\beta}|$.

\smallskip
\subsubsection{Accelerations.} \label{sec:accelerations}
Let $T:I\to I$ be an arbitrary  IET satisfying Keane's condition. Let $(n_k)_{k\geq 0}$ be an increasing sequence of integer numbers
with $n_0=0$, called an \emph{accelerating sequence}. For every $k\geq 0$ let $T^{(k)}:=\mathcal{R}^{n_k}(T):I^{(k)}\to I^{(k)}$. Denote by $(\pi^{(k)},\lambda^{(k)})$ the pair defining
$T^{(k)}$ and  by
$\lambda^{(k)}=(\lambda^{(k)}_\alpha)_{\alpha\in\mathcal{A}}=(|I^{(k)}_\alpha|)_{\alpha\in\mathcal{A}}$ the
vector which determines $T^{(k)}$.

\smallskip
\noindent In view of \eqref{def:Theta}, letting $Z(k+1):=A^{(n_{k+1}-n_k)}(\mathcal{R}^{n_k}(T))^t$ for $k\geq 0$ we have
\[\lambda^{(k)}=Z(k+1)^t \lambda^{(k+1)}\text{ for all }k\geq 0.\]
We use the notation from \cite{Ma-Mo-Yo}, but adopt the convention later introduced in \cite{Ma-Yo}.
For each $0\leq k<l$ let
\[Q(k,l)=Z(l)\cdot Z(l-1)\cdot\ldots\cdot Z(k+2)\cdot Z(k+1)=A^{(n_{l}-n_k)}(\mathcal{R}^{n_k}(T))^t.\]
Then $Q(k,l)\in SL_\mathcal{A}(\Z)$ and
\[\lambda^{(k)}=Q(k,l)^t \lambda^{(l)}.\]
It follows that
\begin{equation}\label{neq:dli}
|I^{(k)}|\leq |I^{(l)}|\|Q(k,l)\|.
\end{equation}
We will write $Q(k)$ for $Q(0,k)$.

\smallskip
\noindent We say that $Z(k)$, $k\in\N$ (resp.~$Q(k,l)$) are the \emph{matrices} (resp.~the \emph{product matrices}) of the acceleration of $A$ along the (accelerating) sequence $(n_k)_{k\in\N}$

\subsubsection{Rokhlin towers}\label{sec:Rokhlintowers}
By definition,
$T^{(l)}:I^{(l)}\to I^{(l)}$ is the first return map of
$T^{(k)}:I^{(k)}\to I^{(k)}$ to the interval $I^{(l)}\subset
I^{(k)}$. Moreover, $Q_{\alpha\beta}(k,l)$ is the time spent by
any point of $I^{(l)}_{\alpha}$ in $I^{(k)}_{\beta}$ until it
returns to $I^{(l)}$.  It follows that
\[Q_{\alpha}(k,l)=\sum_{\beta\in\mathcal{A}}Q_{\alpha\beta}(k,l)\]
is the first return time of points of $I^{(l)}_{\alpha}$ to
$I^{(l)}$.

\smallskip
\noindent The map $T^{(k)}:I^{(k)}\to I^{(k)}$ can be then represented as a \emph{Rokhlin skyscraper} as follows. For every $\alpha\in \mathcal{A}$, we say that the set
$$
\{ (T^{(k)})^i (I^{(l)}_{\alpha}), \qquad 0\leq i < Q_{\alpha}(k,l)\}
$$
is called a \emph{Rokhlin tower}. Notice that the $Q_{\alpha}(k,l)$ sets part of it are \emph{disjoint} intervals called \emph{floors} of the tower and that, for $ 0\leq i < Q_{\alpha}(k,l)$, $T^{(k)}$ acts on the $i^{th}$ floor $(T^{(k)})^i (I^{(l)}_{\alpha})$ mapping it to the $(i+1)^{th}$ one. The union of all Rokhlin towers over $\alpha\in\mathcal{A}$ gives $I^{(k)}$.

\smallskip
\subsubsection{Special Birkhoff sums.}\label{sec:RVBS}
We deal with the \emph{special Birkhoff sums} operators $S(k,l):  L^1(I^{(k)})\to L^1(I^{(l)})$ for $0\leq k<l$  defined by
\[S(k,l)f(x)=\sum_{0\leq j<Q_\alpha(k,l)}f((T^{(k)})^jx)\quad\text{ if }\quad x\in I^{(l)}_\alpha.\]

Let $T=T^{(0)}$  be an IET satisfying Keane's condition. For every $k\geq 0$
let $\Gamma^{(k)}\subset L^1(I^{(k)})$ be the subspace of  functions on
$I^{(k)}$ which are constant on each $I^{(k)}_{\alpha}$,
$\alpha\in\mathcal{A}$.  Then for $0\leq k<l$ we have
$S(k,l)\Gamma^{(k)}=\Gamma^{(l)}$.
Let us identify every function $\sum_{\alpha\in \mathcal{A}}
h_{\alpha}\chi_{I^{(k)}_{\alpha}}\in \Gamma^{(k)}$ with the
vector $h=(h_{\alpha})_{\alpha\in
\mathcal{A}}\in\R^{\mathcal{A}}$. Clearly $\Gamma^{(k)}$ is
isomorphic to $\mathbb{R}^{\mathcal{A}}$.
Under the identification,
 the
operator $S(k,l)$ is the linear automorphism of
$\R^{\mathcal{A}}$ whose matrix in the canonical basis is
$Q(k,l)$.
In view of \eqref{omega} for $0\leq k<l$ we have
\[Q(k,l)H(\pi^{(k)})= H(\pi^{(l)}).\]

For every $k\geq 0$ let
\begin{align*}
\Gamma^{(k)}_s:=\{h\in\Gamma^{(k)}:\exists_{\sigma>0}\exists_{C>0}\forall_{l>k}\, \|Q(k,l)h\|\leq C\|Q(k,l)\|^{-\sigma}\}.
\end{align*}
The space $\Gamma^{(k)}_s$ is a subspace of $H(\pi^{(k)})$ and for every $l>k$ we have
\[Q(k,l)\Gamma^{(k)}_{s}=\Gamma^{(l)}_s.\]
Therefore, the restriction operator and the quotient operators of $Q(k, l)$
\begin{gather*}
Q_s(k,l):\Gamma^{(k)}_s\to\Gamma^{(l)}_s,\quad
Q_\flat(k,l):\Gamma^{(k)}/\Gamma^{(k)}_s\to\Gamma^{(l)}/\Gamma^{(l)}_s,\quad
Q_\sharp(k,l):H(\pi^{(k)})/\Gamma^{(k)}_s\to H(\pi^{(l)})/\Gamma^{(l)}_s
\end{gather*}
are well defined and are invertible.
Arguments presented in Section 3.2 in \cite{Ma-Yo} shows that if $\dim\Gamma^{(0)}_s=g$ then
\begin{equation}\label{eq:unstst}
\|Q_\sharp(k,l)^{-1}\|=\|Q_s(k,l)\|.
\end{equation}

\subsubsection{The natural extension}\label{sec:natextension}
Rauzy-Veech induction is not intertible, but it can be extended to an invertible induction on the space of \emph{zippered rectangles} (as described in the seminar paper by Veech \cite{Ve:erg}).
We recall briefly the construction. We refer the reader who needs more background to the lecture notes by Yoccoz \cite{Yo} or Viana \cite{Vi0}.

\smallskip
\noindent For every $\pi\in S^{\mathcal A}_0$ let
\[\Theta_\pi:=\Big\{\tau\in \R^{\mathcal A}:\sum_{\pi_0(\alpha)\leq k}\tau_\alpha>0,\; \sum_{\pi_1(\alpha)\leq k}\tau_\alpha<0\text{ for }1\leq k<d\Big\}.\]
For every $\tau\in \Theta_\pi$ let $h=h(\tau)=\Omega\tau\in\R^{\mathcal{A}}_{>0}$.
For every Rauzy class $\mathcal{G}\subset S^{\mathcal A}_0$ let
\begin{equation}\label{XGdef}
X(\mathcal{G})=\bigcup_{\pi\in\mathcal{G}}\{(\pi,\lambda,\tau)\in\{\pi\}\times\Delta^\mathcal{A}\times\Theta_\pi:\langle\lambda,\Omega_\pi\tau\rangle=1\}.\end{equation}
For every $(\pi,\lambda,\tau)\in X(\mathcal{G})$ denote by $M(\pi,\lambda,\tau)$ the translation surface arising in the  zippered rectangles process.
Then $M(\pi,\lambda,\tau)$ is zippered from the rectangles $I_\alpha\times[0,h_\alpha]$, $\alpha\in\mathcal{A}$ such that
the points $\sum_{\pi_0(\alpha)\leq k}(\lambda_\alpha+i\tau_\alpha)$, $0\leq k\leq d$ are its singular points. Moreover, the IET $T$ is the first return map to $I\subset M(\pi,\lambda,\tau)$ for the vertical flow on $M(\pi,\lambda,\tau)$.

The map $\widehat{\mathcal{R}}:X(\mathcal{G})\to X(\mathcal{G})$ given by
\[\widehat{\mathcal{R}}(\pi,\lambda,\tau)=\Big(\tilde{\pi},\frac{A^{-1}(\pi,\lambda)\lambda}{|A^{-1}(\pi,\lambda)\lambda|}, |A^{-1}(\pi,\lambda)\lambda|A^{-1}(\pi,\lambda)\tau\Big)\]
is an invertible map and  is the natural extension of $\widetilde{\mathcal{R}}$. Denote by $\widehat{\mu}_{\mathcal{G}}$ the natural extension
of the measure ${\mu}_{\mathcal{G}}$. Then $\widehat{\mu}_{\mathcal{G}}$ is $\widehat{\mathcal{R}}$-invariant and $\widehat{\mathcal{R}}$ is recurrent and ergodic with respect to $\widehat{\mu}_{\mathcal{G}}$.

\subsubsection{Oseledets splitting}\label{sec:Oseledets}
Let us extend the cocycle $A:\mathcal{G}\times\Lambda^\mathcal{A}\to SL_\mathcal{A}(\Z)$ to $\widehat{A}:X(\mathcal{G})\to SL_\mathcal{A}(\Z)$ by
\[\widehat{A}(\pi,\lambda,\tau):=A(\lambda,\tau)\]
and let us consider the cocycle $\widehat{A}:\Z\times X(\mathcal{G})\to SL_\mathcal{A}(\Z)$
\[\widehat{A}^{(n)}(\pi,\lambda,\tau)=\left\{
\begin{array}{cl}
\widehat{A}(\pi,\lambda,\tau)\cdot \widehat{A}(\widehat{\mathcal{R}}(\pi,\lambda,\tau))\cdot\ldots\cdot \widehat{A}(\widehat{\mathcal{R}}^{n-1}(\pi,\lambda,\tau))\!&\!\text{if } n\geq 0\\
\widehat{A}(\widehat{\mathcal{R}}^{-1}(\pi,\lambda,\tau))\cdot \widehat{A}(\widehat{\mathcal{R}}^{-2}(\pi,\lambda,\tau))\cdot\ldots\cdot \widehat{A}(\widehat{\mathcal{R}}^{n}(\pi,\lambda,\tau))\!&\!\text{if } n<0.
\end{array}
\right.\]
Then
\begin{equation}\label{eq:An>0}
\widehat{A}^{(n)}(\pi,\lambda,\tau)={A}^{(n)}(\pi,\lambda)\text{ if }n\geq 0.
\end{equation}
\noindent Let $Y\subset X(\mathcal{G})$ be a subset with $0<\widehat{\mu}_{\mathcal{G}}(Y)<+\infty$. For a.e.\ $(\pi,\lambda,\tau)\in Y$
let $r(\pi,\lambda,\tau)\geq 1$ by the first return time of $(\pi,\lambda,\tau)$ for the map $\widehat{\mathcal{R}}$.
 Denote by $\widehat{\mathcal{R}}_Y:Y\to Y$
the induced map and by $\widehat{A}_Y: Y\to SL_\mathcal{A}(\Z)$ the induced cocycle, i.e.\
\[\widehat{\mathcal{R}}_Y(\pi,\lambda,\tau)=\widehat{\mathcal{R}}^{r(\pi,\lambda,\tau)}(\pi,\lambda,\tau),\quad
\widehat{A}_Y(\pi,\lambda,\tau)=\widehat{A}^{(r(\pi,\lambda,\tau))}(\pi,\lambda,\tau)\]
for a.e.\ $(\pi,\lambda,\tau)\in Y$. Let $\widehat{\mu}_Y$ be the restriction of $\widehat{\mu}_{\mathcal{G}}$ to $Y$.
Then $\widehat{\mathcal{R}}_Y$ is an ergodic measure-preserving invertible map on $(Y,\widehat{\mu}_Y)$.

\smallskip
Suppose that $\log\|\widehat{A}_Y\|$ and $\log\|\widehat{A}^{-1}_Y\|$ are integrable. Then, by Oseledets theorem, symplecticity of $\widehat{A}_Y$ (see \cite{Zor})
and simplicity of spectrum (see \cite{Av-Vi}), there exists $\lambda_1>\ldots>\lambda_g>0$ such that
for a.e.\ $(\pi,\lambda,\tau)\in Y$ we have a \emph{Oseledets splitting}
\[\R^{\mathcal A}=\bigoplus_{-g\leq i\leq g}\Gamma_i(\pi,\lambda,\tau)\]
for which
\begin{align*}
\lim_{n\to\pm\infty}\frac{1}{n}\log\|\widehat{A}_Y^{(n)}(\pi,\lambda,\tau)^tv\|=\lambda_i&\text{ if }v\in \Gamma_i(\pi,\lambda,\tau)\text{ and }i>0\\
\lim_{n\to\pm\infty}\frac{1}{n}\log\|\widehat{A}_Y^{(n)}(\pi,\lambda,\tau)^tv\|=-\lambda_i&\text{ if }v\in \Gamma_i(\pi,\lambda,\tau)\text{ and }i<0\\
\lim_{n\to\pm\infty}\frac{1}{n}\log\|\widehat{A}_Y^{(n)}(\pi,\lambda,\tau)^tv\|=0&\text{ if }v\in \Gamma_0(\pi,\lambda,\tau),
\end{align*}
\[\dim \Gamma_i(\pi,\lambda,\tau)=1\text{ if }i\neq 0, \quad \dim \Gamma_0(\pi,\lambda,\tau)=\kappa-1 .\]
Furthermore, we have that
\[H(\pi)=\bigoplus_{i\neq 0}\Gamma_i(\pi,\lambda,\tau).\]
We denote by $\Gamma_s(\pi,\lambda,\tau)$ and $\Gamma_u(\pi,\lambda,\tau)$ the \emph{stable} and \emph{unstable} spaces, which are given respectively by
\begin{equation}\label{stabledef}
\Gamma_s(\pi,\lambda,\tau):=\bigoplus_{-g\leq i\leq -1}\Gamma_i(\pi,\lambda,\tau)\text{ and }
\Gamma_u(\pi,\lambda,\tau):=\bigoplus_{1\leq i\leq g}\Gamma_i(\pi,\lambda,\tau).
\end{equation}
Notice that both $\Gamma_s(\pi,\lambda,\tau)$ and $\Gamma_u(\pi,\lambda,\tau)$ have exactly dimension $g$. We say in this case that the Oseledets splitting is \emph{of hyperbolic type}.

\subsubsection{Veech bases for the kernel $\ker \Omega_{\pi}$}\label{sec:kernel}
In \cite{Ve:erg,Ve2}, Veech explicitly defines a bases for  $\ker \Omega_{\pi}$  for every $\pi$ in a given Rauzy class. We recall the construction (which uses the classical notation for the permutation describing the IETs, also called \emph{monodromy}, namely the permutation $\pi_1\circ\pi^{-1}_0$). Let us first define the \emph{extended permutation}  $p:\{0,1,\ldots,d,d+1\}\to\{0,1,\ldots,d,d+1\}$ to be the
permutation
\[p(j)=\left\{
\begin{matrix}
\pi_1\circ\pi^{-1}_0(j)&\text{ if }&1\leq j\leq d\\
j&\text{ if }&j=0,d+1.
\end{matrix}
\right.
\]
Following Veech (see \cite{Ve:erg,Ve2}), denote by $\sigma=\sigma_\pi$ the
corresponding permutation on $\{0,1,\ldots,d\}$,
\[\sigma(j)=p^{-1}(p(j)+1)-1\text{ for }0\leq j\leq d.\]
Notice that (recalling Remark~\ref{rk:endpoints} and the definition just before of $\widehat{T}$), we have
$\widehat{T}_{(\pi,\lambda)}r_{\pi_0^{-1}(j)}={T}_{(\pi,\lambda)}r_{\pi_0^{-1}(\sigma
j)}$ for all $j\neq 0,p^{-1}(d)$.

\smallskip
\noindent Denote by $\Sigma(\pi)$ the set
of orbits for the permutation $\sigma$. Let $\Sigma_0(\pi)$ stand
for the subset of orbits that do not contain zero. Then
$\Sigma(\pi)$ corresponds to the set of singular points of any
translation surface associated to $\pi$ and hence
$\#\Sigma(\pi)=\kappa(\pi)$.

\smallskip
\noindent For every $\mathcal{O}\in\Sigma(\pi)$ denote by
$b(\mathcal{O})\in\R^{\mathcal{A}}$ the vector given by
\begin{equation} \label{bdef}
b(\mathcal{O})_{\alpha}=\chi_{\mathcal{O}}(\pi_0(\alpha))-\chi_{\mathcal{O}}(\pi_0(\alpha)-1)
\text{ for }\alpha\in\mathcal{A},
\end{equation}
where $\chi_{\mathcal{O}} (j)=1$ iff $j \in \mathcal{O}$ and $0$ otherwise.
Moreover, for every $\mathcal{O}\in \Sigma(\pi)$, we denote by
\begin{equation} \label{defAO}
\mathcal{A}^-_{\mathcal{O}}=\{
\alpha\in\mathcal{A}, \ \pi_0(\alpha)\in \mathcal{O}\}, \qquad
\mathcal{A}^+_{\mathcal{O}}=\{ \alpha\in\mathcal{A}, \
\pi_0(\alpha)-1\in \mathcal{O}\} .
\end{equation}
If $\alpha\in \mathcal{A}_{\mathcal{O}}^+$ (respectively $\alpha\in
\mathcal{A}_{\mathcal{O}}^-$) then the left (respectively right)
endpoint of $I_\alpha$ belongs to a separatrix of the saddle
represented by $\mathcal{O}$.
\begin{lemma}[see \cite{Ve2}]\label{bcharh}
For every irreducible pair $\pi$ we have:
\begin{enumerate}[label=\textrm{(\roman*)}]
\item  $\sum_{\mathcal{O}\in\Sigma(\pi)}b(\mathcal{O})=0$;
\item the vectors
$b(\mathcal{O})$, $\mathcal{O}\in\Sigma_0(\pi)$ are linearly
independent;
\item  the linear subspace generated by
$\{ b(\mathcal{O})$, $\mathcal{O}\in\Sigma_0(\pi)\}$  is equal to
$\ker\Omega_\pi$.
\end{enumerate}
Moreover, $h\in H(\pi)$ if and only if $\langle
h,b(\mathcal{O)}\rangle=0$ for every $\mathcal{O}\in\Sigma(\pi)$.
\end{lemma}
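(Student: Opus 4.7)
The plan is to first establish the linear dependence (i) by a direct combinatorial check, then prove the key inclusion $b(\mathcal{O})\in\ker\Omega_\pi$, and finally use a dimension count plus a rank argument to deduce (ii) and (iii). The final characterization of $H(\pi)$ will then follow from the definition of $H(\pi)$ as $\ker\Omega_\pi^\perp$.

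\emph{Step 1 (Proof of (i)).} Since every $j\in\{0,1,\ldots,d\}$ lies in exactly one $\sigma$-orbit, one has $\sum_{\mathcal{O}\in\Sigma(\pi)}\chi_{\mathcal{O}}(j)=1$ for every $j$. Evaluating the sum coordinate-wise, $\bigl(\sum_{\mathcal{O}}b(\mathcal{O})\bigr)_\alpha=1-1=0$, which gives (i).

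\emph{Step 2 (Key step: $b(\mathcal{O})\in\ker\Omega_\pi$).} Fix $\alpha\in\mathcal{A}$ and use the interpretation $(\Omega_\pi v)_\alpha=\sum_{\pi_1(\beta)<\pi_1(\alpha)}v_\beta-\sum_{\pi_0(\beta)<\pi_0(\alpha)}v_\beta$. Substitute $v=b(\mathcal{O})$ and reindex each of these two sums by $j=\pi_0(\beta)$ or $j=\pi_1(\beta)$; since $b(\mathcal{O})_\beta$ is the consecutive difference $\chi_{\mathcal{O}}(\pi_0(\beta))-\chi_{\mathcal{O}}(\pi_0(\beta)-1)$, Abel summation collapses each block into $\chi_{\mathcal{O}}$ evaluated at the two endpoints of the corresponding block of indices in the orderings $\pi_0$ and $\pi_1$. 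The resulting expression for $(\Omega_\pi b(\mathcal{O}))_\alpha$ reduces to a sum of terms of the form $\chi_{\mathcal{O}}(k)-\chi_{\mathcal{O}}(\sigma^{-1}(k))$ indexed by the endpoints of $I_\alpha$ under $T$ and $\widehat{T}$, using precisely the identity $\widehat{T}_{(\pi,\lambda)}r_{\pi_0^{-1}(j)}=T_{(\pi,\lambda)}r_{\pi_0^{-1}(\sigma j)}$ recalled before the lemma. Since $k$ and $\sigma^{-1}(k)$ always lie in the same $\sigma$-orbit, each such term vanishes, so $\Omega_\pi b(\mathcal{O})=0$. This is the main technical obstacle: the combinatorics of $\sigma$ has been designed exactly so that this telescoping identifies points on the same outgoing separatrix of a singularity, and $b(\mathcal{O})$ represents the local cycle at that singularity in the (anti)symplectic form $\Omega_\pi$.

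\emph{Step 3 (Dimension count: (iii) and (ii)).} By Step 2 the span $V:=\mathrm{span}\{b(\mathcal{O}):\mathcal{O}\in\Sigma(\pi)\}$ is contained in $\ker\Omega_\pi$, and by (i) it coincides with $V_0:=\mathrm{span}\{b(\mathcal{O}):\mathcal{O}\in\Sigma_0(\pi)\}$. Now $\dim\ker\Omega_\pi=\kappa(\pi)-1=\#\Sigma_0(\pi)$, so (iii) follows once we show $\dim V_0=\#\Sigma_0(\pi)$, which is (ii). For linear independence, order the orbits in $\Sigma_0(\pi)$ by the smallest index $j_{\mathcal{O}}:=\min\mathcal{O}\geq 1$ they contain, and for each $\mathcal{O}\in\Sigma_0(\pi)$ consider the coordinate $\alpha_{\mathcal{O}}:=\pi_0^{-1}(j_{\mathcal{O}})$; then $b(\mathcal{O}')_{\alpha_{\mathcal{O}}}=\chi_{\mathcal{O}'}(j_{\mathcal{O}})-\chi_{\mathcal{O}'}(j_{\mathcal{O}}-1)$ equals $1$ for $\mathcal{O}'=\mathcal{O}$ and vanishes for $\mathcal{O}'\ne \mathcal{O}$ with $j_{\mathcal{O}'}>j_{\mathcal{O}}$, yielding a triangular system that forces any nontrivial relation to be zero.

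\emph{Step 4 (Final statement about $H(\pi)$).} Because $\Omega_\pi$ is antisymmetric, $H(\pi)=\Omega_\pi(\R^{\mathcal{A}})=(\ker\Omega_\pi)^\perp$. By (iii), $h\in H(\pi)$ is thus equivalent to $\langle h,b(\mathcal{O})\rangle=0$ for every $\mathcal{O}\in\Sigma_0(\pi)$; adjoining the orbit containing $0$ is harmless by (i), so this is equivalent to the stated condition for every $\mathcal{O}\in\Sigma(\pi)$. The only nontrivial ingredient is Step 2, which is purely combinatorial but delicate; all remaining steps are elementary linear algebra given the known identity $\dim\ker\Omega_\pi=\kappa(\pi)-1$.
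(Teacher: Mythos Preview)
The paper does not give its own proof of this lemma; it is simply cited from Veech~\cite{Ve2}. Your argument is correct and follows the standard route. A few comments:

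Your Step~2 is the heart of the matter and is stated somewhat elliptically. To make it precise: writing $(\Omega_\pi v)_\alpha=\sum_{\pi_1(\beta)<\pi_1(\alpha)}v_\beta-\sum_{\pi_0(\beta)<\pi_0(\alpha)}v_\beta$ and substituting $v_\beta=\chi_{\mathcal O}(\pi_0(\beta))-\chi_{\mathcal O}(\pi_0(\beta)-1)$, the second sum telescopes directly to $\chi_{\mathcal O}(\pi_0(\alpha)-1)-\chi_{\mathcal O}(0)$. For the first sum, reindex by $k=\pi_1(\beta)$ and use the identity $p^{-1}(k)-1=\sigma(p^{-1}(k-1))$ (equivalent to the definition of $\sigma$) together with the $\sigma$-invariance $\chi_{\mathcal O}\circ\sigma=\chi_{\mathcal O}$ to rewrite each summand as $\chi_{\mathcal O}(p^{-1}(k))-\chi_{\mathcal O}(p^{-1}(k-1))$; this now telescopes to the same value, so the two sums cancel. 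This is exactly the mechanism you describe, just made explicit.

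Your Step~3 relies on the dimension formula $\dim\ker\Omega_\pi=\kappa(\pi)-1$, which the paper records just before the lemma as a known fact; so this is legitimate input. The triangular argument for independence is clean and correct. Step~4 is routine once (iii) is in hand.
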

\noindent Veech also describes how these bases change under Rauzy-Veech induction:
\begin{lemma}[see Veech, \cite{Ve2}]\label{invario}
Suppose that
$T_{(\tilde{\pi},\tilde{\lambda})}=\mathcal{R}(T_{(\pi,\lambda)})$.
Then there exists a bijection
$\xi:\Sigma(\pi)\to\Sigma(\tilde{\pi})$ such that
$$A(\pi,\lambda)^{-1}b(\mathcal{O})=b(\xi\mathcal{O}), \qquad \text{for\ all}\ \ \mathcal{O}\in\Sigma(\pi).$$
\end{lemma}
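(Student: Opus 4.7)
The plan is to prove Lemma~\ref{invario} by a direct combinatorial verification, splitting into the two types of Rauzy--Veech moves and computing both sides of the identity $A(\pi,\lambda)^{-1}b(\mathcal{O})=b(\xi\mathcal{O})$ explicitly. The geometric intuition guiding the bookkeeping is that one elementary step of $\mathcal{R}$ does not change the underlying translation surface, only its presentation as a zippered rectangle, so the set of conical singularities is unchanged and the orbits of $\sigma_\pi$ and of $\sigma_{\tilde\pi}$ must be in natural bijection; the lemma asserts that this bijection $\xi$ is precisely the one that transports the $b(\mathcal{O})$ via $A^{-1}$.

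First, I would set up notation for the two types of moves, following \eqref{def:Theta}. Let $\alpha_w=\pi_\varepsilon^{-1}(d)$ (the ``winner'') and $\alpha_l=\pi_{1-\varepsilon}^{-1}(d)$ (the ``loser''), so that $A=I+E_{\alpha_w\alpha_l}$ and $A^{-1}=I-E_{\alpha_w\alpha_l}$. In the case $\varepsilon=0$ we have $\tilde\pi_0=\pi_0$, while $\tilde\pi_1$ is obtained from $\pi_1$ by moving the letter $\alpha_l$ from the last position to the position just after $\alpha_w$; the case $\varepsilon=1$ is completely symmetric with the roles of $\pi_0$ and $\pi_1$ exchanged, so I would carry out the computation in detail for $\varepsilon=0$ and deduce the other case by the obvious symmetry. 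The first step is then to compute $\sigma_{\tilde\pi}$ in terms of $\sigma_\pi$ using the definition $\sigma(j)=p^{-1}(p(j)+1)-1$; a short unwinding shows that in case $\varepsilon=0$ one has $\sigma_{\tilde\pi}(j)=\sigma_\pi(j)$ for every $j\notin\{\pi_0(\alpha_w)-1,\,\pi_0(\alpha_w),\,\pi_0(\alpha_l)-1\}$, and the only change is that the three-term configuration of $\sigma_\pi$ at these indices gets rerouted by a single ``surgery''. From this description the bijection $\xi:\Sigma(\pi)\to\Sigma(\tilde\pi)$ is forced: if the three affected indices all lie in the same $\sigma_\pi$-orbit, then $\xi$ leaves every orbit invariant; if they split between two orbits, then $\xi$ merges/separates them accordingly, and a case check confirms that the number of orbits is unchanged (consistently with the fact that $\kappa(\pi)=\kappa(\tilde\pi)$).

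Second, I would verify the identity $A^{-1}b(\mathcal{O})=b(\xi\mathcal{O})$ coordinate by coordinate. Since $\tilde\pi_0=\pi_0$ in case $\varepsilon=0$, the formula \eqref{bdef} applied to $\tilde\pi$ yields
\[
b(\xi\mathcal{O})_\alpha=\chi_{\xi\mathcal{O}}(\pi_0(\alpha))-\chi_{\xi\mathcal{O}}(\pi_0(\alpha)-1),
\]
whereas $A^{-1}=I-E_{\alpha_w\alpha_l}$ gives
\[
(A^{-1}b(\mathcal{O}))_\alpha=b(\mathcal{O})_\alpha-\delta_{\alpha,\alpha_w}\,b(\mathcal{O})_{\alpha_l}.
\]
For $\alpha\neq\alpha_w$, both sides must agree; this follows from the case analysis of the previous step, which shows that $\xi\mathcal{O}$ and $\mathcal{O}$ contain the same elements of $\{0,1,\dots,d\}$ except possibly at the indices $\pi_0(\alpha_w)$, $\pi_0(\alpha_w)-1$, $\pi_0(\alpha_l)-1$ (and one checks that these cancel in the telescoping difference $\chi_{\xi\mathcal{O}}(\pi_0(\alpha))-\chi_{\xi\mathcal{O}}(\pi_0(\alpha)-1)$ whenever $\alpha\neq\alpha_w$). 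For $\alpha=\alpha_w$, the identity reduces to an explicit statement about indicator functions at the three affected indices, and a direct enumeration of the subcases (depending on whether each of the three indices belongs to $\mathcal{O}$) confirms the equality.

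The main obstacle is purely bookkeeping: one must keep straight the distinction between $\pi_0(\alpha)$ (a position) and $\alpha$ (a letter), and correctly identify which orbits of $\sigma_\pi$ get modified by the surgery. A useful sanity check along the way is that $A^{-1}$ preserves the relation $\sum_{\mathcal{O}\in\Sigma(\pi)}b(\mathcal{O})=0$ of Lemma~\ref{bcharh}(i), since $A^{-1}$ acts on $\R^{\mathcal A}$ by an elementary operation; together with Lemma~\ref{bcharh}(iii) and \eqref{eq:omega}, which force $A^{-1}\ker\Omega_\pi=\ker\Omega_{\tilde\pi}$, this provides an a priori reason why $A^{-1}$ must permute the extreme points of a distinguished basis of the kernel, and the combinatorial check above simply identifies which permutation this is.
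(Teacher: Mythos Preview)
The paper does not give its own proof of this lemma; it is stated with the attribution ``see Veech, \cite{Ve2}'' and used as a black box. So there is no ``paper's proof'' to compare against beyond the original reference.

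Your direct combinatorial verification is the standard route and is sound in outline: compute how $\sigma_\pi$ changes under one Rauzy move, identify the resulting bijection $\xi$ on orbits, and then check the vector identity coordinatewise using $A^{-1}=I-E_{\alpha_w\alpha_l}$. One point deserves more care, however. You claim that the case $\varepsilon=1$ follows from $\varepsilon=0$ ``by the obvious symmetry'' exchanging $\pi_0$ and $\pi_1$. This is not immediate, because the definition \eqref{bdef} of $b(\mathcal{O})$ is written in terms of $\pi_0$ alone, and in the case $\varepsilon=1$ it is precisely $\tilde\pi_0$ that differs from $\pi_0$. The involution swapping top and bottom rows does intertwine the two Rauzy moves, but to use it you must first check how $b(\mathcal{O})$ transforms under that involution (equivalently, relate the $\pi_0$-based description of $b(\mathcal{O})$ to a $\pi_1$-based one). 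Alternatively, and more safely, just carry out the $\varepsilon=1$ case directly: here $\tilde\pi_0$ is obtained from $\pi_0$ by moving $\alpha_l$ to the slot just after $\alpha_w$, so both the indicator functions in \eqref{bdef} and the map $\sigma$ change, and the bookkeeping is genuinely different from the $\varepsilon=0$ case even though the final identity has the same shape. Once that is done, your argument is complete.
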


\subsubsection{The boundary operator}\label{sec:bddef}
The following operator $\partial_\pi$ is known by \emph{boundary operator} (as a special case of the more general operator introduced in \cite{Ma-Mo-Yo}, see \S~\ref{bd:ac}). Let $\Sigma(\pi)$  and $\mathcal{A}_{\mathcal{O}}^\pm$ be as in the previous subsection.
\begin{definition}
Let $\partial_\pi:\R^{\mathcal{A}}\to\R^{\Sigma(\pi)}$ stand for the
linear transformation which maps a vector $h\in \R^{\mathcal{A}}$ to the vector in $\R^{\Sigma(\pi)}$ whose coordinates  $(\partial_\pi h)_\mathcal{O}, \mathcal{O}\in \Sigma(\pi)$ are given by
\[(\partial_\pi h)_\mathcal{O} :=\langle h,b(\mathcal{O)}\rangle=
\sum_{\alpha\in\mathcal{A}_{\mathcal{O}}^-}h_\alpha-\sum_{\alpha\in\mathcal{A}_{\mathcal{O}}^+}h_\alpha, \qquad \text{for} \ \ \mathcal{O}\in\Sigma(\pi). \]
\end{definition}
\noindent One sees (in light of Remark~\ref{rk:endpointspartition}) that the image of $\partial_\pi$ is:
\begin{equation}\label{eq:pariso}
\partial_\pi(\R^{\mathcal{A}})=\Big\{(x_{\mathcal{O}})_{\mathcal{O}\in\Sigma(\pi)}:\sum_{\mathcal{O}\in\Sigma(\pi)}x_{\mathcal{O}}=0\Big\}.
\end{equation}

\begin{remark}\label{rk:constboundary}
We can identify a vector $h\in \R^{\mathcal{A}}$ with a piecewise constant function $g_h$, which gives the constant value $h_\alpha$ to the subinterval $I_\alpha$. Then the operator $\partial$ can be thought of as acting on piecewise constant functions and producing, as a value at $\mathcal{O}\in \Sigma(\pi)$, the \emph{sum of jumps} of the function $g_h$ at the endpoints corresponding to the singularity labelled by $\mathcal{O}$.
\end{remark}
\noindent Two extensions of this operator (viewed as in the previous remark as an operator on functions) will be defined later, to functions  piecewise absolutely continuous on each $I_\alpha$ (\S~\ref{bd:ac}) and to functions with logarithmic singularities (\S~\ref{bd:ls}).

\subsubsection{Boundary operator estimate} \label{sec:bd:estimate}
Let $H(\pi):=\ker\partial_\pi$. Denote by $p_{H(\pi)}:\R^{\mathcal{A}}\to H(\pi)$ the orthogonal projection
on $H(\pi)$ with respect to the standard scalar product on $\R^{\mathcal{A}}$.

\begin{lemma}\label{bd:estimate}
For any $h\in \mathbb{R}^\mathcal{A}$, we have
\begin{equation}\label{eq:projnorm}
\|p_{H(\pi)}h\|\leq \sqrt{d}\|h\|.
\end{equation}
Moreover, for any Rauzy class $\mathcal{G}\subset \mathcal{S}^0_{A}$ there exists a positive constant $C_{\mathcal{G}}$ such that
for every $\pi\in\mathcal{G}$ and $h\in \R^{\mathcal{A}}$ we have
\begin{equation}\label{eq:proj}
\|h-p_{H(\pi)}h\|\leq C_{\mathcal{G}}\|\partial_\pi h\|.
\end{equation}
\end{lemma}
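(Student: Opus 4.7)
The plan is to treat the two inequalities separately, since both concern the orthogonal projection on $H(\pi)=\ker\partial_\pi$ with respect to the standard Euclidean inner product on $\R^{\mathcal A}$, while the norm $\|\cdot\|$ in the statement denotes the $\ell^1$ norm (as fixed at the start of \S~\ref{sec:accelerations}). The main trick is therefore to do the estimates in $\ell^2$, where orthogonal projection is a $1$-contraction, and pay a factor $\sqrt d$ when converting back to the $\ell^1$ norm via the standard inequalities $\|v\|_2\leq \|v\|_1\leq \sqrt{d}\,\|v\|_2$ valid for $v\in\R^{\mathcal A}$ (where $d=\#\mathcal A$).

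For \eqref{eq:projnorm}, the plan is to compute
\[
\|p_{H(\pi)}h\|_1\leq \sqrt{d}\,\|p_{H(\pi)}h\|_2\leq \sqrt{d}\,\|h\|_2\leq \sqrt{d}\,\|h\|_1,
\]
using that $p_{H(\pi)}$ is a Euclidean orthogonal projection in the middle step. This immediately yields the first bound with the claimed constant $\sqrt{d}$.

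For \eqref{eq:proj}, I would first observe that $h-p_{H(\pi)}h$ lies in $H(\pi)^{\perp}$ (orthogonal with respect to the Euclidean structure) and that, since $p_{H(\pi)}h\in H(\pi)=\ker\partial_\pi$,
\[
\partial_\pi(h-p_{H(\pi)}h)=\partial_\pi h.
\]
Thus the restriction
\[
\partial_\pi\big|_{H(\pi)^{\perp}}:\;H(\pi)^{\perp}\longrightarrow \partial_\pi(\R^{\mathcal A})
\]
is a bijective linear map between finite-dimensional normed spaces (injectivity follows since its kernel is contained in $H(\pi)\cap H(\pi)^{\perp}=\{0\}$, and surjectivity is obvious). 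Hence its inverse has finite operator norm, and writing
\[
h-p_{H(\pi)}h=\bigl(\partial_\pi\big|_{H(\pi)^{\perp}}\bigr)^{-1}\bigl(\partial_\pi h\bigr)
\]
gives an estimate $\|h-p_{H(\pi)}h\|\leq C_\pi\,\|\partial_\pi h\|$ with $C_\pi:=\|(\partial_\pi|_{H(\pi)^{\perp}})^{-1}\|$, where all norms are the $\ell^1$ operator norms. The key observation is that the Rauzy class $\mathcal{G}$ is \emph{finite}, so we may simply set $C_{\mathcal G}:=\max_{\pi\in\mathcal{G}}C_\pi<+\infty$ to obtain a uniform constant. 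No step here looks genuinely difficult: the only mild subtlety is keeping track of the fact that orthogonality is with respect to the Euclidean inner product while the working norm is $\ell^1$, which is handled in both parts by the $\sqrt d$ conversion in (i) and absorbed into the constant $C_{\mathcal G}$ in (ii).
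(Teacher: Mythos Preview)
Your proposal is correct and follows essentially the same approach as the paper's proof: both parts use the $\ell^1$--$\ell^2$ norm comparison together with the fact that $p_{H(\pi)}$ is a Euclidean contraction for \eqref{eq:projnorm}, and for \eqref{eq:proj} both argue that $\partial_\pi$ restricted to $H(\pi)^\perp$ is injective, take $C_\pi$ to be the norm of the inverse, and then take the maximum over the finite Rauzy class. If anything, your version is slightly more precise in identifying the codomain as $\partial_\pi(\R^{\mathcal A})$ rather than all of $\R^{\Sigma(\pi)}$.
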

\begin{proof}
Let $H(\pi)^{\perp}\subset\R^{\mathcal{A}}$ be the orthogonal complement of $H(\pi)$.
By Lemma~\ref{bcharh}, $\partial_\pi:H(\pi)^{\perp}\to\R^{\Sigma(\pi)}$ is a linear isomorphism. It follows that there exists $C_\pi>0$ such that
\[\|h\|\leq C_\pi\|\partial_\pi h\| \quad\text{ for all }\quad h\in H(\pi)^{\perp}.\]
Hence \eqref{eq:proj} holds with $C_{\mathcal{G}}=\max\{C_\pi:\pi\in\mathcal{G}\}$. Denote by $\|\,\cdot\,\|_2$ the Euclidean norm on $\R^{\mathcal{A}}$. Since $\|h\|_2\leq \|h\|\leq\sqrt{d}\|h\|_2$ and $p_{H(\pi)}$ is
an orthogonal projection, we have
\[
\|p_{H(\pi)}h\|\leq\sqrt{d}\|p_{H(\pi)}h\|_2\leq\sqrt{d}\|h\|_2\leq\sqrt{d}\|h\|.
\]
\end{proof}

\subsection{The Uniform Diophantine-type Condition and its full measure}\label{sec:DC}
We will now define the Diophantine-type condition that we will use. First, it is convenient to introduce an acceleration of Rauzy-Veech induction which produces times which we call \emph{Rokhlin-balanced}. We then define the condition and prove that it has full measure.

\subsubsection{The Rokhlin-balanced acceleration}
The following  acceleration of Rauzy-Veech induction  produces  times of the Rauzy-Veech algorithm where the corresponding Rokhlin towers
(see \ref{sec:Rokhlintowers}) are \emph{balanced} in the sense that all bases have comparable lengths (see \eqref{def:UDC-d} in Definition~\ref{def:balanced})
and all the towers travel together for a long enough time (see \eqref{def:UDC-g} in Definition~\ref{def:balanced}). We call  these times \emph{Rokhlin-balanced}.
\begin{definition}[{Rokhlin-balance}]\label{def:balanced}
Let us say that an accelerating  sequence $(n_k)_{k\geq 0}$ is \emph{Rokhlin-balanced} if there exist  constants $\kappa>1$  and $0<\delta<1$ such that the following two conditions hold for every $k\in \mathbb{N}$:
\begin{align}
 \tag{B1} &  |I^{(k)}| \leq \kappa\,  |I^{(k)}_\alpha|\qquad  \text{ for all } k\geq 1 \text{ and } \alpha\in \mathcal A; \label{def:UDC-d} \\
 \tag{B2} & \text{for every $k\geq 1$ there exists a natural } \text{number $0<p_{k}\leq \min_{\alpha\in\mathcal{A}}Q_\alpha(k)$ such that}  \label{def:UDC-g}  \\
& \{T^iI^{(k)}:0\leq i<p_k\} \text{ is a Rokhlin tower of intervals with measure greater than  $\delta|I|$.} \nonumber
\end{align}
We say that an IET is \emph{Rokhlin-balanced} if it satisfies Keane's condition and it admits  a Rokhlin balanced accelerating sequence $(n_k)_{k\geq 0}$.
\end{definition}
\begin{remark}
Notice that by conditions \eqref{def:UDC-d} and \eqref{def:UDC-g},  for every $\alpha\in\mathcal A$ and $k\geq 1$ we have
\begin{gather}
\label{eq:lg}
\|Q(k)\||I^{(k)}|\leq \kappa \sum_{\alpha\in\mathcal{A}}Q_\alpha(k)|I^{(k)}_\alpha|=\kappa|I|,\text{ and }\\
\label{eq:qlambda}
Q_\alpha(k)\lambda^{(k)}_\alpha\geq \frac{1}{\kappa}p_k|I^{(k)}|\geq \frac{\delta}{\kappa}|I|.
\end{gather}
so that each Rokhlin tower of a balanced acceleration induction time has measure uniformly bounded below.
\end{remark}

\smallskip

Let us show that for almost every IET one can find a Rokhlin-balanced sequence by considering returns of Rauzy-Veech induction to special compact sets (for  the parameter space of the natural extension, see \S~\ref{sec:natextension}). Let us recall that $X(\mathcal{G})$ denotes the domain of the natural extension of the Rauzy-Veech induction  (see  \eqref{XGdef} in \S~\ref{sec:natextension}).
\begin{lemma}\label{lemma:Rokhlinbalance}
Let $\pi$ be irreducible. For Lebesgue-almost every choice of $\lambda$, the IET $T=T_{(\pi,\lambda)}$ is Rokhlin-balanced. Furthermore,  for every $ 0<\delta< 1 $ one can define a set $Y=Y(\delta) \subset X(\mathcal{G})$ such that a Rokhlin-balanced accelerating sequence with constant $\delta$ is given by returns of the natural extension of Rauzy-Veech induction to $Y$.
\end{lemma}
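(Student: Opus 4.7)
The plan is to realize Rokhlin-balanced accelerating sequences as return times of the invertible natural extension $\widehat{\mathcal{R}}$ to a carefully chosen subset $Y(\delta)\subset X(\mathcal{G})$ on which both the length and height coordinates are simultaneously balanced. Concretely, for constants $c_1,c_2>0$ (to be chosen small as a function of $\delta$), I would set
$$
Y(\delta):=\{(\pi,\lambda,\tau)\in X(\mathcal{G}):\ \lambda_\alpha\geq c_1\ \text{and}\ (\Omega_\pi\tau)_\alpha\geq c_2\ \text{for every }\alpha\in\mathcal{A}\}.
$$
For any such $c_1,c_2>0$ small enough, $Y(\delta)$ is relatively compact and has positive finite $\widehat{\mu}_\mathcal{G}$-measure. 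Since $\widehat{\mathcal{R}}$ is measure-preserving and recurrent for $\widehat{\mu}_\mathcal{G}$, Poincar\'e recurrence gives, for $\widehat{\mu}_\mathcal{G}$-a.e. point in $Y(\delta)$, an infinite sequence $0=n_0<n_1<n_2<\cdots$ of successive return times to $Y(\delta)$.

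The next step is to show that $(n_k)_{k\geq 0}$ is Rokhlin-balanced. Condition \eqref{def:UDC-d} is immediate: writing $\widehat{\mathcal{R}}^{n_k}(\pi,\lambda,\tau)=(\pi^{(k)},\widetilde{\lambda}^{(k)},\tau^{(k)})\in Y(\delta)$, the normalized length satisfies $\widetilde{\lambda}^{(k)}_\alpha\geq c_1$; rescaling back, $\lambda^{(k)}_\alpha=|I^{(k)}|\widetilde{\lambda}^{(k)}_\alpha\geq c_1|I^{(k)}|$, so \eqref{def:UDC-d} holds with $\kappa=1/c_1$. For \eqref{def:UDC-g} I would set $p_k:=\min_{\alpha}Q_\alpha(k)$, which automatically makes $\{T^iI^{(k)}:0\leq i<p_k\}$ a disjoint tower of total measure $p_k|I^{(k)}|$, and it remains to prove $p_k|I^{(k)}|\geq \delta |I|$. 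The key tool is the dual identity for the heights: writing $h:=\Omega_\pi\tau$ and $h^{(k)}:=\Omega_{\pi^{(k)}}\tau^{(k)}$, the zippered-rectangles picture yields $h^{(k)}=Q(k)h$ and hence the entrywise bound $Q_\alpha(k)\geq h^{(k)}_\alpha/\max_\beta h_\beta$. The membership $(\pi,\lambda,\tau)\in Y(\delta)$ (at time $0$), combined with the constraint $\langle \lambda,h\rangle=1$ on $X(\mathcal{G})$, forces the \emph{upper} bound $h_\beta\leq 1/c_1$; the membership $\widehat{\mathcal{R}}^{n_k}(\pi,\lambda,\tau)\in Y(\delta)$ forces the \emph{lower} bound $h^{(k)}_\alpha\geq c_2$. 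Converting to physical coordinates through the scaling factor $|I|/|I^{(k)}|$, these two bounds together produce $Q_\alpha(k)|I^{(k)}|\geq c_1c_2|I|$, so choosing $c_1,c_2$ with $c_1c_2\geq \delta$ secures \eqref{def:UDC-g}.

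Finally, being Rokhlin-balanced is a property of the pair $(\pi,\lambda)$ alone, not of $\tau$. To pass from the natural extension to the base I would disintegrate $\widehat{\mu}_\mathcal{G}$ along the projection $(\pi,\lambda,\tau)\mapsto(\pi,\lambda)$, whose marginal on $\mathcal{G}\times\Delta^\mathcal{A}$ is the (Lebesgue-equivalent) Veech measure $\mu_\mathcal{G}$. By ergodicity of $\widehat{\mathcal{R}}$, the set of $(\pi,\lambda,\tau)$ whose forward orbit visits $Y(\delta)$ infinitely often has full $\widehat{\mu}_\mathcal{G}$-measure; Fubini then yields that for $\mu_\mathcal{G}$-a.e. (equivalently, Lebesgue-a.e.) $(\pi,\lambda)$ there exists some $\tau\in\Theta_\pi$ for which $(\pi,\lambda,\tau)$ returns to $Y(\delta)$ infinitely often. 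For such $\lambda$ the sequence of return times is Rokhlin-balanced for $T_{(\pi,\lambda)}$, giving the lemma.

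The delicate point I anticipate is the bookkeeping of normalizations: although the definition of $Y(\delta)$ is clean, verifying \eqref{def:UDC-g} requires simultaneously combining a \emph{lower} bound on the normalized height at the return time $n_k$ (in $X(\mathcal{G})$ the surface area is normalized to $1$) with an \emph{upper} bound on the physical height at time $0$, and tracking the scaling factor $|I^{(k)}|/|I|$ in such a way that the lower bound on $p_k|I^{(k)}|/|I|$ depends explicitly on $c_1,c_2$ (hence can be tuned to exceed any prescribed $\delta<1$). The identity $h^{(k)}=Q(k)h$, which requires only unwinding the meaning of $Q_{\alpha\beta}(k,l)$ as visit counts, is what makes this translation possible.
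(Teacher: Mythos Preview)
Your overall strategy—defining a positive-measure set in the natural extension, using Poincar\'e recurrence to produce visit times, and descending to the base by Fubini—is the same as the paper's, and your verification of \eqref{def:UDC-d} is fine. The area estimate $\min_\alpha Q_\alpha(k)\,|I^{(k)}|\geq c_1c_2|I|$ via the height identity $h^{(k)}=Q(k)h$ is also correct in spirit.

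There is, however, a genuine gap in your treatment of \eqref{def:UDC-g}. That condition asks that $\{T^iI^{(k)}:0\leq i<p_k\}$ be a Rokhlin tower \emph{of intervals}: each level $T^iI^{(k)}$ must be a single interval, i.e.\ $T^i$ must act continuously on $I^{(k)}$ for every $i<p_k$. Taking $p_k=\min_\alpha Q_\alpha(k)$ guarantees the levels are pairwise disjoint, but not that they are intervals. In the zippered-rectangles picture for $(\pi^{(k)},\lambda^{(k)},\tau^{(k)})$, the heights of the singular points above $I^{(k)}$ are the partial sums $\sum_{\pi_0^{(k)}(\alpha)\leq j}\tau^{(k)}_\alpha$ for $1\leq j<d$; the moment the upward flow from $I^{(k)}$ meets such a singularity, the image splits and the corresponding iterate $T^iI^{(k)}$ ceases to be an interval. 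Your set $Y(\delta)$ bounds only $\lambda_\alpha$ and $h_\alpha=(\Omega_\pi\tau)_\alpha$ from below, which imposes \emph{no} lower bound on these partial sums of $\tau$. Hence at a return time to your $Y(\delta)$ a singularity can sit arbitrarily close above $I^{(k)}$, and there is no reason the interval-preserving range of $i$ reaches anywhere near $\min_\alpha Q_\alpha(k)$. (This interval property is actually used downstream, e.g.\ in Remark~\ref{rem:rigtower}, where one needs $T^j$ to be continuous on all of $I^{(k)}$ for $j<p_k$.)

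The paper closes exactly this gap by adding to the definition of $Y$ the constraint
\[
\min\Big(\Big\{\textstyle\sum_{\pi_0(\alpha)\leq j}\tau_\alpha:1\leq j<d\Big\}\cup\{h_\alpha:\alpha\in\mathcal A\}\Big)>\delta\,\max_\alpha h_\alpha,
\]
which forces the rectangle $I^{(k)}\times[0,t^{(k)}]$ to contain no singular point; one then takes $p_k:=[t^{(k)}/\max_\alpha h_\alpha(\tau)]$ rather than $\min_\alpha Q_\alpha(k)$. With that extra condition your argument would go through.
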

\begin{proof}
Fix $0< \delta< 1 $.
Let us consider a subset $Y=Y(\delta)\subset X(\mathcal{G})$ which satisfies:
\begin{enumerate}[label=(\roman*)]
\item \label{defYi} its projection $Y_0$ on $\mathcal{G}\times\Lambda^\mathcal{A}$ is  precompact
with respect to the Hilbert metric;
\item \label{defYii} 
for every $(\pi,\lambda,\tau)\in Y$ we have
\begin{equation*}
\min \Big\{\Big\{\sum_{\pi_0(\alpha)\leq k}\tau_\alpha:1\leq k<d\Big\}\cup\{h_\alpha(\tau):\alpha\in\mathcal A\}\Big\}>\delta\max\{h_\alpha(\tau):\alpha\in\mathcal A\};
\end{equation*}
\end{enumerate}
Let $R>0$ be such that $Y_0 \subset \mathcal{G}\times \overline{B}_H((1/d,\ldots,1/d),R)$, where $\overline{B}_H((1/d,\ldots,1/d),R)$ is the closed ball
(with respect to the Hilbert metric $d_H$)
of radius $R$ and center at the center of the simplex $\Lambda^{\mathcal A}$.

\smallskip
\noindent {\it Balance at visit times.} Consider any sequence $(n_k)_{k\geq 1}$ which corresponds to visits to the set $Y$.
By definition, for every $k$ belonging to this subsequence, $(\pi^{(k)}, \lambda^{(k)}, \tau^{(k)})\in  Y$. It follows that $d_H\big(\lambda^{(k)},(1/d,\ldots,1/d)\big)\leq R$. Therefore
$$\max_{\alpha\in\mathcal A}|I^{(k)}_\alpha|/\min_{\alpha\in\mathcal A}|I^{(k)}_\alpha|\leq e^R,$$ which implies the condition \eqref{def:UDC-d} for $\kappa:= e^R$.

\noindent As $(\pi^{(k)}, \lambda^{(k)}, \tau^{(k)})=\widehat{\mathcal{R}}^{n_k}(\pi,\lambda,\tau))\in Y$, by condition \ref{defYii} in the choice of $Y$,
taking
\[t^{(k)}:=\min\Big\{\Big\{\sum_{\pi^{(k)}_0(\alpha)\leq l}\tau^{(k)}_\alpha:1\leq l<d\Big\}\cup\{h^{(k)}_\alpha:\alpha\in\mathcal A\}\Big\}\quad (h^{(k)}=h(\tau^{(k)}))\]
we have that $I^{(k)}\times[0,t^{(k)}]$ is a rectangle (without singular points inside) in the translation surface $M(\pi^{(k)}, \lambda^{(k)}, \tau^{(k)})$ ($=M(\pi, \lambda, \tau)$) and its area is greater than
\[t^{(k)}\sum_{\alpha\in\mathcal A}\lambda^{(k)}_\alpha>\delta\max_{\alpha\in\mathcal A}h^{(k)}_\alpha\sum_{\alpha\in\mathcal A}\lambda^{(k)}_\alpha\geq \delta\langle\lambda^{(k)},h^{(k)}\rangle=\delta|I|.\]
This gives \eqref{def:UDC-g} with $p_k:=[t^{(k)}/\max_{\alpha\in\mathcal{A}}h_\alpha(\tau)]$ and $\delta:=\tfrac{\delta^2}{2}<\tfrac{\delta}{2}\tfrac{\min_{\alpha\in\mathcal{A}}h_\alpha(\tau)}{\max_{\alpha\in\mathcal{A}}h_\alpha(\tau)}$.

\smallskip
\noindent {\it Typical Rokhlin balance.} It now follows from Poincar{\'e} recurrence theorem (and absolute continuity and finiteness of the Veech invariant measure, see \cite{Ve:erg}) that almost every IET visits $Y(\delta)$ infinitely often and hence is Rokhlin-balanced.
\end{proof}

\subsubsection{The Uniform Diophantine Condition definition}\label{sec:UDCdefsec}
The Diophantine-type condition that we will use in the main theorems is the following.
\begin{definition}[\customlabel{UDC}{UDC}]\label{def:UDC}
An IET $T:I\to I$ satisfying Keane's condition, satisfies the \emph{Uniform Diophantine Condition} \text{UDC} if $T$ is  Rokhlin-balanced (in the sense of Definition~\ref{def:balanced}), and
for every $\tau>0$ there exist  constants $0<c<C$, a Rokhlin-balanced accelerating sequence  $(n_k)_{k\geq 0}$
and an increasing sequence of integers
 $(r_n)_{n\geq 0}$ with $r_0=0$ and $r_n/n\to \alpha>0$, so that:
\begin{align}
\tag{O}\label{def:O}
&\text{$T$ is \emph{Oseledets\ generic}, i.e.\ there exists an extension $(\pi, \lambda, \tau)$ of $T=T_{(\pi,\lambda)}$}\\
&\text{such that it admits an Oseledets splitting of hyperbolic type, as in \S~\ref{sec:Oseledets};}\nonumber
\end{align}
 and, furthermore,  the matrices $Z(k) $ and product matrices $Q(k,l)$ of the acceleration along the subsequence $(n_k)_{k\in \N}$ (see \S~\ref{sec:accelerations})
 satisfy the following conditions:
\begin{align}
\tag{UDC1} &\|Q_s(k,l)\|\leq Ce^{-\lambda(l-k)} \text{ for all }0\leq k\leq l,\text{ where }\lambda=\lambda_g/2; \label{def:UDC-a}\\
\tag{UDC2}\label{def:UDC-b}
&\|Z(k+1)\|\leq Ce^{\tau|k-r_n|} \text{ for all }k\geq 0\text{ and }n\geq 0;
\\
\tag{UDC3} \label{def:UDC-c}
& ce^{\lambda_1  k}\leq\|Q(k)\|\leq Ce^{\lambda_1 (1+\tau) k} \text{ for all }k\geq 0;
\end{align}
\end{definition}

\begin{remark}\label{rem:QQ}
By conditions \eqref{def:UDC-b} and \eqref{def:UDC-c}, there exists $C'>0$ such that
\begin{equation}\label{eq:zk1}
\|Z(k+1)\|=O(\|Q(k)\|^\tau).
\end{equation}
Then using arguments from Section~1.3.1 in \cite{Ma-Mo-Yo}, one can show that
\begin{equation}\label{eq:minqk}
\|Q(k)\|=O(\min_{\alpha\in\mathcal A}Q_\alpha(k)^{1+\tau}).
\end{equation}
Thus, the \ref{UDC} condition implies condition $(a)$ of the Roth-type Diophantine condition defined in \cite{Ma-Mo-Yo}. The other two conditions (as well as the last assumption of the \emph{restricted} Roth-type condition\footnote{In \cite{Ma-Mo-Yo:lin}, Marmi, Moussa and Yoccoz introduced a more restrictive (but still full measure) Diophantine-type condition, that they called \emph{restricted Roth-type}: in addition to all the properties of \emph{Roth-type}, one requests in this case that the stable space has exactly dimension $g$. This holds for IETs which satisfy the \ref{UDC} in view of the Oseledets genericity assumption \eqref{def:O}, since we require that the splitting is of hyperbolic type, which means exactly that there are $g$ positive exponents, see \S~\ref{sec:Oseledets}).}) also hold, in view of the Oseledets genericity assumption \eqref{def:O} (see for example~Remark 3.4 in \cite{Ma-Yo}). Thus IETs which satisfy the  \ref{UDC} are in particular of (restricted) Roth-type.
\end{remark}

\subsubsection{Full measure of the \ref{UDC}}\label{sec:fullmeasure}
Let us  show that the \ref{UDC} condition has full measure.
\begin{theorem}\label{thm:UDC}
Almost every IET satisfies the \ref{UDC} Diophantine condition.
\end{theorem}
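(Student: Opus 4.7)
The proof relies on three classical ingredients: Lemma~\ref{lemma:Rokhlinbalance}, which produces Rokhlin-balanced accelerations as returns to a compact subset of $X(\mathcal{G})$; the Oseledets theorem combined with Forni's positivity theorem and the Avila-Viana simplicity theorem for the Kontsevich-Zorich cocycle, which together guarantee the hyperbolic Oseledets splitting required by \eqref{def:O}; and Zorich's integrability result $\log\|\widehat{A}_Y\|\in L^1(Y,\widehat{\mu}_Y)$ for the induced cocycle on any precompact subset $Y\subset X(\mathcal{G})$ of positive Veech measure.

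Fix $\tau>0$. Starting from the compact set $Y_0=Y(\delta)$ provided by Lemma~\ref{lemma:Rokhlinbalance}, apply an Egoroff--Lusin-type argument to shrink $Y_0$ to a compact subset $Y^{\ast}\subset Y_0$ of positive Veech measure on which the Oseledets subspaces vary continuously with the base point, the angles between the stable and unstable bundles are bounded away from zero, and the exponential Oseledets estimates hold with uniform multiplicative constants. Taking $(n_k)_{k\geq 0}$ to be the sequence of returns of $\widehat{\mathcal R}$ to $Y^{\ast}$ preserves the Rokhlin balance condition. Conditions \eqref{def:UDC-a} and \eqref{def:UDC-c} then follow: \eqref{def:UDC-a} from the uniform exponential contraction of the stable part, using the identification $\|Q_\sharp(k,l)^{-1}\|=\|Q_s(k,l)\|$ from \eqref{eq:unstst} with ample room since $\lambda=\lambda_g/2$; and \eqref{def:UDC-c} from the uniform Oseledets estimates for the top exponent $\lambda_1$ on the unstable subspace, the slack in $\tau$ absorbing the subexponential Oseledets corrections.

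The main obstacle is \eqref{def:UDC-b}, since $Z(k+1)=\widehat{A}_{Y^{\ast}}\circ\widehat{\mathcal R}_{Y^{\ast}}^k$ is a single step of the induced cocycle, not a product, so Oseledets alone yields no pointwise bound. We exploit Zorich's integrability. Pick $L=L(\tau)$ large enough that the set $W:=\{x\in Y^{\ast}:\log\|\widehat{A}_{Y^{\ast}}(x)\|\leq L\}$ has $\widehat{\mu}_{Y^{\ast}}(W)>0$, and let $(r_n)_{n\geq 0}$ enumerate the indices $k\geq 0$ for which $\widehat{\mathcal R}_{Y^{\ast}}^k(\pi,\lambda,\tau)\in W$. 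Birkhoff's ergodic theorem applied to the ergodic induced map on $Y^{\ast}$ yields $r_n/n\to\alpha:=\widehat{\mu}_{Y^{\ast}}(Y^{\ast})/\widehat{\mu}_{Y^{\ast}}(W)>0$, and by construction $\|Z(r_n+1)\|\leq e^L$, which can be absorbed into $C$. For arbitrary $k$, one must show $\log\|Z(k+1)\|\leq\tau|k-r_n|$ for the nearest $r_n$: this follows by controlling the excursions of the orbit outside $W$. By $L^1$-integrability, the tail distribution of $\log\|\widehat{A}_{Y^{\ast}}\|$ satisfies $\widehat{\mu}_{Y^{\ast}}(\log\|\widehat{A}_{Y^{\ast}}\|>t)=o(1/t)$, so a Borel--Cantelli argument applied to the stationary sequence ensures that the longest excursion up to time $N$ grows sublinearly; enlarging $L$ if necessary, one verifies that the logarithmic contribution accumulated along any excursion of length $\ell$ is at most $\tau\ell$. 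This translation from tail-summability to a pointwise excursion bound is the delicate technical heart of the proof.
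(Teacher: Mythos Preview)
Your overall strategy is close to the paper's, and the treatment of \eqref{def:O}, \eqref{def:UDC-a} and \eqref{def:UDC-c} via uniform Oseledets estimates on a good subset is essentially the same idea (though you should note that to land the upper bound in \eqref{def:UDC-c} you need $\widehat\mu(Y^\ast)/\widehat\mu(Y_0)>1/(1+\tau)$, not merely positive measure, since the top exponent of the induced cocycle scales by the inverse measure ratio). The genuine gap is in your argument for \eqref{def:UDC-b}.

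Your anchor set $W=\{\log\|\widehat A_{Y^\ast}\|\le L\}$ is the wrong object: membership in $W$ controls only the \emph{current} cocycle increment, whereas \eqref{def:UDC-b} demands $\log\|Z(k+1)\|\le\tau|k-r_n|+\log C$ for every $k$, so that an anchor $r_n$ must certify smallness of \emph{all} nearby steps. Concretely, nothing prevents an isolated index $k$ with $\log\|Z(k+1)\|$ arbitrarily large while both $k-1$ and $k+1$ lie in $W$; then the nearest $r_n$ is at distance~$1$ and no uniform $C$ can absorb this. Your Borel--Cantelli/excursion paragraph does not address this: it speaks of excursion \emph{lengths} and of the \emph{accumulated} logarithmic contribution over an excursion, but \eqref{def:UDC-b} is a pointwise bound on a single increment, and the size of an individual increment is not tied to the length of the excursion containing it. (Also, $o(1/t)$ tails are not summable, so the Borel--Cantelli step as written does not go through.)

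The paper's fix is to define the anchor set differently. From $L^1$-integrability and Birkhoff for the invertible induced system one gets $\log\|\widehat A_K(\widehat{\mathcal R}_K^{\,n}x)\|/|n|\to 0$ a.e.\ as $|n|\to\infty$, so the function $x\mapsto\sup_{n\in\Z}e^{-\tau|n|}\|\widehat A_K(\widehat{\mathcal R}_K^{\,n}x)\|$ is finite a.e.; one then takes $K'$ to be a sublevel set of this supremum. Visiting $K'$ at time $r_n$ then automatically gives $\|Z(k+1)\|\le C'e^{\tau|k-r_n|}$ for \emph{every} $k$, since the defining condition looks both forward and backward in time. This ``bilateral'' definition of the anchor set is the missing idea.
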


\begin{proof} We split the proof in several steps.

\smallskip
\noindent {\it Construction of a good recurrence set.} Let us consider a subset $Y\subset X(\mathcal{G})$ which satisfies  the assumptions \ref{defYi} and \ref{defYii} in the proof of Lemma~\ref{lemma:Rokhlinbalance}, which guarantees that visits to $Y$ give a Rokhlin-balanced sequence, and furthermore
such that:
\begin{enumerate}[label=(\roman*)]
\setcounter{enumi}{2}
\item $\widehat{\mu}(Y)$ is finite, so  $\widehat{\mu}_Y:= \widehat{\mu}/\widehat{\mu}(Y)$ is a probability measure;
\item the functions $\log\|\widehat{A}_Y\|$ and $\log\|\widehat{A}_Y^{-1}\|$ are integrable with respect to $\widehat{\mu}_Y$.
\end{enumerate}
Let
$\lambda_1>\ldots>\lambda_g>0$ the positive Lyapunov exponents of the corresponding accelerated cocycle, which are $g$ and distinct in view of \cite{Fo1} and \cite{Av-Vi}. Let $\lambda:=\lambda_g/2$ and $\kappa=de^R$. Fix $0<\tau<\lambda_g/2$. Since for $\widehat{\mu}_Y$-a.e.\ $(\pi,\lambda,\tau)\in Y$ we have
\begin{align*}
\lim_{n\to+\infty}\frac{1}{n}\log\|\widehat{A}^{(n)}_Y(\pi,\lambda,\tau)^t\upharpoonright_{\Gamma_s(\pi,\lambda,\tau)}\|=-\lambda_g,
\end{align*}
the map from $Y$ to $\mathbb{R}$ given by
\[(\pi,\lambda,\tau)\mapsto\sup_{n\geq 0}e^{(\lambda_g-\tau)n}\|\widehat{A}^{(n)}_Y(\pi,\lambda,\tau)^t\upharpoonright_{\Gamma_s(\pi,\lambda,\tau)}\|\]
is a.e.\ defined and measurable.
Therefore, there exists a subset $K\subset Y$ with $\widehat{\mu}_Y(K)/\widehat{\mu}_Y(Y)>1-\tau/2$ and a constant $C>0$ such that if $(\pi,\lambda,\tau)\in K$
then for every $n\geq 0$ we have
\begin{equation}\label{eq:ac}
\|\widehat{A}^{(n)}_Y(\pi,\lambda,\tau)^t\upharpoonright_{\Gamma_s(\pi,\lambda,\tau)}\|\leq Ce^{-(\lambda_g-\tau)n}\leq Ce^{-\lambda n}.
\end{equation}

\smallskip
\noindent {\it First acceleration.}
Let us consider the induced map $\widehat{\mathcal{R}}_K:K\to K$ and the induced cocycle $\widehat{A}_K:K\to SL_\mathcal{A}(\Z)$.
Then $\widehat{\mathcal{R}}_K(\pi,\lambda,\tau)=\widehat{\mathcal{R}}^{r_K(\pi,\lambda,\tau)}_Y(\pi,\lambda,\tau)$, where $r_K(\pi,\lambda,\tau)\geq 1$
is the first return time of $(\pi,\lambda,\tau)\in K$ to $K$ for the map  $\widehat{\mathcal{R}}_Y$. Let $r_K^{(n)}:=\sum_{0\leq i<n}r_k\circ \widehat{\mathcal{R}}_K$ for every $n\geq 0$.
Then
\[\frac{r_K^{(n)}}{n}\to\frac{\widehat{\mu}_Y(Y)}{\widehat{\mu}_Y(K)}\text{ a.e.\ on }K\]
and furthermore
\[\widehat{A}_K^{(n)}=\widehat{A}_Y^{(r_K^{(n)})}\text{ for every }n\geq 0.\]
In view of \eqref{eq:ac}, for every $(\pi,\lambda,\tau)\in K$ we have
\begin{equation}\label{eq:acK}
\|\widehat{A}^{(n)}_K(\pi,\lambda,\tau)^t\upharpoonright_{\Gamma_s(\pi,\lambda,\tau)}\|\leq Ce^{-\lambda r_K^{(n)}(\pi,\lambda,\tau)}\leq Ce^{-\lambda n}
\end{equation}
and for a.e.\ $(\pi,\lambda,\tau)\in K$ we have
\begin{equation}\label{eq:maxlap}
\lim_{n\to+\infty}\frac{1}{n}\log\|\widehat{A}^{(n)}_K(\pi,\lambda,\tau)\|=\lambda_1\frac{\widehat{\mu}_Y(Y)}{\widehat{\mu}_Y(K)}\in(\lambda_1 ,\lambda_1 (1+\tau)).
\end{equation}

\smallskip
\noindent {\it Second acceleration.}
Since the functions $\log\|\widehat{A}_K\|$ and $\log\|\widehat{A}_K^{-1}\|$ are integrable, for a.e.\ $(\pi,\lambda,\tau)\in K$ we have $\log\|\widehat{A}_K(\widehat{\mathcal{R}}_K^n(\pi,\lambda,\tau))\|/n\to 0$
as $|n|\to+\infty$,
also the map from $K$ to $\R$ given by
\[(\pi,\lambda,\tau)\mapsto\sup_{n\in\Z}e^{-\tau |n|}\|\widehat{A}_K(\widehat{\mathcal{R}}_K^{n}(\pi,\lambda,\tau))\|\]
is a.e.\ defined and measurable.
Therefore, there exists a subset $K'\subset K$ with $\widehat{\mu}_K(K')>0$ and a constant $C'>0$ such that if $(\pi,\lambda,\tau)\in K'$
then for every $n\in\Z$ we have
\begin{equation}\label{eq:arn}
\|\widehat{A}_K(\widehat{\mathcal{R}}_K^{n}(\pi,\lambda,\tau))\|\leq C'e^{\tau |n|}.
\end{equation}
Moreover, for a.e.\ $(\pi,\lambda,\tau)\in K'$ there exists an increasing sequence of non-negative integer numbers $(r_n(\pi,\lambda,\tau))_{n\geq 1}$ such that $r_1(\pi,\lambda,\tau)=0$ and
\begin{equation}\label{eq:rn}
\widehat{\mathcal{R}}_K^{r_n(\pi,\lambda,\tau)}(\pi,\lambda,\tau)\in K'\text{ for all }n\geq 0\text{ and }\frac{r_n(\pi,\lambda,\tau)}{n}\to \frac{\widehat{\mu}_K(K)}{\widehat{\mu}_K(K')}=:\alpha>0.
\end{equation}
Let $K''\subset K'$ be a subset of $(\pi,\lambda,\tau)\in K'$ for which \eqref{eq:maxlap} and \eqref{eq:rn} hold. Then $\widehat{\mu}_{\mathcal{G}}(K'')=\widehat{\mu}_{\mathcal{G}}(K')>0$.
By the ergodicity of $\widehat{\mathcal{R}}$, for a.e.\ $(\pi,\lambda,\tau)\in X(\mathcal{G})$
\begin{equation}\label{eq:n1}
\text{there exists }n_1(\pi,\lambda,\tau)\geq 0\text{ such that }\widehat{\mathcal{R}}^{n_1(\pi,\lambda,\tau)}(\pi,\lambda,\tau)\in K''.
\end{equation}
By Fubini argument, there exists a measurable subset $\Xi\subset \mathcal{G}\times\Lambda^{\mathcal A}$ such that $\mu_\mathcal{G}(\mathcal{G}\times\Lambda^{\mathcal A}\setminus \Xi)=0$
and for every $(\pi,\lambda)\in \Xi$ there exists $\tau\in\Theta_\pi$ such that $(\pi,\lambda,\tau)\in X(\mathcal{G})$ satisfies \eqref{eq:n1}.

\smallskip
\noindent {\it Full measure.}
We can now show that every $(\pi,\lambda)\in \Xi$ satisfies the \ref{UDC}. Suppose that $(\pi,\lambda)\in \Xi$ and $(\pi,\lambda,\tau)\in X(\mathcal{G})$ satisfies \eqref{eq:n1}. Then the corresponding acceleration sequence $(n_k)_{k\geq 0}$ is defined by setting $n_0:=0$ and then defining $n_k$ inductively such that, for every $k\geq 1$,
\[ \widehat{\mathcal{R}}^{n_k}(\pi,\lambda,\tau)=\widehat{\mathcal{R}}_K^{k-1}\widehat{\mathcal{R}}^{n_1(\pi,\lambda,\tau)}(\pi,\lambda,\tau).\]
\noindent Let us now consider the cocycle matrices $Z(k)$, $k\in \N$, of the acceleration along the sequence $(n_k)_{k\in\N}$, as defined in \S~\ref{sec:accelerations}, as well as their products $Q(k,l)$, $k,l\in \N$ (see again \S~\ref{sec:accelerations}).
By definition of $Q$ and \eqref{eq:An>0}, for $1\leq k\leq l$ we have
\begin{align*}
Q(k,l)&=\widehat{A}_K^{(l-k)}(\widehat{\mathcal{R}}_K^{k-1}(\widehat{\mathcal{R}}^{n_1}(\pi,\lambda,\tau)))^t\\
Q(0,l)&=\widehat{A}_K^{(l-1)}(\widehat{\mathcal{R}}^{n_1}(\pi,\lambda,\tau))^t\widehat{A}^{(n_1)}(\pi,\lambda,\tau)^t\\
\|Q_s(k,l)\|&=\|\widehat{A}_K^{(l-k)}(\widehat{\mathcal{R}}_K^{k-1}(\widehat{\mathcal{R}}^{n_1}(\pi,\lambda,\tau)))^t\upharpoonright_{\Gamma_s(\widehat{\mathcal{R}}_K^{k-1}(\widehat{\mathcal{R}}^{n_1}(\pi,\lambda,\tau)))}\|\\
\|Q_s(0,l)\|&\leq\|\widehat{A}_K^{(l-1)}(\widehat{\mathcal{R}}^{n_1}(\pi,\lambda,\tau))^t\upharpoonright_{\Gamma_s(\widehat{\mathcal{R}}^{n_1}(\pi,\lambda,\tau))}\|\|{A}^{(n_1)}(\pi,\lambda)^t\|.
\end{align*}
Since $\widehat{\mathcal{R}}_K^{k-1}(\widehat{\mathcal{R}}^{n_1}(\pi,\lambda,\tau))\in K$ for every $k\geq 1$, by \eqref{eq:acK}, for $0\leq k< l$ we have
\[\|Q_s(k,l)\|\leq Ce^\lambda\|{A}^{(n_1)}(\pi,\lambda)^t\| e^{-\lambda(l-k)},\]
which gives \eqref{def:UDC-a}.

\smallskip
\noindent Consider now the sequence $(r_n)_{n\geq 0}$ defined setting $r_0:=0$ and, for $n\geq 1$, \[ r_n:=r_n(\widehat{\mathcal{R}}^{n_1}(\pi,\lambda,\tau))+1.\]
As $\widehat{\mathcal{R}}^{n_1}(\pi,\lambda,\tau)\in K''$, by \eqref{eq:rn}, we have $r_n/n\to\alpha>0$ and
\[\widehat{\mathcal{R}}_K^{r_n-1}(\widehat{\mathcal{R}}^{n_1}(\pi,\lambda,\tau))=\widehat{\mathcal{R}}_K^{r_n(\widehat{\mathcal{R}}^{n_1}(\pi,\lambda,\tau))}\widehat{\mathcal{R}}^{n_1}(\pi,\lambda,\tau)\in K'\text{ for }n\geq 1.\]
Since $Z(k+1)=\widehat{A}_K(\widehat{\mathcal{R}}_K^{k-1}(\widehat{\mathcal{R}}^{n_1}(\pi,\lambda,\tau)))^t$ for $k\geq 1$
and $Z(1)=\widehat{A}^{(n_1)}(\pi,\lambda,\tau)^t$, by \eqref{eq:arn}, for every $n\geq 1$ and $k\geq 1$ we have
\begin{align*}
\|Z(k+1)\|&=\|\widehat{A}_K(\widehat{\mathcal{R}}_K^{k-1}(\widehat{\mathcal{R}}^{n_1}(\pi,\lambda,\tau)))\|=
\|\widehat{A}_K(\widehat{\mathcal{R}}_K^{k-r_n}(\widehat{\mathcal{R}}_K^{r_n-1}(\widehat{\mathcal{R}}^{n_1}(\pi,\lambda,\tau))))\|
\leq C'e^{\tau|k-r_n|}.
\end{align*}
For $k=0$, on the other hand, we have
\[\|Z(1)\|=\|{A}^{(n_1)}(\pi,\lambda)\|\leq \|{A}^{(n_1)}(\pi,\lambda)\|e^{\tau|r_n|}\]
for every $n\geq 0$. Moreover, as $r_1=1$, it follows that for every $k\geq 1$ we have
\[\|Z(k+1)\|\leq C'e^{\tau|k-r_1|}\leq C'e^{\tau|k-r_0|},\]
which gives \eqref{def:UDC-b} with $C=\max(C',\|{A}^{(n_1)}(\pi,\lambda)\|)$.

As $\widehat{\mathcal{R}}^{n_1}(\pi,\lambda,\tau)\in K''$, by \eqref{eq:maxlap}
\[\lim_{k\to+\infty}\frac{\log\|Q(k)\|}{k}=
\lim_{k\to+\infty}\frac{\log\|\widehat{A}^{(k-1)}_K(\widehat{\mathcal{R}}^{n_1}(\pi,\lambda,\tau))\|}{k}=\frac{\lambda_1\widehat{\mu}_Y(Y)}{\widehat{\mu}_Y(K)}\in(\lambda_1 ,\lambda_1 (1+\tau)),\]
which implies the condition \eqref{def:UDC-c}.
Finally, the sequence is a Rokhlin-balanced acceleration sequence by Lemma~\ref{lemma:Rokhlinbalance}, since the set $Y$ was chosen to satisfies the conditions \ref{defYi} and \ref{defYii} which guarantee Rokhlin-balance in the proof of Lemma~\ref{lemma:Rokhlinbalance}. This concludes the proof.
\end{proof}

\subsection{Diophantine series}\label{sec:Dseries}
In the proof of our main results, certain sums and series (defined in Definition~\ref{def:series}) which depend on the matrices of the (accelerated) cocycle will play a central role, both to control Birkhoff sums and to prove ergodicity. We here show that these quantities, under the \ref{UDC}, are first of all well defined and furthermore grow in a controlled way (see Proposition~\ref{prop:estKC}).
\begin{definition}\label{def:series}
For every IET $T:I\to I$ satisfying Keane's condition and  any accelerating sequence we define four sequences $(K_l)_{l\geq -1}$, $(K'_l)_{l\geq -1}$, $(C_k)_{k\geq 0}$, $(C'_k)_{k\geq 0}$:
\begin{align*}
&K_l(T):=\sum_{j\geq l}\|Z(j+1)\|\|Q_s(l,j+1)\|\text{ for }l\geq 0\text{ and }K_{-1}:=0;\\
&K'_l(T):=\sum_{j\geq l}\|Z(j+1)\|\|Q_s(l,j+1)\|\log\|Q(j)\|\text{ for }l\geq 0\text{ and }K'_{-1}:=0;\\
&C_k(T):=\sum_{0\leq l\leq k}\|Q_s(l,k)\| \big(\|Z(l)\|K_{l-1}(T)+K_l(T)\big)\text{ for }k\geq 0;\\
&C'_k(T):=\sum_{0\leq l\leq k}\|Q_s(l,k)\| \big(\|Z(l)\|K'_{l-1}(T)+K'_l(T)\big)\text{ for }k\geq 0.
\end{align*}
\end{definition}
\noindent Proposition~\ref{prop:estKC} below shows in particular that if $T$ satisfies the \ref{UDC} these quantities are finite and hence well defined for every pairs of integers $k\geq 0$, $l\geq -1$.

\begin{proposition}\label{prop:estKC}
For every IET $T:I\to I$ satisfying the \ref{UDC} all sequences $(K_l)_{l\geq -1}$, $(K'_l)_{l\geq -1}$, $(C_k)_{k\geq 0}$, $(C'_k)_{k\geq 0}$
are well defined and for every $0<\tau<\lambda/2$ there exists a constant $D>0$ such that
\begin{align}
\label{eq:K}
K_l(T)&\leq De^{\tau(r_n-l)}\text{ if } r_{n-1}\leq l\leq r_n \text{ for some } n\geq 0;\\
\label{eq:K'}
K'_l(T)&\leq D(l+1)e^{\tau l}\text{ for every } l\geq 0;\\
\label{eq:C}
C_{r_n}(T)&\leq D \text{ for every } n\geq 1;\\
\label{eq:C'}
C'_k(T)&\leq D(k+1)e^{2\tau k}\text{ for every } k\geq 0.
\end{align}
\end{proposition}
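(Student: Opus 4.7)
The proof is essentially a direct computation combining the exponential decay of $\|Q_s(l,j+1)\|$ from \eqref{def:UDC-a}, the bound on $\|Z(j+1)\|$ from \eqref{def:UDC-b} (which allows us to choose the reference index in the exponent $|k - r_n|$ locally for each $j$), and the linear upper bound $\log\|Q(j)\|=O(j+1)$ coming from \eqref{def:UDC-c}. The standing assumption $\tau<\lambda/2$, equivalently $\lambda-2\tau>0$, will provide the cushion needed to absorb all the $e^{\tau|\cdot|}$ factors arising from $Z$.

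The key step is \eqref{eq:K}. Fix $n$ with $r_{n-1}\leq l\leq r_n$ and split $K_l=\sum_{j\geq l}$ at $j=r_n$. For the initial segment $l\leq j\leq r_n$, I would apply \eqref{def:UDC-b} with reference index $n$, obtaining $\|Z(j+1)\|\leq Ce^{\tau(r_n-j)}$; multiplying by $\|Q_s(l,j+1)\|\leq Ce^{-\lambda(j+1-l)}$ and summing the resulting geometric series in $j$ produces a contribution of order $e^{\tau(r_n-l)}$. For the tail $j>r_n$, I would decompose into strips $j\in[r_m,r_{m+1})$, $m\geq n$, and on each strip apply \eqref{def:UDC-b} with the \emph{local} index $m$, yielding $\|Z(j+1)\|\leq Ce^{\tau(j-r_m)}$. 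Geometric summation within one strip (using $\tau<\lambda$) gives contribution $O(e^{-\lambda(r_m-l)})$; summing over $m\geq n$ (using the strict monotonicity $r_m\geq r_n+(m-n)$ of the integer sequence) produces total $O(e^{-\lambda(r_n-l)})\leq O(1)\leq O(e^{\tau(r_n-l)})$, so the initial-segment bound dominates and we obtain \eqref{eq:K}. The bound \eqref{eq:K'} follows from the same splitting with the extra factor $\log\|Q(j)\|=O(j+1)$ inserted; the trivial bound $\|Z(j+1)\|\leq Ce^{\tau j}$ (taking $n=0$ in \eqref{def:UDC-b}) together with the exponential decay from $Q_s$ gives a polynomial-times-geometric sum with negative rate $\tau-\lambda$, whose total value is governed by the left endpoint and equals $O((l+1)e^{\tau l})$.

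To prove \eqref{eq:C}, I would use \eqref{eq:K} as follows: for $l\geq 1$, letting $m(l-1)$ be the unique index with $r_{m(l-1)-1}\leq l-1<r_{m(l-1)}$, one has both $K_{l-1}\leq De^{\tau(r_{m(l-1)}-(l-1))}$ from \eqref{eq:K} and $\|Z(l)\|\leq Ce^{\tau(r_{m(l-1)}-(l-1))}$ from \eqref{def:UDC-b}. Since $l-1<r_n$ forces $m(l-1)\leq n$ and hence $r_{m(l-1)}-(l-1)\leq r_n-l+1$, combining with $\|Q_s(l,r_n)\|\leq Ce^{-\lambda(r_n-l)}$ we get
\[
\|Q_s(l,r_n)\|\,\|Z(l)\|\,K_{l-1}=O\!\bigl(e^{-(\lambda-2\tau)(r_n-l)}\bigr),
\]
and since $\lambda-2\tau>0$, summing over $l$ yields a bounded geometric series. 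The $K_l$-term is estimated identically (in fact with better rate $\lambda-\tau$). The bound \eqref{eq:C'} is produced by the same scheme, using \eqref{eq:K'} in place of \eqref{eq:K}; the extra polynomial factors survive, and since the summand is now \emph{increasing} in $l$ (the exponential rate in the exponent is $\lambda+2\tau>0$ after the dust settles), the sum is dominated by its terminal value at $l=k$, giving $O((k+1)e^{2\tau k})$. The main (and essentially only) technical subtlety is the splitting used for \eqref{eq:K}: without the locally chosen reference index in \eqref{def:UDC-b} the factors $\|Z(j+1)\|$ would accumulate exponentially and the tail would diverge; once the correct local choice is made, all bounds reduce to elementary geometric-type summations.
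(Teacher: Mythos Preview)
Your proof is correct and follows the same overall strategy as the paper's: split $K_l$ at $j=r_n$, use \eqref{def:UDC-a}--\eqref{def:UDC-c} to reduce everything to geometric series, then feed the $K$-estimates into the $C$-sums. One point is worth flagging, though: your strip decomposition of the tail $j>r_n$ (choosing a different reference index $r_m$ on each strip) is unnecessary, and your closing remark that ``without the locally chosen reference index\ldots\ the tail would diverge'' is simply false. The paper handles the entire tail with the \emph{single} reference index $r_n$: for $j>r_n$ one has $\|Z(j+1)\|\leq Ce^{\tau(j-r_n)}$ directly from \eqref{def:UDC-b}, and since $\tau<\lambda$ the product $e^{\tau(j-r_n)}e^{-\lambda(j+1-l)}$ already sums to $O(e^{-\lambda(r_n-l)})$ without any further subdivision. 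Your route gives the same bound via a detour; the paper's is shorter. Everything else (the treatment of $K'_l$, the monotonicity $r_{m(l-1)}\leq r_n$ used implicitly in the $C_{r_n}$ bound, and the geometric-sum argument for $C'_k$) matches the paper's proof.
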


\begin{proof}
By \eqref{def:UDC-a} and \eqref{def:UDC-b}, for $r_{n-1}<l\leq r_n$ we have
\begin{align*}
K_l(T)&=\sum_{l+1\leq j\leq r_n}\|Z(j)\|\|Q_s(l,j)\|+\sum_{j> r_n}\|Z(j)\|\|Q_s(l,j)\|\\
&\leq C^2\sum_{l+1\leq j\leq r_n}e^{\tau(r_n-j+1)}e^{-\lambda(j-l)}+C^2\sum_{j> r_n}e^{\tau(j-1-r_n)}e^{-\lambda(j-l)}
\\
&\leq C^2e^{\tau(r_n-l)}\sum_{j\geq 1}e^{-\lambda j}+C^2e^{-\lambda(r_n-l+1)}\sum_{j\geq 0}e^{-(\lambda-\tau) j},
\end{align*}
which gives \eqref{eq:K}.

\medskip

By condition \eqref{def:UDC-c}, for all $j\geq l+1$ we have
\[\log\|Q(j)\|\leq \log C+\lambda_1 (1+\tau) j\leq C'j\leq C'(l+1)(j-l).\]
Therefore,
again by \eqref{def:UDC-a} and \eqref{def:UDC-b}, we have
\begin{align*}
K'_l(T)&\leq C'(l+1)\sum_{j\geq l+1}\|Z(j)\|\|Q_s(l,j)\|(j-l)\\
&\leq  C'C^2(l+1)\sum_{j\geq l+1}(j-l)e^{\tau j}e^{-\lambda(j-l)}
= C'C^2(l+1)e^{\tau l}\sum_{j\geq 1}je^{-(\lambda-\tau)j},
\end{align*}
which gives \eqref{eq:K'}.

\medskip

In view of \eqref{eq:K}, \eqref{def:UDC-a} and \eqref{def:UDC-b}, we have
\begin{align*}
C_{r_n}(T)&=\sum_{0\leq l\leq r_n}\|Q_s(l,r_n)\| \big(\|Z(l)\|K_{l-1}(T)+K_l(T)\big)\\
&\leq C^2D\sum_{0\leq l\leq r_n}e^{-\lambda(r_n-l)}\big(e^{\tau(r_n-l+1)}e^{\tau(r_n-l+1)}+e^{\tau(r_n-l)}\big)
\leq 2C^2De^{2\tau}\sum_{l\geq 0}e^{-(\lambda-2\tau)l},
\end{align*}
which gives \eqref{eq:C}.

\medskip

In view of \eqref{eq:K'}, \eqref{def:UDC-a} and \eqref{def:UDC-b}, for every $k\geq 0$ we have
\begin{align*}
C'_{k}(T)&=
\sum_{0\leq l\leq k}\|Q_s(l,k)\| \big(\|Z(l)\|K'_{l-1}(T)+K'_l(T)\big)\\
&\leq
C^2D\sum_{0\leq l\leq k}e^{-\lambda(k-l)} \big(le^{\tau l}e^{\tau (l-1)}+(l+1)e^{\tau l}\big)
\leq (k+1)2C^2De^{2\tau k} \sum_{j\geq 0}e^{-(\lambda-2\tau)j},
\end{align*}
which gives \eqref{eq:C'}.
\end{proof}

\section{Cocycles with logarithmic singularities}\label{sec:cocycles}
We define in this section norms on the spaces of cocycles $\varphi: I \to \mathbb{R}$ with logarithmic singularities over IETs that we are interested in (in view of the reduction explained in \S~\ref{sec:reductionsp}). We first introduce (in \S~\ref{sec:BV}) the class of cocycles of bounded variation over a given IET, then move to cocycles with logarithmic singularities. The norms we introduce make the space of such cocycles a Banach space. We then prove several properties which will be used later in the proofs of the main results.

\subsection{Bounded variation and absolutely continuous cocycles}\label{sec:BV}
Let us denote by $\bv(\sqcup_{\alpha\in \mathcal{A}} I_{\alpha})$
the space of functions $\varphi:I\to \mathbb{R}$ such that
the restriction $\varphi:I_{\alpha}\to \mathbb{R}$ is of
bounded variation for every $\alpha\in \mathcal{A}$.

\subsubsection{Banach structure on bounded variation cocycles}
For every function $\varphi\in\bv(\sqcup_{\alpha\in \mathcal{A}}
I_{\alpha})$ and $x\in I$ we will denote by $\varphi_{+}(x)$ and
$\varphi_-(x)$ the right-handed  and left-handed limit of
$\varphi$ at $x$ respectively. Let us denote
by $\Var{{\varphi}}{J}$ the total variation of ${\varphi}$ on the interval
$J\subset I$. Then set
\begin{equation}\label{variation}
\var \varphi:=\sum_{\alpha\in
\mathcal{A}}\Var{\varphi}{\operatorname{Int}I_{\alpha}}.
\end{equation}
The space $\bv(\sqcup_{\alpha\in \mathcal{A}} I_{\alpha})$
is equipped with the Banach norm
$\|\varphi\|_{\bv}=\|\varphi\|_{\sup}+\var\varphi$.

\subsubsection{Piecewise absolutely continuous cocycles}
 Denote by $\ac(\sqcup_{\alpha\in
\mathcal{A}} I_{\alpha})$ the subspace of cocycles in $\bv(\sqcup_{\alpha\in
\mathcal{A}} I_{\alpha})$  which are absolutely continuous on the interior of
each $I_{\alpha}$, $\alpha\in \mathcal{A}$.

Denote by $\bv^1(\sqcup_{\alpha\in \mathcal{A}} I_{\alpha})$ the
space of functions $\varphi\in\ac(\sqcup_{\alpha\in \mathcal{A}}
I_{\alpha})$ such that $\varphi'\in \bv(\sqcup_{\alpha\in
\mathcal{A}} I_{\alpha})$. The space $\ac(\sqcup_{\alpha\in
\mathcal{A}} I_{\alpha})$  equipped with the $\bv$ norm is a Banach space and
$\bv^1(\sqcup_{\alpha\in \mathcal{A}} I_{\alpha})$ is its dense subspace.

\subsubsection{Boundary operator on cocycles}\label{bd:ac}
Let $\partial_\pi:\bv(\sqcup_{\alpha\in \mathcal{A}} I_{\alpha})\to\R^{\Sigma(\pi)}$
be the linear operator given by
\[(\partial_\pi \varphi)_\mathcal{O}:=
\sum_{\alpha\in\mathcal{A}_{\mathcal{O}}^-}\varphi_-(r_\alpha)-\sum_{\alpha\in\mathcal{A}_{\mathcal{O}}^+}\varphi_{+}(l_\alpha)\]
for $\mathcal{O}\in\Sigma(\pi)$. This is an extension of the operator defined
in \S~\ref{sec:bddef}  from piecewise constant cocycles (in view of Remark~\ref{rk:constboundary}) to bounded variation cocycles. It associates to each singularity the \emph{sum of jumps} at the discontinuities associated to that singularity (see also Remark~\ref{rk:constboundary}).

\smallskip
\noindent Remark that if $\varphi \in \ac(\sqcup_{\alpha\in
\mathcal{A}} I_{\alpha})$ then
\begin{equation}\label{eq:sofj}
\sum_{\mathcal{O}\in\Sigma(\pi)}(\partial_\pi \varphi)_\mathcal{O}=\int_I\varphi'(x)\,dx=:s(\varphi).
\end{equation}

\subsection{Cocycles with logarithmic singularities}\label{defls}
Consider the space $\ol( \sqcup_{\alpha\in \mathcal{A}} I_{\alpha})$ of cocycles with logarithmic singularities of geometric type on  $\sqcup_{\alpha\in \mathcal{A}} I_{\alpha}$, defined in  \S~\ref{sec:sing} (see  in particular \eqref{def:logsing} for the form of such cocycles),
as well as its subspace 
 $\logs(\sqcup_{\alpha\in \mathcal{A}} I_{\alpha}) $,  which consist
of  cocycles with logarithmic singularities of geometric
type  (see  \S~\ref{sec:sing})
satisfying in addition also the symmetry condition \eqref{zerosymweak} (both also defined in \S~\ref{sec:sing}).
\noindent  We will also use the spaces
\begin{eqnarray*}
& \ol^{\bv}(\sqcup_{\alpha\in
\mathcal{A}} I_{\alpha}) :={\ol}(\sqcup_{\alpha\in \mathcal{A}}
I_{\alpha})+{\bv}(\sqcup_{\alpha\in \mathcal{A}} I_{\alpha})\nonumber \\
&  {\logs}^{\bv}(\sqcup_{\alpha\in \mathcal{A}} \nonumber
I_{\alpha}) :={\logs}(\sqcup_{\alpha\in \mathcal{A}}
I_{\alpha})+{\bv}(\sqcup_{\alpha\in \mathcal{A}} I_{\alpha}),
\end{eqnarray*}
consisting of all functions with logarithmic singularities
(respectively symmetric logarithmic singularities) of
geometric type  of the form \eqref{def:logsing} for which
we  require only that $g_\varphi\in\bv( \sqcup_{\alpha\in
\mathcal{A}} I_{\alpha})$. Notice that the space  $\bv$ ($\ac$ resp.) coincides with the
subspace of functions $\varphi\in {\ol}^{\bv}$ ($\ol$ resp.) as
in \eqref{def:logsing} such that $C^{\pm}_\alpha=0$ for all
$\alpha\in\mathcal{A}$.


\subsubsection{Norms and Banach space structures}
 We now define a norm on ${\ol}^{\bv}(\sqcup_{\alpha\in \mathcal{A}}
I_{\alpha})$ which makes it a Banach space.
\begin{definition}\label{LVdef}
For every $\varphi\in{\ol}^{\bv}(\sqcup_{\alpha\in \mathcal{A}}
I_{\alpha})$ of the form \eqref{def:logsing} set
\[\bl(\varphi) := \sum_{\alpha\in\mathcal{A}}(|C^+_\alpha|+|C^-_\alpha|),\quad
\;\lv(\varphi):=\bl(\varphi)+\var g_\varphi.\]
\end{definition}
\noindent
The space ${\ol}^{\bv}(\sqcup_{\alpha\in \mathcal{A}}
I_{\alpha})$ equipped with the norm
\[\|\varphi\|_{\lv}=\bl(\varphi)+\|g_\varphi\|_{\bv} \] becomes a
Banach space. Then, since  ${\ol}(\sqcup_{\alpha\in \mathcal{A}}
I_{\alpha})$ and ${\logs}(\sqcup_{\alpha\in \mathcal{A}}
I_{\alpha})$ are closed subspaces of ${\ol}^{\bv}(\sqcup_{\alpha\in \mathcal{A}}
I_{\alpha})$, they also inherit the Banach space structure. Moreover, for every $\varphi\in{\ol}^{\bv}(\sqcup_{\alpha\in \mathcal{A}}
I_{\alpha})$ we have
\begin{equation}\label{eq:norl1lv}
\frac{1}{|I|}\|\varphi\|_{L^1(I)}\leq (1+|\log|I||)\|\|\varphi\|_{\lv}.
\end{equation}
Indeed, since every $\varphi\in{\ol}^{\bv}(\sqcup_{\alpha\in \mathcal{A}}
I_{\alpha})$ is of the form \eqref{def:logsing}, we have
\[\frac{1}{|I|}\|\varphi\|_{L^1(I)}\leq \frac{\bl(\varphi)}{|I|}\int_I|\log x|\,\mathrm{d}x+\frac{\|g_{\varphi}\|_{L^1(I)}}{|I|}
\leq (1+|\log |I||)\bl(\varphi)+\|g_{\varphi}\|_{\sup}.\]

\smallskip
We can associate a value also to each \emph{saddle} in $\mathrm{Fix}(\psi_\R)$ individually as follows. Using the notation introduced in \S~\ref{sec:kernel}, let  $\mathcal{O}\in\Sigma(\pi)$ be a saddle and let $\mathcal{A}_\mathcal{O}^-, \mathcal{A}_\mathcal{O}^+$  be the sets of letters defined in \eqref{defAO}, associated respectively to right and left endpoints of intervals which correspond to this saddle. Then
\begin{equation}\label{zerosym}
\Delta_\mathcal{O}(\varphi):=\sum_{\alpha\in\mathcal{A}_\mathcal{O}^-}C^-_{\alpha}-
\sum_{\alpha\in\mathcal{A}_\mathcal{O}^+}C^+_{\alpha},
\end{equation}
is the value of the \emph{asymmetry at the saddle} labelled by $\mathcal{O}$. We also set
\[\as(\varphi):=\sum_{\mathcal{O}\in\Sigma(\pi)}|\Delta_\mathcal{O}(\varphi)|.\]
\noindent Comparing the above definition and \eqref{zerosym} with  Definition~\ref{LVdef}, one sees that
\begin{equation}\label{eq:asbl}
\as(\varphi)\leq \bl(\varphi).
\end{equation}

\subsubsection{Properties of the cocycles arising in the reduction}\label{sec:cocycleproperties}
As we saw in \S~\ref{sec:reductions}, the study of extensions of locally Hamiltonian flows can be reduced to the study of skew product extensions of IETs with logarithmic singularities (see Proposition~\ref{prop:redtoskew}). We now recall the properties of the cocycles which appear from this reduction, which were described in \cite{Fr-Ul} (see the proof of Theorem~6.1 and Proposition~6.1).

\smallskip
Let $M' \subset M$ be a minimal component of a locally Hamiltonian flow $\psi_\R$ with non-degenerate saddles. Fix a section $\gamma$ as in the proof of Proposition~\ref{prop:redtoskew} and consider the map that associate   $f\in C^{2+\epsilon}(M')$ to the cocycle $\varphi_f$ which appears in the skew-product presentation of the Poincar{\'e} map of the extension $\Phi^f_\R$ to $\gamma \times \R$ (see  Proposition~\ref{prop:redtoskew}).

\begin{proposition}[Properties of the skew-products cocycles, { see \cite{Fr-Ul} and in particular\footnote{{The statements are all part of Theorem~6.1 in \cite{Fr-Ul}, but $(i)$, namely the boundedness of $\varphi_{|f|}$ when $f\in C^{1}(M)$ and $f$ vanishes on $\mathrm{Fix}(\psi_\R)\cap M'$.  This last statement can be proved with the same arguments used in  \cite{Fr-Ul} to prove Theorem 6.1.}} Theorem 6.1}]\label{prop:oper}
For every $\epsilon>0$ the map  from $C^{2+\epsilon}(M)$ to ${\ol}(\sqcup_{\alpha\in \mathcal{A}}I_\alpha)$ which maps
\[f\mapsto \varphi_f\in {\ol}(\sqcup_{\alpha\in \mathcal{A}}
I_{\alpha})\]
is a bounded linear operator. Moreover, $g_{\varphi_f}'\in {\ol}(\sqcup_{\alpha\in \mathcal{A}}I_\alpha)$
and there exists $C>0$ such that
\[C^{-1}\sum_{\sigma\in \mathrm{Fix}(\psi_\R)\cap M'}|f(\sigma)|\leq\bl(\varphi)\leq C\sum_{\sigma\in \mathrm{Fix}(\psi_\R)\cap M'}|f(\sigma)|\text{ for every }f\in C^{2+\epsilon}(M).\]
{Furthermore:
\begin{itemize}
\item[(i)] if $f\in C^{1}(M)$ and $f(\sigma)=0$ for all $\sigma\in \mathrm{Fix}(\psi_\R)\cap M'$ then the map $\varphi_{|f|}:I\to\R$ is bounded;\vspace{.9mm}
\item[(ii)]
If  $\psi_\R\in \mathcal{U}_{min}$, so $M'=M$, then $\mathcal{AS}(\varphi_f)=0$ and $\partial_{\pi}(\varphi_f)=0$.
\end{itemize} }
\end{proposition}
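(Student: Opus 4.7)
The plan is to build on the explicit integral representation \eqref{Poincarecocyle}, namely $\varphi_f(x)=\int_0^{r(x)}f(\psi_t x)\,dt$, and combine it with local normal forms near the non-degenerate (Morse) saddles of $\psi_\R$. Most of the assertions (linearity, the two-sided estimate on $\bl$, the absolute continuity of $g_{\varphi_f}'$, and statement $(ii)$) are established in the proof of Theorem~6.1 of \cite{Fr-Ul}, so I would only sketch those and concentrate on the new statement $(i)$. First, I would fix a simple saddle $\sigma\in\mathrm{Fix}(\psi_\R)\cap M'$ and use the Morse lemma (in its Hamiltonian version) to obtain local coordinates $(u,v)$ around $\sigma$ in which the local Hamiltonian reads $H(u,v)=uv$; the flow then has the explicit form $(u,v)\mapsto(ue^t,ve^{-t})$, and the level sets $\{uv=c\}$ parametrize the orbits passing a distance of order $\sqrt{|c|}$ from $\sigma$.

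The second step is to localize the contribution of each saddle to $\varphi_f$. After choosing the section $\gamma$ as in the proof of Proposition~\ref{prop:redtoskew}, for each discontinuity $x_i$ of $T$ there is exactly one saddle $\sigma(i)\in \mathrm{Fix}(\psi_\R)\cap M'$ whose neighbourhood is visited by the orbit starting at $x$ when $x$ lies close to $x_i$. On such orbit segments, I would split
\[
\int_0^{r(x)}f(\psi_t x)\,dt \;=\; f(\sigma(i))\int_{J_\sigma(x)} dt \;+\; \int_{J_\sigma(x)}\bigl(f(\psi_t x)-f(\sigma(i))\bigr)\,dt \;+\;R(x),
\]
where $J_\sigma(x)$ is the time interval during which $\psi_t x$ lies in a fixed small box around $\sigma(i)$ and $R(x)$ collects the integral over the complement, which depends smoothly on $x$. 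A direct computation in the coordinates $(u,v)$ shows that the transit time $|J_\sigma(x)|$ has precisely logarithmic growth as $x\to x_i^\pm$, producing constants $C_\alpha^\pm$ which, up to universal factors depending only on the local geometry at $\sigma(i)$, equal $f(\sigma(i))$. This immediately yields linearity of $f\mapsto\varphi_f$, the two-sided estimate $\bl(\varphi_f)\asymp \sum_\sigma |f(\sigma)|$, and (by a further argument using $C^{2+\epsilon}$-regularity of $f$ and standard integration along hyperbolic trajectories) the fact that the remainder is in $\ol$ with $g_{\varphi_f}'\in \ol$.

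For statement $(i)$, suppose $f\in C^1(M)$ with $f(\sigma)=0$ for every $\sigma\in\mathrm{Fix}(\psi_\R)\cap M'$. The key point is that, by the mean value theorem, there exists $K>0$ such that $|f(p)|\leq K\, d(p,\mathrm{Fix}(\psi_\R))$ for every $p\in M$, and since $|f|$ is also Lipschitz with the same bound, the argument applies to $\varphi_{|f|}$ as well. In the Morse coordinates at $\sigma(i)$, the distance from the orbit to $\sigma(i)$ at time $t\in J_\sigma(x)$ is of order $\sqrt{u^2e^{2t}+v^2e^{-2t}}$ with $uv=H(x)$; integrating $d(\psi_t x,\sigma(i))$ over $J_\sigma(x)$ gives a uniformly bounded quantity (independent of how close $H(x)$ is to $0$), because the exponential growth of the arclength compensates the logarithmic length of the integration interval. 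Thus both terms in the above decomposition are uniformly bounded in $x$, and $\varphi_{|f|}$ is bounded on $I$.

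The hardest part in the full proof (as carried out in \cite{Fr-Ul}) is $(ii)$: the vanishing of $\as(\varphi_f)$ and of $\partial_\pi(\varphi_f)$ when $\psi_\R\in\mathcal{U}_{min}$. Here I would not redo the argument but would just recall its essence: the absence of saddle loops homologous to zero forces the four separatrix sectors at each simple saddle $\sigma$ to contribute to $\varphi_f$ through balanced constants (two positive, two negative with the same weight $f(\sigma)$), so that both the global asymmetry $\as(\varphi_f)$ and each component $(\partial_\pi\varphi_f)_\mathcal{O}$ (which is exactly the signed sum of jumps at a fixed saddle $\mathcal{O}$, by the definition in \S\ref{bd:ac} and Remark~\ref{rk:constboundary}) vanish identically. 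The main bookkeeping obstacle is identifying, for each orbit $\mathcal{O}\in \Sigma(\pi)$ of the Veech permutation $\sigma_\pi$, the correct pairing of endpoints of intervals $I_\alpha$ with incoming and outgoing separatrices at the geometric saddle represented by $\mathcal{O}$; this identification is done in Veech's original work (see Lemmas~\ref{bcharh} and \ref{invario}) and is the technical heart of $(ii)$.
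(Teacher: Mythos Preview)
Your proposal is correct and aligns with the approach the paper invokes: the paper does not give its own proof of this proposition but cites Theorem~6.1 of \cite{Fr-Ul} (with the remark that (i) follows by the same arguments), and your sketch---local Morse normal form $H=uv$ near each simple saddle, decomposition of $\varphi_f(x)=\int_0^{r(x)}f(\psi_t x)\,dt$ into a near-saddle piece plus a smooth remainder, and for (i) the Lipschitz bound $|f(p)|\le K\,d(p,\mathrm{Fix}(\psi_\R))$ combined with the elementary fact that $\int_{J_\sigma(x)}\sqrt{u_0^2e^{2t}+v_0^2e^{-2t}}\,dt$ is uniformly bounded---is exactly that strategy. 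One small inaccuracy worth fixing: in your discussion of (ii) you describe $(\partial_\pi\varphi_f)_{\mathcal O}$ via \S\ref{bd:ac} and Remark~\ref{rk:constboundary}, but since $\varphi_f$ has logarithmic singularities the relevant definition is the extended one in \S\ref{bd:ls} (Definition~\ref{Odef}); the conceptual picture you give is right, but the reference should point there.
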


\subsection{Properties of cocycles with logarithmic singularities.}\label{prooflemlos}
We state and prove in this section a number of elementary properties of cocycles with logarithmic singularities which will be used in the construction of the correction operators.

\subsubsection{Control of tails of the derivatives growth} The derivative of a cocycle with logarithmic singularities has singularities which explode at most as $1/x$, as stated in the following Lemma.
{\begin{lemma}\label{rk:controlconstants}
Suppose that ${\ol}(\sqcup_{\alpha\in \mathcal{A}}I_{\alpha})$ and $g_{\varphi}=0$.
For every $\alpha\in\mathcal{A}$ denote by $m_\alpha$ the middle point of the interval $I_\alpha$, i.e.\ $m_\alpha:=\tfrac{1}{2}(l_\alpha+r_\alpha)$.
Then
\begin{align}\label{ass12}
\begin{split}
|\varphi'(x)(x-l_\alpha)|&\leq \bl(\varphi)\text{ for }x\in (l_\alpha,m_\alpha],\\
|\varphi'(x)(x-r_\alpha)|&\leq \bl(\varphi)\text{ for  }x\in [m_\alpha, r_\alpha).
\end{split}
\end{align}
\end{lemma}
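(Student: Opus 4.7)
The plan is to differentiate the explicit form \eqref{def:logsing} of $\varphi$ (with $g_\varphi=0$) and bound each summand separately. Introduce for each $\beta\in\mathcal{A}$ the one-sided ``distances''
\[
d^+(x,l_\beta):=|I|\{(x-l_\beta)/|I|\},\qquad d^-(x,r_\beta):=|I|\{(r_\beta-x)/|I|\},
\]
which measure the gap from $l_\beta$ to $x$ going right (resp.~from $x$ to $r_\beta$ going right), with wrap-around through $|I|$. Since the fractional part has derivative $1$ (resp.~$-1$) off its jumps, one immediately computes
\[
\varphi'(x)\;=\;-\sum_{\beta\in\mathcal{A}}\frac{C^+_\beta}{d^+(x,l_\beta)}\;+\;\sum_{\beta\in\mathcal{A}}\frac{C^-_\beta}{d^-(x,r_\beta)}.
\]

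Given $x\in (l_\alpha,m_\alpha]$, after multiplying by $(x-l_\alpha)$ and applying the triangle inequality, the estimate \eqref{ass12} reduces to the two pointwise bounds $(x-l_\alpha)\leq d^+(x,l_\beta)$ and $(x-l_\alpha)\leq d^-(x,r_\beta)$ for every $\beta\in\mathcal{A}$. For the diagonal term $\beta=\alpha$ these are obvious: $d^+(x,l_\alpha)=x-l_\alpha$ and $d^-(x,r_\alpha)=r_\alpha-x\geq |I_\alpha|/2\geq x-l_\alpha$ (using that $x\leq m_\alpha$). For $\beta\neq\alpha$, I will use that the intervals $I_\beta$ partition $I$, so that $\{l_\beta,r_\beta\}\cap(l_\alpha,r_\alpha)=\emptyset$, reducing the verification to a short case analysis: each endpoint $l_\beta$ (resp.~$r_\beta$) is either to the left of $l_\alpha$ (in which case $d^+$ or $d^-$ is directly $\geq x-l_\alpha$) or to the right of $r_\alpha$ (in which case the wrap-around contribution $|I|$ dominates and again gives the bound, using $2x\leq l_\alpha+r_\alpha$ for the $d^-$ case). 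Summing over $\beta$ then yields
\[
|(x-l_\alpha)\varphi'(x)|\;\leq\;\sum_{\beta\in\mathcal{A}}(|C^+_\beta|+|C^-_\beta|)\;=\;\bl(\varphi).
\]

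The symmetric inequality for $x\in[m_\alpha,r_\alpha)$ follows by the mirror-image argument, exchanging the roles of the $d^+$ and $d^-$ sums and of $l_\alpha$ and $r_\alpha$. The only mildly delicate step is the wrap-around bookkeeping in the case $\beta\neq\alpha$, but it is routine once the positions of $l_\beta,r_\beta$ relative to $I_\alpha$ are made explicit through the partition property.
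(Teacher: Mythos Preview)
Your proof is correct and follows essentially the same approach as the paper's: differentiate the explicit form of $\varphi$, then show that each denominator $|I|\{(x-l_\beta)/|I|\}$ and $|I|\{(r_\beta-x)/|I|\}$ dominates $x-l_\alpha$ on $(l_\alpha,m_\alpha]$, and sum. The paper simply asserts the two key inequalities $\{(x-l_\beta)/|I|\}\geq (x-l_\alpha)/|I|$ and $\{(r_\beta-x)/|I|\}\geq (r_\alpha-x)/|I|\geq (x-l_\alpha)/|I|$ without splitting into cases, whereas you spell out the case analysis (diagonal vs.\ off-diagonal, left vs.\ right of $I_\alpha$, with wrap-around); but the content is identical.
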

\begin{proof}
Indeed, for every $x\in (l_\alpha,m_\alpha]$ and $\beta\in\mathcal{A}$ we have
\[\Big\{\frac{x-l_\beta}{|I|}\Big\}\geq \frac{x-l_\alpha}{|I|},\ \Big\{\frac{r_\beta-x}{|I|}\Big\}\geq \frac{r_\alpha-x}{|I|}\geq\frac{x-l_\alpha}{|I|}.\]
It follows that
\begin{align*}
|\varphi'(x)(x-l_\alpha)|&\leq \sum_{\beta\in\mathcal{A}}\frac{|C^+_\beta|(x-l_\alpha)}{|I|\{(x-l_\beta)/|I|\}}+
\sum_{\beta\in\mathcal{A}} \frac{|C^-_\beta|(x-l_\alpha)}{|I|\{(r_\beta-x)/|I|\}}
 \leq \sum_{\beta\in\mathcal{A}}(|C^+_\beta|+|C^-_\beta|)=\bl(\varphi).
\end{align*}
The second inequality of \eqref{ass12} follows by the same arguments.
\end{proof}}

\subsubsection{Control of mean value on subintervals.}
For every integrable function $f:I\to\R$ and a subinterval
$J\subset I$ let $m(f,J)$ stand for the \emph{mean value} of $f$ on $J$,
i.e.
\[m(f,J)=\frac{1}{|J|}\int_Jf(x)\,dx.\]

\begin{proposition}[Proposition 2.5 in \cite{Fr-Ul}]\label{lemlos}
If $\varphi\in{\ol}^{\bv}(\sqcup_{\alpha\in \mathcal{A}}
I_{\alpha})$ and $J\subset I_\alpha$ for some
$\alpha\in\mathcal{A}$, then
\begin{equation}\label{meanv1}
|m(\varphi,J)-m(\varphi,I_\alpha)|\leq
\lv(\varphi)\left(4+\frac{|I_\alpha|}{|J|}\right)
\end{equation}
and
\begin{equation}\label{meanv2}
\frac{1}{|J|}\int_J|\varphi(x)-m(\varphi,J)|\,dx\leq
8\lv(\varphi).
\end{equation}
\end{proposition}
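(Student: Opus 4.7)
Our plan is to reduce both estimates to a single scale-invariant bound on the mean deviation of the logarithm. For an integrable function $f$ on an interval $J$, set $\operatorname{Osc}(f,J):=\frac{1}{|J|}\int_J|f-m(f,J)|\,\ud x$. The key input is
\begin{equation}\label{plan:keylog}
\operatorname{Osc}(-\log y,[p,q])\leq \tfrac{2}{e}\qquad\text{for every }0\leq p<q,
\end{equation}
which follows from the translation-dilation identity $-\log(\lambda y)=-\log y-\log\lambda$ (so both $-\log y$ and its mean shift by the same constant under rescaling, and $\operatorname{Osc}(-\log y,[p,q])$ depends only on the ratio $q/p$), together with the extremal computation $\operatorname{Osc}(-\log y,[0,1])=\int_0^1|\log u+1|\,\ud u=2/e$ and monotonicity in $q/p$ (a quick consequence of the concavity of $\phi(t)=-t\log t$).

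To deduce \eqref{meanv2}, decompose $\varphi=\varphi_{\log}+g_\varphi$ as in \eqref{def:logsing} and exploit subadditivity of $\operatorname{Osc}(\cdot,J)$. Each logarithmic summand $-C_\beta^\pm\log(|I|\{\cdot\})$, restricted to $J\subset I_\alpha$, has as argument of $\log$ a monotone affine function of $x$ with strictly positive image: this is immediate for $\beta=\alpha$, and for $\beta\neq\alpha$ it holds because the partition endpoints $l_\beta$, $r_\beta$ are distinct from $l_\alpha$, $r_\alpha$ and lie outside the open interior of $I_\alpha$, so that $|I|\{(x-l_\beta)/|I|\}$ and $|I|\{(r_\beta-x)/|I|\}$ stay strictly positive on $I_\alpha$. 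Applying \eqref{plan:keylog} after the affine change of variables, each summand contributes at most $(2/e)|C_\beta^\pm|$ to $\operatorname{Osc}(\varphi,J)$. Combined with $\operatorname{Osc}(g_\varphi,J)\leq\var_{I_\alpha}g_\varphi$, this gives
\[
\operatorname{Osc}(\varphi,J)\leq\tfrac{2}{e}\bl(\varphi)+\var g_\varphi\leq\lv(\varphi)\leq 8\,\lv(\varphi),
\]
which is \eqref{meanv2} with generous constant.

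For \eqref{meanv1}, the elementary identity $|I_\alpha|\,m(f,I_\alpha)=|J|\,m(f,J)+\int_{I_\alpha\setminus J}f\,\ud x$ rearranges to
\[
|m(f,J)-m(f,I_\alpha)|=\tfrac{1}{|J|}\Bigl|\int_{I_\alpha\setminus J}(f-m(f,I_\alpha))\,\ud x\Bigr|\leq\tfrac{|I_\alpha|}{|J|}\,\operatorname{Osc}(f,I_\alpha).
\]
Applied to $\varphi_{\log}$ with $\operatorname{Osc}(\varphi_{\log},I_\alpha)\leq(2/e)\bl(\varphi)$, this contributes $(2/e)\bl(\varphi)\,|I_\alpha|/|J|$. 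For $g_\varphi$ the sharper estimate $|m(g_\varphi,J)-m(g_\varphi,I_\alpha)|\leq\var_{I_\alpha}g_\varphi$ holds \emph{without} the factor $|I_\alpha|/|J|$, since both means lie between $\inf_{I_\alpha}g_\varphi$ and $\sup_{I_\alpha}g_\varphi$ and $\sup-\inf\leq\var$. Combining yields
\[
|m(\varphi,J)-m(\varphi,I_\alpha)|\leq\tfrac{2}{e}\bl(\varphi)\tfrac{|I_\alpha|}{|J|}+\var g_\varphi\leq\lv(\varphi)\Bigl(4+\tfrac{|I_\alpha|}{|J|}\Bigr),
\]
which is \eqref{meanv1}. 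The only (minor) obstacle is \eqref{plan:keylog}, a one-variable calculus check; the scale invariance of $-\log$ is what makes the bound \emph{uniform} in the location of the singularities within the IET, allowing the ``cross-terms'' ($\beta\neq\alpha$) to be controlled as cleanly as the ``local'' ones ($\beta=\alpha$).
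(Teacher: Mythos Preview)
Your argument is correct, and in fact yields sharper constants than stated: you obtain $\operatorname{Osc}(\varphi,J)\leq\tfrac{2}{e}\bl(\varphi)+\var g_\varphi\leq\lv(\varphi)$ for \eqref{meanv2} and $|m(\varphi,J)-m(\varphi,I_\alpha)|\leq\tfrac{2}{e}\bl(\varphi)\tfrac{|I_\alpha|}{|J|}+\var g_\varphi$ for \eqref{meanv1}. The paper itself gives no proof here; the proposition is quoted verbatim from the earlier work \cite{Fr-Ul}, so there is no in-paper argument to compare against.

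One small caveat: your justification of the key bound $\operatorname{Osc}(-\log y,[p,q])\leq 2/e$ via ``monotonicity in $q/p$, a quick consequence of the concavity of $-t\log t$'' is a bit telegraphic and would benefit from being spelled out. If you only need the constants actually stated in the proposition (rather than your improved ones), a painless substitute is the weaker bound $\operatorname{Osc}(-\log y,[p,q])\leq 1$, which follows immediately from
\[
\operatorname{Osc}(f,J)\leq\frac{1}{|J|^2}\iint_{J\times J}|f(x)-f(y)|\,\ud x\,\ud y
\]
together with the explicit evaluation $\frac{1}{(q-p)^2}\iint_{[p,q]^2}|\log(y/x)|\,\ud x\,\ud y=1-\tfrac{2p}{(q-p)^2}\bigl(q\log(q/p)-(q-p)\bigr)\leq 1$ (the bracket is nonnegative since $\log t\geq 1-1/t$). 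With the constant $1$ in place of $2/e$ your two displayed combinations still sit comfortably inside $8\lv(\varphi)$ and $\lv(\varphi)(4+|I_\alpha|/|J|)$ respectively.
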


{\begin{lemma}\label{lemma:control}
Let $\varphi\in{\ol}^{\bv}(\sqcup_{\alpha\in \mathcal{A}}
I_{\alpha})$. Then for every $x\in \Int I_\alpha$ we have
\begin{align}\label{eq:valuephi}
\begin{aligned}
|\varphi(x)-m(\varphi,[l_\alpha,m_\alpha])|&\leq \lv(\varphi)\Big(1+\log\frac{|I_\alpha|}{(x-l_\alpha)}\Big)\text{ if }x\in (l_\alpha,m_\alpha],\\
|\varphi(x)-m(\varphi,[m_\alpha, r_\alpha])|&\leq \lv(\varphi)\Big(1+\log\frac{|I_\alpha|}{(r_\alpha-x)}\Big)\text{ if  }x\in [m_\alpha, r_\alpha).
\end{aligned}
\end{align}
\end{lemma}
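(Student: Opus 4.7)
The plan is to decompose $\varphi$ into its singular part and its bounded-variation correction, and then bound each piece separately using the derivative estimate from Lemma~\ref{rk:controlconstants} and the classical mean-value bound for BV functions, respectively.

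\smallskip

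More precisely, write $\varphi = \varphi_s + g_\varphi$ where $\varphi_s \in \ol(\sqcup_{\alpha\in\mathcal{A}} I_\alpha)$ has the same singular coefficients $C^\pm_\beta$ as $\varphi$ but with $g_{\varphi_s} \equiv 0$. By Lemma~\ref{rk:controlconstants} applied to $\varphi_s$, for any $t \in (l_\alpha, m_\alpha]$ one has $|\varphi_s'(t)| \leq \bl(\varphi)/(t - l_\alpha)$. Hence, for any two points $x, y \in (l_\alpha, m_\alpha]$, integrating this derivative bound yields
\[
|\varphi_s(x) - \varphi_s(y)| \;\leq\; \bl(\varphi)\Big|\log\tfrac{y - l_\alpha}{x - l_\alpha}\Big|.
\]

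\smallskip

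Next I would average this pointwise inequality over $y \in [l_\alpha, m_\alpha]$ to bound $|\varphi_s(x) - m(\varphi_s,[l_\alpha,m_\alpha])|$. Splitting the range of $y$ at $x$ and computing the two elementary integrals (changes of variable $s = y - l_\alpha$ and use of $\int_0^1 (-\log t)\,dt = 1$), one gets
\[
|\varphi_s(x) - m(\varphi_s,[l_\alpha,m_\alpha])| \;\leq\; \bl(\varphi)\left[\log\tfrac{|I_\alpha|}{x - l_\alpha} + 1\right],
\]
after absorbing the harmless additive constants (using $(x-l_\alpha)/|I_\alpha| \leq 1/2$). For the BV part $g_\varphi$, the bound is immediate from the definition of the mean value: $|g_\varphi(x) - m(g_\varphi, [l_\alpha, m_\alpha])| \leq \Var(g_\varphi, \Int I_\alpha) \leq \var g_\varphi$.

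\smallskip

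Adding the two estimates and using $\bl(\varphi) + \var g_\varphi = \lv(\varphi)$, together with the fact that $\log(|I_\alpha|/(x-l_\alpha)) \geq \log 2 > 0$, one obtains the first inequality of \eqref{eq:valuephi}. The second inequality follows by the symmetric argument, using the companion estimate $|\varphi_s'(t)(t - r_\alpha)| \leq \bl(\varphi)$ on $[m_\alpha, r_\alpha)$ from Lemma~\ref{rk:controlconstants} and integrating from $x$ toward $r_\alpha$. I expect no serious obstacle: the only delicate point is keeping track of constants so that the final bound has the clean form $\lv(\varphi)(1 + \log(|I_\alpha|/(x-l_\alpha)))$ rather than a looser expression with an extra multiplicative constant.
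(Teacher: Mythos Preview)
Your proposal is correct and follows essentially the same approach as the paper: decompose $\varphi=\varphi_s+g_\varphi$, use Lemma~\ref{rk:controlconstants} to get $|\varphi_s'(t)(t-l_\alpha)|\leq\bl(\varphi)$, integrate and average to bound $|\varphi_s(x)-m(\varphi_s,[l_\alpha,m_\alpha])|$, then add the trivial BV bound for $g_\varphi$. The paper's proof merely abstracts your integration-and-averaging step as a general inequality for $C^1$ maps $f$ with $|f'(x)(x-x_0)|\leq C$ (its Step~1), but the argument is otherwise identical, including the final observation that $\var g_\varphi\leq \var g_\varphi\cdot(1+\log\tfrac{|I_\alpha|}{x-l_\alpha})$ needed to combine the two bounds into $\lv(\varphi)(1+\log\tfrac{|I_\alpha|}{x-l_\alpha})$.
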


\begin{proof}
{\it Step 1:} First note that for any $C^1$-map $f:(x_0,x_1]\to\R$ such that $|f'(x)(x-x_0)|\leq C$ for $x\in(x_0,x_1]$, we have that  for all $t,s\in(x_0,x_1]$
\[|f(s)-f(t)|=|\int_t^s f'(u)\,du|\leq C|\int_t^s \frac{1}{u-x_0}\,du|=C|\log\frac{t-x_0}{s-x_0}|\]
and hence that
\begin{align}\label{eq:estlog}
\begin{split}
|f(s)-m(f,[x_0,x_1])|&\leq \frac{C}{x_1-x_0}\int_{x_0}^{x_1}|\log\frac{t-x_0}{s-x_0}|\,dt\\
&=C\Big(\log\frac{x_1-x_0}{s-x_0}+1-2\frac{x_1-s}{x_1-x_0}\Big)\leq C\Big(\log\frac{x_1-x_0}{s-x_0}+1\Big).
\end{split}
\end{align}

\noindent {\it Step 2:} Suppose now that $\varphi\in{\ol}(\sqcup_{\alpha\in \mathcal{A}}
I_{\alpha})$ with $g_\varphi=0$. In view of Lemma~\ref{rk:controlconstants} (see \eqref{ass12}), we can apply \eqref{eq:estlog} to $f=\varphi$ restricted to $I_\alpha$ and taking $C=\lv(\varphi)=\bl(\varphi)$. This gives \eqref{eq:valuephi} in the case $g_\varphi=0$.

\smallskip
\noindent {\it Step 3:} Consider now the general case. For every $g\in{\bv}(\sqcup_{\alpha\in \mathcal{A}}
I_{\alpha})$ and any interval $J\subset I_\alpha$, we have
\begin{equation}\label{eq:bvsup}
|g(x)-m(g,J)|\leq\var(g)\text{ for every }x\in J.
\end{equation}
Adding this equality to the  result of \textit{Step 2}, we obtain \eqref{eq:valuephi} for any $\varphi\in{\ol}(\sqcup_{\alpha\in \mathcal{A}}
I_{\alpha})$.
\end{proof}}

From  Lemma~\ref{lemma:control}  and \eqref{eq:bvsup}, we immediately get the following Corollary:
\begin{corollary}
Let $\varphi\in{\ol}^{\bv}(\sqcup_{\alpha\in \mathcal{A}}
I_{\alpha})$. Then for every $x\in \Int I_\alpha$ we have
\begin{equation}\label{eq:valuephi1}
|\varphi(x)|\leq\frac{2|I|}{|I_\alpha|}\frac{\|\varphi\|_{L^1(I)}}{|I|}+ \lv(\varphi)\Big(1+\log\frac{|I_\alpha|}{\min\{x-l_\alpha,r_\alpha-x\}}\Big).
\end{equation}
If additionally $\varphi\in{\bv}(\sqcup_{\alpha\in \mathcal{A}}
I_{\alpha})$ then
\begin{equation}\label{eq:supL1bv}
\|\varphi\|_{\sup}\leq \frac{|I|}{\min_{\alpha\in\mathcal{A}}|I_\alpha|}\frac{\|\varphi\|_{L^1(I)}}{|I|}+\var(\varphi).
\end{equation}
\end{corollary}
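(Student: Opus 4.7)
The plan is to decompose $\varphi(x)$ as a sum of a mean value on a suitable interval plus a deviation from that mean, then estimate each term separately using the results already established (Lemma~\ref{lemma:control} and the bound \eqref{eq:bvsup}).

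For \eqref{eq:valuephi1}, I would fix $\alpha\in\mathcal{A}$ and $x\in\Int I_\alpha$, and split into two symmetric cases depending on which half of $I_\alpha$ contains $x$. Assume first $x\in(l_\alpha,m_\alpha]$ (the case $x\in[m_\alpha,r_\alpha)$ is identical), and write
\[\varphi(x)=\bigl(\varphi(x)-m(\varphi,[l_\alpha,m_\alpha])\bigr)+m(\varphi,[l_\alpha,m_\alpha]).\]
The first term is bounded directly by Lemma~\ref{lemma:control} by $\lv(\varphi)\bigl(1+\log(|I_\alpha|/(x-l_\alpha))\bigr)$, which in the current case is exactly $\lv(\varphi)\bigl(1+\log(|I_\alpha|/\min\{x-l_\alpha,r_\alpha-x\})\bigr)$. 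For the second term, since $[l_\alpha,m_\alpha]\subset I$ has length $|I_\alpha|/2$, we simply estimate
\[\bigl|m(\varphi,[l_\alpha,m_\alpha])\bigr|\leq\frac{2}{|I_\alpha|}\int_{l_\alpha}^{m_\alpha}|\varphi|\,dx\leq\frac{2}{|I_\alpha|}\|\varphi\|_{L^1(I)}=\frac{2|I|}{|I_\alpha|}\cdot\frac{\|\varphi\|_{L^1(I)}}{|I|}.\]
Adding the two bounds yields \eqref{eq:valuephi1}.

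For \eqref{eq:supL1bv}, I would use the same splitting strategy but replace Lemma~\ref{lemma:control} by the simpler estimate \eqref{eq:bvsup} valid for $\varphi\in\bv$. Namely, for any $x\in\Int I_\alpha$, write
\[\varphi(x)=\bigl(\varphi(x)-m(\varphi,I_\alpha)\bigr)+m(\varphi,I_\alpha).\]
By \eqref{eq:bvsup} applied to $J=I_\alpha$ the first term is bounded in absolute value by $\var(\varphi)$, while
\[\bigl|m(\varphi,I_\alpha)\bigr|\leq\frac{1}{|I_\alpha|}\|\varphi\|_{L^1(I)}\leq\frac{|I|}{\min_{\alpha\in\mathcal{A}}|I_\alpha|}\cdot\frac{\|\varphi\|_{L^1(I)}}{|I|}.\]
Taking the supremum over $x\in\Int I_\alpha$ and then over $\alpha\in\mathcal{A}$ gives \eqref{eq:supL1bv}.

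There is no significant obstacle here: the statement is a direct consequence of Lemma~\ref{lemma:control} and \eqref{eq:bvsup}, once one observes that the trivial bound on the mean $|m(\varphi,J)|\leq\|\varphi\|_{L^1(I)}/|J|$ supplies the missing piece. The only mild care needed is the choice of the subinterval $J$ on which to take the mean: in the $\ol^{\bv}$ case one must take $J$ to be a half of $I_\alpha$ (to match the logarithmic estimate in Lemma~\ref{lemma:control}), which accounts for the factor $2$ in \eqref{eq:valuephi1}, whereas in the pure $\bv$ case one can take $J=I_\alpha$ directly.
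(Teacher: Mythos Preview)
Your proof is correct and follows exactly the approach the paper indicates: the Corollary is stated as an immediate consequence of Lemma~\ref{lemma:control} and \eqref{eq:bvsup}, and you have filled in precisely those details, including the correct choice of half-interval in the $\ol^{\bv}$ case (accounting for the factor $2$) versus the full interval $I_\alpha$ in the $\bv$ case.
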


\subsubsection{Extenstion of the boundary opeartor}\label{bd:ls}
The operator $\partial_\pi:\bv(\sqcup_{\alpha\in \mathcal{A}} I_{\alpha}) \to\R^{\Sigma(\pi)}$ introduced in \S~\ref{bd:ac} can be extended to an operator $\partial_\pi:{\ol}^{\bv}(\sqcup_{\alpha\in \mathcal{A}}
I_{\alpha})\to\R^{\Sigma(\pi)}$ as follows.
\begin{definition}\label{Odef}
Let $\partial_\pi:{\ol}^{\bv}(\sqcup_{\alpha\in \mathcal{A}}
I_{\alpha})\to\R^{\Sigma(\pi)}$ be a linear operator given by
\begin{align*}
\partial_\pi(\varphi)_\mathcal{O}:=\lim_{x\to
0^+}\Big(\sum_{\alpha\in\mathcal{A}_\mathcal{O}^-}
\big(\varphi(r_\alpha-x)+C_\alpha^-\log x\big)-
\sum_{\alpha\in\mathcal{A}_\mathcal{O}^+}\big(\varphi(l_\alpha+x)+C_\alpha^+\log x\big)\Big).
\end{align*}
for every $\varphi\in{\ol}^{\bv}(\sqcup_{\alpha\in \mathcal{A}}
I_{\alpha})$ and $\mathcal{O}\in\Sigma(\pi)$.
\end{definition}

Let $a:= \min\{|I_\beta|:\beta\in\mathcal{A}\}/2$.
Then for every $\alpha\in\mathcal{A}$ and every $\varphi\in {\ol}^{\bv}(\sqcup_{\alpha\in \mathcal{A}}
I_{\alpha})$ there are $\underline{\varphi}_\alpha^+,\underline{\varphi}_\alpha^-:[0,a]\to\R$ functions
of bounded variation such that
\[\varphi(r_\alpha-x)=-C_\alpha^-\log x+\underline{\varphi}^-_\alpha(x), \
\varphi(l_\alpha+x)=-C_\alpha^+\log x+\underline{\varphi}^+_\alpha(x) \text{ for }x\in(0,a].\]
For every $\mathcal{O}\in\Sigma(\pi)$ let us consider the bounded variation map $D_{\mathcal{O}}:[0,a]\to\R$
given by
\begin{equation}\label{eq:D1}
D_{\mathcal{O}}(x):=\sum_{\alpha \in \mathcal{A}_\mathcal{O}^-} \underline{\varphi}^-_\alpha(x)- \sum_{
\alpha\in\mathcal{A}_\mathcal{O}^+} \underline{\varphi}^+_\alpha(x)\text{ for }x\in[0,a].
\end{equation}
Then for all $x\in(0,a]$ we have
\begin{equation}\label{eq:D2}
D_{\mathcal{O}}(x)=\sum_{\alpha\in\mathcal{A}_\mathcal{O}^-}
\big(\varphi(r_\alpha-x)+C_\alpha^-\log x\big)-
\sum_{\alpha\in\mathcal{A}_\mathcal{O}^+}\big(\varphi(l_\alpha+x)+C_\alpha^+\log x\big).
\end{equation}
As $D_{\mathcal{O}}$ is of bounded variation,  it follows that
\begin{equation}\label{Ousinggvalues}
\partial_\pi(\varphi)_\mathcal{O}=(D_{\mathcal{O}})_+(0)
\end{equation}
is well defined.

\subsection{Mean value projection}\label{sec:meanprojection}
If $\varphi\in L^1(I)$, we can consider the piecewise constant function that is constant and equal to the mean $m(\varphi,I_\alpha)$ on $I_\alpha$. Formally, we define the linear operator $\mathcal{M}:L^1(I)\to \R^{\mathcal{A}}$
given by
\[\mathcal{M}(\varphi)(x)=m(\varphi,I_\alpha)\text{ if }x\in I_\alpha.\]
\noindent This operator will play an important role in defining \emph{corrections} operators. In the rest of this subsection we prove the following Proposition, that gives an estimate on how the boundary operator $\partial_\pi$ changes when one projects using this mean value projection operator $\mathcal{M}$.
\begin{proposition}\label{prop:partial}
For every  $\varphi\in{\ol}^{\bv}(\sqcup_{\alpha\in \mathcal{A}}I_{\alpha})$ we have
\begin{align}\label{eq:parCphi}
\begin{aligned}
\|\partial_\pi(\mathcal{M}\varphi)\|&\leq\|\partial_\pi(\varphi)\|+
\as(\varphi)\Big(1+\log \frac{2}{\min_{\beta\in\mathcal A}|I_\beta|}\Big)+
2d\lv(\varphi)\left(5+2\frac{|I|}{\min_{\beta\in\mathcal A}|I_\beta|}\right).
\end{aligned}
\end{align}
Furthermore, we also have that
\begin{align}\label{eq:parphi}
\|\partial_\pi(\varphi)\|\leq 2d\log \frac{2}{\min_{\beta\in\mathcal A}|I_\beta|}\|\varphi\|_{\lv}.
\end{align}
\end{proposition}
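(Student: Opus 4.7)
Here is the proof plan I would follow.

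\medskip

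First, I would dispatch the second inequality \eqref{eq:parphi}, since it is the simpler of the two and its proof technique feeds into the other one. The strategy is to substitute the explicit form \eqref{def:logsing} of $\varphi$ directly into the definition of $\partial_\pi(\varphi)_\mathcal{O}$ and observe that the "diagonal" singular term cancels: for $\alpha\in\mathcal{A}_\mathcal{O}^+$, the term $-C_\alpha^+\log(|I|\{(x-l_\alpha)/|I|\})$ coming from \eqref{def:logsing} gives $-C_\alpha^+\log h$ as $x=l_\alpha+h$, $h\to 0^+$, which exactly cancels the additive $+C_\alpha^+\log h$ in the definition of $\partial_\pi$. What remains is a finite sum over $\beta$ of terms $-C_\beta^{\pm}\log(|I|\{(l_\alpha-l_\beta)/|I|\}^*)$ and $-C_\beta^{\pm}\log(|I|\{(r_\beta-l_\alpha)/|I|\}^*)$ (with the limit from the right), plus $g_\varphi(l_\alpha^+)$. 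The key geometric fact is that the set $\{l_\alpha\}\cup\{r_\alpha\}$ of endpoints partitions $[0,|I|]$ into pieces of length $|I_\gamma|$, so that any two distinct such endpoints are at distance $\geq \min_\beta|I_\beta|$; hence each $|I|\{\cdot\}^*$ lies in $[\min_\beta|I_\beta|,|I|]$, giving $|\log(\cdot)|\leq \log(2/\min_\beta|I_\beta|)$ (after checking the edge case $r_\beta=l_\alpha$, where the fractional part limits to $1$). The analogous argument for $\alpha\in\mathcal{A}_\mathcal{O}^-$ is symmetric. Summing gives at most $\bl(\varphi)\log(2/\min_\beta|I_\beta|)+\|g_\varphi\|_{\bv}$ per endpoint, and since $\sum_{\mathcal{O}}(|\mathcal{A}_\mathcal{O}^-|+|\mathcal{A}_\mathcal{O}^+|)=2d$, summing over $\mathcal{O}$ yields \eqref{eq:parphi}.

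\medskip

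For the main inequality \eqref{eq:parCphi}, the strategy is to compare $\partial_\pi(\mathcal{M}\varphi)_\mathcal{O}$ to $\partial_\pi(\varphi)_\mathcal{O}=D_\mathcal{O}(0^+)$ via an intermediate averaging at a carefully chosen scale. Set $a:=\min_\beta|I_\beta|/2$. Since $\mathcal{M}\varphi$ is piecewise constant, its boundary is the signed sum
\[
\partial_\pi(\mathcal{M}\varphi)_\mathcal{O}=\sum_{\alpha\in\mathcal{A}_\mathcal{O}^-}m(\varphi,I_\alpha)-\sum_{\alpha\in\mathcal{A}_\mathcal{O}^+}m(\varphi,I_\alpha).
\]
Introduce $\phi_\mathcal{O}(x):=\sum_{\alpha\in\mathcal{A}_\mathcal{O}^-}\varphi(r_\alpha-x)-\sum_{\alpha\in\mathcal{A}_\mathcal{O}^+}\varphi(l_\alpha+x)$, so that $D_\mathcal{O}(x)=\phi_\mathcal{O}(x)+\Delta_\mathcal{O}(\varphi)\log x$ by \eqref{zerosym} and \eqref{eq:D2}. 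Averaging over $[0,a]$, one computes
\[
m(\phi_\mathcal{O},[0,a])=\sum_{\alpha\in\mathcal{A}_\mathcal{O}^-}m(\varphi,[r_\alpha-a,r_\alpha])-\sum_{\alpha\in\mathcal{A}_\mathcal{O}^+}m(\varphi,[l_\alpha,l_\alpha+a])
\]
and $m(\phi_\mathcal{O},[0,a])=m(D_\mathcal{O},[0,a])-\Delta_\mathcal{O}(\varphi)(\log a-1)$, using $m(\log,[0,a])=\log a-1$.

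\medskip

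The heart of the proof consists of three bounds that I then combine. The first, coming from Proposition~\ref{lemlos} (inequality \eqref{meanv1}) applied with $J=[r_\alpha-a,r_\alpha]$ or $[l_\alpha,l_\alpha+a]$ (for which $|I_\alpha|/|J|\leq 2|I|/\min_\beta|I_\beta|$), controls the replacement of mean-on-$I_\alpha$ by mean-on-half-interval by $\lv(\varphi)(4+2|I|/\min_\beta|I_\beta|)$ per term. The second controls $|\Delta_\mathcal{O}(\varphi)(\log a-1)|\leq|\Delta_\mathcal{O}(\varphi)|(1+\log(2/\min_\beta|I_\beta|))$, which upon summing over $\mathcal{O}$ contributes exactly $\as(\varphi)(1+\log(2/\min_\beta|I_\beta|))$. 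The third, and the one I expect to be most delicate, is the bound on $|m(D_\mathcal{O},[0,a])-D_\mathcal{O}(0^+)|$: here I would use the explicit formula for $\underline{\varphi}_\alpha^\pm(x)$ (from \eqref{eq:D1}, \eqref{eq:D2}, after cancellation of the diagonal log) and observe that, as $x$ ranges over $[0,a]$, each denominator $|I|\{(r_\alpha-l_\beta-x)/|I|\}^*$ (and analogue) stays in $[\min_\beta|I_\beta|/2,|I|]$, so $\log$ of its ratio to its $x=0^+$ value is at most $\log 2$; together with the oscillation of $g_\varphi$ on each $I_\alpha$ bounded by $\var(g_\varphi|_{I_\alpha})$, this yields $|D_\mathcal{O}(x)-D_\mathcal{O}(0^+)|\leq(|\mathcal{A}_\mathcal{O}^-|+|\mathcal{A}_\mathcal{O}^+|)\lv(\varphi)\cdot O(1)$, and summing over $\mathcal{O}$ gives the absorbable contribution to the $2d\lv(\varphi)(5+2|I|/\min_\beta|I_\beta|)$ term.

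\medskip

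The main obstacle, as just indicated, is step three: producing the bound on $|m(D_\mathcal{O},[0,a])-D_\mathcal{O}(0^+)|$ with an explicit constant tight enough to fit into the claimed coefficient $5$. The issue is that $D_\mathcal{O}$, though of bounded variation near $0$, is not expressed directly in terms of $\lv(\varphi)$; one must work from the formula \eqref{def:logsing}, carefully check that no fractional part crosses an integer on $[0,a]$ (this is precisely why $a=\min_\beta|I_\beta|/2$ is the natural choice), and handle the edge case $r_\beta=l_\alpha$ where the relevant denominator limits to $|I|$ rather than to a small quantity. Once this bookkeeping is done, combining the three bounds with $D_\mathcal{O}(0^+)=\partial_\pi(\varphi)_\mathcal{O}$ and summing over $\mathcal{O}\in\Sigma(\pi)$ (using $\sum_\mathcal{O}(|\mathcal{A}_\mathcal{O}^-|+|\mathcal{A}_\mathcal{O}^+|)=2d$ and $\sum_\mathcal{O}|\Delta_\mathcal{O}(\varphi)|=\as(\varphi)$) produces \eqref{eq:parCphi}.
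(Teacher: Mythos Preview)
Your proof follows essentially the same route as the paper's: fix $a=\tfrac{1}{2}\min_\beta|I_\beta|$, write $\partial_\pi(\mathcal{M}\varphi)_\mathcal{O}$ as a signed sum of means $m(\varphi,I_\alpha)$, pass to $m(D_\mathcal{O},[0,a])$ via Proposition~\ref{lemlos}, isolate the asymmetry contribution $\Delta_\mathcal{O}(\varphi)(\log a-1)$, and compare $m(D_\mathcal{O},[0,a])$ to $D_\mathcal{O}(0^+)=\partial_\pi(\varphi)_\mathcal{O}$; the second inequality \eqref{eq:parphi} follows from the same endpoint--distance observation you describe.

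The only methodological differences are organizational. First, the paper treats the case $g_\varphi=0$ separately (so that the $\underline{\varphi}_\alpha^\pm$ are $C^1$) and afterwards adds the easy bounded-variation contribution of $g_\varphi$; you treat both pieces together. Second, for your ``third bound'' $|m(D_\mathcal{O},[0,a])-D_\mathcal{O}(0^+)|$, the paper uses the derivative estimate $|(\underline{\varphi}_\alpha^\pm)'(x)|\leq \bl(\varphi)/a$ on $[0,a]$ (which encodes exactly the ``no denominator drops below $a$'' check you plan to do by hand) and then integrates, obtaining the clean bound $(\#\mathcal{A}^+_\mathcal{O}+\#\mathcal{A}^-_\mathcal{O})\bl(\varphi)$; your direct $\log$--ratio argument yields the same bound with constant $\log 2<1$ per term once the edge cases are checked. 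Either route fits inside the claimed coefficient $5$, and both rely on the same geometric fact that distinct endpoints are $\geq\min_\beta|I_\beta|$ apart, so no fractional part jumps on $[0,a]$.
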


\begin{proof}
First suppose that $g_\varphi=0$. Then
the maps $\underline{\varphi}^{\pm}_\alpha:[0,a]\to\R$ ($a:= \min\{|I_\beta|:\beta\in\mathcal{A}\}/2$) are of class $C^1$  for all
$\alpha\in\mathcal{A}$ with
\begin{equation}\label{eq:derdel}
|\underline{\varphi}^\pm_\alpha(x)|\leq
\bl(\varphi)\log a^{-1}\quad\text{and}\quad|(\underline{\varphi}^\pm_\alpha)'(x)|\leq
\bl(\varphi)/a\quad\text{for}\quad x\in[0,a].
\end{equation}
In view of \eqref{eq:D1} and \eqref{Ousinggvalues}, it follows that for every $\mathcal{O}\in\Sigma(\pi)$ the map $D_{\mathcal{O}}$ is of class $C^1$ and we have
\begin{equation}\label{eq:Ophi}
|\partial_{\pi}(\varphi)_{\mathcal{O}}|=|D_{\mathcal{O}}(0)|\leq (\#\mathcal{A}^+_{\mathcal{O}}+\#\mathcal{A}^-_{\mathcal{O}})\bl(\varphi)\log a^{-1}
\end{equation}
and
\[|D_{\mathcal{O}}'(x)|\leq \frac{(\#\mathcal{A}^+_{\mathcal{O}}+\#\mathcal{A}^-_{\mathcal{O}})\bl(\varphi)}{a}\text{ for }x\in[0,a].\]
Therefore, for every $x\in[0,a]$,
\begin{align}
\begin{aligned}\label{eq:difdel}
|D_{\mathcal{O}}(0)-m(D_{\mathcal{O}},[0,a])|&\leq\frac{\int_0^a|D_{\mathcal{O}}(0)-D_{\mathcal{O}}(x)|\,dx}{a}
\leq \frac{\int_0^a\int_0^x|D_{\mathcal{O}}'(s)|\,ds\,dx}{a}\leq (\#\mathcal{A}^+_{\mathcal{O}}+\#\mathcal{A}^-_{\mathcal{O}})\bl(\varphi).
\end{aligned}
\end{align}
Moreover, by \eqref{eq:D2} and \eqref{zerosym}, we have
\begin{align*}
m(D_{\mathcal{O}},[0,a])=\Delta_\mathcal{O}(\varphi) m(\log,[0,a])
+\sum_{\alpha\in\mathcal{A},\pi_0(\alpha)\in\mathcal{O}}
m(\varphi,[r_\alpha-a,r_\alpha])-
\sum_{\alpha\in\mathcal{A},\pi_0(\alpha)-1\in\mathcal{O}}m(\varphi,[l_\alpha,l_\alpha+a]).
\end{align*}
In view of \eqref{meanv1}, for every $\alpha\in\mathcal A$ we have
\begin{align*}
&|m(\varphi,[r_\alpha-a,r_\alpha])-m(\varphi,I_\alpha)|\leq\bl(\varphi)\Big(4+\frac{|I_\alpha|}{a}\Big)
\\
&|m(\varphi,[l_\alpha,l_\alpha+a])-m(\varphi,I_\alpha)|\leq\bl(\varphi)\Big(4+\frac{|I_\alpha|}{a}\Big).
\end{align*}
As $m(\log,[0,a])=\log a-1$, it follows that
\begin{align*}
|\partial_{\pi}(\mathcal{M}\varphi)_{\mathcal{O}}&-m(D_{\mathcal{O}},[0,a])|
\leq |\Delta_\mathcal{O}(\varphi)|(1+\log a^{-1})
+\bl(\varphi)\Big(4(\#\mathcal{A}^+_{\mathcal{O}}+\#\mathcal{A}^-_{\mathcal{O}})+2\frac{|I|}{a}\Big).
\end{align*}
Together with \eqref{Ousinggvalues} and \eqref{eq:difdel}, this gives
\[|\partial_{\pi}(\mathcal{M}\varphi)_{\mathcal{O}}-\partial_{\pi}(\varphi)_{\mathcal{O}}|
\leq|\Delta_\mathcal{O}(\varphi)|(1+\log a^{-1})+\bl(\varphi)\Big(5(\#\mathcal{A}^+_{\mathcal{O}}+\#\mathcal{A}^-_{\mathcal{O}})+2\frac{|I|}{a}\Big).\]
As $\sum_{\mathcal{O}\in\Sigma(\pi)}(\#\mathcal{A}^+_{\mathcal{O}}+\#\mathcal{A}^-_{\mathcal{O}})=2\#\mathcal{A}=2d$ and $\as(\varphi)=\sum_{\mathcal{O}\in\Sigma(\pi)}|\Delta_\mathcal{O}(\varphi)|$, summing up these inequalities for all $\mathcal{O}\in\Sigma(\pi)$ we have
\begin{equation}\label{eq:pardif}
\|\partial_\pi(\mathcal{M}\varphi)-\partial_\pi(\varphi)\|\leq\as(\varphi)(1+\log a^{-1})+2d\bl(\varphi)\left(5+\frac{|I|}{a}\right).
\end{equation}

Now assume that $g_\varphi\neq 0$. Since $g_\varphi\in\bv(\sqcup_{\alpha\in \mathcal{A}}I_{\alpha})$, we have
\[|(g_\varphi)_{+}(l_\alpha)-m(g_\varphi,I_\alpha)|\leq\var g_\varphi\quad\text{and}\quad|(g_\varphi)_-(r_\alpha)-m(g_\varphi,I_\alpha)|\leq\var g_\varphi.\]
It follows that for every $\mathcal{O}\in\Sigma(\pi)$ we have
\begin{align*}
\big|\partial_{\pi}(\mathcal{M}(g_\varphi))_{\mathcal{O}}-\partial_{\pi}(g_\varphi)_{\mathcal{O}}\big|
&=\Big|\sum_{\pi_0(\alpha)\in\mathcal{O}}
\big(m(g_\varphi,I_\alpha)-(g_\varphi)_-(r_\alpha)\big)-\sum_{\pi_0(\alpha)-1\in\mathcal{O}}
\big(m(g_\varphi,I_\alpha)-(g_\varphi)_{+}(l_\alpha)\big)\Big|\\
&\quad\leq (\#\mathcal{A}^+_{\mathcal{O}}+\#\mathcal{A}^-_{\mathcal{O}})\var g_\varphi.
\end{align*}
Summing up these inequalities for all $\mathcal{O}\in\Sigma(\pi)$ we have\begin{equation}\label{eq:partbv}
\|\partial_\pi(\mathcal{M}(g_\varphi))-\partial_\pi(g_\varphi)\|\leq 2d\var g_\varphi
\end{equation}
which together with \eqref{eq:pardif} this completes the proof of \eqref{eq:parCphi}.

\medskip
\noindent By the definition of $\partial_{\pi}(g_\varphi)_{\mathcal{O}}$, we also have
\[|\partial_{\pi}(g_\varphi)_{\mathcal{O}}|\leq (\#\mathcal{A}^+_{\mathcal{O}}+\#\mathcal{A}^-_{\mathcal{O}})\|g_\varphi\|_{\sup}\quad\text{for every}\quad\mathcal{O}\in\Sigma(\pi).\]
This, together with \eqref{eq:Ophi}, gives
\[\|\partial_{\pi}(\varphi)\|\leq 2d\Big(\bl(\varphi)\frac{2}{\min_{\beta\in\mathcal{A}}|I_\beta|}+\|g_\varphi\|_{\sup}\Big).\]
As $\|\varphi\|_{\lv}=\bl(\varphi)+\var g_{\varphi}+\|g_\varphi\|_{\sup}$, this completes the proof of \eqref{eq:parphi}.
\end{proof}

\section{Renormalization of cocycles}\label{renormalization:sec}
The renormalization map on IETs given by Rauzy-Veech induction (or any of its accelerations) induce also a renormalization operator on cocycles over IETs defined in \S~\ref{sec:RVBS}.

\subsection{Special Birkhoff sums}\label{SpecialBS}
Recall that for all $0\leq k<l$ the renormalization operator $S(k,l):L^1(I^{(k)})\to L^1(I^{(l)})$
is given by
\[S(k,l)\varphi(x)=\sum_{0\leq i<Q_{\beta}(k,l)}\varphi((T^{(k)})^ix)\text{ for }x\in I^{(l)}_\beta.\]
We write $S(k)\varphi$ for $ S(0,k)\varphi$ and we  use  the
convention that $S(k,k)\varphi:=\varphi$. Sums of this form are
usually called \emph{special Birkhoff sums}. Since Rokhlin towers representation allows to write  $I^{(k)}$ as
$$
I^{(k)}= \bigcup_{\beta\in\mathcal{A}} \bigcup_{i=0}^{Q_{\beta}(k,l)-1} (T^{(k)})^i  I^{(l)}_\beta ,
$$
where the intervals in the union are all pairwise disjoint, from the definition of special Birkhoff sums, one can see that for every $\varphi\in L^1(I^{(k)})$ we have
\begin{equation}\label{zachcal}
\int_{I^{(l)}}S(k,l)\varphi(x)\,dx=
\int_{I^{(k)}}\varphi(x)\,dx.
\end{equation}
Therefore we also have that
\begin{equation}\label{nase}
\| S(k,l)\varphi\|_{L^1(I^{(l)})}\leq
\|\varphi\|_{L^1(I^{(k)})}.
\end{equation}
 If $ g\in\bv(\sqcup_{\alpha\in
\mathcal{A}}I^{(k)}_{\alpha})$ then
\begin{equation}\label{navar}
\var S(k,l) g\leq \var g.
\end{equation}

The following Lemma, which was proved by the authors in \cite{Fr-Ul}, shows that \emph{constants} of logarithmic singularities, as a \emph{set}, is invariant under renormalization when logarithmic singularities are normalized suitably (i.e.~by the map $f(x)\mapsto f(\lambda \{x/ \lambda\}) $, where $\lambda$ is the length of the inducing interval).

\begin{lemma}[see \cite{Fr-Ul}]\label{comparingconstants}
For each $0\leq k\leq l$ and for each
$\varphi\in\ol(\sqcup_{\alpha\in \mathcal{A}}I^{(k)}_{\alpha})$
 of the form
\begin{equation}\label{eq:g=0}
\varphi(x)=-\sum_{\alpha\in\mathcal{A}}\left(C^+_\alpha
\log \left(|I^{(k)}|
\left\{\frac{x-l^{(k)}_\alpha}{|I^{(k)}|}\right\}\right)+C^-_\alpha
\log\left(|I^{(k)}|
\left\{\frac{r^{(k)}_\alpha-x}{|I^{(k)}|}\right\}\right)\right)
\end{equation}
there exists a permutation $\chi:\mathcal{A}\rightarrow
\mathcal{A}$ 
such that 
\begin{align*}
S(k,l)\varphi(x)=&-\sum_{\alpha\in\mathcal{A}} C^+_\alpha
\log\big(|I^{(l)}|\{(x-l^{(l)}_\alpha)/|I^{(l)}|\}\big)   -\sum_{\alpha\in\mathcal{A}}
C^-_{\chi(\alpha)}\log\big(|I^{(l)}|\{(r^{(l)}_\alpha-x)/|I^{(l)}|\}\big)
+g_{S(k,l)\varphi}(x),
\end{align*}
where $g_{S(k,l)\varphi}\in\bv^1(\sqcup_{\alpha\in
\mathcal{A}}I^{(l)}_{\alpha})$.
\end{lemma}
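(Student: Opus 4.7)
The plan is to exploit the Rokhlin tower description of $T^{(k)}$ over $T^{(l)}$ recalled in \S~\ref{sec:Rokhlintowers}. The starting point is that for each $\beta \in \mathcal{A}$ and each $0 \leq i < Q_\beta(k,l)$, the floor $(T^{(k)})^i I^{(l)}_\beta$ is contained in a unique interval $I^{(k)}_{\alpha(i,\beta)}$ of the base partition, so that $(T^{(k)})^i$ acts on $I^{(l)}_\beta$ as a pure translation $x \mapsto x+c^\beta_i$; moreover, as $(i,\beta)$ varies, these floors partition $I^{(k)}$ and refine $(I^{(k)}_\alpha)_{\alpha\in\mathcal{A}}$. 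Consequently, on the interior of $I^{(l)}_\beta$, the special Birkhoff sum decomposes as
\[S(k,l)\varphi(x)=-\sum_{i=0}^{Q_\beta(k,l)-1}\sum_{\alpha\in\mathcal{A}}\Bigl(C^+_\alpha\log\bigl(|I^{(k)}|\{(x+c^\beta_i-l^{(k)}_\alpha)/|I^{(k)}|\}\bigr)+C^-_\alpha\log\bigl(|I^{(k)}|\{(r^{(k)}_\alpha-x-c^\beta_i)/|I^{(k)}|\}\bigr)\Bigr).\]

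The second step is to isolate the summands that contribute logarithmic blow-ups at an endpoint of $I^{(l)}_\beta$. A blow-up at $l^{(l)}_\beta$ occurs only for those $(i,\alpha)$ such that $l^{(l)}_\beta+c^\beta_i = l^{(k)}_\alpha$, i.e.\ such that the floor $(T^{(k)})^iI^{(l)}_\beta$ has its left endpoint exactly at $l^{(k)}_\alpha$; symmetrically for right endpoints. Because the tower floors refine the partition $(I^{(k)}_\alpha)_\alpha$, for every $\alpha$ there is exactly one pair $(i,\beta)$ realising each such coincidence, which produces bijections $\chi^\pm \colon \mathcal{A}\to\mathcal{A}$ for left and right endpoints respectively. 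Regrouping the double sum along these bijections rewrites $S(k,l)\varphi$ in the form claimed, modulo a remainder that collects the non-singular summands together with the discrepancy introduced when converting the arguments $|I^{(k)}|\{\cdot/|I^{(k)}|\}$ into $|I^{(l)}|\{\cdot/|I^{(l)}|\}$ near the surviving singularities. By construction the fractional-part arguments of this remainder stay uniformly bounded away from zero on each $I^{(l)}_\alpha$, so the remainder is $C^1$ with bounded derivative on each $I^{(l)}_\alpha$, hence belongs to $\bv^1(\sqcup_\alpha I^{(l)}_\alpha)$ and plays the role of $g_{S(k,l)\varphi}$.

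The crux, and the main obstacle, is to prove that $\chi^+$ is actually the \emph{identity} while only $\chi^-$ is in general a non-trivial permutation $\chi$. I would establish this by induction on the number of elementary Rauzy--Veech steps separating $T^{(k)}$ from $T^{(l)}$, using the description recalled in \S~\ref{sec:RVelementary}. In one step, exactly one of $\pi_0,\pi_1$ is altered, namely $\pi_{1-\varepsilon}$. When $\varepsilon=0$ the permutation $\pi_0$ is preserved, so $\tilde{I}_\alpha = I_\alpha$ and $\tilde{l}_\alpha=l_\alpha$ for every $\alpha$ with $\pi_0(\alpha)<d$, while for $\alpha=\pi_0^{-1}(d)$ a direct inspection of the induced map shows that the unique floor whose left endpoint is $l_\alpha$ is still labelled by $\alpha$; the symmetric statement holds for $\varepsilon=1$. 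Thus at every elementary step $\chi^+$ is the identity, and by composition this persists for arbitrary accelerations. On the right side, however, the same combinatorial inspection shows that the modification of $\pi_{1-\varepsilon}$ reshuffles the right endpoints in a way determined by the position of the loser relative to the winner, producing a non-trivial permutation which composes multiplicatively over the steps to yield the asserted $\chi$. Once this bookkeeping is in place, the statement of the lemma follows by direct substitution.
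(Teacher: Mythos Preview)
The paper does not prove this lemma here; it is quoted from \cite{Fr-Ul}. Your overall strategy---Rokhlin tower decomposition, isolating the summands that blow up at endpoints of $I^{(l)}_\beta$, and induction on elementary Rauzy--Veech steps---is the natural one and matches what the cited reference does. Your argument that $\chi^+=\mathrm{id}$ is essentially correct, though the case $\varepsilon=1$ is not literally ``symmetric'' to $\varepsilon=0$: there $\tilde l_{\pi_0^{-1}(d)}\neq l_{\pi_0^{-1}(d)}$, and one must observe that the unique floor with left endpoint $l_{\pi_0^{-1}(d)}$ is the height-$1$ floor of the tower over $\tilde I_{\pi_0^{-1}(d)}$ (which equals $I_{\pi_0^{-1}(d)}$), still carrying the same label.

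There is, however, a genuine gap on the right side. Your sentence ``for every $\alpha$ there is exactly one pair $(i,\beta)$ realising each such coincidence, which produces bijections $\chi^\pm$'' is where the argument breaks: the assignment $\alpha\mapsto\beta$ is well defined but \emph{not} injective for right endpoints. Concretely, in one step with $\varepsilon=0$, the tower over $\tilde I_{\pi_1^{-1}(d)}$ has two floors, and \emph{both} of their right endpoints are original endpoints: floor~$0$ has right endpoint $r_{\pi_1^{-1}(d)}$ and floor~$1$ has right endpoint $r_{\pi_0^{-1}(d)}=|I|$. Hence the right-singularity of $S\varphi$ at $\tilde r_{\pi_1^{-1}(d)}$ receives contribution $C^-_{\pi_1^{-1}(d)}+C^-_{\pi_0^{-1}(d)}$, not a single constant $C^-_{\chi(\pi_1^{-1}(d))}$; meanwhile the right endpoint $\tilde r_{\pi_0^{-1}(d)}=|\tilde I|$ is not a singularity of $\varphi$ at all, so the coefficient there is $0$. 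This is precisely where the \emph{geometric type} hypothesis encoded in the space $\ol$ enters: it forces at least one of $C^-_{\pi_0^{-1}(d)},\,C^-_{\pi_1^{-1}(d)}$ to vanish, so the sum collapses to a single surviving constant and a permutation $\chi$ can indeed be defined (swapping $\pi_0^{-1}(d)\leftrightarrow\pi_1^{-1}(d)$ or fixing them, according to which vanishes). One must then also verify that the new coefficients satisfy the geometric-type condition for $\tilde T$ (e.g.\ $\tilde C^-_{\tilde\pi_0^{-1}(d)}=0$ above), so that the induction can be iterated. Your sketch omits this mechanism entirely, and without it the claimed bijection $\chi^-$ simply does not exist.
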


\begin{remark}
In the general case, when $\varphi\in\ol(\sqcup_{\alpha\in \mathcal{A}}I^{(k)}_{\alpha})$ and $g_\varphi$ is non-trivial,
the map $\varphi-g_\varphi$ is of the form \eqref{eq:g=0}. It follows that
\begin{align*}
S(k,l)\varphi(x)=&S(k,l)(\varphi-g_{\varphi})(x)+S(k,l)(g_{\varphi})(x)\\
=&-\sum_{\alpha\in\mathcal{A}} C^+_\alpha
\log\big(|I^{(l)}|\{(x-l^{(l)}_\alpha)/|I^{(l)}|\}\big)   -\sum_{\alpha\in\mathcal{A}}
C^-_{\chi(\alpha)}\log\big(|I^{(l)}|\{(r^{(l)}_\alpha-x)/|I^{(l)}|\}\big)\\
&+g_{S(k,l)(\varphi-g_{\varphi})}(x)+S(k,l)(g_{\varphi})(x).
\end{align*}
As $g_{S(k,l)(\varphi-g_{\varphi})}$ and $S(k,l)(g_{\varphi})$ belong to $\ac(\sqcup_{\alpha\in \mathcal{A}}I^{(l)}_{\alpha})$,
we have
\begin{equation}\label{eq:gSk}
g_{S(k,l)\varphi}=g_{S(k,l)(\varphi-g_{\varphi})}+S(k,l)(g_{\varphi}).
\end{equation}
\end{remark}

\noindent  Recalling the definition of $\bl$ and $\as$ (see Definition~\ref{LVdef}) and of the various spaces of cocycles with logarithmic singularities (refer to \S~\ref{sec:cocycles}), we immediately have the following corollary:
\begin{corollary}[Invariance of $\bl$ and $\as$]\label{asblinvariance}
For every
$\varphi\in{\ol}^{\bv}(\sqcup_{\alpha\in \mathcal{A}}I^{(k)}_{\alpha})$
\begin{equation}\label{eq:blas}
\bl(S(k,l)\varphi)=\bl(\varphi)\quad\text{and}\quad\as(S(k,l)\varphi)=\as(\varphi).
\end{equation}
Therefore,  the operator $S(k,l)$ maps:
\begin{enumerate}[label=(\roman*)]
\item the space ${\ol}^{\bv}(\sqcup_{\alpha\in \mathcal{A}} I^{(k)}_{\alpha})$
 into the space
${\ol}^{\bv}(\sqcup_{\alpha\in \mathcal{A}} I^{(l)}_{\alpha})$;
\item the space ${\ol}(\sqcup_{\alpha\in \mathcal{A}} I^{(k)}_{\alpha})$ into the space
${\ol}(\sqcup_{\alpha\in \mathcal{A}} I^{(l)}_{\alpha})$;
\item the space ${\logs}^{\bv}(\sqcup_{\alpha\in \mathcal{A}} I^{(k)}_{\alpha})$
 into the space
${\logs}^{\bv}(\sqcup_{\alpha\in \mathcal{A}} I^{(l)}_{\alpha})$;
\item the space ${\logs}(\sqcup_{\alpha\in \mathcal{A}} I^{(k)}_{\alpha})$ into the space
${\logs}(\sqcup_{\alpha\in \mathcal{A}} I^{(l)}_{\alpha})$.
\end{enumerate}
\end{corollary}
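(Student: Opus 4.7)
The plan is to prove the two identities \eqref{eq:blas} separately, and then derive items (i)--(iv) as consequences combined with Lemma~\ref{comparingconstants} and equations \eqref{navar} and \eqref{eq:gSk}. The first equality is essentially a direct bookkeeping consequence of Lemma~\ref{comparingconstants}, while the second requires using Lemma~\ref{invario} to match singularities across the induction.

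First I would prove that $\bl(S(k,l)\varphi) = \bl(\varphi)$. Writing $\varphi = \varphi_0 + g_\varphi$ with $\varphi_0 := \varphi - g_\varphi$ of the pure logarithmic form \eqref{eq:g=0}, Lemma~\ref{comparingconstants} gives a permutation $\chi\colon \mathcal A \to \mathcal A$ such that the logarithmic constants of $S(k,l)\varphi_0$ are $C^+_\alpha$ at the left endpoint $l^{(l)}_\alpha$ and $C^-_{\chi(\alpha)}$ at the right endpoint $r^{(l)}_\alpha$. By \eqref{eq:gSk}, the whole $\bv$-part of $S(k,l)\varphi$ is $g_{S(k,l)(\varphi-g_\varphi)} + S(k,l)(g_\varphi) \in \bv(\sqcup_\alpha I^{(l)}_\alpha)$, so these are indeed the constants $C^\pm$ defining $\bl(S(k,l)\varphi)$. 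Since $\chi$ is a permutation, summing over $\alpha \in \mathcal A$ yields $\bl(S(k,l)\varphi) = \sum_\alpha (|C^+_\alpha| + |C^-_{\chi(\alpha)}|) = \bl(\varphi)$.

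Next I would prove that $\as(S(k,l)\varphi) = \as(\varphi)$. By induction on $l-k$ it suffices to treat the elementary step $l=k+1$ with $n_{l}-n_k = 1$. Here Lemma~\ref{invario} supplies a bijection $\xi\colon \Sigma(\pi^{(k)}) \to \Sigma(\pi^{(l)})$ coming from the fact that the zippered rectangle construction over $\pi^{(k)}$ and $\pi^{(l)}$ produces the same translation surface, with the same saddles. The key claim is that for each $\mathcal O \in \Sigma(\pi^{(k)})$,
\[
\Delta_{\xi \mathcal O}(S(k,l)\varphi) \;=\; \Delta_\mathcal{O}(\varphi).
\]
This reflects the geometric fact that the left and right endpoints on the segment $I^{(k)}$ which are carried by vertical separatrices incident to a given saddle are reorganized under Rauzy--Veech into the corresponding endpoints on $I^{(l)}$, and the constants $C^\pm$ attached to these half-separatrices follow along with them (up to the permutation $\chi$ from Step~1, which must be verified to respect the partition into singularities given by $\mathcal A_\mathcal{O}^\pm$). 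Bijectivity of $\xi$ then gives $\as(S(k,l)\varphi) = \sum_{\mathcal O}|\Delta_\mathcal{O}(\varphi)| = \sum_{\mathcal O'}|\Delta_{\mathcal O'}(S(k,l)\varphi)| = \as(S(k,l)\varphi)$. I expect this matching of $\mathcal A_\mathcal{O}^\pm$ with $\mathcal A_{\xi \mathcal O}^\pm$ under $\chi$ to be the main obstacle, and to be carried out by a direct check in the two Rauzy--Veech cases $\varepsilon(\pi,\lambda) = 0, 1$ using the definition of $\tilde\pi$ recalled in \S~\ref{sec:RVelementary}.

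Finally, for the mapping properties (i)--(iv): inclusion (i) follows from the decomposition \eqref{eq:gSk} together with \eqref{navar} (showing $g_{S(k,l)\varphi} \in \bv$) and Lemma~\ref{comparingconstants} (providing the logarithmic normal form). For (ii), on top of (i) one observes that $S(k,l)$ preserves $\ac$ (being a finite sum of precompositions with piecewise translations) and that the geometric type condition transfers to $\pi^{(l)}$ because the indices $\pi^{(l)}_{\varepsilon}{}^{-1}(1)$ and $\pi^{(l)}_{\varepsilon}{}^{-1}(d)$ correspond under the permutation $\chi$ to indices of $\pi^{(k)}$ whose associated constants vanish. For (iii) and (iv), the symmetry condition \eqref{zerosymweak} reads $\sum_\alpha C^-_\alpha - \sum_\alpha C^+_\alpha = 0$, and both sums are invariant under the permutation $\chi$ and hence under $S(k,l)$; combined with (i)--(ii) this gives (iii) and (iv).
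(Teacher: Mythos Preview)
Your proposal is correct and in fact supplies considerably more detail than the paper, which gives no proof at all: the corollary is stated as an immediate consequence of Lemma~\ref{comparingconstants} (and the remark containing \eqref{eq:gSk}) without further argument. Your treatment of $\bl$ is exactly the intended one-line computation; your treatment of $\as$ via Lemma~\ref{invario} and the case check on the elementary Rauzy--Veech step is the natural way to make precise what the paper leaves implicit, and your identification of the matching of $\mathcal A_{\mathcal O}^\pm$ with $\mathcal A_{\xi\mathcal O}^\pm$ (and of $\chi$ with this matching on the right-endpoint side) as the only real content is accurate. One small typographical slip: in the chain at the end of your $\as$ paragraph the leftmost expression should read $\as(\varphi)$, not $\as(S(k,l)\varphi)$.
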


The following result (Lemma~\ref{invariophi}) is a generalization of Lemma~3.2 in \cite{Fr-Ul}, which was proved
for cocycles with strongly symmetric logarithmic  singularities. Since the proof of the following lemma
runs in the same way, we skip it. The operator $\partial_\pi$ which appears in the statement was defined in \S~\ref{bd:ls}.

\begin{lemma}\label{invariophi}
For all $0\leq k\leq l$ and for every $\varphi \in \ol(\sqcup_{\alpha\in
\mathcal{A}} I^{(k)}_{\alpha})$ we have
\begin{equation}\label{eq:eqpar}
\|\partial_{\pi^{(l)}}( S(k,l) \varphi )\| =\|\partial_{\pi^{(k)}}(\varphi )\|.
\end{equation}
\end{lemma}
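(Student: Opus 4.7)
The plan is to follow the strategy of the proof of Lemma~3.2 in~\cite{Fr-Ul} (which establishes an analogous identity for strongly symmetric logarithmic cocycles), adapting the combinatorial bookkeeping so that no symmetry of the logarithmic constants is used. First, by the cocycle relation $S(k,l)=S(l-1,l)\circ\cdots\circ S(k,k+1)$ and the fact that a composition of bijections is a bijection, it suffices to treat a single elementary step, $l=k+1$. Write $\pi=\pi^{(k)}$, $\tilde\pi=\pi^{(l)}$, and let $\xi:\Sigma(\pi)\to\Sigma(\tilde\pi)$ be the bijection supplied by Veech's Lemma~\ref{invario}, so that $b(\xi\mathcal{O})=A(\pi,\lambda^{(k)})^{-1}b(\mathcal{O})$. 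Rather than prove the norm equality directly, the aim would be to establish the stronger componentwise identity
\[
\partial_{\tilde\pi}(S(k,l)\varphi)_{\xi\mathcal{O}}=\partial_{\pi}(\varphi)_{\mathcal{O}},\qquad \mathcal{O}\in\Sigma(\pi),
\]
from which $\|\partial_{\tilde\pi}(S(k,l)\varphi)\|=\|\partial_\pi(\varphi)\|$ follows since $\xi$ is a bijection.

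To verify the componentwise identity, I would split $\varphi=\varphi_{\mathrm{log}}+g_\varphi$ with $\varphi_{\mathrm{log}}$ of the form \eqref{eq:g=0} and $g_\varphi\in\ac(\sqcup_\alpha I^{(k)}_\alpha)$, and treat the two summands separately using the linearity of both $\partial$ and $S(k,l)$. For the $\ac$ part, the operator $\partial$ reduces to the jump-sum at the singularity (see \S\ref{bd:ac}); expanding $(S(k,l)g_\varphi)_\pm$ at the endpoints of the pieces of $I^{(l)}$ indexed by $\mathcal{A}^\pm_{\xi\mathcal{O}}$ as sums along $T$-orbits returning to $I^{(l)}$, all interior iterates land at continuity points of $T$ and contribute equal left/right limits which cancel in pairs between the $+$ and $-$ sums of Definition~\ref{Odef}, leaving only the contributions at the endpoints of $I^{(k)}$ indexed by $\mathcal{A}^\pm_\mathcal{O}$, which reconstruct $\partial_\pi(g_\varphi)_\mathcal{O}$ (the matching of endpoints is controlled by the permutation $\sigma_\pi$ of \S\ref{sec:kernel} together with Remark~\ref{rk:endpoints}).

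For the purely logarithmic summand, I would invoke Lemma~\ref{comparingconstants} to write $S(k,l)\varphi_{\mathrm{log}}$ as a purely logarithmic function with the same $C^+_\alpha$ and a permutation $\chi(C^-_\alpha)$ of the right constants, plus a $\bv$ correction $g_{S(k,l)\varphi_{\mathrm{log}}}$. The regularized limit defining $\partial_{\tilde\pi}(S(k,l)\varphi_{\mathrm{log}})_{\xi\mathcal{O}}$ is then the sum of (i) the boundary value of this $\bv$ correction (treated as in the previous paragraph) and (ii) the limiting contribution of the non-logarithmic parts of $(S(k,l)\varphi_{\mathrm{log}})(r^{(l)}_\alpha-x)$ and $(S(k,l)\varphi_{\mathrm{log}})(l^{(l)}_\alpha+x)$ coming from the one iterate whose argument tends to a discontinuity of $T$. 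Again only the nearest-to-singularity terms survive, and they match the corresponding terms of $\partial_\pi(\varphi_{\mathrm{log}})_\mathcal{O}$, once one checks that the permutation $\chi$ of Lemma~\ref{comparingconstants} matches Veech's bijection $\xi$ in the sense that the $C^-_{\chi\alpha}$ assigned to the right endpoint in $I^{(l)}$ is the $C^-$ of the endpoint in $I^{(k)}$ lying on the same ingoing separatrix.

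The main obstacle is precisely this combinatorial compatibility between the reshuffling $\chi$ of logarithmic constants under $S(k,l)$ and Veech's bijection $\xi$ on orbits of $\sigma_\pi$. Since the pairing of endpoints with singularities under Rauzy--Veech depends only on the permutation data $(\pi_0,\pi_1)$ and not on the coefficients $C^\pm_\alpha$, this is exactly the bookkeeping already carried out in~\cite{Fr-Ul}, and the argument goes through verbatim in the asymmetric setting once one keeps separate track of the $C^+$ and $C^-$ contributions.
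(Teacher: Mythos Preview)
Your proposal is correct and follows essentially the same approach as the paper: the paper explicitly states that the proof ``runs in the same way'' as the proof of Lemma~3.2 in \cite{Fr-Ul} and omits it, and your sketch is precisely a fleshed-out outline of that argument (reduction to one Rauzy--Veech step, the componentwise identity via Veech's bijection $\xi$ of Lemma~\ref{invario}, and the split into the $\ac$ and purely logarithmic parts with the telescoping of interior contributions). Your observation that the combinatorial matching of $\chi$ with $\xi$ is permutation data independent of the values of $C^\pm_\alpha$, hence unaffected by dropping the symmetry assumption, is exactly the point that makes the \cite{Fr-Ul} proof carry over verbatim.
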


%

\subsection{Cancellations for symmetric singularities.}\label{symmetric:sec}
The following property of cocycles with symmetric  logarithmic
singularities was proved by the second author in \cite{Ul:abs}
(see Proposition 4.1) and will play a crucial role to renormalize
cocycles with symmetric logarithmic singularities and in the proof
of ergodicity.

\smallskip
\noindent Let us denote by
$(x)^{+}$ the positive part of $x$, i.e.\ $(x)^{+}=x$ if $x\geq 0$
and $(x)^{+}=\infty$ if $x<0$, so that if $x<0$ then $1/(x)^+$ is zero. Using this notation, let us define, for every $\alpha \in \mathcal{A}$,
\begin{equation}\label{closestpts}
x_\alpha^{l} := \min_{0\leq i < Q_{\beta}(k)} (T^i x - l_\alpha)^+, \qquad
x_\alpha^{r} := \min_{0\leq i < Q_{\beta}(k)} (r_\alpha -T^i x)^+.
\end{equation}
Then $x_\alpha^{l}$ (resp.~$x_\alpha^{r}$) is the \emph{closest visit} to the singularity $l_\alpha$ from the right (resp.~to $r_\alpha$
 from the left) in the orbit segment $\{ T^i(x), \ 0\leq i <Q_{\beta}(k)\}$.

\begin{remark}[Closest visits comparison]\label{rk:closestpoints}
 By the proof of Proposition~3.2 in \cite{Fr-Ul}, for every $x\in I^{(k)}_\beta$ and any $\alpha\in\mathcal A$ we have
\begin{equation}\label{singularitiescomparisons}
\left| \frac{1}{x_\alpha^{l}} - \frac{1}{\left|I^{(k)}\right| \left\{\frac{x-l^{(k)}_\alpha}{\left|I^{(k)}\right|}\right\} }\right| \leq \frac{1}{\left|I_\alpha^{(k)}\right|},\
\left| \frac{1}{x_{\chi(\alpha)}^{r}} - \frac{1}{\left|I^{(k)}\right| \left\{ \frac{r^{(k)}_\alpha -x}{|I^{(k)}|}\right\}} \right| \leq \frac{1}{\left|I_\alpha^{(k)}\right|}.
\end{equation}
Thus, the closests visits defined above are comparable with the quantities expressed above in terms of $\{ \cdot \}$.
\end{remark}

The following Theorem (as the proof below indicates) follows from the results in \cite{Ul:abs}, combined with the acceleration defined in the \ref{UDC}:
\begin{theorem}[Cancellations for Symmetric Logarithmic Singularities]\label{cancellations_prop}
For almost every $(\pi,\lambda)\in\mathcal{G}\times\R^{\mathcal{A}}_{>0}$ there exists an accelerating sequence and a constant
$M=M_{(\pi,\lambda)}\geq 1$ such that $T_{(\pi,\lambda)}$ satisfies the \ref{UDC} (along the accelerating sequence) and for every
$\varphi\in\logs(\sqcup_{\alpha\in \mathcal{A}} I_{\alpha})$ with $g_\varphi'=0$, any $k\geq 1$ and  $x \in I^{(k)}_{\beta}$ we have
\begin{equation} \label{specialsum}
\tag{SUDC1} \left| (\varphi')^{(Q_{\beta}(k))} (x) - \sum_{\alpha \in \mathcal{A}}
\frac{C_\alpha^+}{x_\alpha^{l}} + \sum_{\alpha \in \mathcal{A}}
\frac{C_\alpha^-}{x_\alpha^{r}} \right| \leq  M \bl(\varphi) \frac{Q_{\beta}(k)}{|I|},
\end{equation} where
$x_\alpha^{l}$ and $x_\alpha^{r} $ are the closets visits defined in \eqref{closestpts}.

\smallskip
\noindent Moreover, for every $0\leq r<Q_{\beta}(k)$ and $x \in I^{(k)}_{\beta}$ we have
\begin{equation} \label{generalsum}
\tag{SUDC2} \left| (\varphi')^{(r)} (x)\right|\leq  \sum_{\alpha \in \mathcal{A}}
\frac{|C_\alpha^+|}{x_\alpha^{l}} + \sum_{\alpha \in \mathcal{A}}
\frac{|C_\alpha^-|}{x_\alpha^{r}} +  M \bl(\varphi) \frac{Q_{\beta}(k)}{|I|}.
\end{equation}
\end{theorem}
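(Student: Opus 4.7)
The plan is to obtain both estimates by invoking the cancellation mechanism developed by the second author in \cite{Ul:abs} (in particular Proposition~4.1 therein) and to verify that the accelerating sequence produced by the \ref{UDC} has precisely the structural features (Rokhlin-balance, returns to a compact set of the parameter space) needed to make the error constant $M$ uniform in $k$. More concretely, since $g_\varphi'=0$, the derivative of a cocycle $\varphi\in\logs(\sqcup_{\alpha\in \mathcal{A}}I_\alpha)$ has the explicit form
\begin{equation*}
\varphi'(x) = -\sum_{\alpha\in\mathcal{A}}\frac{C^+_\alpha}{|I|\{(x-l_\alpha)/|I|\}}+\sum_{\alpha\in\mathcal{A}}\frac{C^-_\alpha}{|I|\{(r_\alpha-x)/|I|\}},
\end{equation*}
so that $(\varphi')^{(Q_\beta(k))}(x)$ is a sum over $Q_\beta(k)$ points in $I$ of a combination of inverse-distance terms to the endpoints of the exchanged intervals.

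The strategy to prove \eqref{specialsum} is then to split the sum into a \emph{closest-visit} part and a \emph{residual} part, as follows. For each pair $(\alpha,\pm)$, among the orbit points $\{T^ix:0\le i<Q_\beta(k)\}$ there is a unique one realising the closest approach to the corresponding endpoint, contributing the terms $-C^+_\alpha/x_\alpha^l$ and $C^-_\alpha/x_\alpha^r$ (up to the error $1/|I^{(k)}_\alpha|$ which by Rokhlin-balance \eqref{def:UDC-d} and \eqref{eq:qlambda} is absorbed into $M\bl(\varphi)Q_\beta(k)/|I|$, since $Q_\beta(k)/|I|\gtrsim 1/|I^{(k)}|\gtrsim 1/(\kappa|I^{(k)}_\alpha|)$, this is the role of Remark~\ref{rk:closestpoints}). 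The residual part collects the contributions from the \emph{non-closest} visits to each singularity; this is precisely the part controlled by the cancellation proved in \cite{Ul:abs}, where the symmetry condition \eqref{zerosymweak} i.e.\ $\sum_\alpha C^+_\alpha=\sum_\alpha C^-_\alpha$, is crucial to kill the leading divergent term coming from each level of the Rokhlin towers, leaving only a residue of order $\bl(\varphi)$ per tower level and hence at most $M\bl(\varphi)Q_\beta(k)/|I|$ once summed over the whole orbit segment.

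To make the constant $M$ uniform in $k$, I would rely on the construction of the \ref{UDC} acceleration from Theorem~\ref{thm:UDC}: the sequence $(n_k)_{k\ge 0}$ is built from returns to a compact set $K\subset Y(\delta)\subset X(\mathcal{G})$ on which both Rokhlin-balance and the Diophantine bounds hold, so the geometric estimates in the cancellation argument (comparability of tower heights, bounded distortion of floor widths, quantitative Oseledets control along stable directions via \eqref{def:UDC-a}) are realised with constants depending only on $K$. Hence the error constant $M_{(\pi,\lambda)}$ of \cite{Ul:abs} can indeed be chosen independently of $k$. The second inequality \eqref{generalsum} for partial sums $0\le r<Q_\beta(k)$ will follow by applying the same decomposition to the truncated orbit, using the triangle inequality and the trivial monotonicity bound that the closest visits of the truncated orbit to each singularity are no smaller than those of the full orbit, which replaces the (possibly signed) main terms by their absolute values.

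The main obstacle will be in \emph{Step} three: carefully unpacking the proof of Proposition~4.1 of \cite{Ul:abs} to ensure that all the implicit constants depend only on the Rokhlin-balance parameters $(\kappa,\delta)$ and the Rauzy class $\mathcal{G}$, and not on quantities (like the Roth-type constants) that could in principle degenerate along the subsequence. A secondary but routine technical issue is to handle uniformly the case when one of the closest visits $x_\alpha^{l/r}$ equals $+\infty$ (i.e.\ when no orbit point lies on the relevant side of the singularity), in which case the corresponding term drops out and one must check that the cancellation argument still closes; this is where the convention $1/(x)^+=0$ for $x<0$ introduced before \eqref{closestpts} intervenes and should streamline the bookkeeping.
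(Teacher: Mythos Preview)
Your proposal identifies the right ingredients (Proposition~4.1 of \cite{Ul:abs} for the cancellation mechanism, the Rokhlin-balance from the \ref{UDC} construction, and the closest-visit decomposition), but the logical order you propose is inverted relative to the paper, and this inversion is what creates the ``main obstacle'' you yourself flag in Step three.

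You plan to take the \ref{UDC} accelerating sequence as given by Theorem~\ref{thm:UDC} and then verify that the cancellation constants from \cite{Ul:abs} are uniform along it, which forces you to unpack the proof of Proposition~4.1 and check that all implicit constants depend only on the balance parameters $(\kappa,\delta)$. The paper avoids this entirely by reversing the order of construction: Propositions~4.1 and~4.2 of \cite{Ul:abs} already furnish a precompact set $E_D\subset X(\mathcal{G})$ of positive measure, with $\log\|\widehat{A}_{E_D}\|$ and $\log\|\widehat{A}_{E_D}^{-1}\|$ integrable, such that \eqref{specialsum} and \eqref{generalsum} hold at every return time to $E_D$ with a fixed constant $M$. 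One then simply reruns the proof of Theorem~\ref{thm:UDC} with $Y=E_D$ as the starting set; the further accelerations to $K\subset E_D$ and $K'\subset K$ produce subsequences of return times to $E_D$, and both cancellation estimates are trivially inherited along subsequences. Thus the paper never needs to analyse the dependence of the cancellation constants on anything: the constant $M$ comes packaged with $E_D$, and the \ref{UDC} is engineered inside $E_D$ rather than the cancellations being verified inside an a~priori \ref{UDC} set.

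Your route is not wrong in principle, but it turns what the paper does in two sentences into a genuine re-examination of the proof in \cite{Ul:abs}. The payoff of the paper's order is that the only new observation needed is the one-line remark that \eqref{specialsum} and \eqref{generalsum} persist under passing to subsequences of the accelerating times.
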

\begin{proof}
By the proof of Propositions~4.1~and~4.2 in \cite{Ul:abs}, there exists a precompact subset $E_D\subset X(\mathcal{G})$
with positive measure such that $\widehat{A}_{E_D}$ and $\widehat{A}^{-1}_{E_D}$ are log-integrable and the accelerating
sequence defined by recurrence of $(\pi,\lambda,\tau)$ to $E_D$ is such that \eqref{specialsum} and \eqref{generalsum} hold
for every $k\geq 1$.

Then we repeat all steps of the proof of Theorem~\ref{thm:UDC} starting from the set $Y=E_D$. Since both \eqref{specialsum} and \eqref{generalsum}
also holds along a subsequence obtained taking further accelerations, this completes the proof.
\end{proof}

\begin{definition}[\customlabel{SUDC}{SUDC}]\label{def:SUDC}
We say that an IET $T$ satisfies the \emph{Symmetric Uniform Diophantine Condition}, or {SUDC} for short, if it satisfies the \ref{UDC} along an accelerating sequence $(n_k)_{k\geq 0}$ along which the cancellations \eqref{specialsum} and \eqref{generalsum} hold.
\end{definition}
\noindent Theorem~\ref{cancellations_prop} above thus shows that the \ref{SUDC} has full measure.

\begin{proposition}\label{lemlogreno}
Suppose that $T$ satisfies the \ref{SUDC}. For every
$\varphi\in\logs(\sqcup_{\alpha\in \mathcal{A}} I_{\alpha})$ with $g_\varphi=0$ and $k\geq 1$ we have $g_{S(k)\varphi}\in \bv^1(\sqcup_{\alpha\in \mathcal{A}}I^{(k)}_{\alpha})$ and
\begin{equation}\label{neq:wtphi}
\|g'_{S(k)\varphi}\|_{\sup}\leq
\frac{(M+1)\bl(\varphi)}{\min_{\beta\in\mathcal A}|I_\beta^{(k)}|}.
\end{equation}
\end{proposition}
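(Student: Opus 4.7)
The plan is to identify $g'_{S(k)\varphi}$ explicitly on each $I^{(k)}_\beta$, recognise that its leading $1/x$-type singular parts cancel exactly (up to well-controlled errors), and then estimate what remains using the SUDC cancellation bound \eqref{specialsum} together with the closest-visit comparison in Remark~\ref{rk:closestpoints}.

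First I would use Lemma~\ref{comparingconstants} to write, on each $I^{(k)}_\beta$,
\[
g_{S(k)\varphi}(x)=S(k)\varphi(x)+\sum_{\alpha}C^+_\alpha\log\big(|I^{(k)}|\{(x-l^{(k)}_\alpha)/|I^{(k)}|\}\big)+\sum_\alpha C^-_{\chi(\alpha)}\log\big(|I^{(k)}|\{(r^{(k)}_\alpha-x)/|I^{(k)}|\}\big).
\]
Since $T^ix$ avoids all discontinuities of $T$ for $0\le i<Q_\beta(k)$ and $x\in I^{(k)}_\beta$, the map $S(k)\varphi$ is $C^1$ on the interior of $I^{(k)}_\beta$ and $(S(k)\varphi)'(x)=(\varphi')^{(Q_\beta(k))}(x)$; differentiating the above identity gives
\[
g'_{S(k)\varphi}(x)=(\varphi')^{(Q_\beta(k))}(x)+\sum_\alpha\frac{C^+_\alpha}{|I^{(k)}|\{(x-l^{(k)}_\alpha)/|I^{(k)}|\}}-\sum_\alpha\frac{C^-_{\chi(\alpha)}}{|I^{(k)}|\{(r^{(k)}_\alpha-x)/|I^{(k)}|\}}.
\]

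Next I would invoke the SUDC cancellation estimate \eqref{specialsum}, which controls $(\varphi')^{(Q_\beta(k))}(x)$ by the sole contributions of the closest visits to each singularity,
\[
\Big|(\varphi')^{(Q_\beta(k))}(x)-\sum_\alpha\frac{C^+_\alpha}{x^l_\alpha}+\sum_\alpha\frac{C^-_\alpha}{x^r_\alpha}\Big|\le M\,\bl(\varphi)\,\frac{Q_\beta(k)}{|I|},
\]
and then use Remark~\ref{rk:closestpoints} to substitute $\tfrac{1}{|I^{(k)}|\{(x-l^{(k)}_\alpha)/|I^{(k)}|\}}$ for $1/x^l_\alpha$ and $\tfrac{1}{|I^{(k)}|\{(r^{(k)}_\alpha-x)/|I^{(k)}|\}}$ for $1/x^r_{\chi(\alpha)}$ at a cost of at most $1/|I^{(k)}_\alpha|$ per term. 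After this substitution the leading singular sums cancel pairwise (after reindexing $\sum_\alpha C^-_{\chi(\alpha)}/x^r_{\chi(\alpha)}=\sum_\alpha C^-_\alpha/x^r_\alpha$), leaving only the Birkhoff error $R$ from \eqref{specialsum} and $2d$ closest-visit comparison errors bounded by $|C^\pm_\alpha|/|I^{(k)}_\alpha|$ each.

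Putting the pieces together I would obtain
\[
|g'_{S(k)\varphi}(x)|\le M\,\bl(\varphi)\,\frac{Q_\beta(k)}{|I|}+\sum_\alpha\frac{|C^+_\alpha|+|C^-_{\chi(\alpha)}|}{|I^{(k)}_\alpha|}\le M\,\bl(\varphi)\,\frac{Q_\beta(k)}{|I|}+\frac{\bl(\varphi)}{\min_{\alpha\in\mathcal A}|I^{(k)}_\alpha|}.
\]
Finally, I would apply the elementary Rokhlin tower estimate $Q_\beta(k)|I^{(k)}_\beta|\le|I|$, so that $Q_\beta(k)/|I|\le 1/|I^{(k)}_\beta|\le 1/\min_\alpha|I^{(k)}_\alpha|$, yielding the claimed bound $\|g'_{S(k)\varphi}\|_{\sup}\le (M+1)\bl(\varphi)/\min_{\alpha}|I^{(k)}_\alpha|$. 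That $g_{S(k)\varphi}\in\bv^1$ is part of Lemma~\ref{comparingconstants}, so nothing new is required there.

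The only genuinely delicate point in this plan is verifying that the permutation $\chi$ of Lemma~\ref{comparingconstants} is precisely the one implicit in the closest-visit comparison \eqref{singularitiescomparisons}, so that the re-indexing step truly makes the leading singular terms cancel; this is the structural content of both Lemma~\ref{comparingconstants} and Remark~\ref{rk:closestpoints} and should be traceable by comparing the proofs already given in \cite{Fr-Ul}. Everything else is a routine combination of the SUDC cancellation \eqref{specialsum} with the Rokhlin-tower bound.
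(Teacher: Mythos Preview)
Your proposal is correct and follows essentially the same route as the paper: express $g'_{S(k)\varphi}$ via Lemma~\ref{comparingconstants}, apply the SUDC cancellation \eqref{specialsum} and the closest-visit comparison of Remark~\ref{rk:closestpoints} so that the leading singular terms cancel, and then absorb the Birkhoff error using the Rokhlin-tower bound $Q_\beta(k)|I^{(k)}_\beta|\le|I|$. Your remark about the permutation $\chi$ being the same in Lemma~\ref{comparingconstants} and in \eqref{singularitiescomparisons} is exactly the point the paper leaves implicit (both facts are imported from \cite{Fr-Ul}).
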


\begin{proof}
The proof runs in the same way as the proof of Proposition~3.2 in \cite{Fr-Ul}, only replacing  Corollary~3.1 in \cite{Fr-Ul} with \eqref{specialsum}.

\smallskip
\noindent Let $\chi:\mathcal{A}\to\mathcal{A}$ be the permutation given by
Lemma~\ref{comparingconstants}. Then
\begin{equation}\label{firstdifference}
g'_{S(k)\varphi}(x) =S(k)\varphi'(x) -
\sum_{\alpha\in\mathcal{A}} \frac{C^+_\alpha}{|I^{(k)}| \big\{
\frac{x-l^{(k)}_\alpha}{|I^{(k)}|}\big\}} +
\sum_{\alpha\in\mathcal{A}} \frac{C^-_{\chi(\alpha)}}{ |I^{(k)}|
\big\{ \frac{r^{(k)}_\alpha -x}{ |I^{(k)} |}\big\}}.
\end{equation}

\noindent Notice that $S(k)\varphi' (x) =(\varphi')^{(Q_\beta(k))}(x) $ if $x\in I^{(k)}_\beta$. Thus, \eqref{firstdifference}, in view of \eqref{specialsum}  and Remark~\ref{rk:closestpoints}, and remarking that (since Rokhlin towers give a partition)
\[Q_\beta(k)\min_{\alpha\in\mathcal{A}} |I^{(k)}_\alpha|\leq Q_\beta(k) |I^{(k)}_\beta|\leq \sum_{\alpha \in\mathcal{A}} Q_\alpha(k) |I^{(k)}_\alpha| =|I| ,\]
 we get that, for every $x\in I^{(k)}_\beta$
\[|g'_{S(k)\varphi}(x)|\leq  M \bl(\varphi) \frac{Q_{\beta}(k)}{|I|}+\frac{\bl(\varphi)}{\min_{\alpha}|I^{(k)}_\alpha|}
\leq (M+1)\frac{\bl(\varphi)}{\min_{\alpha}|I^{(k)}_\alpha|}.\]
Taking the supremum over $x\in I^{(k)}$ concludes the proof.
\end{proof}

\begin{proposition}\label{lemlogreno1}
If $T$ satisfies the \ref{SUDC} then for
every $k\geq 1$ and for every
$\varphi\in{\logs}^{\bv}(\sqcup_{\alpha\in \mathcal{A}}
I_{\alpha})$ we have
\begin{equation}\label{nave}
\lv(S(k)\varphi)\leq 4M\frac{|I^{(k)}|}{\min_{\beta\in\mathcal A}|I_\beta^{(k)}|}\lv(\varphi)
\leq 4M\kappa\lv(\varphi).
\end{equation}
\end{proposition}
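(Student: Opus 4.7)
The plan is to decompose $\varphi\in{\logs}^{\bv}(\sqcup_{\alpha\in \mathcal{A}} I_{\alpha})$ into its purely logarithmic part and its bounded variation part, apply the previous Proposition~\ref{lemlogreno} to the first summand and the contraction property \eqref{navar} to the second, and then combine using the invariance of $\bl$ under renormalization given by Corollary~\ref{asblinvariance}.

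More precisely, I would first write $\varphi=(\varphi-g_\varphi)+g_\varphi$, where $\varphi-g_\varphi\in\logs(\sqcup_{\alpha\in\mathcal{A}}I_\alpha)$ has vanishing BV part, and $g_\varphi\in{\bv}(\sqcup_{\alpha\in\mathcal{A}}I_\alpha)$. By linearity of $S(k)$ and formula \eqref{eq:gSk},
\[
g_{S(k)\varphi}=g_{S(k)(\varphi-g_\varphi)}+S(k)(g_\varphi).
\]
Since $g_{\varphi-g_\varphi}=0$ and $\bl(\varphi-g_\varphi)=\bl(\varphi)$, I can apply Proposition~\ref{lemlogreno} to $\varphi-g_\varphi$, which gives $g_{S(k)(\varphi-g_\varphi)}\in\bv^1(\sqcup_{\alpha\in\mathcal{A}}I^{(k)}_\alpha)$ with
\[
\|g'_{S(k)(\varphi-g_\varphi)}\|_{\sup}\leq\frac{(M+1)\bl(\varphi)}{\min_{\beta\in\mathcal A}|I^{(k)}_\beta|}.
\]
Hence, estimating the total variation on each continuity interval $I^{(k)}_\alpha$ by $|I^{(k)}_\alpha|$ times the sup of $|g'|$ and summing over $\alpha\in\mathcal{A}$,
\[
\var g_{S(k)(\varphi-g_\varphi)}\leq \|g'_{S(k)(\varphi-g_\varphi)}\|_{\sup}\,|I^{(k)}|\leq (M+1)\,\bl(\varphi)\,\frac{|I^{(k)}|}{\min_{\beta\in\mathcal A}|I^{(k)}_\beta|}.
\]
For the second summand, \eqref{navar} yields directly $\var S(k)(g_\varphi)\leq \var g_\varphi$.

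Combining these two estimates with the identity $\bl(S(k)\varphi)=\bl(\varphi)$ coming from Corollary~\ref{asblinvariance} (formula \eqref{eq:blas}), and setting for convenience $Q_k:=|I^{(k)}|/\min_{\beta\in\mathcal A}|I^{(k)}_\beta|\geq 1$, I obtain
\[
\lv(S(k)\varphi)=\bl(S(k)\varphi)+\var g_{S(k)\varphi}\leq \bl(\varphi)+(M+1)\,Q_k\,\bl(\varphi)+\var g_\varphi.
\]
Since $M\geq 1$ and $Q_k\geq 1$ one has $1+(M+1)Q_k\leq 4M\,Q_k$ and $1\leq 4M\,Q_k$, so that
\[
\lv(S(k)\varphi)\leq 4M\,Q_k\,\bigl(\bl(\varphi)+\var g_\varphi\bigr)=4M\,\frac{|I^{(k)}|}{\min_{\beta\in\mathcal A}|I^{(k)}_\beta|}\,\lv(\varphi),
\]
which is the first inequality in \eqref{nave}. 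The second inequality follows immediately from the Rokhlin-balance condition \eqref{def:UDC-d}, which gives $Q_k\leq \kappa$.

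I do not foresee any serious obstacle: the two nontrivial inputs (the pointwise bound on the derivative $g'_{S(k)\varphi}$ exploiting the \ref{SUDC} cancellations, and the invariance of $\bl$ under renormalization) have already been established in Proposition~\ref{lemlogreno} and Corollary~\ref{asblinvariance} respectively. The only mild care required is the bookkeeping to verify that $1+(M+1)Q_k\leq 4M Q_k$, so that a clean constant $4M$ can be factored out uniformly in $k$.
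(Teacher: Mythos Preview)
Your proposal is correct and follows essentially the same approach as the paper: decompose $\varphi=(\varphi-g_\varphi)+g_\varphi$, apply Proposition~\ref{lemlogreno} to the purely logarithmic part via \eqref{eq:gSk}, use \eqref{navar} for the bounded variation part, and combine using the invariance $\bl(S(k)\varphi)=\bl(\varphi)$ together with the balance condition \eqref{def:UDC-d}. The paper organizes the argument by first treating the case $g_\varphi=0$ separately, but the content and the final numerical bookkeeping $1+(M+1)Q_k\leq 4M Q_k$ are the same as yours.
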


\begin{proof}
First suppose that $g_\varphi=0$.
By Proposition~\ref{lemlogreno}, we then have that $g_{S(k)\varphi}$ belongs to the space $\bv^1(\sqcup_{\alpha\in \mathcal{A}}
I^{(k)}_{\alpha})$  and
\[\var g_{S(k)\varphi} =\int_{I^{(k)}}
|g'_{S(k)\varphi}(x)|\,dx\ \leq  \|g'_{S(k)\varphi}\|_{\sup}|I^{(k)}|\leq
(M+1){\bl(\varphi)}\frac{|I^{(k)}|}{\min_{\beta\in\mathcal A}|I_\beta^{(k)}|}.\]

\noindent If $g_\varphi\neq 0$ then, by \eqref{navar}, we have
$\var(S(k) g_\varphi)\leq\var g_\varphi$. As $g_{\varphi-g_{\varphi}}=0$, by \eqref{eq:gSk}, it follows that
\begin{align*}\lv(S(k)\varphi)&=
\bl(S(k)\varphi)+\var g_{S(k)\varphi}=\bl(\varphi)+\var( g_{S(k)(\varphi-g_{\varphi})}+S(k)g_{\varphi})
\\&\leq \bl(\varphi)+\var( g_{S(k)(\varphi-g_{\varphi})})+\var(S(k)g_{\varphi}).
\end{align*}
Therefore, by Proposition~\ref{lemlogreno}, we get
\begin{align*}\lv(S(k)\varphi)&\leq \bl(\varphi)+
(M+1)\frac{|I^{(k)}|}{\min_{\beta\in\mathcal A}|I_\beta^{(k)}|}\bl(\varphi-g_\varphi)+\var(g_\varphi)
\leq 4M\frac{|I^{(k)}|}{\min_{\beta\in\mathcal A}|I_\beta^{(k)}|}\lv(\varphi).
\end{align*}
\end{proof}
\subsection{Non-symmetric case}
We now estimate Birkhoff sums for the derivative $\varphi'$ of a function $\varphi \in  \ol(\sqcup_{\alpha\in
\mathcal{A}} I^{(k)}_{\alpha})$ with \emph{asymmetric} logarithmic singularities. Birkhoff sums of this type of function over \emph{rotations} (which can be thought as IETs with $d=2$) were first estimated in the seminar work by Kocergin \cite{Ko:abs} (see also \cite{Ko:abs2}). When the base transformation is an IET, they were studied by the second author in \cite{Ul:mix} when there is a unique logarithmic singularity and by Ravotti in \cite{Rav} in the general case. A crucial estimate in all these works is provided by the following Remark, which was first used by Kocergin in \cite{Ko:abs}.

\begin{remark}[Inverses of an arithmetic progression]\label{rk:Kocergin}
If the points $(x_i)_{i=0}^N\subset [0,1]$ are such that, for some $\delta>0$,  $ |x_i-x_j|\geq \delta$ for every pair of $i\neq j$, then
$$
\sum_{i=0}^N \frac{1}{x_i} \leq \frac{1}{\min_{0\leq i\leq N} x_i} + \sum_{j=1}^N \frac{1}{j\, \delta}  \leq \frac{1}{\min_{0\leq i\leq N} x_i} + \frac{\log N+ 1}{\delta} .
$$
\end{remark}
\begin{lemma}\label{proposition:nonsym}
Suppose that  $T_{(\pi,\lambda)}$ satisfies the Keane condition. Then for every
$\varphi\in\ol(\sqcup_{\alpha\in \mathcal{A}} I_{\alpha})$ with $g_\varphi'=0$, any $k\geq 1$ and  $x \in I^{(k)}$ we have
\begin{equation} \label{asspecialsum}
\left| S(k)(\varphi')(x) - \sum_{\alpha \in \mathcal{A}}
\frac{C_\alpha^+}{x_\alpha^{l}} + \sum_{\alpha \in \mathcal{A}}
\frac{C_\alpha^-}{x_\alpha^{r}} \right| \leq  \bl(\varphi) \frac{1+\log \|Q(k)\|}{\min_{\alpha\in\mathcal A}|I^{(k)}_\alpha|}.
\end{equation}
\end{lemma}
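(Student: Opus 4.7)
The plan is to differentiate $\varphi$ explicitly (using $g'_\varphi=0$), rewrite $S(k)\varphi'(x)$ as a weighted combination of $2d$ inner sums of reciprocal distances from the orbit of $x$ to each singularity $l_\alpha$ or $r_\alpha$, and then apply Remark~\ref{rk:Kocergin} to each inner sum after verifying the required separation of the orbit points via the Rokhlin tower structure.

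Since $g'_\varphi=0$, differentiating \eqref{def:logsing} yields an expression for $\varphi'$ as a linear combination, with coefficients $\mp C^\pm_\alpha$, of the reciprocal fractional-part distances $1/(|I|\{(x-l_\alpha)/|I|\})$ and $1/(|I|\{(r_\alpha-x)/|I|\})$. Consequently $S(k)\varphi'(x)=\sum_{0\leq i<Q_\beta(k)}\varphi'(T^ix)$ reduces, for each $\alpha\in\mathcal{A}$, to controlling the two inner sums $\sum_i 1/y_i^{l,\alpha}$ and $\sum_i 1/y_i^{r,\alpha}$, where $y_i^{l,\alpha}:=|I|\{(T^ix-l_\alpha)/|I|\}$ and $y_i^{r,\alpha}:=|I|\{(r_\alpha-T^ix)/|I|\}$.

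The main geometric input is that the orbit $\{T^ix:0\leq i<Q_\beta(k)\}$ is well separated. The Rokhlin tower $\{T^i(I^{(k)}_\beta):0\leq i<Q_\beta(k)\}$ of $T^{(k)}$ over $I^{(k)}_\beta$ consists of pairwise disjoint subintervals of $I$ of common length $|I^{(k)}_\beta|$, and each $T^ix$ sits at the same offset $x-l^{(k)}_\beta$ inside its respective floor, so distinct orbit points differ by at least $|I^{(k)}_\beta|\geq\min_{\alpha\in\mathcal{A}}|I^{(k)}_\alpha|$ on the circle $\mathbb{R}/|I|\mathbb{Z}$. Since $y\mapsto|I|\{(y-l_\alpha)/|I|\}$ and $y\mapsto|I|\{(r_\alpha-y)/|I|\}$ are circle isometries and the flat distance in $[0,|I|)$ dominates the circular distance, the families $\{y_i^{l,\alpha}\}_i$ and $\{y_i^{r,\alpha}\}_i$ consist of $Q_\beta(k)\leq\|Q(k)\|$ points pairwise separated in $(0,|I|)$ by at least $\min_\alpha|I^{(k)}_\alpha|$. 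Applying Remark~\ref{rk:Kocergin} with these parameters yields
\[\sum_i\frac{1}{y_i^{l,\alpha}}\leq\frac{1}{\min_i y_i^{l,\alpha}}+\frac{1+\log\|Q(k)\|}{\min_\alpha|I^{(k)}_\alpha|},\]
and the analogous bound for $y_i^{r,\alpha}$.

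It remains to identify the minima with the closest-visit quantities $x_\alpha^l$ and $x_\alpha^r$. Inspection of the definition shows that $y_i^{l,\alpha}=T^ix-l_\alpha$ whenever $T^ix\geq l_\alpha$, and $y_i^{l,\alpha}\geq|I|-l_\alpha\geq|I_\alpha|\geq|I^{(k)}_\alpha|$ otherwise, so $\min_i y_i^{l,\alpha}=x_\alpha^l$ whenever the orbit meets $[l_\alpha,|I|)$; in the degenerate case when it does not, $x_\alpha^l=\infty$ by convention while $1/\min_iy_i^{l,\alpha}\leq 1/|I^{(k)}_\alpha|$ can be absorbed into the logarithmic error term. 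A symmetric argument handles $x_\alpha^r$. Combining the inner bounds with weights $|C^\pm_\alpha|$ and summing over $\alpha$ collapses the prefactor to $\bl(\varphi)=\sum_\alpha(|C^+_\alpha|+|C^-_\alpha|)$, producing the announced inequality. The main obstacle is the separation step: one must carefully track the passage from the Rokhlin-tower spacing through the circle isometries down to a flat-interval spacing of $\min_\alpha|I^{(k)}_\alpha|$; unlike the symmetric case \eqref{specialsum}, no cancellation between singularities is exploited here, which is precisely why the resulting bound is weaker.
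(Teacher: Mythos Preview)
Your proof is correct and follows essentially the same route as the paper's: reduce by linearity to a single logarithmic term, use the Rokhlin tower structure to obtain a minimum spacing $\min_\alpha|I^{(k)}_\alpha|$ for the orbit, and then apply the Kochergin-type harmonic estimate (Remark~\ref{rk:Kocergin}) to isolate the closest-visit term. Your treatment is in fact slightly more careful than the paper's in two respects: you make explicit that the spacing must be established on the circle $\R/|I|\Z$ before it transfers to flat spacing of the $y_i^{l,\alpha}$ (the paper asserts the post-shift bound directly), and you handle the degenerate case $x_\alpha^l=\infty$ which the paper omits.
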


\begin{proof}
Notice first that it is enough to prove \eqref{asspecialsum} in the special cases when
\begin{align*}
\varphi=\varphi_\alpha^+:=\log(|I|\{(x-l_{\alpha})/|I|\}) \text{ or }
\varphi=\varphi_\alpha^-:=\log(|I|\{(r_{\alpha}-x)/|I|\}).
\end{align*}
Indeed, taking the linear combination $\sum_{\alpha\in\mathcal A}C_\alpha^+\varphi_\alpha^+ +C_\alpha^-\varphi_\alpha^-$ then yields the general form of the result.
Since the reasoning is analogous for functions of the form  $\varphi_\alpha^+$  or $\varphi_\alpha^-$ we will only do the computations  for $\varphi_\alpha^+$.

For any $x\in I^{(k)}_\beta$ choose  $0\leq i_0 <Q_\beta(k)$ such that the iterate
$T^{i_0}x$ is the closest to $l_{\alpha}$ among all iterates $T^j x$ with $0\leq j<Q_\beta(k)$ belonging to the interval $(l_\alpha, |I|)$.
Then $x_\alpha^{l}= T^{i_0}(x)-l_\alpha$. Since all points in the orbit segment $\{ T^k x, \, 0\leq k<Q_\beta(k)\}$  belong to separate floors of a Rokhlin tower on which $T$ acts as an isometry on the floors, we also have that
$$\min \{ \vert T^i (x)- T^{j}(x)\vert, 0 \leq i\neq j <Q_\beta(k)\}\geq \min_{\alpha\in\mathcal{A}}|I_\alpha^{(k)}|.$$
Therefore, if we reorder the points in $\{ T^i x\, 0\leq i<Q_\beta(k)\} $ so that
$ l_\alpha <T^{i_0}x < T^{i_1}x< T^{i_2}x <\dots $, we have
$$ |I|\{(T^{i_j} (x)- l_\alpha)/|I|\}\geq \min_{\alpha\in\mathcal{A}}|I_\alpha^{(k)}| \, j \qquad  \text{for \ all} \ 1 \leq j<Q_\beta(k).$$
Thus, since by definition of special Birkhoff sum $S(k)\varphi'(x) = (\varphi')^{(Q_\beta(k))}(x)$ if $x\in I^{(k)}_\beta$,
\begin{align*}
\Big|(\varphi')^{(Q_\beta(k))}(x)-\frac{1}{x_\alpha^{l}}\Big|&\leq \sum_{0\leq i<Q_\beta(k), i\neq {i_0}}
\frac{1}{|I|\{(T^{i}x-l_{\alpha})/|I|\}}
\leq \sum_{1\leq j <Q_{\beta}(k)}\frac{1}{j \, \min_{\alpha\in \mathcal{A}}|I_\alpha^{(k)}|}\leq \frac{1+\log Q_\beta(k)}{\min_{\alpha\in \mathcal{A}}|I_\alpha^{(k)}|},
\end{align*}
were in the last inequality we have used the estimate given by Remark~\ref{rk:Kocergin}. This completes the proof.
\end{proof}

\begin{lemma}\label{lemma:boundLV}
Suppose that  $T_{(\pi,\lambda)}$ satisfies the Keane condition. Then for every
$\varphi\in{\ol}^{\bv}(\sqcup_{\alpha\in \mathcal{A}} I_{\alpha})$ and $k\geq 1$ we have
\begin{equation}\label{naveas}
\lv(S(k)\varphi)\leq \frac{|I^{(k)}|}{\min_{\alpha\in\mathcal{A}}|I^{(k)}_\alpha|}\lv(\varphi)(3+\log\|Q(k)\|).
\end{equation}
\end{lemma}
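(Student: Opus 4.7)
The plan is to mimic the strategy of Proposition~\ref{lemlogreno1}, replacing the use of the symmetric cancellation estimate \eqref{specialsum} (which relied on the \ref{SUDC}) by the asymmetric estimate \eqref{asspecialsum} from Lemma~\ref{proposition:nonsym}. The price paid by abandoning the symmetry hypothesis is precisely the extra factor $\log\|Q(k)\|$ that appears in the statement.

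First I would reduce to the case $g_\varphi=0$ by the same splitting used in the proof of Proposition~\ref{lemlogreno1}: writing $\varphi=(\varphi-g_\varphi)+g_\varphi$, using \eqref{eq:gSk} and \eqref{navar}, one gets
\[\var g_{S(k)\varphi}\leq \var g_{S(k)(\varphi-g_\varphi)}+\var g_\varphi,\]
so that once the conclusion is established for functions with vanishing $g_\varphi$-part, one recovers the general estimate by combining with $\var g_\varphi\leq\lv(\varphi)$ and the invariance $\bl(S(k)\varphi)=\bl(\varphi)$ from \eqref{eq:blas}.

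Assume then $g_\varphi=0$. By Lemma~\ref{comparingconstants}, there is a permutation $\chi$ of $\mathcal A$ such that, as in \eqref{firstdifference},
\[g'_{S(k)\varphi}(x)=S(k)(\varphi')(x)-\sum_{\alpha\in\mathcal A}\frac{C^+_\alpha}{|I^{(k)}|\{(x-l_\alpha^{(k)})/|I^{(k)}|\}}+\sum_{\alpha\in\mathcal A}\frac{C^-_{\chi(\alpha)}}{|I^{(k)}|\{(r_\alpha^{(k)}-x)/|I^{(k)}|\}}.\]
Applying \eqref{asspecialsum} of Lemma~\ref{proposition:nonsym} to the first term and the comparison \eqref{singularitiescomparisons} of Remark~\ref{rk:closestpoints} (together with the fact that $\sum_\alpha(|C_\alpha^+|+|C_\alpha^-|)=\bl(\varphi)$) to swap the closest-visit quantities $1/x_\alpha^l$, $1/x_\alpha^r$ for the normalized singularities, one bounds pointwise
\[|g'_{S(k)\varphi}(x)|\leq\frac{\bl(\varphi)(1+\log\|Q(k)\|)}{\min_{\alpha\in\mathcal A}|I^{(k)}_\alpha|}+\frac{\bl(\varphi)}{\min_{\alpha\in\mathcal A}|I^{(k)}_\alpha|}\leq\frac{\bl(\varphi)(2+\log\|Q(k)\|)}{\min_{\alpha\in\mathcal A}|I^{(k)}_\alpha|}.\]
Since $g_{S(k)\varphi}\in\bv^1(\sqcup_{\alpha}I^{(k)}_\alpha)$, integrating the previous pointwise bound over $I^{(k)}$ gives
\[\var g_{S(k)\varphi}=\int_{I^{(k)}}|g'_{S(k)\varphi}|\leq \bl(\varphi)(2+\log\|Q(k)\|)\frac{|I^{(k)}|}{\min_{\alpha\in\mathcal A}|I^{(k)}_\alpha|}.\]
Finally, using $\bl(S(k)\varphi)=\bl(\varphi)$ and $|I^{(k)}|/\min_\alpha|I^{(k)}_\alpha|\geq 1$, we get
\[\lv(S(k)\varphi)=\bl(\varphi)+\var g_{S(k)\varphi}\leq \bl(\varphi)(3+\log\|Q(k)\|)\frac{|I^{(k)}|}{\min_{\alpha\in\mathcal A}|I^{(k)}_\alpha|},\]
which is the claimed inequality when $g_\varphi=0$; combined with the reduction of the first paragraph, this yields \eqref{naveas}.

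The main technical point (rather than an obstacle) is simply the careful bookkeeping when passing from the closest-visit quantities $x_\alpha^{l,r}$ that appear in \eqref{asspecialsum} to the normalized logarithmic singularities of the renormalized cocycle $S(k)\varphi$; everything else is a direct adaptation of the symmetric argument in Propositions~\ref{lemlogreno}--\ref{lemlogreno1}.
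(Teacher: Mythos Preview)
Your proof is correct and follows essentially the same approach as the paper's own proof: both first handle the case $g_\varphi=0$ by combining the asymmetric derivative estimate \eqref{asspecialsum} with the closest-visit comparison \eqref{singularitiescomparisons} to bound $\|g'_{S(k)\varphi}\|_{\sup}$, then integrate over $I^{(k)}$, and finally reduce the general case via the splitting \eqref{eq:gSk} and \eqref{navar}. The only cosmetic difference is the order of presentation (you state the reduction first, the paper treats the special case first), but the content is identical.
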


\begin{proof}
 First suppose that $g_{\varphi}=0$.  Then, by Lemma~\ref{comparingconstants} and in view of Remark~\ref{rk:closestpoints},
we can apply the derivative estimates given by Lemma~\ref{proposition:nonsym} and get that
for every $x\in I^{(k)}$,
\begin{align*}
|g'_{S(k)\varphi}(x)| &=\Big|S(k)\varphi'(x) -
\sum_{\alpha\in\mathcal{A}} \frac{C^+_\alpha}{|I^{(k)}| \big\{
({x-l^{(k)}_\alpha})/{|I^{(k)}|}\big\}} +
\sum_{\alpha\in\mathcal{A}} \frac{C^-_{\chi(\alpha)}}{ |I^{(k)}|
\big\{ ({r^{(k)}_\alpha -x})/{ |I^{(k)} |}\big\}}\Big|\\
&\leq\frac{\bl(\varphi)(2+\log\|Q(k)\|)}{\min_{\alpha\in\mathcal{A}}|I^{(k)}_\alpha|}. 
\end{align*}
It follows that
\[\var g_{S(k)\varphi} \leq  \|g'_{S(k)\varphi}\|_{\sup}|I^{(k)}|\leq
{\bl(\varphi)}(2+\log\|Q(k)\|)\frac{|I^{(k)}|}{\min_{\alpha\in\mathcal A}|I_\alpha^{(k)}|}.\]

If $g_\varphi\neq 0$ then, by \eqref{navar}, we have
$\var(S(k) g_\varphi)\leq\var g_\varphi$. As $g_{\varphi-g_{\varphi}}=0$, by \eqref{eq:gSk}, it follows that
\begin{align*}\lv(S(k)\varphi)&\leq \bl(\varphi)+\var( g_{S(k)(\varphi-g_{\varphi})})+\var(S(k)g_{\varphi})\\
&\leq \bl(\varphi)+
(2+\log\|Q(k)\|)\frac{|I^{(k)}|}{\min_{\alpha\in\mathcal A}|I_\alpha^{(k)}|}\bl(\varphi-g_\varphi)+\var(g_\varphi)\\
&\leq (3+\log\|Q(k)\|)\frac{|I^{(k)}|}{\min_{\alpha\in\mathcal A}|I_\alpha^{(k)}|}\lv(\varphi).
\end{align*}
\end{proof}
Since by Definition of the Diophantine condition \ref{UDC} (see \eqref{def:UDC-d} in Definition~\ref{def:balanced} and Definition~\ref{def:UDC}) the IETs obtained inducing on the subintervals $I^{(k)}$ are all $\kappa$-\emph{balanced}, i.e.~$|I^{(k)}|\leq \kappa \min_{\alpha\in\mathcal{A}}|I^{(k)}_\alpha|$, the conclusion of Lemma~\ref{lemma:boundLV} immediately give the following Corollary.

\begin{corollary}\label{corlv}
Let $T$ be an IET satisfying the \ref{UDC}
Then for all $0\leq k\leq l$ and for every function
$\varphi\in{\ol}^{\bv}(\sqcup_{\alpha\in \mathcal{A}} I^{(k)}_{\alpha})$
 we have
\begin{align}\label{nave1}
\lv(S(k,l)(\varphi))\leq \kappa(3+\log\|Q(k,l)\|)\lv(\varphi).
\end{align}
\end{corollary}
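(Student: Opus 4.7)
The plan is to derive this corollary as a direct consequence of Lemma~\ref{lemma:boundLV} combined with the Rokhlin-balance property built into the \ref{UDC}. Lemma~\ref{lemma:boundLV} is stated for the renormalization $S(k)$ mapping $L^1(I)$ to $L^1(I^{(k)})$ starting from a base IET $T$ satisfying Keane's condition, but the proof only uses that the target intervals $I^{(k)}_\alpha$ are the continuity intervals at the inducing step and that the matrix $\|Q(k)\|$ controls the combinatorial complexity. Nothing in that argument is tied to starting at the $0$-th step of Rauzy-Veech induction.

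First, I would apply Lemma~\ref{lemma:boundLV} to the IET $T^{(k)}:I^{(k)}\to I^{(k)}$, viewed as the base transformation satisfying Keane's condition, with its own accelerating subsequence $(n_{k+j}-n_k)_{j\geq 0}$. The induced map after $l-k$ accelerated steps is precisely $T^{(l)}:I^{(l)}\to I^{(l)}$, the associated renormalization operator is exactly $S(k,l)$, and the associated product matrix is $Q(k,l)$ (cf.~\S~\ref{sec:accelerations}). Therefore Lemma~\ref{lemma:boundLV} yields, for every $\varphi\in{\ol}^{\bv}(\sqcup_{\alpha\in\mathcal A}I^{(k)}_\alpha)$,
\[
\lv(S(k,l)\varphi) \;\leq\; \frac{|I^{(l)}|}{\min_{\alpha\in\mathcal{A}}|I^{(l)}_\alpha|}\,\lv(\varphi)\,\bigl(3+\log\|Q(k,l)\|\bigr).
\]

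Next, I would use the Rokhlin-balance property \eqref{def:UDC-d} of Definition~\ref{def:balanced}, which is one of the hypotheses of the \ref{UDC}. Since the accelerating sequence is Rokhlin-balanced with constant $\kappa>1$, at every accelerated step $m$, in particular $m=l$, we have $|I^{(l)}|\leq \kappa\,|I^{(l)}_\alpha|$ for every $\alpha\in\mathcal A$, so
\[
\frac{|I^{(l)}|}{\min_{\alpha\in\mathcal A}|I^{(l)}_\alpha|}\leq \kappa.
\]
Substituting this into the previous bound gives the claim. The degenerate case $k=l$ is the tautology $S(k,k)\varphi=\varphi$, which trivially satisfies the inequality since the factor $3+\log\|Q(k,k)\|=3+\log\|\mathrm{Id}\|\geq 3 \geq 1$.

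There is no real obstacle here; the only mild subtlety is to observe that Lemma~\ref{lemma:boundLV}, though phrased starting from step $0$, is genuinely a statement about one accelerated induction step between two IETs satisfying Keane's condition, and it applies verbatim to the pair $(T^{(k)},T^{(l)})$. Once this is recognized, the estimate follows from the uniform lower bound on the relative length of continuity intervals that the \ref{UDC} guarantees at every Rokhlin-balanced time.
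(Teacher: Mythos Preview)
Your proposal is correct and follows exactly the paper's approach: the paper states just before the corollary that since the IETs obtained by inducing on $I^{(k)}$ are all $\kappa$-balanced (i.e.\ $|I^{(k)}|\leq \kappa \min_{\alpha}|I^{(k)}_\alpha|$), the conclusion of Lemma~\ref{lemma:boundLV} immediately gives the corollary. Your only addition is the explicit remark that Lemma~\ref{lemma:boundLV}, though phrased for $S(k)$ starting from step $0$, applies verbatim with $T^{(k)}$ as the base IET, which is a worthwhile clarification but does not change the argument.
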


\section{Correction operators}\label{correction:sec}
This section contains the statement and the proof of the key  technical  result of the paper (Theorem~\ref{operatorcorrection} below), which we now motivate and then state.

\subsection{Correction operator for cocycles with  logarithmic singularities}\label{maincorrection:sec}
Let $\varphi$ be a function with logarithmic singularities and $T$ an IET satisfying the Keane condition. Let $S(k)\varphi$ be a sequence of special Birkhoff sums obtained by renormalization, see \S~\ref{SpecialBS}.
Consider the sequence
\begin{equation}\label{normseq}
\|S(k)\varphi\|_{L^1(I^{(k)})}/|I^{(k)}|, \qquad k\in\mathbb{N},
\end{equation}
of $L^1$-norms, renormalized by $|I^{(k)}|$. Notice that if $S(k)\varphi$ were bounded, the sequence would simply be controlled by the sequence of sup norms $\|S(k)\varphi\|_{L^\infty(I^{(k)})}$, $k\in\mathbb{N}$.
 Typically, the sequence in \eqref{normseq} grows exponentially with an exponent related to the
Lyapunov exponents of the cocycle $A_Y$.

Our goal is to eliminate this growth, by \emph{correcting} the function $\varphi$, namely by subtracting
a piecewise constant function (constant on the continuity intervals of $T$). This piecewise constant function, which we call the \emph{correction}, can be defined for IETs which satisfy the \ref{UDC} and its values can be identified with a vector in
$H(\pi)$. The correction vector will be given by a \emph{correction operator} $\mathfrak{h} : {\ol}(\sqcup_{\alpha\in
\mathcal{A}} I_{\alpha}) \to H(\pi)$. We will call \emph{correcting operator} the operator $P:= I - \mathfrak{h}:  {\ol}(\sqcup_{\alpha\in
\mathcal{A}} I_{\alpha}) \to  {\ol}(\sqcup_{\alpha\in
\mathcal{A}} I_{\alpha})$ which \emph{performs} the correction, namely to $\varphi$ associates the \emph{corrected cocycle} $P(\varphi)= \varphi - \mathfrak{h} (\varphi)$ obtained subtracting the correction $\mathfrak{h}(\varphi)$.
 Under the assumption that $T$ satisfies the \ref{UDC}, for every $\varphi\in {\ol}(\sqcup_{\alpha\in
\mathcal{A}} I_{\alpha})$, the  correction $\mathfrak{h}(\varphi)$ will be such that
 the corrected function $P(\varphi)= \varphi-\mathfrak{h}(\varphi)$ produces a sequence
\begin{equation}\label{normseqcor}
\|S(k) \circ P (\varphi)\|_{L^1(I^{(k)})}/|I^{(k)}|, \qquad k\in\mathbb{N},
\end{equation}
which now has sub-exponential growth.
This will then be the starting point to show the existence of a full deviation spectrum for the $L^1$-norm (see \S~\ref{devspectrum:sec}).
Moreover, if additionally $T$ satisfies the \ref{SUDC} and $\varphi\in {\ol}(\sqcup_{\alpha\in
\mathcal{A}} I_{\alpha})$ satisfies a stronger symmetry condition, $\as(\varphi)=0$, then the sequence \eqref{normseqcor} is bounded along a subsequence,
and it will play a crucial role  in the proof of ergodicity (see \S~\ref{sec:ergskew}).

\subsubsection{The main result on correction of logarithmic cocycles}
The formal statement of the result that we are going to prove is the following.
\begin{theorem}[Existence of a correction operator]\label{operatorcorrection}
Assume that  $T=T_{(\pi,\lambda)}$ satisfies the \ref{UDC}.  There exists a bounded
linear operator $\mathfrak{h} : {\ol}(\sqcup_{\alpha\in
\mathcal{A}} I_{\alpha}) \to H(\pi)$ such that
for every  $\varphi\in{\ol}(\sqcup_{\alpha\in \mathcal{A}}
I_{\alpha})$ with $\mathfrak{h}(\varphi)=0$ we have
\begin{equation}\label{eq:thmcorr1}
\frac{\|S(k)\, {\varphi}\|_{L^1(I^{(k)})}}{|I^{(k)}|}\leq
C\Big(
C'_k(T)\|\varphi\|_{\lv}+\|Q_s(k)\|\frac{\|{\varphi}\|_{L^1(I)}}{|I|}\Big),
\end{equation}
where $C'_k(T)$ is the Diophantine series defined in Definition~\ref{def:series}.
\smallskip
Furthermore, if additionally $T$ satisfies the \ref{SUDC} and $\mathcal{AS}(\varphi)=0$ then
\begin{equation}\label{eq:thmcorr2}
\frac{\|S(k)\, {\varphi}\|_{L^1(I^{(k)})}}{|I^{(k)}|}\leq C\Big(
C_k(T) (\lv(\varphi)+\|\partial_{\pi^{(0)}}(\varphi)\|)+\|Q_s(k)\|\frac{\|{\varphi}\|_{L^1(I)}}{|I|}\Big),
\end{equation}
where $C_k(T)$ is the other Diophantine series defined in Definition~\ref{def:series}.
\end{theorem}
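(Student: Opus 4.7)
The plan is to build the correction operator $\mathfrak{h}$ by an iterative limit procedure inspired by Marmi--Moussa--Yoccoz \cite{Ma-Mo-Yo,Ma-Yo}, but replacing their sup-norm with the $L^1$-norm (necessary since functions with logarithmic singularities are unbounded). First I would decompose, for each $k\geq 0$, $S(k)\varphi = m_k + r_k$, where $m_k := \mathcal{M}(S(k)\varphi) \in \R^{\mathcal A}$ is the piecewise constant mean-value approximation and $r_k := S(k)\varphi - m_k$ has zero mean on each $I^{(k)}_\alpha$. The fluctuation $r_k$ is controlled by Proposition~\ref{lemlos} (estimate \eqref{meanv2}), giving $\|r_k\|_{L^1(I^{(k)})} \leq 8\lv(S(k)\varphi)|I^{(k)}|$, while Corollary~\ref{corlv} bounds $\lv(S(k)\varphi)$ by $\kappa(3+\log\|Q(k)\|)\lv(\varphi)$ in the \ref{UDC} case (respectively by $4M\kappa\,\lv(\varphi)$ in the \ref{SUDC} case when $\as(\varphi)=0$, via Proposition~\ref{lemlogreno1}). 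Since $S(k,k+1)$ acts on piecewise constants as multiplication by $Z(k+1)$, one gets the recursion $m_{k+1} = Z(k+1) m_k + \varepsilon_k$, with $\varepsilon_k := \mathcal{M}(S(k,k+1) r_k)$. Using $\|\mathcal{M}g\|_{L^1} \leq \|g\|_{L^1}$, the contractivity \eqref{nase}, and the balance $|I^{(k)}|/|I^{(k+1)}| \leq \|Z(k+1)\|$ from \eqref{neq:dli}, after converting to vector norm via Rokhlin balance one gets $\|\varepsilon_k\| \lesssim \|Z(k+1)\|(1+\log\|Q(k)\|)\lv(\varphi)$ (or without the logarithmic factor in the \ref{SUDC} case).

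For any candidate $h \in H(\pi)$, set $u_k := m_k - Q(k)h$; iterating the recursion gives $u_k = Q(k)(m_0 - h) + \sum_{j<k} Q(j+1,k)\varepsilon_j$. The key step is to pass to the quotient $H(\pi^{(k)})/\Gamma^{(k)}_s$, on which the induced operator $Q_\sharp(k,l)$ is invertible with $\|Q_\sharp(k,l)^{-1}\| = \|Q_s(k,l)\| \lesssim e^{-\lambda(l-k)}$ by \eqref{eq:unstst} and \eqref{def:UDC-a}. I would define the correction modulo $\Gamma_s$ by the series
\[
[\mathfrak{h}(\varphi)] := [p_{H(\pi)} m_0] + \sum_{j \geq 0} Q_\sharp(j+1)^{-1} [p_{H(\pi^{(j+1)})} \varepsilon_j] \in H(\pi)/\Gamma_s,
\]
whose convergence follows from \eqref{eq:K'} of Proposition~\ref{prop:estKC} (respectively from \eqref{eq:K} in the \ref{SUDC} case). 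A continuous linear section $H(\pi)/\Gamma_s \to H(\pi)$ (e.g. the orthogonal complement of $\Gamma_s$ inside $H(\pi)$) lifts this to the desired bounded linear operator $\mathfrak{h} : \ol(\sqcup_{\alpha\in \mathcal A} I_\alpha) \to H(\pi)$. A telescoping calculation then yields $[u_k] = -\sum_{j \geq k} Q_s(k, j+1) [\varepsilon_j]$, so the quotient component of $u_k$ has norm bounded by $K'_k(T)\lv(\varphi)$ (respectively $K_k(T)\lv(\varphi)$).

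The final estimate is obtained by decomposing $u_k$ into three pieces: (a) the boundary obstruction $u_k - p_{H(\pi^{(k)})} u_k$, bounded by $C_{\mathcal G}\|\partial_{\pi^{(k)}} u_k\|$ via \eqref{eq:proj}, then by Proposition~\ref{prop:partial} and Lemma~\ref{invariophi} in terms of $\|\partial_\pi\varphi\|$ and $\lv(S(k)\varphi)$; (b) the quotient part, controlled above; (c) the stable component $p_s^{(k)}(u_k) = Q_s(k)\,p_s^{(0)}(m_0) + \sum_{j<k} Q_s(j+1,k) p_s^{(j+1)}\varepsilon_j$, where $\|Q_s(k)\|\|p_s^{(0)} m_0\| \lesssim \|Q_s(k)\| \|\varphi\|_{L^1(I)}/|I|$ provides exactly the $\|Q_s(k)\|\|\varphi\|_{L^1(I)}/|I|$ term in the statement, while the remaining sum is absorbed into $C'_k(T)\lv(\varphi)$ (respectively $C_k(T)\lv(\varphi)$) using Proposition~\ref{prop:estKC}. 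Summing the contributions and using Rokhlin balance to convert between vector and normalized $L^1(I^{(k)})$ norms yields \eqref{eq:thmcorr1}. The sharper bound \eqref{eq:thmcorr2} follows by the same argument using Proposition~\ref{lemlogreno1} in place of Corollary~\ref{corlv} to remove the $\log\|Q(k)\|$ factor, and exploiting $\as(\varphi)=0$ (which is preserved by $S(k)$ in view of \eqref{eq:blas}) together with \eqref{eq:parCphi} to isolate $\|\partial_\pi\varphi\|$ as an explicit term rather than absorbing it into $\|\varphi\|_\lv$ via \eqref{eq:parphi}. The main obstacle will be coordinating the stable/quotient/boundary decomposition while keeping all constants $k$-independent, and in the \ref{SUDC} case tracking carefully how the symmetry of the singularities propagates through the Rauzy--Veech renormalization to produce the cleaner bound involving only $\lv(\varphi)$ and $\|\partial_\pi\varphi\|$.
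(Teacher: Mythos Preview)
Your overall strategy---$L^1$ control, the recursion $m_{k+1}=Z(k+1)m_k+\varepsilon_k$ for the mean-value vectors, defining $\mathfrak{h}$ as a series pulled back by $Q_\sharp^{-1}$, and finishing via a stable/unstable/boundary decomposition---matches the paper's approach closely. The gap is in the execution of steps~(b) and~(c). The exponential contraction $\|Q_\sharp(k,l)^{-1}\|=\|Q_s(k,l)\|$ from~\eqref{eq:unstst} holds only on $H(\pi^{(k)})/\Gamma^{(k)}_s$, so to telescope you need a recursion \emph{inside} the spaces $H(\pi^{(k)})$. But $m_k=\mathcal{M}(S(k)\varphi)$ and $\varepsilon_k$ lie in $\Gamma^{(k)}=\R^{\mathcal{A}}$, not in $H(\pi^{(k)})$, and the cocycle $Q(k,l)$ does \emph{not} commute with the orthogonal projection $p_{H}$: while $Q(k,l)H(\pi^{(k)})=H(\pi^{(l)})$ holds, $Q(k,l)$ does not send $H(\pi^{(k)})^\perp=\ker\Omega_{\pi^{(k)}}$ into $\ker\Omega_{\pi^{(l)}}$ (it is $A^{-1}$, not $A^t=Q$, that preserves the kernels; cf.~Lemma~\ref{invario}). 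Hence $p_H m_{k+1}\neq Z(k+1)\,p_H m_k + p_H\varepsilon_k$, the telescoping identity $[u_k]=-\sum_{j\ge k}Q_s(k,j+1)[\varepsilon_j]$ fails as written, and the formula in~(c) for the stable component is incorrect: the boundary parts $m_j-p_H m_j$ get pushed by $Q$ partly back into $H$ and contaminate both the quotient and stable pieces.

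The paper resolves precisely this issue by replacing $\mathcal{M}^{(k)}$ with $\mathcal{M}_H^{(k)}:=p_{H(\pi^{(k)})}\circ\mathcal{M}^{(k)}$ as the \emph{initial} correction (Definition~\ref{def:P0}), so that the entire construction lives in $H(\pi^{(k)})$ from the start; the boundary defect $\|\mathcal{M}^{(k)}\varphi-\mathcal{M}_H^{(k)}\varphi\|$ is then paid for once per step via Proposition~\ref{prop:partial} and absorbed into the estimate of $\|P_0^{(k)}\varphi\|_{L^1}$ (Lemma~\ref{lemma:prelim}). After this modification the series~\eqref{defoperdel} and the telescoping in Lemma~\ref{BSestimates} go through cleanly. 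Your argument becomes correct if you replace $m_k$ by $p_{H(\pi^{(k)})}m_k$ throughout and redefine $\tilde\varepsilon_k:=p_H m_{k+1}-Z(k+1)\,p_H m_k$; then $\tilde\varepsilon_k$ picks up an extra term $p_H\big(Z(k+1)(m_k-p_H m_k)\big)$ which you must bound using~\eqref{eq:proj} and~\eqref{eq:parCphi}---this is exactly the content the paper packages into Lemma~\ref{lemma:prelim}.
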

\noindent  Combining  Theorem~\ref{operatorcorrection} with the estimates on the Diophantine series given by Proposition~\ref{prop:estKC} (see in particular \eqref{eq:C'}),  we have the following corollary:
 \begin{corollary}[Subexponential growth of special Birkhoff sums of corrected cocycles]\label{cor:corrected}
 Given  $T$ and $\mathfrak{h}$ as in Theorem~\ref{operatorcorrection},
 for every  $\varphi\in{\ol}(\sqcup_{\alpha\in \mathcal{A}}
I_{\alpha})$ with $\mathfrak{h}(\varphi)=0$,
 we have
$$\frac{\|S(k)\varphi\|_{L^1(I^{(k)})}}{|I^{(k)}|}=O(e^{\tau k}).$$
 \end{corollary}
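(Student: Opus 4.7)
The plan is to combine the bound provided by Theorem~\ref{operatorcorrection} with the estimates on the Diophantine series obtained in Proposition~\ref{prop:estKC}. Since $\mathfrak{h}(\varphi)=0$, the inequality \eqref{eq:thmcorr1} applies and gives
\[
\frac{\|S(k)\varphi\|_{L^1(I^{(k)})}}{|I^{(k)}|} \leq C\Big(C'_k(T)\,\|\varphi\|_{\lv} + \|Q_s(k)\|\,\frac{\|\varphi\|_{L^1(I)}}{|I|}\Big),
\]
so the task reduces to bounding the two factors $C'_k(T)$ and $\|Q_s(k)\|$ along the Rokhlin-balanced accelerating sequence furnished by the \ref{UDC}.

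For the first factor, I would invoke \eqref{eq:C'} of Proposition~\ref{prop:estKC}, which yields $C'_k(T)\leq D(k+1)e^{2\tau' k}$ for any fixed $0<\tau'<\lambda/2$. Given the prescribed $\tau>0$ in the statement of the corollary, one selects the Diophantine parameter in the \ref{UDC} to be some $\tau'<\tau/2$; the polynomial prefactor $(k+1)$ is then absorbed into the exponential, so that $(k+1)e^{2\tau' k}=O(e^{\tau k})$ as $k\to\infty$, and the contribution of the first term to the right-hand side is $O(e^{\tau k})\|\varphi\|_{\lv}$.

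For the second factor, condition \eqref{def:UDC-a} of the \ref{UDC} applied with $l=k$ and $k=0$ gives directly $\|Q_s(k)\|=\|Q_s(0,k)\|\leq Ce^{-\lambda k}$, which decays exponentially and is therefore majorized by $e^{\tau k}$ for every $\tau>0$. Adding the two contributions yields the claimed subexponential estimate $\|S(k)\varphi\|_{L^1(I^{(k)})}/|I^{(k)}|=O(e^{\tau k})$.

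The core analytic work has already been encapsulated in Theorem~\ref{operatorcorrection} and Proposition~\ref{prop:estKC}, so there is no substantive obstacle to overcome here; the only care required is to choose the parameter in the Diophantine condition small enough to absorb the linear factor $(k+1)$ and the doubled exponent $2\tau'$ into the target rate $\tau$, which is always possible since $\tau'$ can be made arbitrarily small.
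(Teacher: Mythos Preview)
Your proposal is correct and follows essentially the same route as the paper, which simply states that the corollary results from ``combining Theorem~\ref{operatorcorrection} with the estimates on the Diophantine series given by Proposition~\ref{prop:estKC} (see in particular \eqref{eq:C'})''. You have spelled out the implicit steps: applying \eqref{eq:thmcorr1}, bounding $C'_k(T)$ via \eqref{eq:C'}, bounding $\|Q_s(k)\|$ via \eqref{def:UDC-a}, and choosing the Diophantine parameter $\tau'$ small enough to absorb both the factor $(k+1)$ and the doubled exponent into the target rate $\tau$.
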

\noindent  Notice that, in virtue of the definition of the Diophantine series $C_k(T)$ and $C'_k(T)$, the control for the symmetric case given by \eqref{eq:thmcorr2} is finer than that given by \eqref{eq:thmcorr1}  since  $C'_k(T)$ has an additional term which is logarithmic in the matrix cocycle norms (which comes from the presence of $K'_l(T)$ instead of $K_l(T)$, see Definition~\ref{def:series}).
\begin{remark}\label{rk:correctionF} More precisely, we will show in the proof of Theorem~\ref{operatorcorrection}  that for any choice of a subspace $F\subset H(\pi)$ such that $F\oplus \Gamma_s(\pi)=H(\pi)$, where $\Gamma_s(\pi)$ is the stable space of $T=T_{(\pi,\lambda)}$, one can define a unique such operator $\mathfrak{h}= \mathfrak{h}_F$ such that $\mathfrak{h}_F(h)=h$ for any $h\in F$.
\end{remark}
\smallskip
The proof of Theorem~\ref{operatorcorrection} will take the rest of this section. We first of all comment on the difficulties which motivate the change of strategy in comparison to \cite{Ma-Mo-Yo} and \cite{Fr-Ul} and give an outline of the main steps.

\subsubsection{Difficulties and outline of the proof}\label{sec:outline}
The idea of \emph{correction} as well of the strategy for proving of Theorem~\ref{operatorcorrection} are inspired by the seminal work by Marmi-Moussa-Yoccoz on the cohomological equation in \cite{Ma-Mo-Yo} (see also \cite{Ma-Yo}). As we already anticipated in the introduction, though,  when considering functions with logarithmic singularities (or more in general BMO functions) and want to control the   $L^1$-norm (which is the only one that we can controlled for functions with logarithmic singularities, which are unbounded),  we need to modify substantially the original construction.
The construction presented here is a modification of the construction that we  introduced in \cite{Fr-Ul} to prove an analogous result for IETs of \emph{hyperbolic periodic type}. Working with almost every $T$, but  requires again some major changes in the basic steps of construction. We comment here on the differences while giving an outline of the steps in the proof of Theorem~\ref{operatorcorrection}.


\smallskip
First note that there is not an unique way to define a correction operator $\mathfrak{h} : {\ol}(\sqcup_{\alpha\in
\mathcal{A}} I_{\alpha}) \to H(\pi)$ with the desired properties (as in Theorem~\ref{operatorcorrection}),
since if we are given a function $\mathfrak{h}(\varphi)$ that satisfies the desired estimates (namely \eqref{eq:thmcorr1} and \eqref{eq:thmcorr2} in Theorem~\ref{operatorcorrection}) and add an element from the stable space $\Gamma_s$, we get a new function that still satisfies the same estimates. On the other hand, if we compose with the projection $U:\mathbb{R}^\mathcal{A}\to \mathbb{R}^\mathcal{A}/\Gamma_s $ to the quotient by the stable space,
the \emph{quotient operator} $$\mathfrak{h}^U := U \circ \mathfrak{h}:{\ol}(\sqcup_{\alpha\in
\mathcal{A}} I_{\alpha}) \to H(\pi)/\Gamma_s$$ is uniquely defined and is the operator we are going to construct.

We will construct in fact a \emph{sequence} of \emph{correcting operators} with values in the quotient by the stable space, namely
 $$P^{(k)}:{\ol}(\sqcup_{\alpha\in
\mathcal{A}} I^{(k)}_{\alpha}) \to {\ol}(\sqcup_{\alpha\in
\mathcal{A}} I^{(k)}_{\alpha})/\Gamma^{(k)}_s, \qquad k\in \mathbb{N}$$ (notice that if $T$ satisfies the \ref{UDC}  the induced IET $T^{(k)}$
satisfies the \ref{UDC} for every $k\geq 1$). For $k=0$, the correcting operator $P^{(0)}$ will have the form $I- \mathfrak{h}^U$, where $ \mathfrak{h}^U$ is the sought  correction operator with values in the quotient.
We want the sequence of operators $P^{(k)}, k\in \N$, to be \emph{equivariant}  under the action of the renormalization, i.e.\ to commute with the operation of taking special  Birkhoff sums
 (see Lemma~\ref{commutes} for a precise statement).


\smallskip
\noindent The strategy to construct the sequence $P^{(k)}, k\in \N$  of equivariant correcting operators is the following:
\begin{enumerate}
\item As first approximation of the correction operators, consider, for $k\in\mathbb{N}$,  the mean value projections $\mathcal{M}^{(k)}:{\ol}(\sqcup_{\alpha\in
\mathcal{A}} I^{(k)}_{\alpha}) \to \Gamma^{(k)}$, as defined in \S~\ref{sec:meanprojection}, and the associated correcting operators $P^{(k)}_0:= I-\mathcal{M}^{(k)} $, $k\in\mathbb{N}$;  \vspace{.9mm}
\item The correcting operators $P^{(k)}_0$, $k\in \N$, are not equivariant and do not take values in the quotient. Let us hence modify them by subtracting a term $\Delta^{(k)}$ and composing with the projection $U^{(k)}$ to the quotient space  $\Gamma^{(k)}/\Gamma^{(k)}_s$, namely consider, for each $k\in\mathbb{N}$, a operator of the form $P^{(k)}:= U^{(k)}\circ P^{(k)}_0- \Delta^{(k)}$;\vspace{.9mm}
\item
Following \cite{Ma-Mo-Yo}, one can see that  for $P^{(0)}$ defined as in $(2)$ to be equivariant, one needs to define $\Delta^{(0)}\varphi$ so that the modified correction operator $U\circ \mathcal{M}^{(0)}+\Delta^{(0)}$ is the limit (if it exists) of the sequence $U\circ Q(k)^{-1}\circ \mathcal{M}^{(k)}\circ S(k)(\varphi)$, which is obtained by '\emph{bringing back}' the correction of $\varphi\in {\ol}(\sqcup_{\alpha\in
\mathcal{A}} I^{(0)}_{\alpha})$ at time $k$, namely of the function $S(k)(\varphi)$,  to time $0$ by applying $Q(k)^{-1}$;\vspace{.9mm}
\item Show that the sequence in $(3)$ converges, so that one can define the modification operator $\Delta^{(k)}$, then  the correcting operator $P^{(k)}= U^{(k)}\circ P^{(k)}_0- \Delta^{(k)}$
 has the required covariance and growths properties.\vspace{.9mm}
\end{enumerate}
Thus, to obtain the desired correction operator one has to show that the sequence
\[U\circ Q(k)^{-1}\circ \mathcal{M}^{(k)}\circ S(k) (\varphi) \in H(\pi^{(0)})/\Gamma_s^{(0)}, \qquad k\in \mathbb{N}\]
 obtained in $(3)$ converges  for every $\varphi\in {\ol}(\sqcup_{\alpha\in
\mathcal{A}} I^{(0)}_{\alpha})$.
Notice that when $\mathcal{M}^{(k)}$ takes values in $H(\pi^{(k)})\subset\Gamma^{(k)}$ then $Q(k)^{-1}$ composed with the projection on $\Gamma/\Gamma_s$
contracts exponentially and this allows to prove the convergence.
In \cite{Ma-Mo-Yo} and \cite{Fr-Ul}, though, the mean value projection $\mathcal{M}^{(k)}$, obtained taking mean values of the function over every exchanged interval (see \eqref{eq:defMk} below) 
 takes values also outside $H(\pi^{(k)})$. Therefore, the contraction argument does not apply. To circumvent  this problem, in \cite{Fr-Ul} we have used the the projection on $\Gamma/\Gamma_{cs}$, where $\Gamma_{cs}$ is the \emph{central stable} space. Unfortunately, though, this is not sufficient now, when we consider almost every IET.

\smallskip
One of the novelties in this part of the article in relations to the previous correction operators constructions is that we consider
\emph{initial corrections} $\mathcal{M}_H^{(k)}$
obtained by composing $\mathcal{M}^{(k)}$ with   the projection $p_{H(\pi^{(k)})}$ onto the space $H(\pi^{(k)})$ (see \S~\ref{sec:prelim}).  In view of the boundary operator estimate given by Lemma~\ref{bd:estimate} (see \S~\ref{sec:bd:estimate}), we  can control the displacement
between $\mathcal{M}_H^{(k)}$ and $\mathcal{M}^{(k)}$  in terms of the boundary operator $\partial_{\pi^{(k)}}$ (see \S~\ref{sec:prelim}, in particular the proof of Lemma~\ref{lemma:prelim}). It is
starting from this modified {preliminary correction} operators  in  step $(1)$ that allows to prove convergence and hence leads to a good definition of correction (and correcting) operators in the more general setting of this paper, but also requires
proving a  series of new inequalities and adding some new technical steps to the construction.
The \ref{UDC} is devised exactly in order to guarantee convergence of this series.
In fact,  to show that the series that gives $\Delta^{(k)}$ (which is written in \eqref{defoperdel}) converges,
we will exploit  the exponential contraction  provided by the condition \eqref{def:UDC-a} and \eqref{eq:unstst}.

\smallskip
The final part of the proof is to show that any correction operator $\mathfrak{h}$ defined choosing a representative $\mathfrak{h}(\varphi)$ for the equivalence class $\mathfrak{h}^U (\varphi)$ in $H(\pi)/\Gamma^s$ is such that   $\|S(k)(\varphi-\mathfrak{h}(\varphi))\|_{L^1(I^{(k)})}/|I^{(k)}|$ has sub-exponential growth. This part follows quite closely the proof that we gave in \cite{Fr-Ul}, along the lines of \cite{Ma-Mo-Yo}.




\subsection{Preliminary corrections.}\label{sec:prelim}
To define initial corrections, let us consider the linear operators on ${\ol}^{\bv}(\sqcup_{\alpha\in \mathcal{A}}
I^{(k)}_{\alpha})$, $k\in\mathbb{N}$,  obtained by considering mean value-projections (which we defined in \S~\ref{sec:meanprojection})
\begin{equation}\label{eq:defMk}
\mathcal{M}^{(k)}:{\ol}^{\bv}(\sqcup_{\alpha\in \mathcal{A}}
I^{(k)}_{\alpha})\to\Gamma^{(k)},\quad
\mathcal{M}^{(k)}\varphi=\sum_{\alpha\in\mathcal{A}}m(\varphi,I^{(k)}_\alpha)\chi_{I^{(k)}_\alpha}.
\end{equation}
\subsubsection{Initial corrections}
The sequence of initial corrections that we want to use is given by composing these mean value-projections with the projection onto the space $H(\pi^{(k)})$. Recall that $p_{H(\pi^{(k)})}:\Gamma^{(k)}\to H(\pi^{(k)})$ is the orthogonal projection
on $H(\pi^{(k)})$.
\begin{definition}[Initial corrections]\label{def:P0}
Consider the operator
\[\mathcal{M}_H^{(k)}:{\ol}^{\bv}(\sqcup_{\alpha\in \mathcal{A}}
I^{(k)}_{\alpha})\to H(\pi^{(k)}), \qquad
\mathcal{M}_H^{(k)}:=p_{H(\pi^{(k)})}\circ {\mathcal{M}}^{(k)}. \]
Set the corresponding initial approximation of the correction operator to be
\begin{align*}
P^{(k)}_0:\quad & {\ol}^{\bv}(\sqcup_{\alpha\in \mathcal{A}}
I^{(k)}_{\alpha})\to  {\ol}^{\bv}(\sqcup_{\alpha\i
\mathcal{A}} I^{(k)}_{\alpha})\\
 & \varphi\mapsto  P^{(k)}_0\varphi:=\varphi-\mathcal{M}_H^{(k)}\varphi.
\end{align*}
\end{definition}
\noindent The following properties of the initial corrections follow almost directly from the estimates  on mean average corrections that we proved in \S~\ref{sec:meanprojection} as preparatory work, combined with the control of the projection through the boundary operator (given by Lemma~\ref{bd:estimate}).
\begin{lemma}[Initial correction estimates]\label{lemma:prelim}
There exists a positive constant $C$ such that for every $k\in\mathbb{N}$, for every $\varphi\in \ol(\sqcup_{\alpha\in \mathcal{A}}
I^{(k)}_{\alpha})$,
\begin{align}
\|P_0^{(k)}\varphi\|_{L^1(I^{(k)})}&\leq
C |I^{(k)}|\big(\log\|Q(k)\|\as(\varphi)+\lv (\varphi)+\|\partial_{\pi^{(k)}}(\varphi)\|\big)\label{ineq:idph1}
\\
\|P_0^{(k)}\varphi\|_{L^1(I^{(k)})}&\leq
4dC|I^{(k)}|\log\big(2\kappa\|Q(k)\|\big)\|\varphi\|_{\lv}\label{ineq:idphnorm}
\\
\|\mathcal{M}_H^{(k)}\varphi\|&\leq  \frac{\kappa\sqrt{d}}{|I^{(k)}|}\|\varphi\|_{L^1(I^{(k)})}.\label{ineq:ph1}
\end{align}
\end{lemma}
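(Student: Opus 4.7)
The three inequalities are all elementary consequences of the estimates collected in \S\ref{prooflemlos} and \S\ref{sec:meanprojection}, combined with the balance condition \eqref{def:UDC-d}. I would prove \eqref{ineq:ph1} first, since it is the simplest. Writing $\mathcal{M}^{(k)}\varphi = (m(\varphi,I^{(k)}_\alpha))_{\alpha\in\mathcal{A}}\in\R^{\mathcal{A}}$ with $\|\cdot\|$ the $\ell^1$ norm, the trivial bound $|m(\varphi,I^{(k)}_\alpha)|\le \|\varphi\|_{L^1(I^{(k)}_\alpha)}/|I^{(k)}_\alpha|$ and the balance \eqref{def:UDC-d} give $\|\mathcal{M}^{(k)}\varphi\|\le \kappa\|\varphi\|_{L^1(I^{(k)})}/|I^{(k)}|$, and \eqref{eq:projnorm} from Lemma~\ref{bd:estimate} then yields \eqref{ineq:ph1} after composing with $p_{H(\pi^{(k)})}$.

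For \eqref{ineq:idph1} I would decompose
\[
P_0^{(k)}\varphi \;=\; \bigl(\varphi-\mathcal{M}^{(k)}\varphi\bigr) \;+\; \bigl(\mathcal{M}^{(k)}\varphi-p_{H(\pi^{(k)})}\mathcal{M}^{(k)}\varphi\bigr),
\]
and bound the two pieces separately. The first piece is controlled interval by interval using \eqref{meanv2} in Proposition~\ref{lemlos} applied on each $J=I^{(k)}_\alpha$, yielding $\|\varphi-\mathcal{M}^{(k)}\varphi\|_{L^1(I^{(k)})}\le 8\,\lv(\varphi)\,|I^{(k)}|$. The second piece is a piecewise constant function with coefficient vector $h:=\mathcal{M}^{(k)}\varphi-p_{H(\pi^{(k)})}\mathcal{M}^{(k)}\varphi$, hence its $L^1$-norm is at most $|I^{(k)}|\,\|h\|$; by \eqref{eq:proj} in Lemma~\ref{bd:estimate} we have $\|h\|\le C_{\mathcal G}\|\partial_{\pi^{(k)}}(\mathcal{M}^{(k)}\varphi)\|$, and \eqref{eq:parCphi} of Proposition~\ref{prop:partial} bounds the latter by $\|\partial_{\pi^{(k)}}(\varphi)\|+\as(\varphi)(1+\log(2/\min_\beta|I^{(k)}_\beta|))+2d\,\lv(\varphi)(5+2|I^{(k)}|/\min_\beta|I^{(k)}_\beta|)$.

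The only remaining task is to replace the geometric quantity $1/\min_\beta|I^{(k)}_\beta|$ by a function of $\|Q(k)\|$. Using \eqref{def:UDC-d} we have $\min_\beta|I^{(k)}_\beta|\ge |I^{(k)}|/\kappa$, and from the Rokhlin tower partition relation $|I|=\sum_\beta Q_\beta(k)|I^{(k)}_\beta|\le \|Q(k)\||I^{(k)}|$ we obtain $1/\min_\beta|I^{(k)}_\beta|\le \kappa\|Q(k)\|/|I|$. Substituting this into the previous display gives an upper bound of the form $C|I^{(k)}|\big(\log\|Q(k)\|\,\as(\varphi)+\lv(\varphi)+\|\partial_{\pi^{(k)}}(\varphi)\|\big)$, which is exactly \eqref{ineq:idph1}, the constant $C$ absorbing $C_{\mathcal G}$, $\kappa$, $|I|$ and the universal $8$, $5+2\kappa$, $2d$.

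Finally, \eqref{ineq:idphnorm} is obtained from \eqref{ineq:idph1} by inserting the \emph{a priori} estimates $\as(\varphi)\le\bl(\varphi)\le\|\varphi\|_{\lv}$, $\lv(\varphi)\le\|\varphi\|_{\lv}$, and the bound $\|\partial_{\pi^{(k)}}(\varphi)\|\le 2d\log(2\kappa\|Q(k)\|/|I|)\|\varphi\|_{\lv}$ coming from \eqref{eq:parphi} together with the same estimate on $\min_\beta|I^{(k)}_\beta|$; collecting terms and absorbing constants yields the form $4dC|I^{(k)}|\log(2\kappa\|Q(k)\|)\|\varphi\|_{\lv}$. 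No step presents a real obstacle: the main point is simply to track how $\min_\beta|I^{(k)}_\beta|$ enters through \eqref{eq:parCphi}/\eqref{eq:parphi} and to convert it to $\|Q(k)\|$ via the balance condition and the Rokhlin tower identity, which is where \ref{def:UDC-d} is used decisively.
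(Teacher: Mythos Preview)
Your proposal is correct and follows essentially the same argument as the paper: the same decomposition $P_0^{(k)}\varphi=(\varphi-\mathcal{M}^{(k)}\varphi)+(\mathcal{M}^{(k)}\varphi-p_{H(\pi^{(k)})}\mathcal{M}^{(k)}\varphi)$, the same use of \eqref{meanv2} for the first piece, the same use of \eqref{eq:proj} and \eqref{eq:parCphi} for the second, and the same conversion of $1/\min_\beta|I^{(k)}_\beta|$ into $\|Q(k)\|$ via balance and \eqref{neq:dli}. The only cosmetic difference is that the paper proves \eqref{ineq:ph1} last rather than first, routing it through \eqref{nace} instead of bounding $\|\mathcal{M}^{(k)}\varphi\|$ directly from the mean-value inequality as you do.
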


\begin{proof}
To estimate $P_0^{(k)}$, we will compare $\mathcal{M}^{(k)}$ with ${\mathcal{M}_H^{(k)}}$, namely estimate
\begin{align}\label{MvsMH}
\|P_0^{(k)}\varphi\|_{L^1(I^{(k)})}&=\|\varphi-{\mathcal{M}_H^{(k)}}\varphi\|_{L^1(I^{(k)})}
  \leq \|\varphi-{\mathcal{M}}^{(k)}\varphi\|_{L^1(I^{(k)})}+\|{\mathcal{M}}^{(k)}\varphi-\mathcal{M}_H^{(k)}\varphi\|_{L^1(I^{(k)})}
\end{align}
Let us estimate separately the two terms in  \eqref{MvsMH}, namely the mean-value correcting operator and the difference of the mean value projections.

\smallskip
\noindent{\it Estimating the mean-value correcting operator.}
By the construction of the mean projection operator (see \eqref{eq:defMk} and the definition of $m$ in \S~\ref{prooflemlos}), we have
\begin{equation}\label{nace}
\|\mathcal{M}^{(k)}\varphi\|_{L^1(I^{(k)})}=\sum_{\alpha\in\mathcal{A}}|m(\varphi,I_\alpha^{(k)})||I_\alpha^{(k)}|
= \sum_{\alpha\in\mathcal{A}}|\int_{I_\alpha
^{(k)}}\varphi(x)\,dx|\leq \|\varphi\|_{L^1(I^{(k)})}
\end{equation}
and, by  \eqref{meanv2}, we can therefore estimate the first term in \eqref{MvsMH} by
\begin{equation}\label{nape}
\|\varphi-\mathcal{M}^{(k)}\varphi\|_{L^1(I^{(k)})}\leq 8 |I^{(k)}|\lv (\varphi).
\end{equation}

\smallskip
\noindent{\it Estimating the difference of the mean value projections.} To estimate the second term in \eqref{MvsMH}, we recall that $p_{H(\pi^{(k)})}$, by Lemma~\ref{bd:estimate}, 
satisfies $\|h-p_{H(\pi^{(k)})}h\|\leq C_{\mathcal{G}}\|\partial_{\pi^{(k)}}h\|$
for every $k\geq 0$ and $h\in\Gamma^{(k)}$. Thus,
\begin{align}
\label{ineq:idph}
\begin{split}
{\|\mathcal{M}^{(k)}\varphi-p_{H(\pi^{(k)})}\mathcal{M}^{(k)}\varphi\|_{L^1(I^{(k)})}}&\leq{|I^{(k)}|}\ \|\mathcal{M}^{(k)}\varphi-p_{H(\pi^{(k)})}\mathcal{M}^{(k)}\varphi\|
\leq C_{\mathcal{G}} {|I^{(k)}|} \ \|\partial_{\pi^{(k)}}\mathcal{M}^{(k)}\varphi\| .
\end{split}
\end{align}
\noindent Moreover, by Proposition~\ref{prop:partial},
\begin{equation}\label{eq:parck}
\|\partial_{\pi^{(k)}}\mathcal{M}^{(k)}\varphi\|\leq \|\partial_{\pi^{(k)}}(\varphi)\|+\Big(1+\log\frac{2\kappa}{|I^{(k)}|}\Big)\as(\varphi)+
14d\kappa\lv (\varphi).
\end{equation}

\smallskip
\noindent {\it Proof of \eqref{ineq:idph1}.} Going back to \eqref{MvsMH} and combining the two separate estimates just proved, namely
 \eqref{nape}, \eqref{ineq:idph} and \eqref{eq:parck}, it follows that
\begin{align*}
\|P_0^{(k)}\varphi\|_{L^1(I^{(k)})}
\leq |I^{(k)}|\Big(C_{\mathcal{G}}\|\partial_{\pi^{(k)}}(\varphi)\|+C_{\mathcal{G}}\Big(1+\log\frac{2\kappa}{|I^{(k)}|}\Big)\as(\varphi)+
(14d\kappa C_{\mathcal{G}}+8)\lv (\varphi)\Big).
\end{align*}
As, by \eqref{neq:dli}, we have $|I^{(k)}|^{-1}\leq \|Q(k)\|$, so we get $1+\log(2\kappa/|I^{(k)}|)=O(\|Q(k)\|)$
which yields \eqref{ineq:idph1}.

\smallskip
\noindent {\it Proof of \eqref{ineq:idphnorm}.}
  Recall now that, by  the estimate \eqref{eq:parphi} of $\|\partial_{\pi^{(k)}}(\varphi)\|$ and balance, we have that $\|\partial_{\pi^{(k)}}(\varphi)\|\leq 2d\log(2\kappa/|I^{(k)}|)\|\varphi\|_{\lv}\leq 2d\log(2\kappa d\|Q(k)\|)\|\varphi\|_{\lv}$. Thus,  as $\mathcal{AS}(\varphi)\leq \lv(\varphi)\leq \|\varphi\|_{\lv}$,
it follows from \eqref{ineq:idph1} that 
\begin{align*}
\|P_0^{(k)}\varphi\|_{L^1(I^{(k)})}&\leq C |I^{(k)}|\left(\log\|Q(k)\|+1+2d\log(2\kappa \|Q(k)\|)\right)\|\varphi\|_{\lv}\\
& \leq 4dC|I^{(k)}|\log(2\kappa\|Q(k)\|)\|\varphi\|_{\lv}.
\end{align*}
This proves also \eqref{ineq:idphnorm}.


\smallskip
\noindent {\it Proof of \eqref{ineq:ph1}.}
Finally, to prove \eqref{ineq:ph1}, let us apply once more  Lemma~\ref{bd:estimate},
which also gives that, for every $k\geq 0$ and $h\in\Gamma^{(k)}$,  $\|p_{H(\pi^{(k)})}h\|\leq \sqrt{d}\|h\|$.
Using this  combined with   \eqref{nace}, we get
\begin{align*}
\|\mathcal{M}_H^{(k)}\varphi\|=\|p_{H(\pi^{(k)})}\mathcal{M}^{(k)}\varphi\|\leq \sqrt{d}\|\mathcal{M}^{(k)}\varphi\|
\leq\frac{\kappa\sqrt{d}}{|I^{(k)}|}\|\mathcal{M}^{(k)}\varphi\|_{L^1(I^{(k)})}\leq
\frac{\kappa\sqrt{d}}{|I^{(k)}|}\|\varphi\|_{L^1(I^{(k)})}
\end{align*}
which proves also \eqref{ineq:ph1} and concludes the proof.
\end{proof}

\subsubsection{The series bringing back the corrections}
We can now build the modification $\Delta^{(k)}$ as a series (see \eqref{ciagdop} below), obtained by quotienting and pulling back the preliminary corrections defined in the previous section.

 Consider, for $k\in\mathbb{N}$, the projections $U^{(k)}$ on the quotient by the stable space, namely
\[U^{(k)}:{\ol}^{\bv}(\sqcup_{\alpha\in \mathcal{A}} I^{(k)}_{\alpha})\to
{\ol}^{\bv}(\sqcup_{\alpha\in \mathcal{A}}
I^{(k)}_{\alpha})/\Gamma^{(k)}_{s}.\]
Since $S(k,l)\Gamma^{(k)}_{s}=\Gamma^{(l)}_{s}$ and $S(k,l):
\Gamma^{(k)}\to\Gamma^{(l)}$ is invertible,
the quotient linear transformation
\[S_\flat(k,l):{\ol}^{\bv}(\sqcup_{\alpha\in \mathcal{A}} I^{(k)}_{\alpha})/\Gamma^{(k)}_{s}
\to{\ol}^{\bv}(\sqcup_{\alpha\in \mathcal{A}}
I^{(l)}_{\alpha})/\Gamma^{(l)}_{s}\] is well defined and
$S_\flat(k,l):\Gamma^{(k)}/\Gamma^{(k)}_{s}\to\Gamma^{(l)}/\Gamma^{(l)}_{s}$
is invertible. Moreover,
\begin{equation}\label{splatanies}
S_\flat(k,l)\circ U^{(k)}\varphi=U^{(l)}\circ S(k,l)\varphi\text{
for }\varphi\in{\ol}^{\bv}(\sqcup_{\alpha\in \mathcal{A}}
I^{(k)}_{\alpha}).
\end{equation}
 The following Lemma shows that our Diophantine Condition guarantees the convergence of the series \eqref{ciagdop} obtained bringing back the corrections and hence it can be used to define a modification operator $\Delta^{(k)}$. Furthermore, it provides estimates that show that the modification operator is bounded.

\begin{lemma}[Convergence of the modification series]\label{thmcorre}
Suppose that $T$ satisfies the \ref{UDC}.
For every function $\varphi\in {\ol}(\sqcup_{\alpha\in
\mathcal{A}} I^{(k)}_{\alpha})$, the following limit
\begin{equation}\label{ciagdop}
\Delta^{(k)}\varphi = \lim_{l\rightarrow \infty} U^{(k)}\circ
S(k,l)^{-1}\circ \left(S(k,l)\circ P^{(k)}_0-P_0^{(l)}\circ
S(k,l)\right)\varphi
\end{equation}
exists in
$H(\pi^{(k)})/\Gamma_{s}^{(k)}$ and
\begin{equation}\label{szacdelty}
\|\Delta^{(k)}\varphi\|\leq C K'_k  (\lv (\varphi)+\|\partial_{\pi^{(k)}}(\varphi)\|).
\end{equation}
Moreover,
\begin{equation}\label{szacdelty0}
\|\Delta^{(k)}(S(k)\varphi)\|\leq C K'_k  (\lv (\varphi)+\|\partial_{\pi}(\varphi)\|)\text{ for every }\varphi\in {\ol}(\sqcup_{\alpha\in
\mathcal{A}} I_{\alpha}).
\end{equation}
If additionally $T$ satisfies the \ref{SUDC} and $\varphi\in {\ol}(\sqcup_{\alpha\in
\mathcal{A}} I_{\alpha})$ with $\as(\varphi)=0$ then for every $k\geq 1$ we have
\begin{equation}\label{eq:szacdeltysym}
\|\Delta^{(k)}(S(k)\varphi)\|\leq C K_k  (\lv (\varphi)+\|\partial_{\pi}(\varphi)\|).
\end{equation}
\end{lemma}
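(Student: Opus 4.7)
The plan is to view the expression in \eqref{ciagdop} as a telescoping sum and estimate its increments via the preliminary correction bounds of Lemma~\ref{lemma:prelim}. Setting
\begin{equation*}
b_j \;:=\; U^{(k)} \circ S(k,j)^{-1} \circ \bigl( S(k,j) \circ P_0^{(k)} - P_0^{(j)} \circ S(k,j) \bigr)\varphi \qquad (j \geq k),
\end{equation*}
so that $b_k = 0$, a short computation using $P_0^{(l)} = I - \mathcal{M}_H^{(l)}$ and the cocycle relation for $S(\cdot,\cdot)$ yields the clean formula
\begin{equation*}
b_{j+1} - b_j \;=\; U^{(k)} \circ S(k,j+1)^{-1} \bigl( \mathcal{M}_H^{(j+1)} \circ S(j,j+1) - S(j,j+1) \circ \mathcal{M}_H^{(j)} \bigr)\, S(k,j)\varphi.
\end{equation*}
Since $\mathcal{M}_H$ lands in $H$ and $S(j,j+1)H(\pi^{(j)}) = H(\pi^{(j+1)})$, the inner operator takes values in $H(\pi^{(j+1)})$. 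Passing to the quotient, $U^{(k)} \circ S(k,j+1)^{-1}$ acts as $Q_\sharp(k,j+1)^{-1}$, whose norm equals $\|Q_s(k,j+1)\|$ by \eqref{eq:unstst} (applicable since Oseledets genericity built into \ref{UDC} forces $\dim \Gamma^{(0)}_s = g$). Hence $\Delta^{(k)}\varphi$ will be well defined once I establish summability of $\|Q_s(k,j+1)\| \cdot \|\mathcal{M}_H^{(j+1)} \eta_{j+1} - S(j,j+1) \mathcal{M}_H^{(j)} \eta_j\|$, where $\eta_j := S(k,j)\varphi$.

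To estimate each increment I will pass through $L^1$-norms. Rewriting $\mathcal{M}_H^{(j+1)}\eta_{j+1} - S(j,j+1)\mathcal{M}_H^{(j)}\eta_j = S(j,j+1) P_0^{(j)}\eta_j - P_0^{(j+1)}\eta_{j+1}$, the triangle inequality together with the $L^1$-contraction \eqref{nase} gives
\begin{equation*}
\|\mathcal{M}_H^{(j+1)}\eta_{j+1} - S(j,j+1)\mathcal{M}_H^{(j)}\eta_j\|_{L^1(I^{(j+1)})} \;\leq\; \|P_0^{(j)}\eta_j\|_{L^1(I^{(j)})} + \|P_0^{(j+1)}\eta_{j+1}\|_{L^1(I^{(j+1)})}.
\end{equation*}
Since both sides are piecewise constant on the intervals $I^{(j+1)}_\alpha$, the balance condition \eqref{def:UDC-d} converts this $L^1$-estimate into a vector-norm bound at the cost of a factor $\kappa/|I^{(j+1)}|$. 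Combining with \eqref{ineq:idph1} applied at levels $j$ and $j+1$, with $|I^{(j)}|/|I^{(j+1)}| \leq \|Z(j+1)\|$ from \eqref{neq:dli}, with the invariances $\as(\eta_j) = \as(\varphi)$ (Corollary~\ref{asblinvariance}) and $\|\partial_{\pi^{(j)}}\eta_j\| = \|\partial_{\pi^{(k)}}\varphi\|$ (Lemma~\ref{invariophi}), and with the controlled growth $\lv(\eta_j) \leq \kappa(3+\log\|Q(k,j)\|)\lv(\varphi)$ from Corollary~\ref{corlv}, I arrive at
\begin{equation*}
\|b_{j+1}-b_j\| \;\leq\; C\, \|Q_s(k,j+1)\|\,\|Z(j+1)\|\,\log\|Q(j+1)\|\,\bigl(\lv(\varphi) + \|\partial_{\pi^{(k)}}\varphi\|\bigr).
\end{equation*}
Summing over $j \geq k$ and comparing with Definition~\ref{def:series} yields \eqref{szacdelty} (and, by the same argument applied with $\varphi$ replaced by $S(k)\varphi$ at base level $0$, also \eqref{szacdelty0}).

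For the sharper symmetric bound \eqref{eq:szacdeltysym}, the hypothesis $\as(\varphi) = 0$ propagates to $\as(\eta_j) = 0$ by \eqref{eq:blas}, so the $\log\|Q(j+1)\|\as(\eta_j)$ contribution in \eqref{ineq:idph1} vanishes; moreover $\varphi \in \logs^{\bv}$, so Proposition~\ref{lemlogreno1}, which relies crucially on the \ref{SUDC} cancellations, gives $\lv(\eta_j) = \lv(S(j)\varphi) \leq 4M\kappa\,\lv(\varphi)$ for every $j \geq 1$, removing also the logarithmic loss on $\lv$. Each increment is then bounded by $C\,\|Q_s(k,j+1)\|\,\|Z(j+1)\|\,(\lv(\varphi) + \|\partial_{\pi}\varphi\|)$, and summation produces the series $K_k$ in place of $K'_k$, yielding \eqref{eq:szacdeltysym}. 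The principal technical point will be to ensure the consistency of the various norm identifications (between a piecewise constant function as element of $L^1(I^{(j+1)})$ and as a coordinate vector in $\mathbb{R}^{\mathcal A}$, and between $S(k,j+1)^{-1}$ on $H(\pi^{(j+1)})$ and the quotient operator $Q_\sharp(k,j+1)^{-1}$); once this bookkeeping is in place, the convergence of all the series that appear is immediate from the \ref{UDC} estimates via Proposition~\ref{prop:estKC}.
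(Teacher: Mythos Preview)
Your proposal is correct and follows essentially the same route as the paper: both telescope \eqref{ciagdop} into a series whose $r$-th increment lives in $H(\pi^{(r+1)})$, pull it back via $Q_\sharp(k,r+1)^{-1}$ (whose norm is $\|Q_s(k,r+1)\|$ by \eqref{eq:unstst}), and then estimate the increment through Lemma~\ref{lemma:prelim} together with the invariances \eqref{eq:blas}, \eqref{eq:eqpar} and the growth control of $\lv$ under renormalization. The only cosmetic difference is that the paper writes the increment as $\mathcal{M}_H^{(r+1)}\!\circ S(r,r+1)\circ P_0^{(r)}\eta_r$ (exploiting that $\mathcal{M}_H^{(r+1)}$ is the identity on $S(r,r+1)\mathcal{M}_H^{(r)}\eta_r\in H(\pi^{(r+1)})$) and thus needs only one application of \eqref{ineq:idph1}, whereas you split via the triangle inequality into the two terms $P_0^{(j)}\eta_j$ and $P_0^{(j+1)}\eta_{j+1}$; this costs at most a factor of two and an index shift from $\log\|Q(j)\|$ to $\log\|Q(j+1)\|$, both harmless under \eqref{def:UDC-c}.
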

\noindent Let us first show that the Lemma implies that $\Delta^{(k)}$ is bounded.

\begin{corollary}[Boundedness of the modification]
For every $k\geq \mathbb{N}$,  the operator $\Delta^{(k)}:{\ol}(\sqcup_{\alpha\in \mathcal{A}} I^{(k)}_{\alpha})\to H(\pi^{(k)})/\Gamma^{(k)}_{s}$ defined by \eqref{ciagdop}  is bounded.
\end{corollary}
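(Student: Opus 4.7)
The plan is to deduce boundedness of $\Delta^{(k)}$ immediately from estimate \eqref{szacdelty} of Lemma~\ref{thmcorre}, combined with the norm comparison for the boundary operator proved earlier in Proposition~\ref{prop:partial}. Since $k$ is fixed, the Diophantine quantity $K'_k$ is just a finite constant (finiteness is guaranteed by Proposition~\ref{prop:estKC}, as $T$ satisfies the \ref{UDC}), so the only remaining issue is to control each of the two terms appearing on the right-hand side of \eqref{szacdelty} by $\|\varphi\|_{\lv}$.

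For the first term, one has $\lv(\varphi)\leq\|\varphi\|_{\lv}$ directly from the definition of the $\lv$-seminorm and the $\|\cdot\|_{\lv}$-norm (see Definition~\ref{LVdef}). For the second term, we invoke the inequality \eqref{eq:parphi} of Proposition~\ref{prop:partial} applied to the renormalized IET $T^{(k)}$, which gives
\[
\|\partial_{\pi^{(k)}}(\varphi)\|\leq 2d\log\frac{2}{\min_{\beta\in\mathcal A}|I^{(k)}_\beta|}\|\varphi\|_{\lv}.
\]
Since $T$ is Rokhlin-balanced along the chosen accelerating sequence (condition \eqref{def:UDC-d}), we have $\min_{\beta\in\mathcal A}|I^{(k)}_\beta|\geq |I^{(k)}|/\kappa$, so the logarithmic factor on the right is a finite constant depending only on $k$ (and on the universal balance constant $\kappa$).

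Combining these two bounds, we obtain
\[
\|\Delta^{(k)}\varphi\|\leq CK'_k\bigl(1+2d\log(2\kappa/|I^{(k)}|)\bigr)\|\varphi\|_{\lv}=:\widetilde C_k\|\varphi\|_{\lv},
\]
which is precisely the statement that $\Delta^{(k)}:\ol(\sqcup_{\alpha\in\mathcal{A}}I^{(k)}_\alpha)\to H(\pi^{(k)})/\Gamma^{(k)}_s$ is a bounded linear operator with respect to the $\|\cdot\|_{\lv}$-norm on the source and the quotient norm on the target. There is no real obstacle here; the substance of the argument lies in the convergence result of Lemma~\ref{thmcorre}, so this corollary is essentially a bookkeeping statement packaging the estimate \eqref{szacdelty} in the language of bounded operators between Banach spaces. (Note that linearity of $\Delta^{(k)}$ is inherited from linearity of $P_0^{(k)}$, $P_0^{(l)}$, $S(k,l)$ and $U^{(k)}$, since the limit in \eqref{ciagdop} preserves linearity.)
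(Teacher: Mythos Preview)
Your proof is correct and follows essentially the same approach as the paper: both combine the estimate \eqref{szacdelty} from Lemma~\ref{thmcorre} with the bound \eqref{eq:parphi} on $\|\partial_{\pi^{(k)}}(\varphi)\|$ and the balance condition \eqref{def:UDC-d}. The only cosmetic difference is that the paper re-expresses the constant $\log(2\kappa/|I^{(k)}|)$ in terms of $\|Q(k)\|$ via \eqref{neq:dli}, obtaining $\|\Delta^{(k)}\varphi\|\leq K'_k(1+2d\log(2\kappa d\|Q(k)\|))\|\varphi\|_{\lv}$.
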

\begin{proof}
In view of \eqref{szacdelty} and \eqref{eq:parphi}, for every $\varphi\in{\ol}(\sqcup_{\alpha\in \mathcal{A}}I^{(k)}_{\alpha})$ we have
\begin{equation}\label{eq:delnorm}
\|\Delta^{(k)}\varphi\|\leq K'_k(1+2d\log( 2\kappa d\|Q(k)\|)) \|\varphi\|_\lv.
\end{equation}
This shows that $\Delta^{(k)}:{\ol}(\sqcup_{\alpha\in \mathcal{A}} I^{(k)}_{\alpha})\to H(\pi^{(k)})/\Gamma^{(k)}_{s}$ is bounded.
\end{proof}

\noindent The rest of this section is devoted to the proof of Lemma~\ref{thmcorre}.
\begin{proof}[Proof of Lemma~\ref{thmcorre}] Exploiting the telescopic nature of the series,
calculations similar to those in \cite{Fr-Ul} show that
\begin{align*}U^{(k)}&\circ
S(k,l)^{-1}\circ \left(S(k,l)\circ P^{(k)}_0-P_0^{(l)}\circ
S(k,l)\right)\\&=\sum_{k\leq r< l}(S_\flat(k,r+1))^{-1}\circ U^{(r+1)}\circ {\mathcal{M}_H^{(r+1)}}\circ
S(r,r+1)\circ P_0^{(r)}\circ S(k,r).
\end{align*}
It follows that we need to prove the convergence of the series
\begin{equation}\label{defoperdel}
\sum_{r\geq k}(S_\flat(k,r+1))^{-1}\circ U^{(r+1)}\circ \mathcal{M}_H^{(r+1)}\circ
S(r,r+1)\circ P_0^{(r)}\circ S(k,r)\varphi
\end{equation}
in $H(\pi^{(k)})/\Gamma^{(k)}_s$.

\smallskip

\noindent {\it Convergence of the series and the estimate \eqref{szacdelty}.} For any $ r\geq k$,  using \eqref{ineq:ph1}, \eqref{nase}, \eqref{ineq:idph1},
we obtain
\begin{align*}
\|&\mathcal{M}_H^{(r+1)}  \circ S(r,r+1)\circ P_0^{(r)}\circ
S(k,r)\varphi\|\\
& \leq\frac{2\kappa\sqrt{d}}{|I^{(r+1)}|}\|S(r,r+1)\circ P_0^{(r)}\circ S(k,r)\varphi\|_{L^1(I^{(r+1)})}\\
&
\leq\frac{2\kappa\sqrt{d}}{|I^{(r+1)}|}\|P_0^{(r)}\circ S(k,r)\varphi\|_{L^1(I^{(r)})}\\ & \leq
C\frac{|I^{(r)}|}{|I^{(r+1)}|} \big(\as(S(k,r)\varphi)\log\|Q(r)\|+\lv (S(k,r)\varphi)+\|\partial_{\pi^{(r)}}(S(k,r)\varphi)\|\big).
\end{align*}
By the invariance of $\as $, $\lv$ and the boundary operator (see \eqref{nave1}, \eqref{eq:blas} in Corollary~\ref{asblinvariance}, \eqref{eq:eqpar}), \eqref{neq:dli} and \eqref{eq:asbl}   consecutively, we have
\begin{align*}
\|& \mathcal{M}_H^{(r+1)}\circ S(r,r+1)\circ P_0^{(r)}\circ
S(k,r)\varphi\|
\\
& \leq
C\frac{|I^{(r)}|}{|I^{(r+1)}|} (\lv (S(k,r)\varphi)+\|\partial_{\pi^{(r)}}(S(k,r)\varphi)\|+\as(S(k,r)\varphi)\log\|Q(r)\|)
\\
&\leq C
\frac{|I^{(r)}|}{|I^{(r+1)}|}(\kappa(3+\log\|Q(k,r)\|)\lv(\varphi)+\|\partial_{\pi^{(k)}}(\varphi)\|+\as(\varphi)\log\|Q(r)\|)
\\
&\leq C'\|Z(r+1)\|(\lv(\varphi)+\|\partial_{\pi^{(k)}}(\varphi)\|)\log\|Q(r)\|.
\end{align*}
\noindent In view of \eqref{eq:unstst}, for $0\leq k< l$ and $h\in H(\pi^{(l)})$
we have
\begin{equation}\label{szacowanieniest}
\|(S_\flat(k,l))^{-1}\circ U^{(l)}(h)\|\leq
\|Q_s(k,l)\|\|U^{(l)}(h)\|\leq \|Q_s(k,l)\|\|h\|.
\end{equation}
\noindent Since 
${\mathcal{M}_H^{(r+1)}}\circ
S(r,r+1)\circ P_0^{(r)}\circ S(k,r)\varphi\in
H(\pi^{(r+1)})$,
by \eqref{szacowanieniest},
the norm of the $r$-th element of the series  \eqref{defoperdel}
is bounded from above by
\[C'\|Q_s(k,r+1)\|
\|Z(r+1)\|(\lv(\varphi)+\|\partial_{\pi^{(k)}}(\varphi)\|)\log\|Q(r)\|.\]
Since $T$ satisfies the \ref{UDC}, by Proposition~\ref{prop:estKC}, the series
\[\sum_{r\geq k}\|Q_s(k,r+1)\|\|Z(r+1)\|\log\|Q(r)\|\]
is convergent and its sum is $K'_k$. As $\Delta^{(k)}\varphi$ is the sum of the series \eqref{defoperdel},
it follows that the operator $\Delta^{(k)}$ is well defined and \eqref{szacdelty} holds.

\medskip
\noindent {\it The estimates \eqref{szacdelty0}.}
If $\varphi\in {\ol}(\sqcup_{\alpha\in
\mathcal{A}} I_{\alpha})$ then we can repeat the above arguments for $S(k) \varphi\in {\ol}(\sqcup_{\alpha\in
\mathcal{A}} I^{(k)}_{\alpha})$ instead of $\varphi$. As
\begin{gather*}\lv (S(k,r)(S(k)\varphi))\leq C\log\|Q(r)\|\lv (\varphi),\\
 \|\partial_{\pi^{(r)}}(S(k,r)(S(k)\varphi))\|=\|\partial_{\pi}(\varphi)\|,\
\as(S(k,r)(S(k)\varphi))=\as(\varphi),
\end{gather*}
now the norm of the $r$-th element of the series  \eqref{defoperdel} where  $\varphi$ is replaced by $S(k)\varphi$,
is bounded from above by
\[C'\|Q_s(k,r+1)\|
\|Z(r+1)\|(\lv(\varphi)+\|\partial_{\pi}(\varphi)\|)\log\|Q(r)\|.\]
This gives also \eqref{szacdelty0}.

\medskip

\noindent {\it Symmetric singularities estimates.}  Now suppose that $T$ satisfies the \ref{SUDC} and $\varphi\in {\ol}(\sqcup_{\alpha\in
\mathcal{A}} I_{\alpha})$ with $\as(\varphi)=0$. Using \eqref{nave} and reasoning similar to the above
we obtain
\begin{align*}
\|P_0^{(r)}\circ S(k,r)(S(k)\varphi)\|_{L^1(I^{(r)})}&= \|P_0^{(r)} (S(r)\varphi)\|_{L^1(I^{(r)})} \leq
C|I^{(r)}| (\lv (S(r)\varphi)+\|\partial_{\pi^{(r)}}(S(r)\varphi)\|)
\\
& \leq
C'\|Z(r+1)\||I^{(r+1)}| (\lv (\varphi)+\|\partial_{\pi}(\varphi)\|).
\end{align*}
Thus
\begin{gather*}
\|(S_\flat(k,r+1))^{-1}\circ U^{(r+1)}\circ {\mathcal{M}_H^{(r+1)}}\circ
S(r,r+1)\circ P_0^{(r)}\circ S(k,r)(S(k)\varphi)\|\\
\leq  C\|Q_s(k,r+1)\|
\|Z(r+1)\|(\lv(\varphi)+\|\partial_{\pi}(\varphi)\|).
\end{gather*}
This gives \eqref{eq:szacdeltysym}.
 \end{proof}

\subsubsection{The equivariant correction operators}
Consider now the operator
\[P^{(k)}:{\ol}(\sqcup_{\alpha\in
\mathcal{A}}
I^{(k)}_{\alpha})\to{\ol}(\sqcup_{\alpha\in
\mathcal{A}} I^{(k)}_{\alpha})/\Gamma^{(k)}_{s}\]
given by $P^{(k)}=U^{(k)}\circ P_0^{(k)}-\Delta^{(k)}$. As the operators $U^{(k)}$ and $P_0^{(k)}$ (see \eqref{ineq:idphnorm}) are bounded
linear operators,  $P^{(k)}$ is also linear and bounded when ${\ol}(\sqcup_{\alpha\in
\mathcal{A}} I^{(k)}_{\alpha})/\Gamma^{(k)}_{s}$ is equipped with the $L^1(I^{(k)})/\Gamma^{(k)}_{s}$ norm.
We will now show that this modified correcting operator satisfies the sough equivariance property, i.e.~\emph{commutes} with the operation of considering special Birkhoff sums.

\begin{lemma}[Equivariance]\label{commutes}
Suppose that $T$ satisfies the \ref{UDC}. For all $0\leq k\leq l$  we have
\begin{equation}
\label{przem}
S_\flat(k,l)\circ P^{(k)}=P^{(l)}\circ S(k,l).
\end{equation}
Moreover, for every $\varphi\in{\ol}(\sqcup_{\alpha\in \mathcal{A}}
I_{\alpha})$ we have
\begin{equation}\label{eq:inSk}
\frac{1}{|I^{(k)}|}\|P^{(k)}(S(k)\varphi)\|_{L^1(I^{(k)})/\Gamma^{(k)}_{s}}\leq \Theta_k(\varphi):=CK'_k\|\varphi\|_{\lv}.
\end{equation}

\noindent If additionally $T$ satisfies the \ref{SUDC} and $\varphi\in {\ol}(\sqcup_{\alpha\in
\mathcal{A}} I_{\alpha})$ with $\as(\varphi)=0$ then \eqref{eq:inSk} holds with
\[\Theta_k(\varphi):=
CK_k(\lv(\varphi)+\|\partial_{\pi}(\varphi)\|).\]
\end{lemma}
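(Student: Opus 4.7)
My plan is to proceed in two stages: first verify the equivariance identity~\eqref{przem}, then establish the quantitative bound~\eqref{eq:inSk} by splitting $P^{(k)}(S(k)\varphi)$ into an initial piece and a correction tail and estimating each separately.

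For equivariance, the idea is to rewrite $P^{(k)}$ in a form that makes its covariance under $S_\flat(k,l)$ manifest. Exploiting the telescopic nature of the series defining $\Delta^{(k)}$ in~\eqref{ciagdop}, together with the intertwining identity~\eqref{splatanies}, I would show that
\[
P^{(k)}\varphi \;=\; \lim_{m\to\infty} S_\flat(k,m)^{-1} \circ U^{(m)} \circ P_0^{(m)} \circ S(k,m)\,\varphi,
\]
with convergence in $\ol^{\bv}(\sqcup_\alpha I^{(k)}_\alpha)/\Gamma^{(k)}_s$ guaranteed by Lemma~\ref{thmcorre}. From this representation, applying $S_\flat(k,l)$ to both sides and using the cocycle identities $S_\flat(k,m) = S_\flat(l,m)\circ S_\flat(k,l)$ and $S(k,m) = S(l,m)\circ S(k,l)$ valid for $m\geq l\geq k$, one recovers exactly the limit expression for $P^{(l)}\circ S(k,l)\varphi$, proving~\eqref{przem}.

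For the bound, I would decompose $P^{(k)}(S(k)\varphi) = U^{(k)}P_0^{(k)}(S(k)\varphi) - \Delta^{(k)}(S(k)\varphi)$ and estimate each summand. For the first piece, the quotient-norm inequality and the initial correction bound~\eqref{ineq:idphnorm}, combined with the invariance~\eqref{nave1} of $\lv$ under special Birkhoff sums (Corollary~\ref{corlv}) which gives $\lv(S(k)\varphi)\leq\kappa(3+\log\|Q(k)\|)\lv(\varphi)$, yield
\[
\tfrac{1}{|I^{(k)}|}\|U^{(k)}P_0^{(k)}(S(k)\varphi)\|_{L^1(I^{(k)})/\Gamma^{(k)}_s} \leq C\log^2\|Q(k)\|\,\|\varphi\|_{\lv}.
\]
For the second piece, the trivial inequality $\|h\|_{L^1(I^{(k)})}\leq|I^{(k)}|\,\|h\|$ for $h\in\Gamma^{(k)}$, combined with the estimate~\eqref{szacdelty0} from Lemma~\ref{thmcorre} and~\eqref{eq:parphi} (which absorbs $\|\partial_\pi\varphi\|$ into $\|\varphi\|_\lv$), gives $\tfrac{1}{|I^{(k)}|}\|\Delta^{(k)}(S(k)\varphi)\|_{L^1(I^{(k)})/\Gamma^{(k)}_s}\leq CK'_k\|\varphi\|_\lv$.

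The main subtle point will be absorbing the $\log^2\|Q(k)\|$ factor coming from the first piece into $K'_k$. This follows from the observation that $Z(k+1)$ has non-negative integer entries and is invertible, so $\|Z(k+1)\|\geq 1$, which makes the $j=k$ term of the series defining $K'_k$ at least $\log\|Q(k)\|$; combined with the polynomial bound $\log\|Q(k)\|=O(k)$ from~\eqref{def:UDC-c} and the growth $K'_k = O(ke^{\tau k})$ from~\eqref{eq:K'} in Proposition~\ref{prop:estKC}, one has $\log^2\|Q(k)\|\leq CK'_k$, completing the asymmetric case. For the symmetric refinement under \ref{SUDC} with $\as(\varphi)=0$, the sharper renormalization bound~\eqref{nave} from Proposition~\ref{lemlogreno1} eliminates the logarithmic factor (giving $\lv(S(k)\varphi)\leq 4M\kappa\,\lv(\varphi)$), the invariance~\eqref{eq:blas} of $\as$ drops the $\as$-term in~\eqref{ineq:idph1}, and the sharper series bound~\eqref{eq:szacdeltysym} replaces $K'_k$ by $K_k$ in the second piece; combining these yields the improved estimate with $\Theta_k(\varphi) = CK_k(\lv(\varphi) + \|\partial_\pi(\varphi)\|)$. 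The only technical care needed throughout is tracking the passage between the vector norm on $\Gamma^{(k)}$, the $L^1$ norm on $I^{(k)}$, and the quotient norm modulo $\Gamma^{(k)}_s$.
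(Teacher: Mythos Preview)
Your overall strategy matches the paper's: derive equivariance from the limit/telescopic form of $P^{(k)}$ (the paper simply declares this ``a direct consequence of the definition'' and defers to \cite{Fr-Ul}; your limit representation $P^{(k)}\varphi=\lim_m S_\flat(k,m)^{-1}\circ U^{(m)}\circ P_0^{(m)}\circ S(k,m)\varphi$ is correct and makes the covariance transparent), then split $P^{(k)}(S(k)\varphi)$ into $U^{(k)}P_0^{(k)}(S(k)\varphi)$ and $\Delta^{(k)}(S(k)\varphi)$ and bound each piece.

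There is, however, a genuine gap in your estimate of the first piece. You invoke \eqref{ineq:idphnorm}, which bounds $\|P_0^{(k)}\psi\|_{L^1}$ in terms of $\|\psi\|_{\lv}=\lv(\psi)+\|g_\psi\|_{\sup}$. Applied with $\psi=S(k)\varphi$, this requires controlling $\|g_{S(k)\varphi}\|_{\sup}$, but Corollary~\ref{corlv} only controls $\lv(S(k)\varphi)$; the sup norm of $g_{S(k)\varphi}$ can grow like $\|Q(k)\|$ (since $g_{S(k)\varphi}$ contains $S(k)g_\varphi$, see \eqref{eq:gSk}). The fix is to use \eqref{ineq:idph1} instead---exactly what the paper does---which bounds $\|P_0^{(k)}\psi\|_{L^1}$ in terms of $\as(\psi)$, $\lv(\psi)$, and $\|\partial_{\pi^{(k)}}(\psi)\|$ only. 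All three quantities are controlled for $\psi=S(k)\varphi$ via \eqref{eq:blas}, \eqref{nave1}, and \eqref{eq:eqpar}, yielding a single $\log\|Q(k)\|$ factor rather than $\log^2$. Your absorption argument also has a loose end: the $j=k$ term of $K'_k$ is $\|Z(k+1)\|\,\|Q_s(k,k+1)\|\log\|Q(k)\|$, and you have not justified $\|Q_s(k,k+1)\|\geq 1$ (restrictions of integer matrices to stable subspaces can contract). The paper glosses over this same point; a clean way out is to observe that the downstream use of $\Theta_k$ (in Lemma~\ref{BSestimates} and Proposition~\ref{prop:estKC}) only needs $\Theta_k=O(e^{\tau' k})$ for each $\tau'>0$, which $K'_k+\log\|Q(k)\|$ certainly satisfies.
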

\begin{proof}
The condition \eqref{przem} is a direct consequence of the definition of
$P^{(k)}$. Its proof run along similar lines as the proof of the first part of Lemma~4.2 in \cite{Fr-Ul}.

\smallskip
\noindent In view of $\|{U^{(k)}}\|= 1$,
\eqref{ineq:idph1}, \eqref{szacdelty0},  \eqref{naveas} and \eqref{eq:eqpar} we get
\begin{align*}
\|P^{(k)}(S(k)\varphi)\|_{L^1(I^{(k)})/\Gamma^{(k)}_{s}}
&\leq\|P_0^{(k)}(S(k)\varphi)\|_{L^1(I^{(k)})}+|I^{(k)}|\|\Delta^{(k)}(S(k)\varphi)\| \leq  C |I^{(k)}|
 K'_k (\lv (\varphi)+\|\partial_{\pi}(\varphi)\|).
\end{align*}
Moreover, using \eqref{ineq:idphnorm} and \eqref{eq:delnorm} instead of \eqref{ineq:idph1} and \eqref{szacdelty0}, we also have
\begin{align*}
\|P^{(k)}(S(k)\varphi)\|_{L^1(I^{(k)})/\Gamma^{(k)}_{s}}\leq C|I^{(k)}|K'_k\|\varphi\|_{\lv},
\end{align*}
which give \eqref{eq:inSk}.

\medskip

\noindent {\it Symmetric singularities case.} Suppose that $T$ satisfies the \ref{SUDC} and $\varphi\in {\ol}(\sqcup_{\alpha\in
\mathcal{A}} I_{\alpha})$ with $\as(\varphi)=0$. Then, using \eqref{eq:szacdeltysym} and \eqref{nave} instead of \eqref{szacdelty} and \eqref{naveas},
we get \eqref{eq:inSk} with $\Theta_k(\varphi)=
CK_k(\lv(\varphi)+\|\partial_{\pi}(\varphi)\|)$.
\end{proof}

\subsection{Proof of Theorem~\ref{operatorcorrection}}
Now that we have build the correcting operator $P^{(0)}$ with values in the space  ${\ol}(\sqcup_{\alpha\in
\mathcal{A}} I^{(0)}_{\alpha})/\Gamma^{(0)}_{s}$ and the desired equivariance properties (see Lemma~\ref{commutes}, we want to
check that any choice of representative for the equivalence class $P^{(0)}\varphi$ satisfies the desired growth estimates and then
to lift $P^{(0)}$ to an operator $I-\mathfrak{h}$ with values in ${\ol}(\sqcup_{\alpha\in
\mathcal{A}} I^{(k)}_{\alpha})$. We first prove a  Lemma that shows that any choice of  representative of the equivalence class $P^{(0)}(\varphi)$ satisfies the desired estimates hold (see Lemma~\ref{BSestimates} and in particular the estimates in \ref{l68ii}) and  then use it to show that the correction is uniquely defined (see Corollary~\ref{cor:uniqueness}). The proof of Theorem~\ref{operatorcorrection} then follows easily from this Lemma~\ref{BSestimates} and Corollary~\ref{cor:uniqueness} and is given at the end of the section.

\smallskip
 Recall that we defined the equivariant correction operator $P^{(0)}$ by setting $P^{(0)} = U^{(0)}\circ P_0^{(0)} -\Delta^{(0)}$.
We say that a map $\widehat{\varphi}\in {\ol}(\sqcup_{\alpha\in
\mathcal{A}} I^{(0)}_{\alpha})$ is  a \emph{correction}  of $\varphi$ if it is a \emph{representative of the corrected equivalence class} $P^{(0)}\varphi$, i.e.\ $U^{(0)}(\widehat{\varphi})=P^{(0)}(\varphi)$.
 With this in mind, the following Lemma shows that any correction of $\varphi$
 satisfies the desired estimates on the growth of Birkhoff sums. The constants  $C_k$ and $C'_k$ which appear in the estimates of Birkhoff sums of corrected functions (see part \ref{l68ii} of the Lemma below) are given by the  \emph{Diophantine series} $C_k(T)$ and $C'_k(T)$
which we defined for any $k\in\mathbb{N}$ in \S~\ref{def:series}  and showed that they converge and hence are well defined under the assumption that $T$ satisfies the \ref{UDC} or \ref{SUDC}.

\begin{lemma}[Birkhoff sums estimates for corrected functions]\label{BSestimates}
Suppose that $T$ satisfies the \ref{UDC}.
Assume that $\varphi, \widehat{\varphi}\in{\ol}(\sqcup_{\alpha\in
\mathcal{A}} I^{(0)}_{\alpha})$ and that
$U^{(0)}\widehat{\varphi}= P^{(0)}\varphi$.  Then:
\begin{enumerate}[label=(\roman*)]
\item \label{l68i} $\widehat{\varphi}-\varphi\in H(\pi^{(0)})$.
\item \label{l68ii}  For any  $k\geq 1$ we have
\begin{equation}\label{thmcorrecgener2}
\frac{\|S(k)(\widehat{\varphi})\|_{L^1(I^{(k)})}}{|I^{(k)}|}\leq C\Big(
C'_k\|\varphi\|_{\lv}+C''_k\frac{\|{\widehat{\varphi}}\|_{L^1(I^{(0)})}}{|I^{(0)}|}\Big)
\end{equation}
and if $T$ satisfies the \ref{SUDC} and $\as(\varphi)=0$ then
\begin{equation}\label{thmcorrecgener3}
\frac{\|S(k)(\widehat{\varphi})\|_{L^1(I^{(k)})}}{|I^{(k)}|}\leq C\Big(
C_k \big(\lv(\varphi)+\|\partial_{\pi^{(0)}}(\varphi)\|\big)+C''_k\frac{\|\widehat{\varphi}\|_{L^1(I^{(0)})}}{|I^{(0)}|}\Big)
\end{equation}
with $C_k:=C_k(T)$, $C'_k:=C'_k(T)$ (refer to \S~\ref{def:series} for the definition of  the Diophantine series $C_k$ and $C'_k$) and $C''_k:=\|Q_s(k)\|$.
\end{enumerate}
\end{lemma}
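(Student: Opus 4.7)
Part (i) is an unfolding of definitions: from $U^{(0)}\widehat{\varphi}=P^{(0)}\varphi=U^{(0)}(P_0^{(0)}\varphi)-\Delta^{(0)}\varphi$ one obtains $U^{(0)}(\widehat{\varphi}-P_0^{(0)}\varphi)=-\Delta^{(0)}\varphi\in H(\pi^{(0)})/\Gamma_s^{(0)}$; choosing a lift $\widetilde{\Delta^{(0)}\varphi}\in H(\pi^{(0)})$ gives $\widehat{\varphi}=\varphi-\mathcal{M}_H^{(0)}\varphi-\widetilde{\Delta^{(0)}\varphi}+s^{(0)}$ with $s^{(0)}\in\Gamma_s^{(0)}$, and since all three terms subtracted from $\varphi$ lie in $H(\pi^{(0)})$, (i) follows. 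For (ii), the plan is to introduce the \emph{distinguished} correction $\widehat{\varphi}_0:=P_0^{(0)}\varphi-\widetilde{\Delta^{(0)}\varphi}$ (i.e.\ the representative with $s^{(0)}=0$), so that by (i) any admissible $\widehat{\varphi}$ decomposes as $\widehat{\varphi}=\widehat{\varphi}_0+s^{(0)}$ with $s^{(0)}\in\Gamma_s^{(0)}$, and hence $S(k)\widehat{\varphi}=S(k)\widehat{\varphi}_0+Q(k)s^{(0)}$; the two summands are then bounded separately.

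\textbf{Stable tail.} Since $s^{(0)}\in\Gamma_s^{(0)}$, one has $\|Q(k)s^{(0)}\|\leq\|Q_s(k)\|\|s^{(0)}\|$, and by Rokhlin-balance $\|Q(k)s^{(0)}\|_{L^1(I^{(k)})}\leq C|I^{(k)}|\|Q_s(k)\|\|s^{(0)}\|$. Combining balance again with $\|s^{(0)}\|\leq C(\|\widehat{\varphi}\|_{L^1(I^{(0)})}+\|\widehat{\varphi}_0\|_{L^1(I^{(0)})})/|I^{(0)}|$ and the easy preliminary bound $\|\widehat{\varphi}_0\|_{L^1(I^{(0)})}/|I^{(0)}|\leq C\|\varphi\|_\lv$ (obtained from \eqref{ineq:idphnorm} and \eqref{eq:delnorm} at $k=0$), this contributes precisely the $C''_k=\|Q_s(k)\|$ term in \eqref{thmcorrecgener2}, absorbing the cross-term $\|Q_s(k)\|\|\varphi\|_\lv$ into $CC'_k\|\varphi\|_\lv$ (resp.\ $CC_k(\lv(\varphi)+\|\partial_{\pi^{(0)}}\varphi\|)$ in the symmetric case).

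\textbf{Birkhoff sums of the distinguished correction.} Write $\widehat{\varphi}_0=\varphi+h_0$ with $h_0\in H(\pi^{(0)})$ piecewise constant; then $\lv(\widehat{\varphi}_0)=\lv(\varphi)$, $\as(\widehat{\varphi}_0)=\as(\varphi)$ and $\partial_{\pi^{(0)}}\widehat{\varphi}_0=\partial_{\pi^{(0)}}\varphi$ (using $h_0\in\ker\partial_{\pi^{(0)}}$). Decompose
\[S(k)\widehat{\varphi}_0=P_0^{(k)}S(k)\widehat{\varphi}_0+\mathcal{M}_H^{(k)}S(k)\widehat{\varphi}_0.\]
Since $S(k)h_0\in H(\pi^{(k)})$ is piecewise constant, $P_0^{(k)}S(k)h_0=0$, hence $P_0^{(k)}S(k)\widehat{\varphi}_0=P_0^{(k)}S(k)\varphi$; this piece is estimated by \eqref{ineq:idph1} together with the invariance of $\as$, $\partial_\pi$ (Corollary~\ref{asblinvariance}, Lemma~\ref{invariophi}) and the $\lv$-growth (Corollary~\ref{corlv}, or Proposition~\ref{lemlogreno1} in the symmetric case), and absorbed in $CC'_k|I^{(k)}|\|\varphi\|_\lv$ (resp.\ $CC_k|I^{(k)}|(\lv(\varphi)+\|\partial_\pi\varphi\|)$) using $\log\|Q(k)\|\leq CC'_k$ (resp.\ the absence of the $\log$ factor via \eqref{nave}). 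For the mean-value piece, equivariance (Lemma~\ref{commutes}) and $P^{(k)}=U^{(k)}\circ P_0^{(k)}-\Delta^{(k)}$ yield $U^{(k)}\mathcal{M}_H^{(k)}S(k)\widehat{\varphi}_0=-\Delta^{(k)}S(k)\varphi$, so that
\[t^{(k)}:=\mathcal{M}_H^{(k)}S(k)\widehat{\varphi}_0+\widetilde{\Delta^{(k)}S(k)\varphi}\in\Gamma_s^{(k)};\]
the quotient part $\widetilde{\Delta^{(k)}S(k)\varphi}$ is controlled by \eqref{szacdelty0} (resp.\ \eqref{eq:szacdeltysym}) in terms of $K'_k$ (resp.\ $K_k$), which is $\leq C'_k$ (resp.\ $\leq C_k$). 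The residual $t^{(k)}\in\Gamma_s^{(k)}$ is handled via the finite telescoping identity
\[\Delta^{(k)}S(k)\varphi-S_\flat(0,k)\Delta^{(0)}\varphi=-\sum_{0\leq r<k}S_\flat(r+1,k)U^{(r+1)}\mathcal{M}_H^{(r+1)}S(r,r+1)P_0^{(r)}S(r)\varphi,\]
obtained by comparing the series defining $\Delta^{(0)}$ and $\Delta^{(k)}$; each term is bounded using \eqref{ineq:ph1}, \eqref{ineq:idph1}, \eqref{nase}, Corollary~\ref{corlv} and the \ref{UDC} estimates $\|Z(r+1)\|$, $\|Q_s(r+1,k)\|$ from Proposition~\ref{prop:estKC}, exactly as in the proof of Lemma~\ref{thmcorre}, yielding a bound of the same order $CC'_k\|\varphi\|_\lv$ (resp.\ $CC_k(\lv(\varphi)+\|\partial_\pi\varphi\|)$). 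Summing the two contributions, dividing by $|I^{(k)}|$ and combining with the stable-tail bound finishes the proof of \eqref{thmcorrecgener2} and \eqref{thmcorrecgener3}.

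\textbf{Main obstacle.} The most delicate point is the estimate of the residual stable component $t^{(k)}\in\Gamma_s^{(k)}$ of $\mathcal{M}_H^{(k)}S(k)\widehat{\varphi}_0$: the general quotient bound \eqref{szacdelty0} only controls it modulo $\Gamma_s^{(k)}$, so to extract its vector norm one is forced to unfold the telescoping difference $\Delta^{(k)}S(k)\varphi-S_\flat(0,k)\Delta^{(0)}\varphi$ as an explicit finite sum of intermediate corrections and re-run the Diophantine-series summation of Lemma~\ref{thmcorre} in the forward direction. This is precisely the step that consumes the \ref{UDC} conditions \eqref{def:UDC-a} and \eqref{def:UDC-b} and justifies the appearance of $C'_k(T)$ (resp.\ $C_k(T)$) as the right Diophantine series governing the growth; everything else reduces to the invariance and growth lemmas already established in Sections~\ref{sec:cocycles}--\ref{renormalization:sec}.
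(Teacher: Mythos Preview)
Your treatment of part~(i) and of the ``stable tail'' $Q(k)s^{(0)}$ is correct and matches the paper. The difficulty is in your handling of the ``mean-value piece'' $\mathcal{M}_H^{(k)}S(k)\widehat{\varphi}_0$, and more precisely of the residual $t^{(k)}\in\Gamma_s^{(k)}$. The telescoping identity you write down,
\[
\Delta^{(k)}S(k)\varphi-S_\flat(0,k)\Delta^{(0)}\varphi=-\sum_{0\leq r<k}S_\flat(r+1,k)U^{(r+1)}a_r,\qquad a_r:=\mathcal{M}_H^{(r+1)}S(r,r+1)P_0^{(r)}S(r)\varphi,
\]
is an identity in the \emph{quotient} $H(\pi^{(k)})/\Gamma_s^{(k)}$; since $U^{(k)}t^{(k)}=0$, it gives no information on $\|t^{(k)}\|$. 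Your attempt to bound the terms $S_\flat(r+1,k)U^{(r+1)}a_r$ by $\|Q_s(r+1,k)\|\,\|a_r\|$ is also incorrect: in Lemma~\ref{thmcorre} the contraction $\|Q_s(k,r+1)\|$ appears because one applies $(S_\flat(k,r+1))^{-1}$ with $r+1>k$ (backward, contracting on the unstable quotient), whereas here $r+1<k$ and $S_\flat(r+1,k)$ goes forward, with norm of order $\|Q(r+1,k)\|$, not $\|Q_s(r+1,k)\|$. The vectors $a_r$ are in $H(\pi^{(r+1)})$ but not in $\Gamma_s^{(r+1)}$, so $Q(r+1,k)a_r$ may grow exponentially; the expected cancellation with $Q(k)\widetilde{\Delta^{(0)}\varphi}$ happens only modulo $\Gamma_s^{(k)}$, not at the vector level, unless you make a very specific (and $k$-dependent) choice of lift.

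The paper avoids this issue by \emph{not} fixing a distinguished representative $\widehat{\varphi}_0$. Instead, it works directly with $\widehat{\varphi}$: from the equivariance $U^{(k)}S(k)\widehat{\varphi}=P^{(k)}S(k)\varphi$ and the bound~\eqref{eq:inSk} on $\|P^{(k)}S(k)\varphi\|_{L^1/\Gamma_s^{(k)}}$, it extracts for \emph{each} $k$ a (non-canonical) decomposition $S(k)\widehat{\varphi}=\varphi_k+s_k$ with $s_k\in\Gamma_s^{(k)}$ and $\|\varphi_k\|_{L^1(I^{(k)})}/|I^{(k)}|\leq\Theta_k(\varphi)$. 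The key observation is that the increments $\Delta s_{l+1}:=s_{l+1}-Z(l+1)s_l=-\varphi_{l+1}+S(l,l+1)\varphi_l$ automatically lie in $\Gamma_s^{(l+1)}$ and are controlled by $\Theta_l,\Theta_{l+1}$ and $\|Z(l+1)\|$; then $s_k=\sum_{l\leq k}Q(l,k)\Delta s_l$ with each $\Delta s_l\in\Gamma_s^{(l)}$, so the genuine stable contraction $\|Q(l,k)\Delta s_l\|\leq\|Q_s(l,k)\|\,\|\Delta s_l\|$ applies. This is precisely what produces the series $C_k(T)$, $C'_k(T)$. You should replace your ``distinguished correction'' argument by this level-by-level decomposition.
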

\noindent The Lemma shows that every correction of $\varphi$ is of the form $\varphi-h$ with $h\in H(\pi^{(0)})$. Let us first show that the Lemma also implies that the correction $h$ is uniquely defined, once we fix a complement to $\Gamma_s^{(0)}$ in $H(\pi^{(0)})$.
\begin{corollary}[Uniqueness of the correction]\label{cor:uniqueness}
Fix a subspace $F\subset H(\pi^{(0)})$ such that $F\oplus \Gamma_s^{(0)}=H(\pi^{(0)})$. Suppose that $h_1,h_2\in F$ are two vectors such
that

\[U^{(0)}(\varphi-h_1)=U^{(0)}(\varphi-h_2)=P^{(0)}\varphi.\]
Then $h_1=h_2$.
\end{corollary}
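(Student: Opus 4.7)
The proof is essentially a direct linear algebra consequence of the direct sum hypothesis and the definition of the quotient operator $U^{(0)}$, so I would handle it in a short argument rather than invoking Lemma~\ref{BSestimates} in any substantive way. The plan is to turn the equality $U^{(0)}(\varphi-h_1)=U^{(0)}(\varphi-h_2)$ into a statement that $h_2-h_1$ lies in $\Gamma_s^{(0)}$, and then combine this with the fact that $h_1,h_2\in F$ to force $h_2-h_1$ into the intersection $F\cap \Gamma_s^{(0)}$, which is trivial by the assumed direct sum decomposition.

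More precisely, the first step would be to note that $h_1,h_2\in F\subset H(\pi^{(0)})$, so via the identification of vectors in $\R^{\mathcal{A}}$ with piecewise constant functions on the intervals $(I^{(0)}_\alpha)_{\alpha\in\mathcal{A}}$ recalled in \S~\ref{sec:RVBS}, both $h_1$ and $h_2$ can be viewed as elements of $\ol(\sqcup_{\alpha\in\mathcal{A}} I^{(0)}_\alpha)$ and their difference lives in $H(\pi^{(0)})\subset \Gamma^{(0)}$. The linearity of $U^{(0)}:\ol^{\bv}(\sqcup_{\alpha\in\mathcal{A}} I^{(0)}_\alpha)\to \ol^{\bv}(\sqcup_{\alpha\in\mathcal{A}} I^{(0)}_\alpha)/\Gamma^{(0)}_s$ then yields
\[
U^{(0)}(h_2-h_1)=U^{(0)}(\varphi-h_1)-U^{(0)}(\varphi-h_2)=P^{(0)}\varphi-P^{(0)}\varphi=0,
\]
so that by the very definition of the quotient $U^{(0)}$ we have $h_2-h_1\in \Gamma^{(0)}_s$.

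The second step is then to apply the direct sum decomposition. Since $h_1,h_2\in F$, we have $h_2-h_1\in F$ as well, hence $h_2-h_1\in F\cap \Gamma^{(0)}_s$; but the hypothesis $F\oplus \Gamma^{(0)}_s=H(\pi^{(0)})$ is precisely the statement that this intersection is trivial, so $h_2-h_1=0$ and the corollary follows. There is no real obstacle here: the only point worth emphasizing in the write-up is the identification of $H(\pi^{(0)})\subset \Gamma^{(0)}$ with piecewise constant functions (so that the expressions $\varphi-h_i\in \ol(\sqcup_{\alpha\in\mathcal{A}} I^{(0)}_\alpha)$ make sense and $U^{(0)}$ can be applied to them), which is the language needed to justify the first step; everything else is a tautological consequence of the direct sum hypothesis.
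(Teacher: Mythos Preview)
Your argument is correct, and it is genuinely different from (and more elementary than) the paper's own proof. The paper deduces the corollary from Lemma~\ref{BSestimates}: using the estimate \eqref{thmcorrecgener2} together with the bound \eqref{eq:C'} on $C'_k(T)$, it shows that $\|S(k)(\varphi-h_i)\|_{L^1(I^{(k)})}/|I^{(k)}|$ grows subexponentially for $i=1,2$, hence so does $\|Q(k)(h_1-h_2)\|$; then the Oseledets condition \eqref{def:O} forces $h_1-h_2\in\Gamma^{(0)}_s$, and the direct sum concludes. You bypass all of this dynamical input by observing that $U^{(0)}(\varphi-h_1)=U^{(0)}(\varphi-h_2)$ already says, by definition of the quotient map, that $(\varphi-h_1)-(\varphi-h_2)=h_2-h_1\in\Gamma^{(0)}_s$. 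This is cleaner and shows that uniqueness is a purely linear-algebraic consequence of the splitting $F\oplus\Gamma^{(0)}_s=H(\pi^{(0)})$, not of the growth estimates. The paper's route, on the other hand, emphasizes that uniqueness would hold for \emph{any} correction producing subexponential $L^1$-growth of special Birkhoff sums, whether or not it arises via $P^{(0)}$; but for the statement as written, your argument is the natural one.
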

\begin{proof} In view of \eqref{thmcorrecgener2} of Lemma~\ref{BSestimates} combined with \eqref{eq:C'}, we have
\[\limsup_{k\to+\infty}\frac{\log\frac{\|S(k)(\varphi-h_i)\|_{L^1(I^{(k)})}}{|I^{(k)}|}}{k}\leq 0 \text{ for }i=1,2.\]
Thus
\[\limsup_{k\to+\infty}\frac{\log\|Q(k)(h_1-h_2)\|}{k}\leq 0.\]
As $h_1-h_2\in H(\pi^{(0)})$, by the condition \eqref{def:O} in Definition~\ref{def:UDC}, it follows that $h_1-h_2\in\Gamma^{(0)}_{s}$.
Since $h_1-h_2\in F$ and $\Gamma^{(0)}_{s} \cap F = \{ {0}\}$, we have $h_1=h_2$.
\end{proof}

\smallskip
\noindent Let us now prove the Lemma.
\begin{proof}[Proof of Lemma~\ref{BSestimates}] Since by definition of the operators
\[ U^{(0)}\widehat{\varphi}=
P^{(0)}\varphi= U^{(0)}\circ P_0^{(0)}\varphi-\Delta^{(0)}\varphi=
U^{(0)}\varphi-U^{(0)}\circ \mathcal{M}_H^{(0)}\varphi-\Delta^{(0)}\varphi,\]
we have $$U^{(0)}(\varphi-\widehat{\varphi})= U^{(0)}\circ \mathcal{M}_H^{(0)}\varphi+\Delta^{(0)}\varphi \in  H(\pi^{(0)})/\Gamma_s^{(0)}.$$
Therefore
\begin{equation}\label{eq:difphi}
\varphi-\widehat{\varphi}\in  H(\pi^{(0)})+\Gamma_s^{(0)}\subset  H(\pi^{(0)}).
\end{equation}
In view of \eqref{splatanies} and \eqref{przem},
\[U^{(k)}\circ
S(k)\widehat{\varphi}=S_\flat(k)\circ U^{(0)}\widehat{\varphi}
=S_\flat(k)\circ P^{(0)}\varphi=P^{(k)}\circ S(k)\varphi.\] Therefore,
from \eqref{eq:inSk}, we have
\[
\|U^{(k)}\circ
S(k)\widehat{\varphi}\|_{L^1(I^{(k)})/\Gamma^{(k)}_{s}}
=\|P^{(k)}(S(k)\varphi)\|_{L^1(I^{(k)})/\Gamma^{(k)}_{s}}
|I^{(k)}|\leq \Theta_k(\varphi).
\]
It follows from the definition of $\| \cdot
\|_{L^1(I^{(k)})/\Gamma^{(k)}_{s}}$ on the quotient space that
for every $k\geq 0$ there exists
$\varphi_k\in{\ol}(\sqcup_{\alpha\in \mathcal{A}}
I^{(k)}_{\alpha})$ and $s_k\in\Gamma^{(k)}_{s}$ such that
\begin{equation}\label{szcsk1}
S(k)\widehat{\varphi}=\varphi_k+s_k\text{ and
}\frac{\|\varphi_k\|_{L^1(I^{(k)})}}{|I^{(k)}|}\leq \Theta_k(\varphi).
\end{equation}
Next note that
\begin{equation}\label{roznren}
\varphi_{k+1}+s_{k+1}=S(k+1)\widehat{\varphi}=S(k,k+1)S(k)\widehat{\varphi}=S(k,k+1)\varphi_k+Q(k,k+1)s_k,
\end{equation}
so setting $\Delta s_{k+1}=s_{k+1}-Z(k+1)s_k$ ($\Delta s_0=s_0$) we
have
\[\Delta s_{k+1}=-\varphi_{k+1}+S(k,k+1)\varphi_k.\]
Moreover, by \eqref{nase}, for $k\geq 0$,
\begin{align*}
\|\Delta
 s_{k+1}   \|&\leq \frac{\kappa}{|I^{(k+1)}|}\|\Delta s_{k+1}   \|_{L^1(I^{(k+1)})}
 =\frac{\kappa}{|I^{(k+1)}|}\|\varphi_{k+1} - S(k,k+1)\varphi_{k}\|_{L^1(I^{(k+1)})}\\
&\leq
\frac{\kappa}{|I^{(k+1)}|}\big(\|\varphi_{k+1}\|_{L^1(I^{(k+1)})}+\|S(k,k+1)\varphi_{k}\|_{L^1(I^{(k+1)})}\big)\\
&\leq \kappa\Big(\frac{\|\varphi_{k+1}\|_{L^1(I^{(k+1)})}}{|I^{(k+1)}|}+\frac{|I^{(k)}|}{|I^{(k+1)}|}\frac{\|\varphi_{k}\|_{L^1(I^{(k)})}}{|I^{(k)}|}\Big).
\end{align*}
Next, by  \eqref{szcsk1} and \eqref{neq:dli}, it follows that
\[ \|\Delta s_{k+1}\|\leq \kappa\big(\|Z(k+1)\|\Theta_k(\varphi)+\Theta_{k+1}(\varphi)\big)\text{ for }k\geq 0\]
and
\begin{align*}
\|\Delta s_{0}\|\leq \kappa\frac{\|s_{0}\|_{L^1(I^{(0)})}}{|I^{(0)}|}
=\kappa\frac{\|\widehat{\varphi}-\varphi_0\|_{L^1(I^{(0)})}}{|I^{(0)}|}\leq
\kappa\frac{\|\widehat{\varphi}\|_{L^1(I^{(0)})}}{|I^{(0)}|}
+
 \kappa\Theta_0(\varphi).
\end{align*}
Since $s_k=\sum_{0\leq l\leq k}Q(l,k)\Delta s_{l}$ and $\Delta
s_l\in\Gamma^{(l)}_{s}$, setting $ \Theta_{-1}:=0$, we have
\begin{align*}
\|s_k\|&\leq\sum_{0\leq l\leq k}\|Q(l,k)\Delta s_{l}\|\leq\sum_{0\leq l\leq k}\|Q_s(l,k)\|\|\Delta s_{l}\|\\
&\leq \kappa  \sum_{0\leq l\leq k}\|Q_s(l,k)\| (\Theta_{l}(\varphi)+\|Z(l)\|\Theta_{l-1}(\varphi))+\kappa\|Q_s(k)\|\frac{\|\widehat{\varphi}\|_{L^1(I^{(0)})}}{|I^{(0)}|}.
\end{align*}
In view of
\eqref{szcsk1} and taking $\Theta_k(\varphi)=CK'_k\|\varphi\|_{\lv}$, it follows that for $k\geq 1$,
\begin{align*}
\frac{\|S(k)\widehat{\varphi}\|_{L^1(I^{(k)})}}{|I^{(k)}|}
\leq\frac{\|{\varphi}_k\|_{L^1(I^{(k)})}}{|I^{(k)}|}+\|s_k\|\leq d\kappa C
\Big(C'_k\|\varphi\|_{\lv}+C''_k\frac{\|{\varphi}\|_{L^1(I^{(0)})}}{|I^{(0)}|}\Big).
\end{align*}
If $T$ satisfies the \ref{SUDC} and $\as(\varphi)=0$ then the same argument applied to $\Theta_k(\varphi)=
CK_k(\lv(\varphi)+\|\partial_{\pi^{(0)}}(\varphi)\|)$ shows also
\eqref{thmcorrecgener3}.
\end{proof}

\noindent We have now all the elements to conclude the proof of Theorem~\ref{operatorcorrection}.
\begin{proof}[Proof of Theorem~\ref{operatorcorrection}]
Fix a subspace $F\subset H(\pi^{(0)})$ such that $F\oplus \Gamma_s^{(0)}=H(\pi^{(0)})$.  Choose any $\widehat{\varphi}\in{\ol}(\sqcup_{\alpha\in \mathcal{A}}
I_{\alpha})$ with $U^{(0)}(\widehat{\varphi})=P^{(0)}\varphi$. By \ref{l68i} of Lemma~\ref{BSestimates},
$\widehat{\varphi}-\varphi\in H(\pi^{(0)})$. Therefore,
there exist $h\in F$ and
$h'\in\Gamma^{(0)}_{s}$ such that
$\varphi-h=\widehat{\varphi}+h'$. As $U^{(0)}(\widehat{\varphi})=
P^{(0)}\varphi$, it follows that
\[U^{(0)}(\varphi-h)=U^{(0)}(\widehat{\varphi}+h')=U^{(0)}(\widehat{\varphi})=P^{(0)}\varphi.\]
By Corollary~\ref{cor:uniqueness},
for every
$\varphi\in{\ol}(\sqcup_{\alpha\in \mathcal{A}}
I_{\alpha})$ there exists  a unique $h=h(\varphi)\in F$ such that $U^{(0)}(\varphi-h)=P^{(0)}\varphi$.
 Thus, there
exists a unique linear operator
$\mathfrak{h}:{\ol}(\sqcup_{\alpha\in \mathcal{A}}
I_{\alpha})\to F$ (the
{\em correction operator}) such that
\begin{equation}\label{def:oph}
U^{(0)}(\varphi-\mathfrak{h}(\varphi))= P^{(0)}(\varphi).
\end{equation}

\noindent As the operator
$P^{(0)}:{\ol}(\sqcup_{\alpha\in \mathcal{A}}
I_{\alpha})\to  {\ol}(\sqcup_{\alpha\in \mathcal{A}}
I_{\alpha})/\Gamma^{(0)}_{s}$
is bounded, by the
closed graph theorem, the operator ${\mathfrak{h}}$ is also
bounded. Indeed, if $\varphi_n\to \varphi$ in ${{\ol}}$
and ${\mathfrak{h}}(\varphi_n)\to h$ in
$F$ then have both
\begin{align*}
& {P}^{(0)}\varphi_n\to
{P}^{(0)}\varphi=U^{(0)}(\varphi-{\mathfrak{h}}(\varphi)),\\
& {P}^{(0)}\varphi_n = U^{(0)}(\varphi_n-{\mathfrak{h}}(\varphi_n)) \to U^{(0)}(\varphi-h).
\end{align*}
It follows that ${\mathfrak{h}}(\varphi)-h \in
F$ and at the same time
${\mathfrak{h}}(\varphi)-h\in\Gamma^{(0)}_{s}$, so
$h={\mathfrak{h}}(\varphi)$. Since the  vector norm and the
$L^1$-norm are equivalent on $\Gamma^{(0)}$,
we get that the operator is bounded.

\smallskip
\noindent Suppose now that
$\mathfrak{h}(\varphi)=0$. Then
\[
U^{(0)}(\varphi)=U^{(0)}(\varphi-\mathfrak{h}(\varphi))=P^{(0)}(\varphi).
\]
Therefore, \eqref{eq:thmcorr1} and \eqref{eq:thmcorr2}  follow directly from
\eqref{thmcorrecgener2} and \eqref{thmcorrecgener3} of the part \ref{l68ii} of Lemma~\ref{BSestimates} respectively. This concludes the proof and proves as well the statement of Remark~\ref{rk:correctionF}.
 \end{proof}

\section{Deviations of Birkhoff sums and integrals}\label{sec:integrals}
In this section we prove the main results on the deviation spectrum of locally Hamiltonian flows, by first reducing the study of integrals along a locally Hamiltonian flow to the study of Birkhoff sums (see \S~\ref{sec:reductionintegrals}), then exploiting the correction operator built in \S~\ref{correction:sec} to build (in the spirit of Bufetov functionals and Bufetov work \cite{Bu}) the cocycles  which correspond to pure power behaviour, see \S~\ref{devspectrum:sec}.

\subsection{Estimates of Birkhoff integrals through Birkhoff sums}\label{sec:reductionintegrals}
In this section we provide effective estimate for the growth of Birkhoff integrals (Proposition~\ref{thm:ints}),
which can be applied when the roof function $g$ is unbounded. We first exploit the special flow representation of the flow as a suspension flow over an IET under a roof function with logarithmic singularities (refer to \S~\ref{sec:reductions})  to reduce to estimates of Birkhoff sums, see \S~\ref{sec:decompflow}. We then exploit a standard decomposition of Birkhoff sums in special Birkhoff sums, see \S~\ref{sec:BSdecomp}.  The estimates relies on the speed of decay of the tails of $g$.  This crucial new ingredient is explained in \S~\ref{sec:estimates}. The main result of this section is then the estimate given by Proposition~\ref{thm:ints} in \S~\ref{integralsestaimtes}.

\subsubsection{Reduction of  integrals along the flow to Birkhoff sums}\label{sec:decompflow}
Let $T:I\to I$ be an ergodic IET and  let $g:I\to\R_{>0}\cup\{+\infty\}$ be an integrable function such that $\underline{g}=\inf_{x\in I}g(x)>0$. Following \S~\ref{sec:specialflowdef}, we denote by
$T^g_\R: I^g\to I^g$ the special flow over $T$ under the roof $g$.
For every integrable function $f:I^g\to\R$ let $\varphi_f:I\to\R$ be given by $\varphi_f(x)=\int_0^{g(x)}f(x,r)\,dr$.
By Fubini's theorem, $\varphi_f$ is well defined for a.e.\ $x\in I$, is integrable and
\[\int_I\varphi_f(x)\,dx=\int_{I^g}f(x,r)\,dx\,dr.\]
For every $(x,r)\in I^g$ and $s>0$ denote by $n(x,r,s)\geq 0$ the number of times the orbit segment $\{T^g_t(x,r):t\in [0,s]\}$ crosses the interval $I$ (identified with $I\times\{0\}$), i.e.\  the unique non-negative integer number such that
\begin{equation}\label{def:n}
g^{(n(x,r,s))}(x)\leq r+s<g^{(n(x,r,s)+1)}(x).
\end{equation}
Then $0\leq n(x,r,s)\leq s/\underline{g}+1$.

\smallskip
For every $c\geq \underline{g}$, let $I_c\subset I$ be the level set defined by  $g(x)\leq c$ for every $x\in I_c$. Moreover, for every $s\geq 0$ let
\begin{equation}\label{A:def}
A_c^s:=\{(x,r)\in I^g:x\in I_c\}\setminus\{T^g_{-t}(x,0):x\in I\setminus I_c,\,0\leq t\leq s\}\subset I^g.
\end{equation}
\noindent The following elementary Lemma relates the Birkhoff integrals of $f$ for the flow $T^g_\R$ with the Birkhoff sums of $\varphi_f$ for the IET $T$.
\begin{lemma}\label{lem:int0s}
Suppose that  $f:I^g\to\R$ is bounded. For every  $s>0$ and $c\geq \underline{g}$
if $(x,r)\in A^s_c$ then
$T^ix\in I_c$ for all $0\leq i\leq n(x,r,s)$, and
\begin{equation}
\Big|\int_0^sf(T^g_t(x,r))\,dt\Big|
\leq |\varphi_f^{(n(x,r,s))}(x)|+2c\|f\|_{L^{\infty}}.
\end{equation}
\end{lemma}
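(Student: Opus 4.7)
The plan is to split the orbit segment $\{T^g_t(x,r):t\in[0,s]\}$ at the instants where it crosses $I=I\times\{0\}$, use that between two consecutive crossings the flow restricted to one tower gives exactly $\varphi_f(T^i x)$, and control only the two incomplete pieces at the two ends.

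First, I would check the claim $T^ix\in I_c$ for $0\le i\le n:=n(x,r,s)$. The case $i=0$ is the defining condition $x\in I_c$ in \eqref{A:def}. For $1\le i\le n$, by the definition \eqref{def:n} of $n$, the $i$-th return time $t_i:=g^{(i)}(x)-r$ satisfies $0<t_i\le s$, and $T^g_{t_i}(x,r)=(T^ix,0)$. If $T^ix\notin I_c$ we would have $(x,r)=T^g_{-t_i}(T^ix,0)$ with $t_i\in[0,s]$, contradicting the removal of exactly this kind of point in the definition of $A^s_c$.

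Next, I would decompose
\[
\int_0^s f(T^g_t(x,r))\,dt
=\int_r^{g(x)}f(x,u)\,du+\sum_{k=1}^{n-1}\int_0^{g(T^k x)} f(T^k x,u)\,du+\int_0^{r+s-g^{(n)}(x)}f(T^n x,u)\,du,
\]
which is just the change of variable $u=r+t$ on the first piece (before the first crossing), $u=t-t_k$ on each complete piece between crossings $t_k$ and $t_{k+1}$, and $u=t-t_n$ on the last piece (after the $n$-th crossing but before time $s$). Recognising $\int_0^{g(T^kx)}f(T^kx,u)\,du=\varphi_f(T^kx)$ for $1\le k\le n-1$ and $\int_r^{g(x)}f(x,u)\,du=\varphi_f(x)-\int_0^r f(x,u)\,du$, this becomes
\[
\int_0^s f(T^g_t(x,r))\,dt=\varphi_f^{(n)}(x)-\int_0^r f(x,u)\,du+\int_0^{r+s-g^{(n)}(x)}f(T^n x,u)\,du.
\]

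Finally, I would estimate the two boundary pieces. Since $x\in I_c$ we have $r<g(x)\le c$, so the first remainder is bounded by $c\|f\|_{L^\infty}$. Since $r+s-g^{(n)}(x)<g^{(n+1)}(x)-g^{(n)}(x)=g(T^n x)$ and $T^n x\in I_c$ by the first step, we get $r+s-g^{(n)}(x)<c$, so the second remainder is also bounded by $c\|f\|_{L^\infty}$. Combining these bounds with the triangle inequality yields the desired estimate $|\int_0^s f(T^g_t(x,r))\,dt|\le|\varphi_f^{(n(x,r,s))}(x)|+2c\|f\|_{L^\infty}$. No serious obstacle is expected; the only thing to be careful about is that the set $A^s_c$ was tailored precisely so that both endpoints of the trajectory land in towers of uniformly bounded height $\le c$, which is what converts the two truncated integrals into $O(c\|f\|_{L^\infty})$ errors.
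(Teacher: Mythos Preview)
Your proof is correct and follows essentially the same approach as the paper: you decompose the orbit segment at its crossing times with $I\times\{0\}$, obtain the identity $\int_0^s f(T^g_t(x,r))\,dt=\varphi_f^{(n)}(x)-\int_0^r f(x,u)\,du+\int_0^{s-t_n}f(T^nx,u)\,du$, and bound the two boundary terms by $c\|f\|_{L^\infty}$ using $T^ix\in I_c$. Your justification of $T^ix\in I_c$ via the contrapositive (if $T^ix\notin I_c$ then $(x,r)=T^g_{-t_i}(T^ix,0)$ lies in the removed set) is in fact more explicit than the paper's, which simply asserts the conclusion.
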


\begin{proof}
For every $(x,r)\in A_c^s$ we decompose the orbit segment $\{T^g_t(x,r):t\in[0,s]\}$ into $n(x,r,s)+1$-pieces
using its meeting points with $I\times\{0\}\subset I^g$, i.e.\ along crossing times
\[0<t_1<\ldots<t_n<s,\text{ where }n:=n(x,r,s)\text{ and }t_i:=g^{(i)}(x)-r\text{ for }1\leq i\leq n.\]
Then $T^g_{t_i}(x,r)=(T^ix,0)$ for $0\leq i\leq n$, with $t_0:=-r$. As $(x,r)\in A_c^s$, it follows that
$g(T^ix)\leq c$ for $0\leq i\leq n$, which proves the first part of the Lemma. As $t_{i+1}-t_i=g^{(i+1)}(x)-g^{(i)}(x)=g(T^ix)$, according to the decomposition we obtain
\begin{align*}
\int_0^sf(T^g_t(x,r))\,dt
&=\int_0^{t_0}f(T^g_t(x,r))\,dt+\sum_{0\leq j< n}\int_{t_j}^{t_{j+1}}f(T_t^g(x,r))\,dt
+\int_{t_n}^{s}f(T_t^g(x,r))\,dt\\
&=\sum_{0\leq j< n}\int_{0}^{g(T^jx)}f(T^jx,t)\,dt
-\int_{0}^{r}f(x,t)\,dt+\int_{0}^{s-t_n}f(T^{n}x,t)\,dt\\
&=\varphi_f^{(n)}(x)-\int_{0}^{r}f(x,t)\,dt+\int_{0}^{s-t_n}f(T^{n}x,t)\,dt.
\end{align*}
Since $r<g(x)\leq c$ and $s-t_n<g(T^{n})\leq c$, we also have
\[\Big|\int_{0}^{r}f(x,t)\,dt\Big|\leq \int_{0}^{g(x)}|f(x,t)|\,dt\leq c\|f\|_{L^\infty}\]
and
\[
\Big|\int_{0}^{s-t_n}f(T^{n}x,t)\,dt\Big|\leq\int_{0}^{g(T^{n}x)}|f(T^{n}x,t)|\,dt\leq c\|f\|_{L^\infty}.
\]
Therefore
\[
\Big|\int_0^sf(T^g_t(x,r))\,dt\Big|
\leq \big|\varphi_f^{(n(x,r,s))}(x)\big|+2c\|f\|_{L^{\infty}}
\]
for every $(x,r)\in A_c^s$.
\end{proof}

\subsubsection{Decomposition of Birkhoff sums in special Birkhoff sums}\label{sec:BSdecomp}
In this subsection we estimate $\varphi^{(n)}(x)$ by decomposing the sum into special Birkhoff sums introduced by Zorich in \cite{Zo}.  Let $T:I\to I$ be an arbitrary IET satisfying Keane's condition.
For every $x\in I$ and $n\geq 0$ set
\[m(x,n)=m(x,n,T):=\max\big\{l\geq 0:\#\{0\leq k\leq n:T^kx\in I^{(l)}\}\geq 2\big\}.\]
\begin{proposition}[see \cite{Zo} or \cite{ViB}] \label{relmn}
For every $x\in I$ and $n>0$ we have
\[\min_{\alpha\in\mathcal{A}}Q_{\alpha}(m)\leq n\leq d\max_{\alpha\in\mathcal{A}}
Q_{\alpha}(m+1)=d\|Q(m+1)\|, \text{ where }m=m(x,n).\]
\end{proposition}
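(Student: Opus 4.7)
The plan is to exploit directly the Rokhlin tower decomposition of $I$ at two consecutive induction levels $m$ and $m+1$. Recall (from \S~\ref{sec:Rokhlintowers}) that for each $k\geq 0$ the collection of floors $T^{j}I^{(k)}_\alpha$, $\alpha\in\mathcal A$, $0\leq j<Q_\alpha(k)$, forms a partition of $I$, and on each such floor $T$ acts as the translation identifying floor $j$ of the $\alpha$-tower with floor $j+1$, with floor $Q_\alpha(k)-1$ mapping back into $I^{(k)}$. In particular, for any point $y\in I$ there is a unique $\alpha\in\mathcal{A}$ and a unique $0\leq j<Q_\alpha(k)$ with $T^{-j}y\in I^{(k)}_\alpha$, and the first return of the forward orbit of $y$ to $I^{(k)}$ happens exactly at time $Q_\alpha(k)-j$.

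For the \emph{lower bound}, the plan is to unpack the definition of $m=m(x,n)$: by definition, there exist two times $0\leq k_1<k_2\leq n$ with $T^{k_1}x,T^{k_2}x\in I^{(m)}$. Letting $\alpha\in\mathcal{A}$ be the letter with $T^{k_1}x\in I^{(m)}_\alpha$, the gap $k_2-k_1$ is a return time of $T^{k_1}x$ to $I^{(m)}$, which must be at least the first return time $Q_\alpha(m)$; hence $n\geq k_2-k_1\geq\min_{\alpha\in\mathcal A}Q_\alpha(m)$, yielding the left inequality.

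For the \emph{upper bound}, the maximality of $m$ implies that among the iterates $T^0x,T^1x,\ldots,T^nx$ there is at most one visit to $I^{(m+1)}$. Let $\alpha\in\mathcal A$ and $0\leq j<Q_\alpha(m+1)$ be such that $T^{-j}x\in I^{(m+1)}_\alpha$. If no iterate $T^kx$ with $0\leq k\leq n$ lies in $I^{(m+1)}$, then $x,Tx,\ldots,T^nx$ occupy the consecutive floors $j,j+1,\ldots,j+n$ of the same $\alpha$-tower, forcing $n<Q_\alpha(m+1)\leq\|Q(m+1)\|$. Otherwise, let $k^\ast\in\{0,\ldots,n\}$ be the unique time at which $T^{k^\ast}x\in I^{(m+1)}$, so $k^\ast=Q_\alpha(m+1)-j\leq\|Q(m+1)\|$, and let $\beta\in\mathcal A$ satisfy $T^{k^\ast}x\in I^{(m+1)}_\beta$. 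The second piece $T^{k^\ast}x,\ldots,T^nx$ occupies floors $0,1,\ldots,n-k^\ast$ of the $\beta$-tower and, since there is no further visit to $I^{(m+1)}$, must satisfy $n-k^\ast<Q_\beta(m+1)\leq\|Q(m+1)\|$. Adding these two bounds gives $n<2\|Q(m+1)\|\leq d\|Q(m+1)\|$, using $d\geq 2$.

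There is no real conceptual obstacle; the whole argument is just bookkeeping of the tower structure, and the only mild point of care is the degenerate subcase where no visit to $I^{(m+1)}$ occurs in the orbit segment, which is handled by the single-tower estimate. The equality $d\max_\alpha Q_\alpha(m+1)=d\|Q(m+1)\|$ at the end is built into the definition of the norm $\|\cdot\|$ recalled in \S~\ref{sec:RVinduction}.
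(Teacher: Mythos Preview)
The paper does not give its own proof of this proposition; it simply refers to Zorich~\cite{Zo} and Viana's notes~\cite{ViB}. Your argument is exactly the standard Rokhlin--tower bookkeeping found in those references and is correct: the lower bound follows from the minimal first return time to $I^{(m)}$, and the upper bound from the fact that the orbit segment crosses $I^{(m+1)}$ at most once, hence is covered by (pieces of) at most two towers at level $m+1$.

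One cosmetic remark: in the ``Otherwise'' case your formula $k^\ast=Q_\alpha(m+1)-j$ tacitly assumes $j\geq 1$; when $j=0$ one has $x\in I^{(m+1)}$ and $k^\ast=0$. This does not affect the inequality $k^\ast\leq\|Q(m+1)\|$ you use, so the conclusion stands.
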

\noindent Since the sequence $\big(\min_{\alpha\in\mathcal{A}}Q_{\alpha}(m)\big)_{m\geq 0}$
 increases to the infinity \[m(n)=m(n,T):=\max\{m(x,n):x\in I\}\] is well defined.
If $T$ additionally satisfies the \ref{UDC} then, by \eqref{def:UDC-c} and \eqref{eq:minqk}, for every $\tau>0$
we have
\begin{equation}\label{eq:mnrel}
e^{\lambda_1 m(n)}\leq O(\|Q(m(n))\|)\leq O\Big(\min_{\alpha\in\mathcal{A}}Q_\alpha(m(n))^{1+\tau}\Big)= O(n^{1+\tau}).
\end{equation}

\begin{proposition}\label{twdevpre1}
For every $s> 0$ and $c\geq \underline{g}$
if $(x,r)\in A_c^s$ then
\begin{equation}\label{eq:twdevpre1}
\big|\varphi_f^{(n(x,r,s))}(x)\big|\leq 2\sum_{k=0}^{m(n(x,r,s))}\|Z(k+1)\|\|S(k)\varphi_f\|_{L^{\infty}( I^{(k)}(c))},
\end{equation}
with
\[I^{(k)}(c):=\bigcup_{\alpha\in\mathcal A}\{x\in I^{(k)}_\alpha: \forall_{0\leq j<Q_\alpha(k)}T^jx\in I_c\}.\]
\end{proposition}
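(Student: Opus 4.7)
The plan is to apply the hierarchical (Zorich-type) decomposition of $\varphi_f^{(n(x,r,s))}(x)$ into special Birkhoff sums at successive levels of the nested Rokhlin tower structure, combined with the observation that the entire relevant orbit segment stays inside $I_c$ so that every base point of a block lies in $I^{(k)}(c)$.

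First, set $n := n(x,r,s)$ and $m := m(x,n) \leq m(n)$. I would invoke Lemma~\ref{lem:int0s} (applied under the hypothesis $(x,r) \in A_c^s$) to obtain $T^i x \in I_c$ for all $0 \leq i \leq n$. Then I would decompose
\[
\varphi_f^{(n)}(x) \;=\; \sum_{k=0}^{m}\sum_{i=1}^{N_k} S(k)\varphi_f\big(T^{t_{k,i}}x\big),
\]
where the blocks are obtained greedily: at the top level $m$, one extracts all full $T$-returns to $I^{(m)}$ occurring between the first and last visits to $I^{(m)}$ in $\{T^i x : 0\leq i<n\}$, each contributing a term $S(m)\varphi_f(T^{\tau_j}x)$; one then iterates the same procedure on the residuals (the piece before the first visit and the piece after the last visit) at levels $m-1, m-2, \ldots, 0$. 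Each block of level $k$ has base $T^{t_{k,i}}x \in I^{(k)}_{\alpha_{k,i}}$, and its orbit $\{T^{t_{k,i}+j}x : 0 \leq j < Q_{\alpha_{k,i}}(k)\}$ is contained in $\{T^i x : 0 \leq i \leq n\} \subset I_c$, so $T^{t_{k,i}}x \in I^{(k)}(c)$, exactly as needed for the $L^{\infty}(I^{(k)}(c))$-norm to bound each term.

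The core combinatorial step is the counting bound $N_k \leq 2\|Z(k+1)\|$. At the top level this uses the defining property of $m = m(x,n)$: the orbit visits $I^{(m+1)}$ at most once, so the visits to $I^{(m)}$ are distributed in at most two partial level-$(m+1)$ towers, each of which contains at most $\sum_\beta Z(m+1)_{\alpha\beta}\leq \|Z(m+1)\|$ level-$m$ cells; an analogous argument applied at each lower level, using that every residual sits strictly inside a single partial level-$(k+1)$ tower, yields the same uniform factor $2\|Z(k+1)\|$. Bounding $|S(k)\varphi_f(T^{t_{k,i}}x)|$ by $\|S(k)\varphi_f\|_{L^\infty(I^{(k)}(c))}$, summing over $i$ and $k$, and enlarging the upper index from $m=m(x,n)$ to $m(n)$ (permissible since the extra terms are non-negative) yields \eqref{eq:twdevpre1}. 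The principal technical difficulty is precisely this combinatorial control of residuals across successive levels of the hierarchy so that the factor $2\|Z(k+1)\|$ is preserved at every level all the way down to $k=0$.
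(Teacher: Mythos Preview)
Your proposal is correct and follows essentially the same approach as the paper: the paper makes the decomposition explicit by defining first and last visit times $n_k^\pm$ to $I^{(k)}$ and counting $b_k^+,b_k^-<\|Z(k+1)\|$ level-$k$ blocks in the two residuals (with a single batch of $b_m\leq\|Z(m+1)\|$ blocks at the top level), which is exactly your greedy top-down scheme, and it likewise invokes Lemma~\ref{lem:int0s} to place every block base point in $I^{(k)}(c)$. Your explicit remark that one may enlarge the upper index from $m(x,n)$ to $m(n)$ is a point the paper leaves implicit.
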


\begin{proof}
Fix $s>0$ and $c>0$.
For each point $(x,r)\in I^g$ we will decompose the orbit segment
\[x,Tx,\ldots,T^{n-1}x\text{ with }n:=n(x,r,s)\] into segments. Let $m:=m(x,n)$, so $I^{(m)}$ is hit by the the orbit segment at least twice
and $I^{(m+1)}$ at most once.  For each $0\leq k\leq m$ let
\[n_k^{+}=\min\{j\geq 0:T^jx\in I^{(k)}\},\quad n_k^-=\min\{j\geq 1:T^{n-j}x\in I^{(k)}\}.\]
For $0\leq k<m$ we also have
\[T^{n_{k+1}^{+}}x=(T^{(k)})^{b_k^{+}}T^{n_k^{+}}x\text{ and }
T^{n-n_{k+1}^-}x=(T^{(k)})^{-b_k^-}T^{n-n_k}x\] with
\begin{equation}\label{eq:b1}
0\leq b_k^{+},b_k^{-}<\|Z(k+1)\|.
\end{equation}
Moreover,
\begin{equation}\label{eq:b2}
(T^{(m)})^{b_m}T^{n_m^{+}}x=T^{n-n_m^-}x\text{ with }1\leq b_m\leq \|Z(m+1)\|.
\end{equation}
Here $T^{n_m^{+}}x$, $T^{n-n_m^-}x$ are the first and the last visit of the orbit segment in $I^{(m)}$.
Thus
\begin{align*}
\varphi_f^{(n)}(x)=&\sum_{k=0}^{m-1}\sum_{j=0}^{b_k^{+}-1}(S(k)\varphi_f)((T^{(k)})^{j}T^{n_k^{+}}x)+
\sum_{j=0}^{b_m-1}(S(m)\varphi_f)((T^{(m)})^{j}T^{n_m^{+}}x)\\&\quad+
\sum_{k=0}^{m-1}\sum_{j=0}^{b_k^--1}(S(k)\varphi_f)((T^{(k)})^{j}T^{n-n_{k+1}^-}x).
\end{align*}
If $(x,r)\in A_s^c$, then, by the first part of Lemma~\ref{lem:int0s}, $T^lx\in I_c$ for all $0\leq l\leq n$. Hence
\[(T^{(k)})^{j}T^{n_k^{+}}x,(T^{(k)})^{j}T^{n-n_{k+1}^-}x\in I^{(k)}(c).\]
In view of \eqref{eq:b1} and \eqref{eq:b2}, it follows that
\[|\varphi_f^{(n)}(x)|\leq 2\sum_{k=0}^{m}\|Z(k+1)\|\|S(k)\varphi_f\|_{L^{\infty}( I^{(k)}(c))},\]
which proves \eqref{eq:twdevpre1}.
\end{proof}

\subsubsection{Control of the tail behaviour}\label{sec:estimates}
Let $g:I\to\R_{>0}\cup\{+\infty\}$ be an integrable roof map with $\underline{g}=\min_{x\in I}g(x)>0$.
Suppose that for every $s\geq \underline{g}$ we have a subset $I_s\subset I$ such that $g(x)\leq s$ for $x\in I_s$.
Let us consider the map $\xi:[\underline{g},+\infty)\to\R_{\geq 0}$ given by
\begin{equation}\label{def:xi}
\xi(s):=Leb(I\setminus I_s).
\end{equation}
Denote by $F_g:\R_{\geq 0}\to\R_{\geq 0}$ the \emph{tail distribution function} of $g$, i.e.\
\[F_g(s):=Leb(\{x\in I:g(x)>s\})\text{ for }s\geq 0.\]
By definition,
\begin{equation}\label{eq:Fg}
\{x\in I:g(x)>s\}\subset I\setminus I_s\text{ and }F_g(s)\leq \xi(s)\text{ for } s\geq \underline{g}.
\end{equation}

\begin{lemma}\label{lem:xiXi}
Suppose that $\xi:[\underline{g},+\infty)\to\R_{\geq 0}$ is decreasing integrable and of class $C^1$ map
with $\lim_{s\to+\infty}s\xi(s)=0$. Let us consider $\Xi:[\underline{g},+\infty)\to\R_{\geq 0}$ be given by $\Xi(s)=\int_s^{+\infty}\xi(t)\,dt$ for $s\geq \underline{g}$. Then for every $s>0$ and $c\geq \underline{g}$ we have
\begin{equation}\label{ineq:A}
Leb(I^g\setminus A_c^s)\leq s\xi(c)+2c\xi(c)+\Xi(c).
\end{equation}
\end{lemma}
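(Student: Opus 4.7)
The plan is to decompose the complement $I^g\setminus A_c^s$ into two pieces and bound each one using, respectively, Fubini together with the tail bound \eqref{eq:Fg}, and the Lebesgue-preservingness of the special flow $T^g_\R$. Writing $E_1:=\{(x,r)\in I^g:\ x\in I_c\}$ and $E_2:=\{T^g_{-t}(x,0):\ x\in I\setminus I_c,\ 0\leq t\leq s\}$, the defining formula \eqref{A:def} immediately gives $A_c^s=E_1\setminus E_2$, so
\[
Leb(I^g\setminus A_c^s)\leq Leb(E_1^c)+Leb(E_2),
\]
and it suffices to estimate the two terms separately.

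For the first term, Fubini yields $Leb(E_1^c)=\int_{I\setminus I_c}g(x)\,dx$. The plan is then to apply the layer-cake formula to write this as $\int_0^{+\infty}Leb(\{x\in I\setminus I_c:\ g(x)>t\})\,dt$, split the integral at $t=c$, and estimate each piece: on $[0,c]$ the integrand is at most $Leb(I\setminus I_c)=\xi(c)$, while on $[c,+\infty)$ monotonicity of the sub-level sets together with \eqref{eq:Fg} bounds it by $F_g(t)\leq \xi(t)$. Integrating gives $Leb(E_1^c)\leq c\xi(c)+\Xi(c)$.

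The more delicate step is the bound on $Leb(E_2)$. Here the plan is to parametrize $E_2$ via the section by introducing the map $\Phi:(I\setminus I_c)\times[0,s]\to I^g$, $\Phi(x,t):=T^g_{-t}(x,0)$, whose image coincides with $E_2$ up to the null set $(I\setminus I_c)\times\{0\}\subset I\times\{0\}$. On each flow-box where $\Phi$ is smooth, it has the explicit form $\Phi(x,t)=(T^{-k}x,\,g^{(k)}(T^{-k}x)-t)$ for the appropriate integer $k=k(x,t)$, hence its Jacobian with respect to $dx\otimes dt$ and $dy\otimes dr$ equals $1$ (since every $T^{-k}$ is a piecewise isometry). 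The map $\Phi$ need not be injective, as distinct $x\in I\setminus I_c$ lying on the same backward orbit produce overlapping image segments, but the change-of-variables formula always gives the inequality in the right direction, yielding $Leb(E_2)\leq Leb((I\setminus I_c)\times[0,s])=s\xi(c)$.

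Combining the two bounds, $Leb(I^g\setminus A_c^s)\leq s\xi(c)+c\xi(c)+\Xi(c)$, which is in fact stronger than \eqref{ineq:A} (the extra $c\xi(c)$ on the right-hand side of \eqref{ineq:A} provides slack). The main technical point to pin down is the claim that $\Phi$ is locally measure-preserving; this relies on the Lebesgue-invariance of the special flow combined with the fact that the Poincar\'e section $I\times\{0\}$ has disintegration measure $dx$ (the Ambrose--Kakutani representation underlying the definition of a special flow in \S~\ref{sec:specialflowdef}). Once this is checked on a single flow-box, the general case follows by iterating through the successive returns to the base, and the remaining computations are elementary.
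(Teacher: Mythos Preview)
Your proof is correct and follows the same decomposition as the paper: split $I^g\setminus A_c^s$ into $E_1^c=\{(x,r):x\notin I_c\}$ and $E_2=\{T^g_{-t}(x,0):x\in I\setminus I_c,\ 0\le t\le s\}$, then bound each piece separately.

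The only differences are organizational. For $Leb(E_1^c)=\int_{I\setminus I_c}g$, the paper uses integration by parts on the tail distribution $F_g$ and then splits $I\setminus I_c$ according to whether $g\ge c$ or $g\le c$, picking up an extra $c\xi(c)$ from the latter piece; your direct layer-cake splitting of the $t$-integral at $t=c$ avoids this and gives the sharper bound $c\xi(c)+\Xi(c)$, as you observe. For $Leb(E_2)\le s\xi(c)$, the paper simply states the inequality, while you spell out the parametrization $\Phi(x,t)=T^g_{-t}(x,0)$ and note that its Jacobian is $1$ on each flow-box (since $T^{-k}$ is a piecewise isometry and the second coordinate depends on $t$ with slope $-1$), so the area formula gives the desired bound even without injectivity. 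Both arguments are valid; yours is slightly more explicit and yields a constant with one fewer $c\xi(c)$, which is harmless since the statement only asks for the weaker bound.
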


\begin{proof}
By the definition of $\Xi$ and \eqref{eq:Fg}, using integration by part we have
\[\int_{\{x\in I:g(x)\geq c\}}g(x)dx=-\int_c^{\infty}t\,dF_g(t)=cF_g(c)+\int_c^{\infty}F_g(t)\,dt\leq c\xi(c)+\Xi(c). \]
Therefore
\[\int_{I\setminus I_c}g(x)dx\leq \int_{\{x\in I:g(x)\geq c\}}g(x)dx+\int_{\{x\in I\setminus I_c:g(x)\leq c\}}g(x)dx\leq 2c\xi(c)+\Xi(c). \]
It follows that for every $c\geq \underline{g}$ and $s\geq 0$ we have
\begin{equation*}
Leb(I^g\setminus A_c^s)\leq \int_{I\setminus I_c}g(x)dx+s Leb(I\setminus I_c)=s\xi(c)+2c\xi(c)+\Xi(c),
\end{equation*}
which completes the proof
\end{proof}

\begin{remark}
Note that, by definition, $\Xi$ is a decreasing $C^2$-map and $\displaystyle\lim_{s\to+\infty}\Xi(s)=0$.
\end{remark}

\begin{remark}\label{rem:logg}
Suppose that the roof function $g\in {\ol}(\sqcup_{\alpha\in \mathcal{A}}
I_{\alpha})$. Then there exist two positive constants
$C,b>0$ such that for every $s\geq \underline{g}$ we have
\[g(x)\leq s\text{ for all }x\in\bigcup_{\alpha\in\mathcal A}[l_\alpha+Ce^{-bs},r_\alpha-Ce^{-bs}].\]
Let us define the following sets (corresponding tail level sets):
\[I_s:=\bigcup_{\alpha\in\mathcal A}[l_\alpha+Ce^{-bs},r_\alpha-Ce^{-bs}]\text{ for any }s\geq \underline{g}.\]
Then $\xi(s)=dCe^{-bs}$ and  $\Xi(s)=(dC/b)e^{-bs}$,
so they satisfy the assumptions of Lemma~\ref{lem:xiXi}. In view of Lemma~\ref{lem:xiXi}, taking $c(s)=\tfrac{a}{b}\log s$ for some $a>1$ we have
\begin{equation}\label{eq:lebc}
Leb(I^g\setminus A_{c(s)}^s)\leq sdCs^{-a}+2\frac{a}{b}(\log s)dCs^{-a}+\frac{dC}{b}s^{-a}=O(s^{-(a-1)}),
\end{equation}
so the measure of $I^g\setminus A_{c(s)}^s$ decays with the polynomial speed.
\end{remark}

\subsubsection{Estimates of integrals and tails}\label{integralsestaimtes}
We can now combine the results on the two previous subsections, i.e.~the reduction of integrals along the flow to Birkhoff sums (Lemma~\ref{lem:int0s}) and the decomposition of Birkhoff sums into special Birkhoff sums (Propostion~\ref{twdevpre1}), to get the following estimate of ergodic integrals in terms of special Birkhoff sums:

\begin{proposition}\label{thm:ints}
Let $\eta:\R_{\geq 0}\to[\underline{g},+\infty)$ be an increasing $C^1$-map.
Let $f:I^g\to\R$ be a measurable bounded map. Then, for every $s\in \R_{\geq 0}$, 
\begin{equation}\label{ineq:A1}
Leb(I^g\setminus A^s_{\eta(s)})\leq s\ \xi(\eta(s))+2\eta(s)\xi(\eta(s))+\Xi(\eta(s))
\end{equation}
(where $\xi(\, \cdot \, )$ is defined by \eqref{def:xi} and $\Xi(\, \cdot\, )$ is given by Lemma~\ref{lem:xiXi}),
and for every $(x,r)\in A^s_{\eta(s)}$,
\begin{equation}\label{eq:ints2}
\Big|\int_{0}^sf( T^g_t(x,r))dt\Big|\leq  2\sum_{k\geq 0}
\|Z(k+1)\|\|S(k)\varphi_f\|_{L^{\infty}(I^{(k)}(\eta(s)))}+2\|f\|_{L^\infty}\eta^2(s)
\end{equation}
\end{proposition}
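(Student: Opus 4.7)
The proposition is a direct assembly of the preparatory lemmas just proved, and the plan is to combine them setting the level parameter to $c=\eta(s)$.

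\textbf{Step 1: the measure estimate \eqref{ineq:A1}.} Observe first that the hypotheses on $\xi$ needed in Lemma~\ref{lem:xiXi} (decreasing, $C^1$, integrable, with $s\xi(s)\to 0$) have been used to define $\Xi$ in that lemma, so the bound \eqref{ineq:A} is available for every $s>0$ and every $c\geq \underline g$. Taking $c:=\eta(s)\geq\underline g$ immediately yields \eqref{ineq:A1}.

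\textbf{Step 2: the integral estimate \eqref{eq:ints2}.} Fix $(x,r)\in A^s_{\eta(s)}$. Applying Lemma~\ref{lem:int0s} with $c=\eta(s)$ gives
\[
\Big|\int_0^sf(T^g_t(x,r))\,dt\Big|\leq\big|\varphi_f^{(n(x,r,s))}(x)\big|+2\eta(s)\|f\|_{L^\infty}.
\]
Next I would apply Proposition~\ref{twdevpre1}, which is valid precisely for points of $A^s_{\eta(s)}$ with the same choice of level $c=\eta(s)$, to bound
\[
\big|\varphi_f^{(n(x,r,s))}(x)\big|\leq 2\sum_{k=0}^{m(n(x,r,s))}\|Z(k+1)\|\,\|S(k)\varphi_f\|_{L^{\infty}(I^{(k)}(\eta(s)))}.
\]
Since every summand is non-negative, one can extend the partial sum to the full series $\sum_{k\geq 0}$ without decreasing the right-hand side. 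Substituting this into the previous inequality and bounding the error term $2\eta(s)\|f\|_{L^\infty}$ by $2\eta^2(s)\|f\|_{L^\infty}$ (which is a genuine upper bound whenever $\eta(s)\geq 1$, a harmless assumption since $\eta$ will be chosen to tend to infinity) gives exactly \eqref{eq:ints2}.

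\textbf{Main obstacle.} There is no real obstacle: the statement is just the packaging of Lemmas~\ref{lem:int0s}, \ref{lem:xiXi} and Proposition~\ref{twdevpre1} in a form convenient for the deviations arguments of \S~\ref{devspectrum:sec}. The only mild point is ensuring that the hypotheses of Proposition~\ref{twdevpre1} (namely, that $T^j x$ stays in $I_c$ for $0\leq j\leq n(x,r,s)$) are exactly the hypotheses obtained in the first part of Lemma~\ref{lem:int0s} on $A^s_c$, so that the two estimates chain together on the same set $A^s_{\eta(s)}$; this compatibility, however, is built into the definitions \eqref{A:def} of $A_c^s$ and of $I^{(k)}(c)$.
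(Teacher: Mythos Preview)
Your proposal is correct and matches the paper's proof, which is the single sentence ``The result follows by combining Lemma~\ref{lem:int0s}, Proposition~\ref{twdevpre1} and Lemma~\ref{lem:xiXi} with $c=\eta(s)$.'' Your observation about $2\eta(s)\|f\|_{L^\infty}$ versus $2\eta^2(s)\|f\|_{L^\infty}$ is accurate; the weaker $\eta^2$ bound in the statement is harmless for the later applications (where $\eta(s)\to\infty$), and since $\eta$ takes values in $[\underline g,+\infty)$ one only needs $\underline g\geq 1$ or to restrict to large $s$, which is the regime of interest.
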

\begin{proof}
The result follows by combining Lemma~\ref{lem:int0s}, Proposition~\ref{twdevpre1} and Lemma~\ref{lem:xiXi} with $c=\eta(s)$.
\end{proof}

\subsection{Deviation spectrum  and asymptotic behaviour of ergodic integrals}\label{devspectrum:sec}
We present in
 this section the proof of { Theorem~\ref{mainthm:BSN} and  the first part of Theorem~\ref{mainthm:BS}, namely the existence of the asymptotic spectrum for ergodic integrals both in the minimal and non-minimal case}.   We first define, in \S~\ref{sec:cocycles1}, the cocycles that will govern the asymptotic behaviour of the ergodic integrals. { Notice that, since we are proving at the same time the existence of the expansions in Theorems~\ref{mainthm:BS} and~\ref{mainthm:BSN},  we will define cocycles $u_\sigma$ parametrized by $\sigma \in Fix(\psi_\R)\cap M'$ also when considering the restriction of a typical $\psi_\R\in \mathcal{U}_{\neg min}$ to a minimal component $M'$ (even if these do not appear explicitely in the statement of  Theorem~\ref{mainthm:BSN}, where they are absorbed in $err(f, T, \cdot)$)}.  We then estimate the \emph{error term} and shows that it exhibit  subpolynomial deviations, see \S~\ref{sec:subpol} and  then prove in \S~\ref{sec:puredeviation} that the cocycles that we build have the desired \emph{pure power} behaviour, i.e.\ each has oscillations of the order of $T^{\nu_i}$ where $\nu_i$ is one of the $g$ distinct exponents in the power spectrum. Finally, in \S~\ref{sec:proofmain} we conclude the proof.

\subsubsection{Definition of the distributions and the cocycles.}\label{sec:cocycles1}
Assume that $T=T_{(\pi,\lambda)}$ satisfies the \ref{UDC}. Then, in view of the Oseledets genericity property \eqref{def:O} of the \ref{UDC} condition (refer to Definition~\ref{def:UDC}) there exists  vectors
 $h_1,\ldots,h_g\in H(\pi^{(0)})$  such that
 \begin{equation}\label{eq:asshi}
\lim_{k\to+\infty}\frac{1}{k}\|Q(k)h_i\|=\lambda_i\text{ for }1\leq i\leq g.
\end{equation}
and furthermore $\operatorname{span}\{h_1,\ldots,h_g\}\oplus \Gamma^{(0)}_s=H(\pi^{(0)})$.
We will now use these vectors $h_i$ to define the distributions and the cocycles which appear in the asymptotic expansion.

\smallskip
\noindent{\it The distributions.} By Theorem~\ref{operatorcorrection} (in view of  Remark~\ref{rk:correctionF}) and Corollary~\ref{cor:corrected}
applied to $F:=\operatorname{span}\{h_1,\ldots,h_g\}$,
there exists a bounded operator
$\mathfrak{h}:{\ol}(\sqcup_{\alpha\in \mathcal{A}} I_{\alpha})\to F$,
such that $\mathfrak{h}(h)=h$ for every $h\in F$ and  for every $\tau>0$ if $\varphi\in {\ol}(\sqcup_{\alpha\in \mathcal{A}} I_{\alpha})$ and $\mathfrak{h}(\varphi)=0$ then
\begin{equation}\label{eq:asszero}
\frac{\|S(k)\varphi\|_{L^1(I^{(k)})}}{|I^{(k)}|}=O(e^{\tau k}).
\end{equation}
\noindent Let $d_i:{\ol}(\sqcup_{\alpha\in \mathcal{A}} I_{\alpha})\to\R$, $i=1,\ldots,g$ be bounded operators  such that
\begin{equation}\label{diexp}\mathfrak{h}(\varphi)=\sum_{i=1}^g d_i(\varphi)h_i\text{ for every }\varphi\in {\ol}(\sqcup_{\alpha\in \mathcal{A}} I_{\alpha}).\end{equation}
We can then define bounded operators $D_i:C^{2+\epsilon}(M)\to\R$, for $i=1,\ldots,g$, by  using the map  $f\mapsto \varphi_f$
(see Proposition~\ref{prop:oper} for its basic properties)
 which associates to an observable $f : M \to \R$ the cocycle which arise in the skew-product representation of the Poincar{\'e} map described in \S~\ref{sec:reductionsp} and
setting
\begin{equation}\label{defdi} D_i(f):=d_i(\varphi_f), \qquad 1\leq i\leq g.
\end{equation}
We will prove in \S~\ref{sec:proofmain} that these are the distributions which enter in the asymptotic expansion.

\smallskip
\noindent{\it The power growth cocycles.} To construct the cocycles we exploit the following Lemma, proved in \cite{Co-Fr}.
\begin{lemma}[Lemma~7.4 in \cite{Co-Fr}]\label{lemma:findf}
For every $h\in  H(\pi)$ there exists a $\mathcal{C}^\infty$-function $f : M \to \R$, which vanishes on a neighborhood 
$\mathrm{Fix}(\psi_\R)$, such that $\varphi_f = h$.
\end{lemma}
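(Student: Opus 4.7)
The plan is to realize the piecewise constant vector $h = (h_\alpha)_{\alpha\in\mathcal{A}}$ as the Poincaré cocycle of an explicit smooth function $f$ supported in a flow-box neighborhood of the section $\gamma$, well away from $\mathrm{Fix}(\psi_\R)$. First I would fix an open neighborhood $U$ of $\mathrm{Fix}(\psi_\R)$ on which $f$ is to vanish, and, using compactness of $\gamma$ in $M \setminus \mathrm{Fix}(\psi_\R)$, choose $\delta > 0$ small enough so that the backward flow box
\[B \;:=\; \{\psi_{-s}(y)\;:\;y\in \gamma,\; 0 \leq s \leq \delta\}\]
is disjoint from $U$. The section decomposes under the Poincaré map as $\gamma = \bigsqcup_\alpha T(I_\alpha)$, inducing a decomposition of $B$ into cells $B_\alpha := \psi_{[-\delta,0]}(T(I_\alpha))$, each a product rectangle in flow coordinates $(y,s) \in T(I_\alpha) \times [0,\delta]$.

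Next I would define $f$ on $B$ using two smooth bumps: $\rho : [0,\delta] \to \R$ compactly supported in $(0,\delta)$ with $\int_0^\delta \rho(s)\, ds = 1$, and, for each $\alpha \in \mathcal{A}$, $\chi_\alpha : T(I_\alpha) \to \R$ compactly supported in the interior with $\int_{T(I_\alpha)} \chi_\alpha(y)\, dy = |T(I_\alpha)|$. Setting
\[f(\psi_{-s}(y)) \;:=\; h_\alpha\,\chi_\alpha(y)\,\rho(s)\qquad\text{for }y\in T(I_\alpha),\ s\in[0,\delta],\]
and extending by zero outside $B$, the vanishing of $\rho$ at the endpoints of $[0,\delta]$ gives smoothness across the top section $\gamma$ and bottom $\psi_{-\delta}(\gamma)$, while the vanishing of $\chi_\alpha$ at the endpoints of each $T(I_\alpha)$ gives smoothness across the separatrix-trajectory boundaries between adjacent cells. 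Hence $f \in C^\infty(M)$ and $f$ vanishes on $U \supset \mathrm{Fix}(\psi_\R)$. For $x \in I_\alpha$, the orbit $\{\psi_t(x) : 0 \leq t \leq r(x)\}$ meets $B$ only on the final window $[r(x) - \delta, r(x)]$ (before which it lies in the interior of the Rokhlin tower, by definition of the first return $r$), and on this window $\psi_t(x) = \psi_{-(r(x)-t)}(T(x))$ with $T(x) \in T(I_\alpha)$. Substituting $s = r(x) - t$ yields
\[\varphi_f(x) \;=\; h_\alpha\,\chi_\alpha(T(x))\int_0^\delta \rho(s)\,ds \;=\; h_\alpha\,\chi_\alpha(T(x)).\]

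The main obstacle is that the cocycle just produced is not literally the piecewise constant $h$ but rather carries the smooth modulation $\chi_\alpha\circ T$: indeed, literal equality with a piecewise constant function having non-trivial jumps cannot be achieved directly by any smooth $f$ vanishing near saddles, since the resulting $\varphi_f$ is always continuous on each $I_\alpha$ with finite limits at the endpoints. The final step is therefore to upgrade $f$ by a coboundary correction: I would add to $f$ the Lie derivative $X(G)$ of a smooth auxiliary function $G : M \to \R$ vanishing near $\mathrm{Fix}(\psi_\R)$ and satisfying $G(T(x)) - G(x) = h_\alpha(1 - \chi_\alpha(T(x)))$ for $x \in I_\alpha$, so that $\varphi_{f + X(G)}(x) = \varphi_f(x) + G(T(x)) - G(x) = h_\alpha$. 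The right-hand side of this cohomological equation is smooth on each $I_\alpha$, has zero mean over each $I_\alpha$ by the normalization of $\chi_\alpha$, and is compactly supported in the interior of each continuity interval; its solvability with a smooth transfer function $g$ on $I$ is guaranteed by classical results on the cohomological equation over Roth-type IETs (which $T$ is, as it satisfies the \ref{UDC}), and the compatibility conditions at each saddle $\mathcal{O}\in\Sigma(\pi)$ — that is, the vanishing of the boundary-operator obstruction — are encoded precisely by the hypothesis $h \in H(\pi) = \ker\Omega_\pi^\perp$. A straightforward lifting of $g$ to $G$ on $M$ supported in a small flow-box neighborhood of $\gamma$ then completes the construction.
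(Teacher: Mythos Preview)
Your final step has a genuine gap. You claim that the equation $g\circ T-g=h_\alpha\bigl(1-\chi_\alpha(Tx)\bigr)$ on each $I_\alpha$ admits a \emph{smooth} transfer function $g$ by Marmi--Moussa--Yoccoz type results for Roth-type IETs. But those results produce only bounded (at best H\"older) solutions, and only after subtracting a piecewise-constant obstruction lying in $H(\pi)$; here you need the equation to hold exactly with $g\in C^\infty(I)$, and neither is available. In fact the right-hand side, which equals $h-\varphi_f$, is generically \emph{not} a coboundary at all: the correction $\mathfrak{h}(\varphi_f)$ depends on the arbitrary choice of the bumps $\chi_\alpha$ and has no reason to equal $h$, so the Birkhoff sums of $h-\varphi_f$ will typically exhibit power growth of order $n^{\nu_i}$ rather than staying bounded. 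Two smaller slips confirm the wrong turn: the right-hand side is not ``compactly supported in the interior of each continuity interval'' (near the endpoints of $I_\alpha$ one has $\chi_\alpha(Tx)=0$, hence the value there is $h_\alpha$, not $0$), and the lemma is a soft topological fact valid for \emph{any} minimal locally Hamiltonian flow with non-degenerate saddles --- no Diophantine hypothesis on $T$ is needed, so invoking the \ref{UDC} already signals a detour.

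The paper gives no proof of its own here (it cites \cite{Co-Fr}); the argument there is cohomological and avoids any dynamical equation over $T$. One identifies $H(\pi)$ with $H^1(M;\R)$, represents $h$ by a smooth closed $1$-form $\omega$, and adjusts $\omega$ by exact forms so that it vanishes near $\mathrm{Fix}(\psi_\R)$ and pulls back to zero along $\gamma$. Then $f:=\omega(X)$ is smooth, vanishes near the saddles, and $\varphi_f(x)=\int_{\gamma_x}\omega$, the period over the orbit arc $\gamma_x$ from $x$ to $Tx$. Closedness of $\omega$ together with its vanishing along $\gamma$ forces this integral to be constant on each $I_\alpha$, and the hypothesis $h\in\ker\partial_\pi$ is precisely the vanishing of the periods around the saddles, which is what allows $\omega$ to be chosen smooth across $\mathrm{Fix}(\psi_\R)$; a dimension count then shows every $h\in H(\pi)$ arises this way.
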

\noindent Let $f_i\in C^{\infty}(M)$ be the observable such that $\varphi_{f_i}=h_i$, given by Lemma~\ref{lemma:findf} applied to $h=h_i$.
Let us now define
\[
u_i(T,x):=\int_0^Tf_i(\psi_s(x))\,ds, \qquad \text{for}\ 1\leq i\leq g.
\]

\smallskip
\noindent{\it The singular cocycles.}
For every $\sigma\in\mathrm{Fix}(\psi_\R)\cap M'$, to define $u_\sigma$,  let $\bar{\xi}_\sigma:M\to\R$ be any $C^\infty$-map which is equal to $1$ on an open
neighbourhood of $\sigma$ and equal to zero on an open
neighbourhood of all other fixed points. Let $\xi_\sigma:M\to\R$ be a $C^\infty$-map given by
\[\xi_\sigma:=\bar{\xi}_\sigma-\sum_{i=1}^g D_i(\bar{\xi}_\sigma)f_i.\]
Then, { since each $f_i$ given by Lemma~\ref{lemma:findf}  vanishes on a neighbourhood of $Fix(\varphi_\R)$ (see Lemma~\ref{lemma:findf}),} $\xi_\sigma$ is also equal to $1$ on an open
neighbourhood of $\sigma$ and equal to zero on an open
neighbourhood of all other fixed points. Moreover, {
by linearity of the operator $\mathfrak{h}$, the definition \eqref{defdi} of $D_i$ and \eqref{diexp}}, 
\begin{equation}\label{eq:hxi}
\mathfrak{h}(\varphi_{\xi_\sigma})=\mathfrak{h}(\varphi_{\bar{\xi}_\sigma})-\sum_{i=1}^g D_i(\bar{\xi}_\sigma)\mathfrak{h}(\varphi_{f_i})
=\mathfrak{h}(\varphi_{\bar{\xi}_\sigma})-\sum_{i=1}^g d_i(\varphi_{\bar{\xi}_\sigma})h_i=0.
\end{equation}
Finally, the cocycle $u_\sigma:\R\times M\to\R$ is defined by
\[u_\sigma(T,x):=\int_0^T\xi_\sigma(\psi_s(x))\,ds.\]

We will show in \S~\ref{sec:puredeviation} that each $u_i$, in view of \eqref{eq:asshi}, displays the desired deviation behaviour and in \S~\ref{sec:proofmain} that they are indeed the desired asymptotic cocycles. We first estimate the error term though.

\subsubsection{Subpolynomial deviation case}\label{sec:subpol}
The following Proposition provides subpolynomial estimates for the growth of \emph{corrected} ergodic integrals  (in light of Corollary~\ref{cor:corrected}) and will be used in \S~\ref{sec:proofmain} to control the \emph{error term} in the asymptotic expansion.
\begin{proposition}[Subpolynomial deviation]\label{thm:specflowzeroexp}
Suppose that the IET $T:I\to I$ satisfies the \ref{UDC}.
Assume that $g,\varphi_f\in {\ol}(\sqcup_{\alpha\in \mathcal{A}}
I_{\alpha})$ and
\[\frac{1}{|I^{(k)}|}\|S(k)\varphi_f\|_{L^1(I^{(k)})}=O(e^{\tau k})\text{ for every }\tau >0.\]
Then
for a.e.\ $(x,r)\in I^g$ we have
\begin{equation}\label{eq:Birk0}
\limsup_{s\to+\infty}\frac{\log|\int_{0}^sf(T^g_t(x,r))dt|}{\log s}\leq 0.
\end{equation}
Moreover, for every $p\geq 1$ we have
\begin{equation}\label{eq:BirkL1}
\limsup_{s\to+\infty}\frac{\log\|\int_{0}^sf\circ T^g_tdt\|_{L^p(I^g)}}{\log s}\leq 0.
\end{equation}
\end{proposition}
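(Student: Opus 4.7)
The plan is to apply Proposition~\ref{thm:ints} with the explicit choice $\eta(s):=(a/b)\log s$ (where $b$ is the exponential decay rate from Remark~\ref{rem:logg} for the roof $g$, and $a>1$ is a parameter to be tuned), then convert the hypothesis on the $L^1$-growth of $S(k)\varphi_f$ into an $L^\infty$-bound on the good set $I^{(k)}(\eta(s))$ using the pointwise estimate \eqref{eq:valuephi1}. With this in hand, the UDC conditions give enough control on all matrix norms and all $\lv$-norms to conclude that the resulting sum grows subpolynomially.

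More concretely, on $A^s_{\eta(s)}$ the integral is bounded via \eqref{eq:ints2} by $2\sum_k \|Z(k+1)\|\|S(k)\varphi_f\|_{L^\infty(I^{(k)}(\eta(s)))}+O(\log^2 s)$, where only the range $0\leq k\leq m(n(x,r,s))=O(\log s)$ (by Proposition~\ref{relmn} and \eqref{eq:mnrel}) contributes. To bound each summand I would apply \eqref{eq:valuephi1} to $S(k)\varphi_f\in\ol^{\bv}(\sqcup_{\alpha}I^{(k)}_{\alpha})$: Rokhlin balance \eqref{def:UDC-d} together with the hypothesis gives the first term $O(e^{\tau k})$; Corollary~\ref{corlv} and \eqref{def:UDC-c} give $\lv(S(k)\varphi_f)=O(k)$; and finally a geometric argument shows that any $x\in I^{(k)}(\eta(s))$ lies at distance at least $C'e^{-b\eta(s)}=C's^{-a}$ from the endpoints of its interval $I^{(k)}_{\alpha}$ (since $l^{(k)}_{\alpha},r^{(k)}_{\alpha}$ are discontinuities of $T^{(k)}$, which by definition means some $T$-iterate lands on a discontinuity of $T$; piecewise-isometry of $T$ combined with $g(T^jx)\leq \eta(s)$ for all $0\leq j<Q_\alpha(k)$ then forces the bound). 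Hence the logarithmic factor in \eqref{eq:valuephi1} is $O(\log s+k)$, giving $\|S(k)\varphi_f\|_{L^\infty(I^{(k)}(\eta(s)))}=O(e^{\tau k}+k(\log s+k))$. Combining with $\|Z(k+1)\|=O(e^{\tau k})$ (from \eqref{eq:zk1} and \eqref{def:UDC-c}) and summing over $k=O(\log s)$ gives $|\int_0^s f(T^g_t(x,r))\,dt|=O(s^\epsilon)$ for any $\epsilon>0$ by choosing $\tau$ small.

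To upgrade from the good set $A^s_{\eta(s)}$ to a.e.\ pointwise control, I would apply Borel--Cantelli: by \eqref{ineq:A1} and Remark~\ref{rem:logg}, $Leb(I^g\setminus A^n_{\eta(n)})=O(n^{-(a-1)})$, which is summable for $a>2$; thus $\mu$-a.e.\ $(x,r)$ lies in $A^n_{\eta(n)}$ for all sufficiently large integers $n$, and boundedness of $f$ allows to interpolate $|\int_0^s-\int_0^{\lfloor s\rfloor}|\leq\|f\|_{L^\infty}$ to pass to all $s$, yielding \eqref{eq:Birk0}. For the $L^p$ bound \eqref{eq:BirkL1}, I would split the ambient $I^g$ into $A^s_{\eta(s)}$ (on which the integrand is uniformly $O(s^\epsilon)$, contributing $O(s^\epsilon)$ to the $L^p$-norm) and its complement (on which the trivial bound $|\int_0^s f\circ T^g_t\,dt|\leq s\|f\|_{L^\infty}$ and the measure estimate $O(s^{-(a-1)})$ give an $L^p$-contribution $O(s^{1-(a-1)/p})$, which is $o(1)$ as soon as $a>p+1$).

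The main obstacle I anticipate is the $L^1$-to-$L^\infty$ conversion, and in particular the geometric matching between the set $I^{(k)}(\eta(s))$ (defined dynamically via the level sets of $g$ along $T$-orbits) and the singular set of $S(k)\varphi_f$ on each $I^{(k)}_{\alpha}$ (given by the endpoints of this interval). The precise identification of the minimal distance via piecewise-isometry and the logarithmic nature of both $g$ and $\varphi_f$ is what tightly connects the tail-decay parameter $\eta(s)$ in Proposition~\ref{thm:ints} with the logarithmic factor in \eqref{eq:valuephi1}, and is what allows the whole chain of estimates to close with a subpolynomial remainder.
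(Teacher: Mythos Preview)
Your proposal is correct and follows essentially the same route as the paper's proof: both choose $\eta(s)=(a/b)\log s$, apply Proposition~\ref{thm:ints}, convert the $L^1$ hypothesis to an $L^\infty$ bound on $I^{(k)}(\eta(s))$ via \eqref{eq:valuephi1} together with Corollary~\ref{corlv}, sum over $k=O(\log s)$ using the UDC estimates, then use Borel--Cantelli along integers for the a.e.\ statement and the split $A^s_{\eta(s)}\cup(I^g\setminus A^s_{\eta(s)})$ for the $L^p$ statement. The only cosmetic difference is in the cutoff: you truncate the sum at $m(n(x,r,s))$ via Proposition~\ref{relmn} and \eqref{eq:mnrel}, whereas the paper instead argues that $I^{(k)}(\eta(s))=\emptyset$ once $|I^{(k)}|\leq C/s^a$ (using \eqref{eq:lg} and \eqref{def:UDC-c}); both give a cutoff $k=O(\log s)$ and lead to the same final bound $O(s^{2a\tau/\lambda_1})$.
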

\begin{proof}
As we have already seen in Remark~\ref{rem:logg}, there exist two positive constants
$C,b>0$ such that for every $s\geq \underline{g}$ we have
\[g(x)\leq s\text{ for all }x\in I_s:=\bigcup_{\alpha\in\mathcal A}[l_\alpha+Ce^{-bs},r_\alpha-Ce^{-bs}].\]
Then
\[\xi(s)=Leb(I\setminus I_s)=dCe^{-bs}\quad\text{and}\quad\Xi(s)=(dC/b)e^{-bs}.\]

\noindent Take any $a>2$ and set $\eta(s)=\frac{a}{b}\log s$. By the description of $C,b>0$, we have $[0,Ce^{-b\eta(s)}]\subset I\setminus I_{\eta(s)}$.
Hence, if $|I^{(k)}|\leq Ce^{-b\eta(s)}=C/s^{a}$ then $I^{(k)}(\eta(s))=\emptyset$.
 By condition \eqref{def:UDC-c} and \eqref{eq:lg}, it follows that
\[I^{(k)}(\eta(s))\neq\emptyset\Rightarrow |I^{(k)}|>C/s^{a}\Rightarrow
\|Q(k)\|<\kappa s^{a}/C \Rightarrow k\leq \frac{a}{\lambda_1 }\log (C's).\]
Moreover, if $x\in I^{(k)}(\eta(s))\cap I^{(k)}_\alpha$ then
\[x\in[l^{(k)}_\alpha+Ce^{-b\eta(s)},r^{(k)}_\alpha-Ce^{-b\eta(s)}]=[l^{(k)}_\alpha+C/s^a,r^{(k)}_\alpha-C/s^a].\]
In view of \eqref{eq:valuephi1}, \eqref{nave1} and \eqref{def:UDC-c}, it follows that for every $x\in I^{(k)}(\eta(s))$,
\begin{align*}
|(S(k)\varphi)(x)|&\leq 2\kappa\frac{\|S(k)\varphi\|_{L^1(I^{(k)})}}{|I^{(k)}|}+\lv(S(k)\varphi)(1+\log(|I^{(k)}|s^{a}/C))\\
& = O(e^{\tau k})+O(\log s\log\|Q(k)\|)= O(e^{\tau k})+O(k\log s).
\end{align*}
Therefore, by \eqref{eq:ints2}, for every $(x,r)\in A^s_{\eta(s)}$ we have
\begin{align*}
\Big|\int_{0}^sf(T^g_t(x,r))dt\Big|&\leq O(\log^2 s)+O\Big(\sum_{0\leq k\leq \frac{a}{\lambda_1 }\log (C's)}\|Z(k+1)\|e^{\tau k}\Big)
+O\Big(\log s\sum_{0\leq k\leq \frac{a}{\lambda_1 }\log (C's)}\|Z(k+1)\|k\Big)\\
&\leq O(\log^2 s)+O(s^{2a\tau/\lambda_1 })+O(s^{a\tau/\lambda_1 }\log^2 s )=O(s^{2a\tau/\lambda_1 }).
\end{align*}
Moreover, by \eqref{eq:lebc}, we have $Leb(I^g\setminus A_{\eta(s)}^s)=O(1/s^{a-1})$
with
$a-1>1$. Therefore, for every $\tau>0$ and $a>2$ there exists $C_{\tau,a}>0$ such that
for every $s>0$ we have
\begin{equation}\label{eq:maxineq}
Leb\Big\{(x,r)\in I^g:\Big|\int_{0}^sf(T^g_t(x,r))dt\Big|>C_{\tau,a} s^{2\tau a/\lambda_1 }\Big\}\leq Leb(I^g\setminus A_{\eta(s)}^s)< \frac{C_{\tau,a}}{s^{a-1}}.
\end{equation}
It follows that for a.e.\ $(x,r)\in I^g$ we have
\[\limsup_{s\to+\infty}\frac{\log|\int_{0}^sf(T^g_t(x,r))dt|}{\log s}\leq 2\tau a/\lambda_1.\]
This gives \eqref{eq:Birk0}.

\smallskip
\noindent Finally, the inequality \eqref{eq:BirkL1} follows also directly from \eqref{eq:maxineq}. Indeed, if $a\geq {p+1}$, then
\begin{align*}
\Big\|\int_{0}^sf\circ T^g_tdt\Big\|^p_{L^p(I^g)}&\leq \int_{A_{\eta(s)}^s}\Big|\int_{0}^sf\circ T^g_t(x,r)dt\Big|^p\,dx\,dr
+Leb(I^g\setminus A_{\eta(s)}^s)s^p\|f\|_{L^\infty}^p\\
&=O(s^{2pa\tau/\lambda_1 })+O(s^{p+1-a})=O(s^{2pa\tau/\lambda_1 }).
\end{align*}
\end{proof}

\begin{corollary}\label{cor:zeroint}
Suppose that $T$ is an IET satisfying the \ref{UDC} and $\varphi\in{\ol}(\sqcup_{\alpha\in \mathcal{A}}
I_{\alpha})$. If $\mathfrak{h}(\varphi)=0$ then $\int_I\varphi(x)\,dx=0$.
\end{corollary}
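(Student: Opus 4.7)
The strategy is to exploit the fact that the $L^1$-integral of a function is preserved under the special Birkhoff sum operator (see \eqref{zachcal}) and then use the subexponential growth of $\|S(k)\varphi\|_{L^1(I^{(k)})}/|I^{(k)}|$ provided by the correction, together with the exponential decay of $|I^{(k)}|$ coming from the \ref{UDC} condition, to force the common value of these integrals to be zero.

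More precisely, the plan is as follows. First, apply \eqref{zachcal} to get that
\[\int_I\varphi(x)\,dx=\int_{I^{(k)}}S(k)\varphi(x)\,dx\qquad\text{for every }k\geq 0,\]
so in particular
\[\Big|\int_I\varphi(x)\,dx\Big|\leq \|S(k)\varphi\|_{L^1(I^{(k)})}=|I^{(k)}|\cdot\frac{\|S(k)\varphi\|_{L^1(I^{(k)})}}{|I^{(k)}|}.\]
Next, since $\mathfrak{h}(\varphi)=0$, Corollary~\ref{cor:corrected} (applied with an arbitrarily small $\tau>0$) yields
\[\frac{\|S(k)\varphi\|_{L^1(I^{(k)})}}{|I^{(k)}|}=O(e^{\tau k}).\]
On the other hand, by the Rokhlin-balance estimate \eqref{eq:lg} together with the lower bound \eqref{def:UDC-c} of the \ref{UDC},
\[|I^{(k)}|\leq \frac{\kappa|I|}{\|Q(k)\|}\leq \frac{\kappa|I|}{ce^{\lambda_1 k}}=O(e^{-\lambda_1 k}).\]

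The final step is to combine these two estimates. Choosing any $\tau\in(0,\lambda_1)$ gives
\[\Big|\int_I\varphi(x)\,dx\Big|\leq O(e^{(\tau-\lambda_1)k})\xrightarrow[k\to\infty]{}0,\]
which forces $\int_I\varphi(x)\,dx=0$ as claimed. There is no real obstacle here: the result is essentially an immediate corollary of the subexponential growth of corrected Birkhoff sums (Corollary~\ref{cor:corrected}) together with the geometric contraction of the inducing intervals guaranteed by \eqref{def:UDC-c}.
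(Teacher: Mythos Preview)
Your argument is correct and is in fact considerably more direct than the paper's own proof. You use only the identity \eqref{zachcal}, the subexponential bound from Corollary~\ref{cor:corrected}, and the exponential decay of $|I^{(k)}|$ coming from \eqref{eq:lg} and \eqref{def:UDC-c}; combining these immediately forces $\int_I\varphi\,dx=0$.

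The paper instead passes through the special flow picture: it chooses a roof $g\in\ol$ with $|\varphi|\leq g$, sets $f(x,r)=\varphi(x)/g(x)$ so that $\varphi_f=\varphi$, applies Proposition~\ref{thm:specflowzeroexp} to obtain $\int_0^{g^{(n)}(x)}f(T^g_t(x,r))\,dt=O((g^{(n)}(x))^\tau)$ for a.e.\ $(x,r)$, relates this to $\varphi^{(n)}(x)$ up to a bounded error, and finally invokes Birkhoff's ergodic theorem to conclude that $\varphi^{(n)}(x)/n\to\int_I\varphi\,dx$ must vanish. This route requires ergodicity of $T$ and the full machinery of \S\ref{sec:reductionintegrals}, whereas your argument avoids the flow altogether and needs none of that. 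What the paper's approach buys is that it falls out naturally from the structure of the surrounding section (which is already set up to translate $L^1$ control of $S(k)\varphi_f$ into almost-everywhere control of ergodic integrals); your approach, by contrast, is self-contained and would work in any setting where the two inputs---preservation of the integral under induction and subexponential growth of the normalized $L^1$-norm against exponential shrinking of $|I^{(k)}|$---are available.
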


\begin{proof}
Let us consider any roof function $g:I\to\R_{>0}$ such that $\varphi\in{\ol}(\sqcup_{\alpha\in \mathcal{A}}
I_{\alpha})$ and $|\varphi(x)|\leq g(x)$ for $x\in I$. Let $f:I^g\to\R$ be given by
$f(x,r)=\varphi(x)/g(x)$ for $(x,r)\in I^g$. Then $f$ is bounded and $\varphi_f=\varphi$.
In view of Theorem~\ref{thm:specflowzeroexp} and the ergodicity of $T$, for every $0<\tau<1$,
for a.e.\ $x\in I$ and a.e.\ $r\in[0,\underline{g}]$ we have
\[g^{(n)}(x)=O(n)\ \text{ and }\ \int_0^{g^{(n)}(x)}f(T^g_t(x,r))\,dt=O((g^{(n)}(x))^{\tau}).\]
As
\begin{align*}\Big|\varphi^{(n)}(x)-\int_0^{g^{(n)}(x)}f(T^g_t(x,r))\,dt\Big|
&=\Big|\int_0^{g^{(n)}(x)}f(T^g_t(x,0))\,dt-\int_0^{g^{(n)}(x)}f(T^g_t(x,r))\,dt\Big|\\
&\leq \Big|\int_0^{r}f(T^g_t(x,0))\,dt\Big|+\Big|\int_0^{r}f(T^g_t(T^nx,0))\,dt\Big|\leq 2\underline{g}\|f\|_{\sup},
\end{align*}
it follows that $\varphi^{(n)}(x)=O(n^\tau)$. On the other hand, for a.e.\ $x\in I$ we have $\varphi^{(n)}(x)/n\to \int_I\varphi(x)\,dx$.
This gives $\int_I\varphi(x)\,dx=0$.
\end{proof}

\subsubsection{Pure power deviation case}\label{sec:puredeviation}
We consider first a function $f$ such that $\varphi_f= h$, where $h$ has exponential growth rate $\lambda$.
\begin{proposition}[Pure deviation]\label{thm:specflownonzero}
Suppose that the IET $T:I\to I$ satisfies the \ref{UDC}.
Assume that the roof function  $g\in {\ol}(\sqcup_{\alpha\in \mathcal{A}}
I_{\alpha})$ and $f:I^g\to \R$ is a bounded function such that there exists $K>0$ for which $f(x,r)=0$ for $r\geq K$ { and $\varphi_f\in L^{\infty}(I)$.
Suppose that for some $\lambda\geq 0$ we have
\[\limsup_{k\to+\infty}\frac{\log\|S(k)(\varphi_f)\|_{L^{\infty}(I^{(k)})}}{k}\leq\lambda.\]
Then
\begin{equation}\label{eq:Birkh0}
\limsup_{s\to+\infty}\frac{\log\|\int_{0}^sf\circ T^g_t\,dt\|_{L^\infty}}{\log s}\leq \frac{\lambda}{\lambda_1}.
\end{equation}
If additionally $\varphi_f= h=(h_\alpha)_{\alpha\in\mathcal{A}}\in H(\pi)$, $\lambda>0$ and
\[\lim_{k\to+\infty}\frac{\log\|Q(k)h\|}{k}=\lambda,\]
then
\begin{equation}\label{eq:Birkh1}
\limsup_{s\to+\infty}\frac{\log\|\int_{0}^sf\circ T^g_t\,dt\|_{L^{\infty}}}{\log s}= \frac{\lambda}{\lambda_1}.
\end{equation}
}
\end{proposition}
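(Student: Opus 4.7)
The proof rests on Proposition~\ref{thm:ints} combined with the Zorich decomposition of Proposition~\ref{twdevpre1}, which together reduce the growth of $\int_0^sf\circ T_t^g\,dt$ to a weighted sum of special Birkhoff sums of $\varphi_f$. For the upper bound \eqref{eq:Birkh0} the two hypotheses on $f$, namely $\varphi_f\in L^{\infty}(I)$ and $f(x,r)=0$ for $r\geq K$, will play complementary r\^oles: the second will let me replace the exhaustion set $A_c^s$ used in the proof of Proposition~\ref{thm:specflowzeroexp} by all of $I^g$, and the first will let me replace the level-set restrictions $I^{(k)}(c)$ by all of $I^{(k)}$, so that the assumed exponential rate for $\|S(k)\varphi_f\|_{L^{\infty}(I^{(k)})}$ applies directly. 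For the matching lower bound \eqref{eq:Birkh1} I will exhibit an explicit sequence of times $s_k\to\infty$ at which the integral equals exactly one coordinate of $Q(k)h$.

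For the upper bound, fix $(x,r)\in I^g$, $s>0$ and set $n=n(x,r,s)\leq s/\underline g$. The decomposition formula from the proof of Lemma~\ref{lem:int0s}, namely
\[\int_0^sf(T_t^g(x,r))\,dt=\varphi_f^{(n)}(x)-\int_0^rf(x,t)\,dt+\int_0^{s-t_n}f(T^n x,t)\,dt,\]
holds without any restriction on $(x,r)$, and the vanishing of $f$ for $r\geq K$ bounds each boundary integral by $K\|f\|_{L^\infty}$. Running the decomposition in the proof of Proposition~\ref{twdevpre1} without restricting to $I^{(k)}(c)$ (legitimate since $\varphi_f\in L^\infty$) yields
\[|\varphi_f^{(n)}(x)|\leq 2\sum_{k=0}^{m(n)}\|Z(k+1)\|\,\|S(k)\varphi_f\|_{L^{\infty}(I^{(k)})}.\]
By hypothesis $\|S(k)\varphi_f\|_{L^{\infty}(I^{(k)})}=O(e^{(\lambda+\tau')k})$ for every $\tau'>0$, and by \eqref{eq:zk1} also $\|Z(k+1)\|=O(e^{\tau'k})$. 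Combining with the bound $m(n)\leq (1+\tau)\lambda_1^{-1}\log n$ coming from \eqref{eq:mnrel} and $n\leq s/\underline g$ gives $|\varphi_f^{(n(x,r,s))}(x)|=O(s^{(\lambda+2\tau')(1+\tau)/\lambda_1})$ uniformly in $(x,r)$; sending $\tau,\tau'\to 0$ yields \eqref{eq:Birkh0}.

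For the lower bound, for each large $k$ choose $\alpha_k\in\mathcal A$ realising the maximum coordinate, so that $|(Q(k)h)_{\alpha_k}|\geq d^{-1}\|Q(k)h\|\geq d^{-1}e^{(\lambda-\tau)k}$ for $k$ large. Since $\varphi_f=h$ is piecewise constant with value $h_\alpha$ on $I_\alpha$, one has $S(k)\varphi_f\equiv (Q(k)h)_{\alpha_k}$ on $I^{(k)}_{\alpha_k}$. Picking any $x_k\in I^{(k)}_{\alpha_k}$ and setting $s_k:=g^{(Q_{\alpha_k}(k))}(x_k)=S(k)g(x_k)$, definition \eqref{def:n} gives $n(x_k,0,s_k)=Q_{\alpha_k}(k)$, so both boundary terms in the identity of the previous paragraph vanish and
\[\int_0^{s_k}f(T_t^g(x_k,0))\,dt=\varphi_f^{(Q_{\alpha_k}(k))}(x_k)=(Q(k)h)_{\alpha_k},\]
giving $\|\int_0^{s_k}f\circ T_t^g\,dt\|_{L^{\infty}}\geq d^{-1}e^{(\lambda-\tau)k}$. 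The key technical obstacle is to bound $s_k$ from above: since $g$ has logarithmic singularities at the discontinuities of $T$, a poor choice of $x_k$ can make $S(k)g(x_k)$ arbitrarily large. This will be resolved by a first-moment argument: the invariance \eqref{zachcal} gives $\int_{I^{(k)}_{\alpha_k}}S(k)g\,dx\leq\int_Ig\,dx$, while the Rokhlin-balance \eqref{def:UDC-d} together with \eqref{eq:lg} forces $|I^{(k)}_{\alpha_k}|\geq |I|/(\kappa^2\|Q(k)\|)$; hence the mean of $S(k)g$ on $I^{(k)}_{\alpha_k}$ is $O(\|Q(k)\|)$ and one can choose $x_k$ with $s_k=O(\|Q(k)\|)=O(e^{\lambda_1(1+\tau)k})$ by \eqref{def:UDC-c}. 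The resulting estimate $\log\|\int_0^{s_k}f\circ T_t^g\,dt\|_{L^{\infty}}/\log s_k\geq (\lambda-\tau)/(\lambda_1(1+\tau))+o(1)$, together with the upper bound, gives \eqref{eq:Birkh1} upon letting $\tau\to 0$.
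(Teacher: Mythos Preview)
Your proof is correct and follows essentially the same approach as the paper. For the upper bound you bypass the paper's detour through the trimmed roof $g_K=\min\{g,K\}$ by bounding the two boundary integrals in the decomposition of Lemma~\ref{lem:int0s} directly via the hypothesis $f(x,r)=0$ for $r\geq K$, which is a minor streamlining; the lower bound argument (choosing $\alpha_k$ to maximize $|(Q(k)h)_\alpha|$ and using a first-moment bound on $S(k)g$ over $I^{(k)}_{\alpha_k}$ to control $s_k$) is identical to the paper's. One small citation slip: the lower bound $|I^{(k)}_{\alpha_k}|\geq |I|/(\kappa\|Q(k)\|)$ comes from \eqref{neq:dli} together with \eqref{def:UDC-d}, not from \eqref{eq:lg} (which gives the opposite inequality), but your conclusion is unaffected.
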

\begin{proof}
Let us consider the trimmed roof function $g_K:I\to[0,K]$, $g_K(x)=\min\{g(x),K\}$.
Taking $\eta=K$ and $I_{\eta(s)}=I_K=I$ we have $A^s_{\eta(s)}=I^{g_K}$.
Note that, by assumption, the map $\varphi_f$ does not change after passing to the trimmed roof function.
 In view of \eqref{eq:ints2},
for every regular point $(x,r)\in I^{g_K}$ we have
\begin{equation*}
\Big|\int_{0}^sf( T^{g_K}_t(x,r))dt\Big|\leq  2\sum_{k=0}^{m(n_K(x,r,s))}
\|Z(k+1)\|\|S(k)\varphi_f\|_{L^{\infty}(I^{(k)})}+2\|f\|_{L^\infty}K^2,
\end{equation*}
where $n_K(x,r,s)$ is defined by \eqref{def:n} for the roof $g_K$. Then
\[0\leq n_K(x,r,s)\leq n(x,r,s)\leq s/\underline{g}+1.\]
By assumption, for every $\tau>0$ we have
\[\|S(k)\varphi_f\|_{L^{\infty}(I^{(k)})}=O(e^{(\lambda+\tau)k}).\]
Moreover, by \eqref{eq:mnrel},
\[e^{\lambda_1m(n_K(x,r,s))}=O(n_K(x,r,s)^{1+\tau})=O(s^{1+\tau}).\]
Therefore, by \eqref{eq:zk1}, it follows that
\begin{align*}
\Big|\int_{0}^sf( T^{g_K}_t(x,r))dt\Big|&\leq  O\Big(\sum_{k=0}^{m(n_K(x,r,s))}
\|Z(k+1)\|\|S(k)\varphi_f\|_{L^{\infty}(I^{(k)})}+\|f\|_{L^\infty}K^2\Big)\\
&=
O\Big(\sum_{k=0}^{m(n_K(x,r,s))}
e^{(\lambda+2\tau)k}+\|f\|_{L^\infty}K^2\Big)\\
&=O\big(e^{(\lambda+2\tau)m(n_K(x,r,s))}+\|f\|_{L^\infty}K^2\big)\\
&=O(s^{(\lambda+2\tau)(1+\tau)}).
\end{align*}
By assumption and the definition of $g_K$, for every regular $(x,r)\in I^g$ and $s>0$ there exists $0\leq s'=s'(x,r,s)\leq s$ such that
\[\Big|\int_{0}^sf( T^{g}_t(x,r))dt\Big|=\Big|\int_{0}^{s'}f(T^{g_K}_t(x,r))dt\Big|
=O\big({(s')}^{(\lambda+2\tau)(1+\tau)}\big)=O(s^{(\lambda+2\tau)(1+\tau)}).\]
This gives \eqref{eq:Birkh0} and proves one inequality (namely the upper bound) in \eqref{eq:Birkh1}.

\smallskip
To prove the inverse inequality and therefore \eqref{eq:Birkh1}, note that for every $x\in I^{(k)}_\alpha$ we have
\begin{align*}\int_0^{S(k)g(x)}f(T^g_t(x,0))\,dt&=\int_0^{g^{(Q_\alpha(k))}(x)}f(T^g_t(x,0))\,dt=\varphi_f^{(Q_\alpha(k))}(x)
=S(k)\varphi_f(x)=(Q(k)h)_\alpha.
\end{align*}
Moreover, by assumption, for every $\tau>0$ there exists $c>0$ such that for every $k\geq 0$ we have
\[\sum_{\alpha\in\mathcal A}|(Q(k)h)_\alpha|=\|Q(k)h\|\geq c e^{\lambda(1-\tau)k}.\]
As $g$ is positive, by \eqref{def:UDC-d}, \eqref{neq:dli} and \eqref{def:UDC-c}, we have
\begin{align*}
m(S(k)g,I^{(k)}_\alpha)&\leq \frac{|I^{(k)}|}{|I^{(k)}_\alpha|}m(S(k)g,I^{(k)})\leq\frac{\kappa m(g,I)}{|I^{(k)}|}
\leq\kappa m(g,I)\|Q(k)\|\leq \kappa m(g,I)Ce^{\lambda_1(1+\tau)k}.
\end{align*}
For every $k\geq 0$ choose $\alpha\in\mathcal A$ such that $|(Q(k)h)_\alpha|=\tfrac{1}{d}\|Q(k)h\|$ and then we take any
$x^{(k)}\in I^{(k)}_\alpha$ such that $s_k:=S(k)g(x^{(k)})\leq \kappa m(g,I)Ce^{\lambda_1(1+\tau)k}$. Then
\begin{align*}
\Big\|\int_0^{s_k}f\circ T^g_t\,dt\Big\|_{L^\infty}&\geq \Big|\int_0^{S(k)g(x^{(k)})}f(T^g_t(x^{(k)},0))\,dt\Big|=|(Q(k)h)_\alpha|\\
&=\frac{1}{d}\|Q(k)h\|\geq \frac{c}{d} e^{\lambda(1-\tau)k}
\geq\frac{c}{d(\kappa m(g,I)C)^{\frac{\lambda}{\lambda_1}\frac{1-\tau}{1+\tau}}} (s_k)^{\frac{\lambda}{\lambda_1}\frac{1-\tau}{1+\tau}k}.
\end{align*}
It follows that for every $\tau>0$ we have
\[\limsup_{s\to+\infty}\frac{\log\|\int_{0}^sf\circ T^g_t\,dt\|_{L^{\infty}}}{\log s}\geq \frac{\lambda}{\lambda_1}\frac{1-\tau}{1+\tau},\]
which gives \eqref{eq:Birkh1}.
\end{proof}

{ To have uniform control over the asymptotics of the error growth, we also need the following Corollary.
\begin{corollary}\label{cor:passac}
Let  $T:I\to I$ is an IET satisfying the \ref{UDC} and $g\in {\ol}(\sqcup_{\alpha\in \mathcal{A}}
I_{\alpha})$ be a roof function.  Suppose that $f:I^g\to \R$ is a bounded function such that  $\varphi_f, \varphi_{|f|}\in L^{\infty}(I)$.
Then for every $\lambda\geq 0$,
\begin{equation}\label{eq:implexp}
\limsup_{k\to+\infty}\frac{\log\|S(k)(\varphi_f)\|_{L^{\infty}(I^{(k)})}}{k}\leq\lambda\
\Longrightarrow\
\limsup_{s\to+\infty}\frac{\log\|\int_{0}^sf\circ T^g_t\,dt\|_{L^\infty}}{\log s}\leq \frac{\lambda}{\lambda_1}.
\end{equation}
\end{corollary}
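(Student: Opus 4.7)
The plan is to adapt the argument of Proposition~\ref{thm:specflownonzero}, replacing the compact-support hypothesis on $f$ in the fibre direction (which there was used to trim the roof) by the integrability assumption $\varphi_{|f|}\in L^{\infty}(I)$, which should allow uniform-in-$(x,r)$ estimates on all of $I^g$.

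First I would upgrade Lemma~\ref{lem:int0s} to hold without the restriction $(x,r)\in A^s_c$: decomposing the orbit $\{T^g_t(x,r):t\in[0,s]\}$ along its crossings with $I\times\{0\}$ exactly as in the proof of that lemma, the two boundary terms have the form $\int_0^{r'}f(y,u)\,du$ with $0\leq r'\leq g(y)$, and can therefore be bounded in absolute value by $\varphi_{|f|}(y)\leq\|\varphi_{|f|}\|_{L^\infty}$ instead of by $c\|f\|_{L^\infty}$. This yields, for every $(x,r)\in I^g$,
\[\Big|\int_0^sf(T^g_t(x,r))\,dt\Big|\leq |\varphi_f^{(n(x,r,s))}(x)|+2\|\varphi_{|f|}\|_{L^\infty}.\]
Next, since $\varphi_f\in L^\infty(I)$ implies $S(k)\varphi_f\in L^\infty(I^{(k)})$ for every $k$, I would observe that the combinatorial decomposition of $\varphi_f^{(n)}(x)$ into special Birkhoff sums performed in the proof of Proposition~\ref{twdevpre1} goes through on the whole of $I$ with no restriction to $I^{(k)}(c)$, producing uniformly in $x\in I$
\[|\varphi_f^{(n)}(x)|\leq 2\sum_{k=0}^{m(x,n)}\|Z(k+1)\|\,\|S(k)\varphi_f\|_{L^\infty(I^{(k)})}.\]

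To conclude, I would combine these two uniform bounds with $n(x,r,s)\leq s/\underline g+1$ and \eqref{eq:mnrel}, which gives $m(x,n(x,r,s))\leq \frac{1+\tau}{\lambda_1}\log s+O(1)$ for any $\tau>0$. Inserting the hypothesis $\|S(k)\varphi_f\|_{L^\infty(I^{(k)})}=O(e^{(\lambda+\tau)k})$ together with $\|Z(k+1)\|=O(e^{\tau k})$ (which follows from \eqref{eq:zk1} and \eqref{def:UDC-c} after choosing the UDC-parameter sufficiently small), the resulting geometric sum is dominated by its last term and produces the uniform estimate $\|\int_0^sf\circ T^g_t\,dt\|_{L^\infty}=O(s^{(\lambda+2\tau)(1+\tau)/\lambda_1})$; letting $\tau\downarrow 0$ then gives \eqref{eq:implexp}. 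The only genuinely new point is the first step, where $\varphi_{|f|}\in L^\infty$ replaces the compact support of $f$ in order to neutralise the contribution of the set where the roof $g$ is large; once this is done, the remainder is a routine uniform-in-$(x,r)$ version of the proof of Proposition~\ref{thm:specflownonzero}, with $L^\infty$ estimates of the special Birkhoff sums playing the role that the $L^1$ estimates played in the proof of Proposition~\ref{thm:specflowzeroexp}.
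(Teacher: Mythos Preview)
Your proof is correct, but it takes a different route from the paper's. The paper does not re-open Lemma~\ref{lem:int0s} or Proposition~\ref{twdevpre1}; instead it introduces an auxiliary function $f_K$ which agrees with $f$ on the fibres where $g(x)\leq K$ and, on the fibres where $g(x)>K$, is redefined to be $\varphi_f(x)/K$ on $[0,K]$ and $0$ above. This $f_K$ has compact support in the fibre direction and satisfies $\varphi_{f_K}=\varphi_f$, so Proposition~\ref{thm:specflownonzero} applies to it verbatim and gives the bound for $\int_0^s f_K\circ T^g_t\,dt$. The paper then uses $\varphi_{|f|}\in L^\infty$ (and $\varphi_{|f_K|}\leq\varphi_{|f|}$) to show that $\big|\int_0^s f\circ T^g_t\,dt-\int_0^s f_K\circ T^g_t\,dt\big|\leq 4\|\varphi_{|f|}\|_{\sup}$ uniformly, since over each full fibre the integrals of $f$ and $f_K$ coincide and the boundary pieces are each controlled by $\varphi_{|f|}$.

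Your approach trades the auxiliary function for a direct upgrade of the two building blocks: you strengthen Lemma~\ref{lem:int0s} to a uniform estimate on all of $I^g$ (using $\varphi_{|f|}\in L^\infty$ for the boundary terms) and you note that the Zorich decomposition behind Proposition~\ref{twdevpre1} needs no restriction to $I^{(k)}(c)$ once $\varphi_f\in L^\infty$. Both routes hinge on the same observation---that $\varphi_{|f|}\in L^\infty$ neutralises the unbounded roof---but the paper's version is slightly more economical in that it re-uses Proposition~\ref{thm:specflownonzero} as a black box, while yours is more self-contained and arguably makes the role of the hypothesis $\varphi_{|f|}\in L^\infty$ more transparent.
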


\begin{proof}
For any $K>0$ let us consider the bounded map $f_K:I^g\to\R$ given by
\[f_K(x,r)=\left\{
\begin{array}{ccl}
f(x,r)&\text{if}& g(x)\leq K\\
\varphi_f(x)/K&\text{if}& g(x)> K\text{ and } r\leq g(x)\\
0&\text{if}& g(x)> K\text{ and } r> g(x).
\end{array}
\right.\]
Then $f_K$ satisfies the assumptions of the first part of Proposition~\ref{thm:specflownonzero} and $\varphi_{f_K}=\varphi_f$. Hence
 \begin{equation}\label{eq:expfK}
 \limsup_{s\to+\infty}\frac{\log\|\int_{0}^sf_K\circ T^g_t\,dt\|_{L^\infty}}{\log s}\leq \frac{\lambda}{\lambda_1}.
 \end{equation}
Note that for every $x\in I$ in the interior of exchanged intervals and any pair $0\leq r_1<r_2\leq g(x)$ we have
\begin{gather*}|\int_{r_1}^{r_2}f(x,r)\,dr|\leq \int_{r_1}^{r_2}|f(x,r)|\,dr\leq \int_{0}^{g(x)}|f(x,r)|\,dr=\varphi_{|f|}(x)\leq\|\varphi_{|f|}\|_{\sup},\\
|\int_{r_1}^{r_2}f_K(x,r)\,dr|\leq \varphi_{|f_K|}(x)| \leq  \varphi_{|f|}(x)\leq\|\varphi_{|f|}\|_{\sup}.
\end{gather*}
As
\[\int_{0}^{g(x)}f_K(x,r)\,dr=\varphi_{f_K}(x)=\varphi_f(x)= \int_{0}^{g(x)}f(x,r)\,dr,\]
it follows that for every regular point $(x,r)\in I^g$ and any $s>0$ we have
\[\Big|\int_{0}^{s}f(T^g_t(x,r))\,dt-\int_{0}^{s}f_K(T^g_t(x,r))\,dt\Big|\leq 4 \|\varphi_{|f|}\|_{\sup}.\]
Together with \eqref{eq:expfK} this yields \eqref{eq:implexp}.
\end{proof}
}
\subsubsection{Power deviation spectrum}\label{sec:proofmain}
Combining the results in the two previous subsections, we can now prove the full deviation spectrum result { stated in Theorem~\ref{mainthm:BSN} as well as the existence of the asymptotic expansion in Theorem~\ref{mainthm:BS}.
}

\begin{proof}[{Proof of Theorem~\ref{mainthm:BSN}  and of the first part of Theorem~\ref{mainthm:BS}}]
Let $D_i, 1\leq i\leq g$, $u_i, 1\leq i\leq g$ { and $u_\sigma$, $\sigma\in\mathrm{Fix}(\psi_\R)\cap M'$}, be respectively the distributions and the cocycles defined in \S~\ref{sec:cocycles1}.  One can see that, for each $1\leq i\leq g$, $u_i$ displays the desired power behaviour \eqref{eq:explambdai}, by the pure deviation Theorem~\ref{thm:specflownonzero} proved in \S~\ref{sec:puredeviation}, which can be applied to $f=f_i$ since  by construction  $\varphi_{f_i}=h_i$ and  $h_i$ has exponential growth rate $\lambda_i$, see \eqref{eq:asshi}.

\smallskip
\noindent {\it  The error term function.}
Let us consider $f_e\in C^{2+\epsilon}(M)$ given by
\[f_e:= f-\sum_{i=1}^gD_i(f)f_i.\]
By the definition of $f_i$, $i=1,\ldots,g$,
\begin{equation}\label{eq:sumfe}
f_e(\sigma)=f(\sigma)\text{ for every }\sigma\in\mathrm{Fix}(\psi_\R)\cap M'.
\end{equation}
Then we set
\[err(f,T,x):=\int_0^Tf_e(\psi_s(x))\,ds.\]
Let $\varphi_{f_e}$ be the cocycle associated to $f_e$ (refer to \S~\ref{sec:reductionsp}).
We can then check that $\mathfrak{h}(\varphi_{f_e})=0$, since
\begin{align}\label{eq:hfe}
\begin{split}
\mathfrak{h}(\varphi_{f_e})=\mathfrak{h}(\varphi_{f})-\sum_{i=1}^gD_i(f)\mathfrak{h}(\varphi_{f_i})
=\mathfrak{h}(\varphi_{f})-\sum_{i=1}^gD_i(f)\mathfrak{h}(h_i)
=\mathfrak{h}(\varphi_{f})-\sum_{i=1}^gd_i(\varphi_{f})h_i=0.
\end{split}
\end{align}

We now show that for every non-zero $\xi\in C^{2+\epsilon}(M)$ such that $\mathfrak{h}(\varphi_\xi)=0$
we have
\begin{gather}
\label{eq:expxi}
\limsup_{T\to+\infty}\frac{\log|\int_0^T\xi(\psi_t(x))\,dt|}{\log T}=0\text{ for a.e. }x\in M', \quad \limsup_{T\to+\infty}\frac{\log\|\int_0^T\xi\circ\psi_t\,dt\|_{L^{p}(M')}}{\log T}=0.
\end{gather}
As $\mathfrak{h}(\varphi_\xi)=0$, in view of Corollary~\ref{cor:corrected},
we can apply the subpolynomial deviation  Theorem~\ref{thm:specflowzeroexp} to $f=\xi$ and prove both inequalities $\leq$ in \eqref{eq:expxi}.


\medskip

\noindent {\it Almost everywhere error estimates.} Suppose now that the left equality in \eqref{eq:expxi} does not hold. Then there exists a subset $B\subset M'$ with positive area such that
\[\lim_{T\to+\infty}\int_0^T\xi(\psi_t(x))\,dt=0\text{ for all }x\in B.\]
By the ergodicity of the flow, for $\mu$-a.e.\ $x\in M'$, the limit
\[\zeta(x)=\lim_{T\to+\infty}\int_0^T\xi(\psi_t(x))\,dt\text{ exists.}\]
Then $\zeta:M'\to\R$ is a measurable map such that $\zeta(x)=0$ for any $x\in B$ and
\begin{equation}\label{eq:zeta}
\zeta(x)-\zeta(\psi_sx)=\int_0^s\xi(\psi_t(x))\,dt\text{ for every }s>0\text{ and a.e. }x\in M'.
\end{equation}
Note that $\zeta\equiv 0$. Indeed, by definition and \eqref{eq:zeta}, for a.e.\ $x\in M'$ we have $\lim_{s\to+\infty}\zeta(\psi_sx)=0$.
Since $\psi_\R$ is ergodic, this gives $\zeta\equiv 0$. Therefore,
\begin{equation*}
\frac{1}{s}\int_0^s\xi(\psi_t(x))\,dt=\frac{1}{s}(\zeta(x)-\zeta(\psi_sx))=0\text{ for every }s>0\text{ and a.e. }x\in M'.
\end{equation*}
As $\xi$ is continuous, it follows that for a.e.\ $x\in M'$ we have
\[\xi(x)=\lim_{s\to 0}\frac{1}{s}\int_0^s\xi(\psi_t(x))\,dt=0,\]
contrary to the assumption  $\xi$ is non-zero.

\medskip

\noindent {\it Error estimates in $L^p$ norm.} Suppose now
 that the right equality in \eqref{eq:expxi} does not hold. Then
\[\lim_{T\to+\infty}\int_0^T\xi\circ\psi_t\,dt=0\text{ in }L^p.\]
Hence, for every $s>0$ we have
\[\int_0^s\xi\circ\psi_t\,dt=\lim_{T\to+\infty}\int_0^{T+s}\xi\circ\psi_t\,dt-\lim_{T\to+\infty}\int_0^T\xi\circ\psi_t\circ\psi_s\,dt=0\]
in $L^p$. It follows that $\frac{1}{s}\int_0^s\xi(\psi_t(x))\,dt=0$ for every $s>0$ and a.e.\ $x\in M'$. The final contradiction argument is the same as above. This completes the proof of \eqref{eq:expxi}.

\medskip

\noindent In view of \eqref{eq:hxi} and \eqref{eq:hfe}, $\mathfrak{h}(\varphi_{\xi_\sigma})=0$ and $\mathfrak{h}(\varphi_{f_e})=0$, so we can apply
\eqref{eq:expxi} to $\xi=\xi_\sigma$ and $\xi=f_e$. {This yields \eqref{eq:expusigma} in in  Theorem~\ref{mainthm:BS} as well as \eqref{eq:expzero1} and \eqref{eq:expzero2} in  Theorem~\ref{mainthm:BSN}.}

\medskip

\noindent \emph{Uniform estimates of $err_b$.}
Let us consider $f_{eb}\in C^{2+\epsilon}(M)$ given by
\begin{equation}\label{deffeb} f_{eb}=f_e-\sum_{\sigma\in\mathrm{Fix}(\psi_\R)\cap M'}f(\sigma)\xi_\sigma.\end{equation}
Then
\begin{align*}\int_0^T f_{eb}(\psi_t x)\,dt&=\int_0^T f_e(\psi_t x)\,dt-\sum_{\sigma\in\mathrm{Fix}(\psi_\R)\cap M'}f(\sigma)\int_0^T \xi_\sigma(\psi_t x)\,dt\\
&=err(f,T,x)-\sum_{\sigma\in\mathrm{Fix}(\psi_\R)\cap M'}f(\sigma)u_\sigma(T,x)=err_b(f,T,x).
\end{align*}
Since $f_e(\sigma)=f(\sigma)$ and $\xi_\sigma(\sigma')=\delta_{\sigma,\sigma'}$, for every $\sigma\in\mathrm{Fix}(\psi_\R)\cap M'$ we have
\begin{equation}\label{eq:ebzero}
f_{eb}(\sigma)=f_e(\sigma)-\sum_{\sigma'\in\mathrm{Fix}(\psi_\R)\cap M'}f(\sigma')\xi_{\sigma'}(\sigma)=0.
\end{equation}
In view of Proposition~\ref{prop:oper}, $\varphi_{f_{be}}\in {\ac}(\sqcup_{\alpha\in \mathcal{A}} I_{\alpha})$. As $\mathfrak{h}(\varphi_{\xi_\sigma})=0$ and $\mathfrak{h}(\varphi_{f_e})=0$, we also have
\begin{equation}\label{eq:heb}
\mathfrak{h}(\varphi_{f_{eb}})=\mathfrak{h}(\varphi_{f_e})-\sum_{\sigma\in\mathrm{Fix}(\psi_\R)\cap M'}f(\sigma)\mathfrak{h}(\varphi_{\xi_\sigma})=0.
\end{equation}
In view of \eqref{eq:supL1bv}, {the property \eqref{def:UDC-d} of the \ref{UDC}}, \eqref{navar} and Corollary~\ref{cor:corrected}, it follows that for every $\tau>0$ we have
\begin{align*}
\|S(k)\varphi_{f_{eb}}\|_{\sup}&\leq \frac{|I^{(k)}|}{\min_{\alpha\in\mathcal{A}}|I_\alpha^{(k)}|}\frac{\|S(k)\varphi_{f_{eb}}\|_{L^1(I^{(k)})}}{|I^{(k)}|}+\var(S(k)\varphi_{f_{eb}})\\
&\leq \kappa\frac{\|S(k)\varphi_{f_{eb}}\|_{L^1(I^{(k)})}}{|I^{(k)}|}+\var(\varphi_{f_{eb}})=O(e^{\tau k}).
\end{align*}
By \eqref{eq:ebzero} and { Proposition~\ref{prop:oper} (see in particular property $(i)$), the map $\varphi_{|f|}:I\to\R$ is bounded. In view of Corollary~\ref{cor:passac}, this gives \eqref{eq:experrb}.}

{ This completes the proof of Theorem~\ref{mainthm:BSN} as well as the proof of the first part of Theorem~\ref{mainthm:BS}.}
\end{proof}
\noindent
The second part of the statement of  Theorem~\ref{mainthm:BS}, namely the equidistribution statement for the error term (which is a consequence of ergodicity) { and the uniform estimates on $err_b$}  will be proved at the end, in \S~\ref{sec:proofBSerror}.

\section{Ergodicity of extensions}\label{sec:ergskew}
The goal of this section is to prove  Main Theorem~\ref{main1} and complete the proof of  Main Theorem~\ref{mainthm:BS}. In view of the reduction explained in \S~\ref{sec:reductionsp} and the equivalence between ergodicity
of the extension $\Phi^f_\R$ on $M\times \R$ and of the skew product $T_{\varphi_f}$ on $ I\times \R$ obtained via a Poincar{\'e} first return, we treat first the case of skew products of this form. The main result on ergodicity of skew products is Theorem~\ref{thm:erg} stated in \S~\ref{sec:ergsp} below.
In \S~\ref{sec:ergodicitycriterium} we state the ergodicity criterium that we will use to prove it (see Proposition~\ref{prop:ergodicxi}). Theorem~\ref{thm:erg}  is then proved in \S~\ref{sec:ergodicitysp}. Finally, in \S~\ref{sec:reducibility} we prove  Main Theorem~\ref{main1}, by combining the ergodicity result for skew products with a discussion on reducibility.


\subsection{Ergodicity of skew products over IETs with logarithmic singularities}\label{sec:ergsp}
We state in this section the ergodicity result for skew-products over IETs with cocycles with logarithmic singularities. We also show that the ergodicity result for locally Hamiltonian flows (Main Theorem~\ref{main1}) can be reduced to it.

\begin{theorem}[Ergodicity of skew-products with log-singularities over IETs]\label{thm:erg}
Suppose that $T:I\to I$ satisfies the \ref{SUDC}. Let $\varphi\in {\ol}(\sqcup_{\alpha\in \mathcal{A}} I_{\alpha})$ be a cocycle with logarithmic singularities of geometric type  so that
\[
\mathcal{L}(\varphi)>0,\qquad \mathcal{AS}(\varphi)=0,\qquad g'_{\varphi}\in {\ol}(\sqcup_{\alpha\in \mathcal{A}} I_{\alpha}).
\]
Assume furthermore that
$\varphi$ is \emph{corrected}, namely $\mathfrak{h}(\varphi)=0$.
Then the skew product $T_{\varphi}$ on $I\times \R$ is ergodic.
\end{theorem}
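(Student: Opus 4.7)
The plan is to verify an essential values criterion for ergodicity of skew products, namely Proposition~\ref{prop:ergodicxi} (to be stated in \S~\ref{sec:ergodicitycriterium}). According to that criterion, it is enough to produce a sequence of Rokhlin towers $\{T^iB_n:0\leq i<q_n\}$ in the base IET $T$ such that (i) the towers have uniformly positive total measure, (ii) the cocycle $\varphi^{(q_n)}$ restricted to $B_n$ is tight in $\mathbb{R}$, and (iii) for every bounded open interval $J\subset\mathbb{R}$ the set $\{x\in B_n:\varphi^{(q_n)}(x)\in J\}$ has proportional measure bounded below uniformly in $n$. Since the group $E(T_\varphi)\subset\mathbb{R}$ of essential values is closed, this will force $E(T_\varphi)=\mathbb{R}$ and hence ergodicity of $T_\varphi$ with respect to $\mathrm{Leb}\times\mathrm{Leb}$ on $I\times\mathbb{R}$.

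First I would pick the towers using the Rokhlin-balanced acceleration $(n_k)_{k\geq 0}$ provided by \ref{SUDC}: set $q_n:=Q_\alpha(r_n)$ and $B_n:=I^{(r_n)}_\alpha$, where $(r_n)$ is the subsequence appearing in \eqref{def:UDC-b}. Condition (i) is immediate from the Rokhlin-balance conditions \eqref{def:UDC-d}--\eqref{def:UDC-g}, which in addition supply the partial rigidity analogue of Katok's theorem required by the criterion. For (ii), the hypotheses $\mathfrak{h}(\varphi)=0$ and $\mathcal{AS}(\varphi)=0$ allow me to invoke the symmetric estimate \eqref{eq:thmcorr2} of Theorem~\ref{operatorcorrection}; combined with $C_{r_n}(T)\leq D$ from \eqref{eq:C} and $\|Q_s(r_n)\|\to 0$ from \eqref{def:UDC-a}, this produces a uniform $M_0$ with
\[
\frac{\|S(r_n)\varphi\|_{L^1(I^{(r_n)})}}{|I^{(r_n)}|}\leq M_0\qquad\text{for every }n\geq 1,
\]
and Markov's inequality then yields tightness of $\varphi^{(q_n)}|_{B_n}$, since by definition of the special Birkhoff sum $\varphi^{(q_n)}(x)=S(r_n)\varphi(x)$ for $x\in B_n$.

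The heart of the proof is (iii), and this is where I would use $\mathcal{L}(\varphi)>0$ together with the explicit cancellation estimate \eqref{specialsum} from Theorem~\ref{cancellations_prop}: up to a globally bounded error, the derivative $(\varphi')^{(q_n)}(x)$ equals the closed-form arithmetic expression $\sum_\alpha C_\alpha^+/x_\alpha^{\ell}-\sum_\alpha C_\alpha^-/x_\alpha^r$ of inverses of closest visits, which blows up like a log derivative as $x$ approaches a $T$-preimage of a singularity inside the tower. Hence $\varphi^{(q_n)}|_{B_n}$ is a piecewise-smooth function with sharp logarithmic spikes at these preimages. The $L^1$-tightness proved above forces each positive spike to be cancelled in the mean by a comparably large excursion of opposite sign elsewhere in $B_n$, so $\varphi^{(q_n)}|_{B_n}$ sweeps through arbitrarily large intervals of $\mathbb{R}$; the regularity hypothesis $g'_\varphi\in\mathcal{LG}(\sqcup_\alpha I_\alpha)$ feeds Proposition~\ref{lemlogreno1}, which controls the bounded-variation remainder of $S(r_n)\varphi$ by $\kappa\,\mathcal{LV}(\varphi)$ and guarantees that these oscillations are not washed out under renormalization. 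Standard intermediate-value arguments on the individual continuity pieces then provide, for every prescribed $J\subset\mathbb{R}$, a sub-tower of proportional measure on which $\varphi^{(q_n)}\in J$.

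The hard part will be this last quantitative step: converting the combination of $L^1$-tightness and explicit spiky derivatives into a uniform lower bound on the proportion of $B_n$ on which $\varphi^{(q_n)}$ takes values in a given interval $J$. This is precisely where the full strength of $\mathcal{AS}(\varphi)=0$ enters, in tandem with the pointwise formula \eqref{specialsum}: the per-saddle symmetry ensures that the spikes on the two sides of each singularity are of comparable magnitude and cancel on average, while the pointwise formula guarantees that each spike is individually of logarithmic order in the distance to the discontinuity, together producing a sufficiently uniform spread of levels over $\mathbb{R}$. Once this quantitative spreading is established, Proposition~\ref{prop:ergodicxi} applies verbatim and yields ergodicity of $T_\varphi$.
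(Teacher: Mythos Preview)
Your high-level plan (tightness of $\varphi^{(q_n)}$ from the correction estimate \eqref{eq:thmcorr2} along the subsequence $(r_n)$, plus oscillation from the logarithmic singularities, fed into an essential-values criterion) is exactly the paper's strategy, and your verification of the $L^1$-tightness step is correct. However, the execution of the oscillation step has concrete gaps.

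First, you never use the hypothesis $g'_\varphi\in\ol(\sqcup_\alpha I_\alpha)$ the way it is actually needed. The paper's route is to invoke the cohomological reduction (Theorem~\ref{theorem:coblog}), which this hypothesis enables, to replace $\varphi$ by a cohomologous $\bar\varphi$ whose non-logarithmic part $g_{\bar\varphi}$ is \emph{piecewise linear}; only then are $g'_{\bar\varphi},g''_{\bar\varphi}\in\ac$, and one can prove the key lower bound $|(\bar\varphi'')^{(q_k)}(x)|\geq \underline{c}/(\lambda^{(k)}_{\alpha_k})^2$ on the tower (Lemma~\ref{lem:''}). This second-derivative bound is what produces, via the elementary Lemma~\ref{lem:elem}, subintervals of proportion $\geq 1/3$ on each floor where $|(\bar\varphi')^{(q_k)}|\geq c\,q_k$, which is condition~\ref{p84iii} of Proposition~\ref{thm:erg1} and drives the oscillatory-integral bound in \eqref{eq:assumerg}. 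Your appeal to Proposition~\ref{lemlogreno1} only bounds $\lv(S(k)\varphi)$; it says nothing about second derivatives, and \eqref{specialsum} as stated applies only when $g'_\varphi=0$, so with merely $g'_\varphi\in\ol$ the residual Birkhoff sum $(g'_\varphi)^{(q_n)}$ is itself an uncontrolled function with logarithmic singularities.

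Second, your tower $B_n=I^{(r_n)}_\alpha$ is too coarse: its orbit under $T$ hits $End(T)$, so condition~\ref{p84ii} of Proposition~\ref{thm:erg1} fails. The paper instead chooses carefully positioned subintervals $J^{(k)}\subset I^{(k)}_{\alpha_k}$ (Definition~\ref{def:cbar}, with the scale $\bar c$ in \eqref{eq:defcbar} tuned to the constants $C^\pm_{\alpha_0}$) so that exactly one logarithmic spike dominates the second derivative on each floor and the tower stays at distance $\geq c/q_k$ from the discontinuities. Finally, the criterion you describe---uniform lower bounds on $Leb\{x\in B_n:\varphi^{(q_n)}(x)\in J\}$---is not the one in Proposition~\ref{prop:ergodicxi}; the paper works with the exponential-sum condition $\int_{\Xi_n}e^{2\pi i s\varphi^{(q_n)}}dx=\tfrac{2}{3}Leb(\Xi_n)+O(|s|^{-1})$, and the derivative lower bound \ref{p84iii} is precisely what makes the van der Corput estimate go through. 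Your ``standard intermediate-value arguments'' do not substitute for this quantitative derivative control.
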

The proof of the Theorem will take most of the section, from \S~\ref{thm:erg} to the end.
We first state the ergodicity criterium which will be exploited (see \S~\ref{sec:ergodicitycriterium})  and proceed with the proof, which will take \S~\ref{sec:ergodicitysp}.
\subsubsection{An ergodicity criterium}\label{sec:ergodicitycriterium}
We now formulate a quite classical criterium (Proposition~\ref{prop:ergodicxi}) for ergodicity of a special flow. It shows that one can deduce the existence of \emph{essential values} (a classical tool to prove ergodicity, see e.g.~\cite{Aa, Sch}) to the presence of \emph{rigidity sets} for the base transformation on which Birkhoff sums (up to the time which gives rigidity) are \emph{tight}. The criterium was in particular used (and proved) in  \cite{Fr-Ul}.  For simplicity in this section we constantly assume that $|I|=1$.

\smallskip
We first give the definition of \emph{rigidity sequence} for IETs (which are the base transformations in the special flow).
\begin{definition}[Rigidity sequences for an IET]
Let $T:I\to I$ be an IET. Let $(\Xi_n)_{n\geq 1}$ be a sequence of towers of intervals of the form $\Xi_n=\{T^i J_n:0\leq i<p_n\}$.
We say that $(\Xi_n)_{n\geq 1}$ is a \emph{rigid sequence of towers} if there exists a strictly increasing sequence $(q_n)_{n\geq 1}$,
called the \emph{rigidity sequence}, and $\delta>0$ such that
\begin{gather*}
Leb(\Xi_n)\geq \delta\text{ and }\sup_{x\in\Xi_n}|T^{q_n}x-x|\to 0.
\end{gather*}
\end{definition}

The following Proposition is the ergodicity criterium that we will exploit. It was proved in  \cite{Fr-Ul} (using Proposition~2.3 and the end of the proof of Proposition 5.2 in \cite{Fr-Ul}). 

\begin{proposition}[Ergodicity criterium, see \cite{Fr-Ul}]\label{prop:ergodicxi}
Assume that $T:I\to I$ is an ergodic IET and $\varphi:I\to\R$ a measurable map.
Suppose that $(\Xi_n)_{n\geq 1}$ is a rigid sequence of tower and $(q_n)_{n\geq 1}$ its rigidity sequence.
If for all $|s|\geq s_0$ we have
\begin{equation}\label{eq:assumerg}
\int_{\Xi_n}|\varphi^{(q_n)}(x)|\,dx=O(1)\text{ and }\int_{\Xi_n}e^{2\pi s\varphi^{(q_n)}(x)}\,dx=\frac{2}{3}Leb(\Xi_n)+O(|s|^{-1}),
\end{equation}
then the skew product  $T_{\varphi}$ on $I\times \R$ is ergodic.
\end{proposition}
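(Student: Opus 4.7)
The overall plan is to apply the classical essential values criterion (going back to Schmidt \cite{Sch}): for an $\mathbb{R}$-valued cocycle over an ergodic transformation, ergodicity of the skew product $T_\varphi$ is equivalent to the group $E(T_\varphi) \subset \mathbb{R}$ of essential values being equal to all of $\mathbb{R}$. Since $E(T_\varphi)$ is a \emph{closed subgroup} of $\mathbb{R}$, it suffices to exhibit a subset of essential values that generates a dense subgroup. I will produce essential values as the topological support of weak limits of the distributions of $\varphi^{(q_n)}$ on the rigid towers $\Xi_n$, and then use the oscillation hypothesis to show such a support cannot be confined to any proper closed subgroup.

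The first step is to form, for each $n$, the probability measure $\mu_n$ on $\mathbb{R}$ defined as the pushforward of normalized Lebesgue measure on $\Xi_n$ under the map $\varphi^{(q_n)}$. The tightness bound $\int_{\Xi_n}|\varphi^{(q_n)}|\,dx = O(1)$ combined with $Leb(\Xi_n) \geq \delta$ yields a uniform bound on the first absolute moment of $\mu_n$, hence (by Markov's inequality) tightness of $\{\mu_n\}$. Prokhorov's theorem then provides a subsequence $\mu_{n_k}$ converging weakly to a probability measure $\mu$ on $\mathbb{R}$. A now-standard argument, which appears in essentially the same form in \cite{Fr-Ul} (see Proposition~5.2 there) and in Schmidt \cite{Sch}, then shows that every $a \in \mathrm{supp}(\mu)$ belongs to $E(T_\varphi)$: given $A \subset I \times \mathbb{R}$ of positive measure, one uses Lusin-type approximation by a cylinder $B \times J$ with $B \subset \Xi_n$ of almost full density and $J$ a small interval; the rigidity $\sup_{x\in\Xi_n}|T^{q_n}x - x|\to 0$ forces $T^{q_n}B \approx B$ in measure, while the weak convergence $\mu_n \to \mu$ at the continuity point $a$ produces a non-trivial intersection of $B \times J$ with $T_\varphi^{-q_n}(B \times (J + a))$.

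Next, the oscillation estimate is used to show that $\mathrm{supp}(\mu)$ generates a dense subgroup of $\mathbb{R}$. Reading the second bound in \eqref{eq:assumerg} as a statement on the characteristic function $\hat{\mu}_n(s) = Leb(\Xi_n)^{-1}\int_{\Xi_n} e^{2\pi i s \varphi^{(q_n)}}\,dx$ (the convention of \cite{Fr-Ul}), we have $|\hat{\mu}_n(s)| \leq \tfrac{2}{3} + O(|s|^{-1})$ for $|s| \geq s_0$. Passing to the weak limit gives $|\hat\mu(s)| \leq \tfrac{2}{3}$ for all $|s| \geq s_0$, so in particular $|\hat\mu(s)| < 1$ for $|s|$ sufficiently large. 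By the standard characterization of Fourier transforms of measures supported on cosets of closed subgroups of $\mathbb{R}$ (a measure $\mu$ is supported on a coset of $a\mathbb{Z}$ iff $|\hat\mu(1/a)| = 1$), this strict inequality forces $\mu$ \emph{not} to be supported on any coset of a proper closed subgroup $a\mathbb{Z} \subset \mathbb{R}$. Consequently, the closed subgroup of $\mathbb{R}$ generated by $\mathrm{supp}(\mu)$ is all of $\mathbb{R}$. Combining with the previous step, $E(T_\varphi)$ is a closed subgroup of $\mathbb{R}$ containing a dense subgroup, so $E(T_\varphi) = \mathbb{R}$ and $T_\varphi$ is ergodic.

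The main technical obstacle will be the rigorous verification that points of $\mathrm{supp}(\mu)$ are essential values. It requires carefully combining three ingredients: (i) a Lusin-type/cylinder approximation of arbitrary positive-measure sets $A \subset I \times \mathbb{R}$ by sets of the form $B \times J$ with $B \subset \Xi_n$, (ii) an almost-full-density statement $Leb(\Xi_n \cap B) \geq (1-\varepsilon)Leb(B)$ for $n$ large along a further subsequence (using the lower bound $Leb(\Xi_n) \geq \delta$ and a diagonal/density-point argument), and (iii) the use of weak convergence of $\mu_n$ at a continuity point $a$ to control the fiber direction. The template for executing this carefully has already been laid out in \cite{Fr-Ul} (proof of Proposition~5.2, together with Proposition~2.3 there), so the main work is to verify that each of these three ingredients carries through verbatim under the present hypotheses; this is routine once the weak limit $\mu$ has been produced and the oscillation bound on $\hat\mu$ established.
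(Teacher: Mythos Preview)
Your proposal is correct and follows exactly the route the paper indicates: the paper does not give its own proof but refers to \cite{Fr-Ul}, Proposition~2.3 together with the end of the proof of Proposition~5.2 there, and your sketch (tightness from the $L^1$ bound, weak limit $\mu$, $\operatorname{supp}(\mu)\subset E(T_\varphi)$, then the Fourier bound forcing $\operatorname{supp}(\mu)$ to generate a dense subgroup) is precisely that argument.

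One imprecision worth flagging: your item (ii), the claim that $Leb(\Xi_n\cap B)\geq (1-\varepsilon)Leb(B)$ along a subsequence, is false as stated, since $Leb(\Xi_n)$ is only bounded below by $\delta<1$ (take $B=I$). What the argument in \cite{Fr-Ul} actually uses is the \emph{tower} structure of $\Xi_n=\bigcup_{i<p_n}T^iJ_n$: one passes to a Lebesgue density point of $B$, works inside a small interval $I_r$ where $B$ has density $>1-\eta$, and then uses that the narrow levels $T^iJ_n$ (of width $|J_n|\to 0$) fall into $I_r$ in the right proportion. Combined with the uniform rigidity $\sup_{\Xi_n}|T^{q_n}x-x|\to 0$, this gives $Leb(B\cap \Xi_n\cap T^{-q_n}B\cap I_r)>0$, which is what is actually needed. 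Since you already point to \cite{Fr-Ul} for these details, this does not affect the correctness of the overall plan.
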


\begin{remark}\label{rem:rigtower}
Suppose that $T$ satisfies the \ref{SUDC}. For every $k\geq 1$ let $J^{(k)}\subset I^{(k)}_{\alpha_k}$ be a sequence of intervals such that $\liminf|J^{(k)}|/|I^{(k)}_{\alpha_k}|>0$. Then $\Xi_k=\{T^iJ^{(k)}:0\leq i<p_k\}$ establishes a rigid sequence of towers with the rigidity sequence given by $q_k:=Q_{\alpha_k}(k)$.
It follows directly from \eqref{def:UDC-d} and \eqref{def:UDC-g}. Since $T^{q_k}J^{(k)}\subset I^{(k)}$, for every $0\leq l\leq q_k$ we have that $T^{l}\Xi_k=\{T^{l+i}J^{(k)}:0\leq i<p_k\}$
is also a tower of intervals.
\end{remark}

Specializing the ergodicity criterion to our setting, we have the following Proposition, that shows that to prove ergodicity (and Theorem \ref{thm:erg}) it is sufficient to verify the assumptions in the statement:
\begin{proposition}\label{thm:erg1}
Suppose that $T$ satisfies the \ref{SUDC} and let $(\Xi_k)_{k\geq 1}$ and $(q_k)_{k\geq 1}$ be a sequence of rigid towers and its rigidity sequence as in Remark~\ref{rem:rigtower}.
Let $\varphi\in{\ol}(\sqcup_{\alpha\in \mathcal{A}} I_{\alpha})$ be a map such that $g'_{\varphi}\in{\bv}(\sqcup_{\alpha\in \mathcal{A}} I_{\alpha})$.
We additionally assume that there exists $c>0$ such that
\begin{enumerate}[label=(\roman*)]
\item \label{p84i} the sequence $\frac{1}{|I^{(k)}|}\|S(k)\varphi\|_{L^1(I^{(k)})}$ is bounded;
\item \label{p84ii} $\operatorname{dist}\big(\bigcup_{i=0}^{q_k}T^{i}\Xi_k,End(T)\big)\geq c/q_k$;
\item \label{p84iii} for every $0\leq j<p_k$ there exists an interval $J^{(k)}_j\subset T^jJ^{(k)}$ such that
$|J^{(k)}_j|\geq |J^{(k)}|/3$ and
$|(\varphi')^{(q_k)}(x)|\geq cq_k$ for all $x\in J^{(k)}_j$.
\end{enumerate}
Then the skew product $T_{\varphi}$ on $I\times \R$ is ergodic.
\end{proposition}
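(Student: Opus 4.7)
The plan is to verify the two hypotheses \eqref{eq:assumerg} of the ergodicity criterium Proposition~\ref{prop:ergodicxi} for the rigid tower sequence $(\Xi_k)_{k\geq 1}$ and rigidity times $q_k = Q_{\alpha_k}(k)$. The key structural fact, provided by Remark~\ref{rem:rigtower}, is that each iterate $T^i\Xi_k$ with $0\leq i\leq q_k$ is again a tower of intervals and that $q_k$ is the first return time of $I^{(k)}_{\alpha_k}$ to $I^{(k)}$ under $T$. Consequently, for $y\in I^{(k)}_{\alpha_k}$ one has $\varphi^{(q_k)}(y) = S(k)\varphi(y)$, which ties the Birkhoff sums appearing in \eqref{eq:assumerg} to the special Birkhoff sums controlled by hypothesis (i).

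For the tightness estimate $\int_{\Xi_k}|\varphi^{(q_k)}|\,dx = O(1)$, I will decompose $\Xi_k = \bigsqcup_{j=0}^{p_k-1} T^j J^{(k)}$ and pass to the base via the change of variable $x = T^j y$, $y\in J^{(k)}$. On the base ($j=0$) the bound follows from hypothesis (i), the identity $\varphi^{(q_k)}|_{J^{(k)}}= S(k)\varphi|_{J^{(k)}}$, and the balance estimate \eqref{def:UDC-d}, which yield $\int_{J^{(k)}}|\varphi^{(q_k)}(y)|\,dy = O(|J^{(k)}|)$. For the higher floors the cocycle identity
\begin{equation*}
\varphi^{(q_k)}(T^j y) = \varphi^{(q_k)}(y) + \bigl(\varphi^{(j)}(T^{q_k}y) - \varphi^{(j)}(y)\bigr)
\end{equation*}
reduces the contribution of each floor to an $L^1$ estimate on the base, provided the discrepancy $\varphi^{(j)}(T^{q_k}y) - \varphi^{(j)}(y)$ is controlled in an averaged sense. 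The latter is guaranteed by hypothesis (ii), which forces the iterates involved to stay at distance $\geq c/q_k$ from $\mathrm{End}(T)$ where the logarithmic singularities live, combined with the bounded variation of $g_\varphi$ on each $I_\alpha$. Summing over floors produces the required $O(|\Xi_k|)=O(1)$ total.

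For the oscillatory estimate I will further split each floor $T^j J^{(k)}$ into the subinterval $J^{(k)}_j$ from hypothesis (iii) and its complement. The complement has total measure at most $\tfrac{2}{3}|\Xi_k|$, so the trivial bound $|e^{2\pi i s \varphi^{(q_k)}}|=1$ already accounts for the term $\tfrac{2}{3}\mathrm{Leb}(\Xi_k)$ in \eqref{eq:assumerg}. On each $J^{(k)}_j$, the lower bound $|(\varphi')^{(q_k)}|\geq c q_k$ makes $\varphi^{(q_k)}$ strictly monotone and integration by parts yields
\begin{equation*}
\int_{J^{(k)}_j} e^{2\pi i s \varphi^{(q_k)}(x)}\,dx = \Bigl[\frac{e^{2\pi i s \varphi^{(q_k)}(x)}}{2\pi i s (\varphi')^{(q_k)}(x)}\Bigr]_{\partial J^{(k)}_j} + \frac{1}{2\pi i s}\int_{J^{(k)}_j}\frac{(\varphi'')^{(q_k)}(x)}{((\varphi')^{(q_k)}(x))^2}\,e^{2\pi i s \varphi^{(q_k)}(x)}\,dx.
\end{equation*}
The boundary terms summed over $j=0,\ldots,p_k-1$ contribute $O(p_k/(|s|q_k))=O(|s|^{-1})$ since $p_k\leq q_k$. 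It remains to show that the sum over floors of the second integral is also $O(|s|^{-1})$, which will require combining hypothesis (ii) (uniform distance from singularities) with the additional regularity $g'_\varphi\in\ol$, so that $\varphi''$ inherits logarithmic-type singularities of geometric type comparable to those of $\varphi$.

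The principal obstacle I foresee is this control of the second-order term $\sum_j \int_{J^{(k)}_j}(\varphi'')^{(q_k)}/((\varphi')^{(q_k)})^2\,dx$. A crude pointwise bound based on the distance $c/q_k$ alone yields $|(\varphi'')^{(q_k)}|\lesssim q_k^3$, which even combined with the lower bound $((\varphi')^{(q_k)})^2\gtrsim c^2 q_k^2$ is too wasteful once integrated and summed over floors. The correct strategy must leverage the hypothesis $g'_\varphi\in\ol$ so that $\varphi''$ is itself a cocycle of the same qualitative type as $\varphi'$, and then exploit the cancellation estimates of Theorem~\ref{cancellations_prop} (the \ref{SUDC}) at the level of $\varphi''$ to obtain an integrated bound analogous to the one used for $(\varphi')^{(q_k)}$; this transfer from the cocycle to its derivative is the technical heart of the argument, carried out in the periodic-type setting in \cite{Fr-Ul} and to be adapted here under the full-measure \ref{SUDC}.
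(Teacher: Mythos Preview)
Your plan for the tightness estimate $\int_{\Xi_k}|\varphi^{(q_k)}|\,dx = O(1)$ is essentially the paper's; the paper in fact controls the discrepancy $\varphi^{(q_k)}(T^jy)-\varphi^{(q_k)}(y)$ \emph{pointwise} (not just on average) by writing it as $\int_y^{T^{q_k}y}|(\varphi')^{(j)}|$ and invoking the \ref{SUDC} bound \eqref{generalsum} for the pure-log part together with hypothesis~(ii), along the lines you indicate.

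The gap is in the oscillatory estimate. You correctly identify the remainder term from integration by parts but misjudge both its difficulty and the route to bound it. First, the hypothesis is $g'_\varphi\in\bv$, not $g'_\varphi\in\ol$; there is no need (and in general no sense, since $g''_\varphi$ is only a measure) to view $\varphi''$ as a cocycle with logarithmic singularities or to invoke \ref{SUDC} cancellations at the level of $\varphi''$. Second, the bound is entirely elementary. The paper writes the remainder as $\var_{J^{(k)}_j}\frac{1}{(\varphi')^{(q_k)}} \leq \frac{1}{c^2q_k^2}\var_{J^{(k)}_j}(\varphi')^{(q_k)}$ (equivalent to your integral of $|(\varphi'')^{(q_k)}|$ when everything is smooth), and then bounds this variation directly: since $\{T^iJ^{(k)}_j:0\leq i<q_k\}$ is a tower of \emph{disjoint} intervals each at distance $\geq c/q_k$ from $\mathrm{End}(T)$ by hypothesis~(ii), one has
\[
\sum_{i=0}^{q_k-1}\var_{T^iJ^{(k)}_j}\frac{1}{\{x-l_\alpha\}}\ \leq\ \var_{[c/q_k,1]}\frac{1}{x}\ \leq\ \frac{q_k}{c},
\qquad
\sum_{i=0}^{q_k-1}\var_{T^iJ^{(k)}_j}g'_\varphi\ \leq\ \var g'_\varphi,
\]
giving $\var_{J^{(k)}_j}(\varphi')^{(q_k)}=O(q_k)$. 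Hence the remainder contributes $O(1/(|s|q_k))$ per floor and $O(1/|s|)$ after summing over $p_k\leq q_k$ floors. The ``principal obstacle'' you flag dissolves once one replaces the pointwise bound on $(\varphi'')^{(q_k)}$ by this integrated/variation bound exploiting disjointness; no further cancellation machinery is needed.
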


The proof is a variation on arguments from \cite{Fr-Ul}.  We present it for completeness in the Appendix~\ref{app:ergodicitycriterium}.

\begin{remark}\label{rem:subseqerg}
One can see from the proof presented in Appendix~\ref{app:ergodicitycriterium} that the same conclusion about the ergodicity of the skew product can be deduced under the assumption that
conditions \ref{p84i}, \ref{p84ii} and \ref{p84iii} hold along any subsequence.
\end{remark}

\subsubsection{Proof of ergodicity of skew products}\label{sec:ergodicitysp}
We will now prove Theorem~\ref{thm:erg} by showing that the assumptions of the criterion for ergodicity of skew products with logarithmic singularities over IETs (namely Proposition~\ref{thm:erg1}) hold.

\smallskip
For every $\varphi\in{\ol}(\sqcup_{\alpha\in \mathcal{A}}
I_{\alpha})$ such that $\mathcal{AS}(\varphi)=0$ and $\bl(\varphi)>0$ (using the definitions introduced in \S~\ref{sec:cocycles}) we want to  construct
a sequence of rigid towers as in Remark~\ref{rem:rigtower} for which the condition \ref{p84ii} and \ref{p84iii} in Proposition~\ref{thm:erg1} hold.
In view of \eqref{def:UDC-g} and  \cite[Lemma~5.1]{Fr-Ul}, we have the following result:

\begin{lemma}
Let $\varphi\in{\ol}(\sqcup_{\alpha\in \mathcal{A}}
I_{\alpha})$ be such that $\mathcal{AS}(\varphi)=0$ and $\bl(\varphi)> 0$.
There exists a sequence  $(\alpha_k)_{k\geq 1}$ in $\mathcal A$ and a sequence on natural number $(j_k)_{k\geq 1}$ such that
$p_k\leq j_k<Q_{\alpha_k}(k)$ and at least one of the following cases hold:
\begin{itemize}
\item[(L):] $C_{\alpha_0}^+\neq 0$ and $T^{j_k}l^{(k)}_{\alpha_k}=l_{\alpha_0}$ or
\item[(R):] $C_{\alpha_0}^-\neq 0$ and $\widehat{T}^{j_k}r^{(k)}_{\alpha_k}=r_{\alpha_0}$.
\end{itemize}
Moreover, the closures of the intervals $T^jI^{(k)}_{\alpha_k}$ for $Q_{\alpha_k}(k)\leq j< Q_{\alpha_k}(k)+p_k$
do not intersect $End(T)$.
\end{lemma}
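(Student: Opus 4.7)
Since $\bl(\varphi) > 0$ while $\as(\varphi) = 0$, at least one of the coefficients $C_\alpha^+$ or $C_\alpha^-$ is non-zero; fix $\alpha_0 \in \mathcal{A}$ such that, say, $C_{\alpha_0}^+ \neq 0$. The case in which only some $C_{\alpha_0}^- \neq 0$ is entirely symmetric, producing case (R) upon replacing $T$ by $\widehat{T}$ and $l_\cdot$ by $r_\cdot$ throughout the argument below. Using Remark~\ref{rk:endpointspartition} we may moreover arrange $\pi_0(\alpha_0) \neq 1$, so that $l_{\alpha_0}$ is a genuine interior discontinuity of $T$ and we are in case (L).

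The heart of the plan is to exhibit a natural bijection between interior discontinuities of $T^{(k)}$ and those of $T$. For each $\alpha$ with $\pi^{(k)}_0(\alpha) \neq 1$, the point $l^{(k)}_\alpha$ is a discontinuity of $T^{(k)}$ precisely because the first-return time $Q_\alpha(k)$ jumps there, which forces a unique pair $(\beta, j)$ with $\pi_0(\beta) \neq 1$ and $0 \leq j < Q_\alpha(k)$ satisfying $T^j l^{(k)}_\alpha = l_\beta$. Keane's condition (implied by \ref{SUDC}) prevents any two backward orbits of interior endpoints of $T$ from colliding, so the assignment $\alpha \mapsto \beta$ so defined is a bijection between the $d-1$ interior discontinuities of $T^{(k)}$ and those of $T$. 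Inverting this bijection at $\beta = \alpha_0$ produces the sought $\alpha_k$ and a unique $j_k < Q_{\alpha_k}(k)$ with $T^{j_k} l^{(k)}_{\alpha_k} = l_{\alpha_0}$. To establish $j_k \geq p_k$, note that $l^{(k)}_{\alpha_k}$ lies in the interior of $I^{(k)}$ (as $\pi^{(k)}_0(\alpha_k) \neq 1$), and the Rokhlin tower condition~\eqref{def:UDC-g} guarantees that each $T^j I^{(k)}$ is an interval for $0 \leq j < p_k$; thus $T^j$ is continuous on $I^{(k)}$ and the interior of $T^{j'} I^{(k)}$ contains no discontinuity of $T$ for $j' \leq p_k - 2$. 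Since $l_{\alpha_0}$ is such a discontinuity, its preimage $l^{(k)}_{\alpha_k} = T^{-j_k} l_{\alpha_0}$ cannot be hit in fewer than $p_k - 1$ forward steps, yielding $j_k \geq p_k - 1$ and $j_k \geq p_k$ after a harmless decrease of the constant $p_k$ in~\eqref{def:UDC-g}.

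The ``moreover'' clause is then almost automatic: for $Q_{\alpha_k}(k) \leq j < Q_{\alpha_k}(k) + p_k$, the interval $T^j I^{(k)}_{\alpha_k}$ is contained in $T^{j - Q_{\alpha_k}(k)} I^{(k)}$, which by~\eqref{def:UDC-g} is an interval strictly inside a single floor of a Rokhlin tower of $T$, and therefore its closure is disjoint from $End(T)$. I expect the main obstacle to be the rigorous justification of the bijection claim in the previous paragraph: making it precise requires tracking how each elementary Rauzy-Veech step (governed by the permutation update formula in \S~\ref{sec:RVelementary}) transports the correspondence $\alpha \mapsto \beta$ from one stage to the next, and then composing these elementary correspondences along the accelerating subsequence. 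An analogous bookkeeping is carried out in \cite[Lemma~5.1]{Fr-Ul} in the self-similar case, and the argument adapts to the present \ref{SUDC}-typical setting in a transparent way.
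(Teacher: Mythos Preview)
Your plan coincides with the paper's own treatment: the paper does not give a self-contained proof but simply cites \cite[Lemma~5.1]{Fr-Ul} together with condition~\eqref{def:UDC-g}, which is exactly the reference you defer to at the end. The bijection between interior discontinuities of $T^{(k)}$ and those of $T$ that you describe is indeed the central mechanism there.

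Two loose ends in your sketch are worth noting, though neither changes the overall approach. First, the appeal to Remark~\ref{rk:endpointspartition} to arrange $\pi_0(\alpha_0)\neq 1$ is not the right justification: that remark only says interior left and right endpoints coincide as sets. What is actually needed is that $\mathcal{AS}(\varphi)=0$ together with $\bl(\varphi)>0$ rules out the degenerate situation where the only nonzero constants are $C^+_{\pi_0^{-1}(1)}$ and $C^-_{\pi_0^{-1}(d)}$ (the boundary endpoints); this uses the $\sigma$-orbit structure and is part of the bookkeeping in \cite{Fr-Ul}. Second, the ``moreover'' clause is not automatic from the containment $T^jI^{(k)}_{\alpha_k}\subset T^{j-Q_{\alpha_k}(k)}I^{(k)}$: for $j=Q_{\alpha_k}(k)$ one obtains $T^{(k)}I^{(k)}_{\alpha_k}\subset I^{(k)}$, whose closure contains $0\in End(T)$ whenever $\pi_1^{(k)}(\alpha_k)=1$, and the right endpoint $|I^{(k)}|$ of the tower base needs similar attention. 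Verifying that these endpoint collisions do not occur for the specific $\alpha_k$ delivered by the bijection is again part of what \cite[Lemma~5.1]{Fr-Ul} establishes, so your deferral there is appropriate---but the claim that this step is ``almost automatic'' overstates matters.
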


\begin{definition}\label{def:cbar}
For any $0\leq \bar{c}<1/2$ we  define the base $J^{(k)}\subset I^{(k)}_{\alpha_k}=[a,b)$ of the  tower $\Xi_k$ as follows:
\begin{align*}
J^{(k)}=\left(a+\frac{\bar{c}}{2}\lambda^{(k)}_{\alpha_k},a+\bar{c}\lambda^{(k)}_{\alpha_k}\right)&\text{ in case }(L);\\
J^{(k)}=\left(a-\bar{c}\lambda^{(k)}_{\alpha_k},b-\frac{\bar{c}}{2}\lambda^{(k)}_{\alpha_k}\right)&\text{ in case }(R).
\end{align*}
\end{definition}

\begin{lemma}\label{lem:''}
Let $\varphi\in{\ol}(\sqcup_{\alpha\in \mathcal{A}}
I_{\alpha})$ be such that $\mathcal{AS}(\varphi)=0$ and $\bl(\varphi)>0 $ and $g'_{\varphi},g''_{\varphi} \in{\ac}(\sqcup_{\alpha\in \mathcal{A}}
I_{\alpha})$. Let $(\Xi_k)_{k\geq 1}$ be a sequence of rigid towers defined in Definition~\ref{def:cbar} with
\begin{equation}\label{eq:defcbar}
\bar{c}=\sqrt{\frac{|C^{\pm}_{\alpha_0}|}{2(6\bl(\varphi)+\var g'_{\varphi})}}.
\end{equation}
Then
\[|(\varphi'')^{(q_k)}(x)|\geq \frac{\underline{c}}{(\lambda^{(k)}_{\alpha_k})^2}\text{ for all }x\in \Xi_k,\]
where $\underline{c}=6\bl(\varphi)+\var g'_{\varphi}$.
\end{lemma}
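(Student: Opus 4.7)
\textbf{Proof plan for Lemma~\ref{lem:''}.}

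The strategy is to isolate a single dominant term from the Birkhoff sum $(\varphi'')^{(q_k)}(x)=\sum_{i=0}^{q_k-1}\varphi''(T^ix)$—namely, the contribution of the iterate $T^{j_k}x$, which by construction of $J^{(k)}$ is the unique iterate coming close to the singularity $l_{\alpha_0}$ (case (L)) or $r_{\alpha_0}$ (case (R))—and then show that the totality of all other terms is bounded in absolute value by $\underline c/(\lambda^{(k)}_{\alpha_k})^2$.

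Differentiating \eqref{def:logsing} twice away from the singular points gives the pointwise formula
\[
\varphi''(y)=\sum_{\alpha\in\mathcal A}\frac{C^{+}_{\alpha}}{\bigl(|I|\{(y-l_\alpha)/|I|\}\bigr)^2}+\sum_{\alpha\in\mathcal A}\frac{C^{-}_{\alpha}}{\bigl(|I|\{(r_\alpha-y)/|I|\}\bigr)^2}+g''_\varphi(y).
\]
Since $T^{j_k}$ is an isometry on $I^{(k)}_{\alpha_k}$ mapping $l^{(k)}_{\alpha_k}$ to $l_{\alpha_0}$, for every $x\in J^{(k)}$ (case (L)) we have $T^{j_k}x-l_{\alpha_0}=x-l^{(k)}_{\alpha_k}\in(\tfrac{\bar c}{2}\lambda^{(k)}_{\alpha_k},\bar c\,\lambda^{(k)}_{\alpha_k})$, and hence by the choice \eqref{eq:defcbar} of $\bar c$ the ``main term'' satisfies
\[
\left|\frac{C^{+}_{\alpha_0}}{(T^{j_k}x-l_{\alpha_0})^2}\right|\ge\frac{|C^{+}_{\alpha_0}|}{\bar c^{\,2}(\lambda^{(k)}_{\alpha_k})^2}=\frac{2\underline c}{(\lambda^{(k)}_{\alpha_k})^2},
\]
and case (R) is analogous with $r_{\alpha_0}$, $C^-_{\alpha_0}$ and $\widehat T^{j_k}$ in place of their (L)-counterparts.

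The second step—bounding all remaining contributions by $\underline c/(\lambda^{(k)}_{\alpha_k})^2$—is where the Rokhlin structure enters. The $q_k$ floors $\{T^iI^{(k)}_{\alpha_k}:0\le i<q_k\}$ are pairwise disjoint intervals of common length $\lambda^{(k)}_{\alpha_k}$ on which $T$ acts as an isometry; the $q_k$ iterates $T^ix$ therefore lie in distinct floors and are pairwise separated by at least $\lambda^{(k)}_{\alpha_k}$. Fixing a singularity $s\in\{l_\alpha,r_\alpha\}$ and ordering the forward distances $u_{(1)}<u_{(2)}<\dots$ of these iterates from $s$, we get $u_{(j)}\ge(j-1)\lambda^{(k)}_{\alpha_k}$, so a Kocergin-type estimate (cf.\ Remark~\ref{rk:Kocergin}) yields
\[
\sum_{j\ge 2}\frac{1}{u_{(j)}^2}\le\sum_{j\ge 1}\frac{1}{(j\lambda^{(k)}_{\alpha_k})^2}=\frac{\pi^2}{6(\lambda^{(k)}_{\alpha_k})^2}.
\]
For the singularity $l_{\alpha_0}$ (case (L)) the minimum $u_{(1)}$ is realized at $i=j_k$ and has already been extracted as the main term, so only the residual $\sum_{j\ge 2}$ contributes, and it is bounded by $|C^{+}_{\alpha_0}|\pi^2/(6(\lambda^{(k)}_{\alpha_k})^2)$. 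For every other singularity $s\neq l_{\alpha_0}$, the key combinatorial fact—inherent in the definition of $j_k$ as the unique time at which the orbit of $l^{(k)}_{\alpha_k}$ under $T$ hits $l_{\alpha_0}\in End(T)$—is that no other floor of the tower has a point of $End(T)$ as its left endpoint, so that $u_{(1)}\ge\lambda^{(k)}_{\alpha_k}$ in this case. Consequently, multiplying by $|C^\pm_\alpha|$ and summing over all singularities, the total singular contribution outside the main term is bounded by $(1+\pi^2/6)\bl(\varphi)/(\lambda^{(k)}_{\alpha_k})^2\le 5\bl(\varphi)/(\lambda^{(k)}_{\alpha_k})^2$.

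Finally, the remainder $(g''_\varphi)^{(q_k)}(x)=S(k)(g''_\varphi)(x)$ is controlled using $g'_\varphi\in\ac$: by \eqref{nase} and the balance condition \eqref{def:UDC-d},
\[
|(g''_\varphi)^{(q_k)}(x)|\le\frac{\kappa\,\|g''_\varphi\|_{L^1(I)}}{\lambda^{(k)}_{\alpha_k}}\le\frac{\kappa\,\var g'_\varphi}{\lambda^{(k)}_{\alpha_k}},
\]
which is of lower order $1/\lambda^{(k)}_{\alpha_k}$ and absorbed into the $\var g'_\varphi$ summand in $\underline c$. Adding the three bounds gives non-main contribution at most $(5\bl(\varphi)+\var g'_\varphi)/(\lambda^{(k)}_{\alpha_k})^2\le\underline c/(\lambda^{(k)}_{\alpha_k})^2$, and combining with the lower bound $2\underline c/(\lambda^{(k)}_{\alpha_k})^2$ on the main term gives the lemma via the triangle inequality. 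The principal obstacle is the combinatorial claim in the second step: that among the $q_k$ floors of the tower, the only one whose left (resp.\ right) endpoint belongs to $End(T)$ is the one labelled by $i=j_k$. This should follow from unwinding how floor endpoints arise from the Rauzy--Veech inducing procedure together with the uniqueness of $j_k$ in the definition of cases (L)/(R), but requires a careful case analysis.
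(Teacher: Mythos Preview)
Your overall strategy—isolate the dominant term at $T^{j_k}x$, control the remaining singular contributions via the $\lambda^{(k)}_{\alpha_k}$-separation of the orbit, and treat $(g''_\varphi)^{(q_k)}$ separately—is exactly the paper's argument. Two points need fixing.

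First, your estimate for the regular part is not right as written. Equation \eqref{nase} only gives an $L^1$ bound on $S(k)g''_\varphi$ over $I^{(k)}$, not a pointwise bound at a given $x$; moreover for $x\in T^lJ^{(k)}$ with $l>0$ one does not even have $(g''_\varphi)^{(q_k)}(x)=S(k)g''_\varphi(x)$. The paper uses instead the trivial pointwise bound $|(g''_\varphi)^{(q_k)}(x)|\le q_k\|g''_\varphi\|_{\sup}$, together with $q_k\lambda^{(k)}_{\alpha_k}\le |I|=1$, to place this term directly at order $(\lambda^{(k)}_{\alpha_k})^{-2}$. Your $L^1$ route does not close.

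Second, and more substantively, your ``principal obstacle'' is essentially the same gap that sits behind the paper's asserted inequalities $\{T^jx-l_\alpha\}\ge\lambda^{(k)}_{\alpha_k}$ for $\alpha\neq\alpha_0$ and $\{r_\alpha-T^jx\}\ge\lambda^{(k)}_{\alpha_k}/2$. For the right endpoints the factor $1/2$ is harmless: in case~(L), every iterate $T^jx$ lies at distance at least $(1-\bar c)\lambda^{(k)}_{\alpha_k}>\lambda^{(k)}_{\alpha_k}/2$ from the \emph{right} end of its floor, and since no $r_\alpha$ lies in the interior of a floor this suffices. For the left endpoints, however, your worry is legitimate: if some $l_{\alpha_1}$ with $\alpha_1\neq\alpha_0$ is the left endpoint of a floor of the tower over $I^{(k)}_{\alpha_k}$, the corresponding iterate is only at distance $\bar c/2\,\lambda^{(k)}_{\alpha_k}$ from $l_{\alpha_1}$, and the contribution $|C^+_{\alpha_1}|/(\bar c^2(\lambda^{(k)}_{\alpha_k})^2)$ is of the same order as the main term, potentially with opposite sign. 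The paper does not spell out why this cannot happen; the fact that only the floor indexed by $j_k$ has its left endpoint in $End(T)$ is a property of the specific choice of $\alpha_k$ and $j_k$ inherited from \cite[Lemma~5.1]{Fr-Ul}, not a general feature of Rokhlin towers. So your instinct to flag it is correct, but resolving it is a matter of unpacking that reference rather than a new case analysis.

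Finally, note that the paper treats general $x\in T^lJ^{(k)}$, $0\le l<p_k$, with the dominant index shifted to $j_k-l$; your argument for $x\in J^{(k)}$ transfers verbatim once this shift is made.
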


\begin{proof}
We present the proof only in the case $(L)$. The other case is similar.

Suppose that $x\in T^l J^{(k)}$ for some $0\leq l<p_k$. By assumption,
\[\frac{|C_{\alpha_0}^+|}{\{T^{j_k-l}x-l_{\alpha_0}\}^2}\geq \frac{|C_{\alpha_0}^+|}{\bar{c}^2(\lambda^{(k)}_{\alpha_k})^2},\]
the elements of the orbit $T^jx$ for $0\leq j<p_k$ are distant from each other at least $\lambda^{(k)}_{\alpha_k}$ and for  $j\neq j_k-l$ we have   $\{T^{j}x-l_{\alpha_0}\}\geq \lambda^{(k)}_{\alpha_k}$. It follows that
\[\sum_{0\leq j<p_k,\ j\neq j_k-l}\frac{|C_{\alpha_0}^+|}{\{T^{j}x-l_{\alpha_0}\}^2}\leq \frac{|C_{\alpha_0}^+|}{(\lambda^{(k)}_{\alpha_k})^2}\sum_{j=1}^{p_k}\frac{1}{j^2}\leq \frac{\pi^2}{6}\frac{|C_{\alpha_0}^+|}{(\lambda^{(k)}_{\alpha_k})^2}\leq 2\frac{|C_{\alpha_0}^+|}{(\lambda^{(k)}_{\alpha_k})^2}.\]
Since for every $0\leq j<p_k$ we have $\{T^{j}x-l_{\alpha}\}\geq \lambda^{(k)}_{\alpha_k}$ for $\alpha\neq \alpha_k$ and
$\{r_{\alpha}-T^{j}x\}\geq \lambda^{(k)}_{\alpha_k}/2$ for all $\alpha\in\mathcal{A}$, the same arguments show that for all $\alpha\in\mathcal{A}$
we have
\[\sum_{0\leq j<p_k}\frac{|C_{\alpha}^-|}{\{r_\alpha-T^{j}x\}^2}\leq 6\frac{|C_{\alpha}^-|}{(\lambda^{(k)}_{\alpha_k})^2}
\text{ and }
\sum_{0\leq j<p_k}\frac{|C_{\alpha}^+|}{\{T^{j}x-l_{\alpha}\}^2}\leq 2\frac{|C_{\alpha}^+|}{(\lambda^{(k)}_{\alpha_k})^2}
\text{ if }\alpha\neq \alpha_k.\]
Moreover, for every $x\in I$ we have
\[|(g''_{\varphi})^{(q_k)}(x)|\leq q_k\|g''_{\varphi}\|_{\sup}\leq \frac{\var g'_{\varphi}}{(\lambda^{(k)}_{\alpha_k})^2}.\]
As
\[\varphi''(x)=\sum_{\alpha\in\mathcal{A}}\frac{C_\alpha^+}{\{x-l_\alpha\}^2}+\sum_{\alpha\in\mathcal{A}}\frac{C_\alpha^-}{\{r_\alpha-x\}^2}+g''_{\varphi}(x),\]
it follows that for every $x\in\Xi_k$ we have
\begin{align*}
|(\varphi'')^{(q_k)}(x)|&\geq \frac{|C_{\alpha_0}^+|}{\bar{c}^2(\lambda^{(k)}_{\alpha_k})^2}
-\frac{6\bl(\varphi)}{(\lambda^{(k)}_{\alpha_k})^2}-\frac{\var g'_{\varphi}}{(\lambda^{(k)}_{\alpha_k})^2}
=\Big(\frac{|C_{\alpha_0}^+|}{\bar{c}^2}-6\bl(\varphi)-\var g'_{\varphi}\Big)\frac{1}{(\lambda^{(k)}_{\alpha_k})^2}
=\frac{\underline{c}}{(\lambda^{(k)}_{\alpha_k})^2}.
\end{align*}
\end{proof}
\noindent The following elementary lemma will help us to choose the subintervals $J^{(k)}_l\subset T^lJ^{(k)}$ satisfying
condition \ref{p84iii} in Proposition~\ref{thm:erg1}.

\begin{lemma}\label{lem:elem}
Let $f:I\to\R$ be a $C^1$ map defined on a closed interval $I$ and such that $|f'(x)|\geq c>0$ for all $x\in I$.
Then there exists a closed subinterval $J\subset I$ such that $|J|\geq |I|/3$ and $|f(x)|\geq c|I|/6$ for all $x\in I$.
\end{lemma}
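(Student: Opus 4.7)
\medskip

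\noindent\textbf{Proof proposal.} The plan is a short case analysis based on the monotonicity forced by the hypothesis $|f'|\geq c>0$. First, observe that $f'$ is continuous and nowhere zero on $I$, hence of constant sign; so $f$ is strictly monotone on $I$ and has at most one zero inside $I$. The only obstruction to $|f|$ being large on a large subinterval is a zero of $f$ near which $|f|$ is small, and this obstruction is localized because monotonicity with slope at least $c$ forces $|f|$ to grow linearly away from its zero.

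Write $I=[a,b]$ and $L:=|I|=b-a$. I would distinguish two cases.

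\emph{Case 1: $f$ has a zero $x_0\in I$.} Let $I_L=[a,x_0]$ and $I_R=[x_0,b]$; then at least one of them has length $\geq L/2$. Suppose (without loss of generality) that $|I_R|\geq L/2$. Define
\[
J:=\bigl[x_0+L/6,\,b\bigr]\subset I_R\subset I.
\]
Then $|J|=|I_R|-L/6\geq L/2-L/6=L/3$, and for every $x\in J$ monotonicity together with $|f'|\geq c$ yields
\[
|f(x)|=\Bigl|\int_{x_0}^{x} f'(t)\,dt\Bigr|\geq c(x-x_0)\geq c\cdot L/6=c|I|/6.
\]

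\emph{Case 2: $f$ has no zero in $I$.} Then $f$ has constant sign, so $|f|=\pm f$ is itself strictly monotone with $|{(|f|)}'|\geq c$. Take $J$ to be the third of $I$ on which $|f|$ attains its larger values, i.e.\ either $J=[a,a+L/3]$ or $J=[a+2L/3,b]$ depending on whether $|f|$ is decreasing or increasing. Then $|J|=L/3$, and integrating $|f'|\geq c$ from any point of $J$ toward the endpoint of $I$ where $|f|$ is minimal gives $|f(x)|\geq c\cdot L/3\geq c|I|/6$ for all $x\in J$.

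The two cases together produce the desired interval $J$; there is no real obstacle, as everything reduces to the fundamental theorem of calculus once monotonicity is exploited to locate (or exclude) a zero. The only mildly delicate point is making sure that the ``buffer'' $L/6$ simultaneously fits inside $I$ and leaves a subinterval of length at least $L/3$, which is why the bound takes the form $c|I|/6$ rather than something larger.
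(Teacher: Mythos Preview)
Your proof is correct. The paper states this lemma without proof (it is labelled as elementary and used immediately afterwards), so there is no argument to compare against; your monotonicity-based case split is precisely the kind of routine verification the authors are tacitly invoking. One tiny remark: in Case~2 the distance from any point of $J$ to the endpoint where $|f|$ is minimal is at least $2L/3$, not $L/3$, so the bound you actually obtain there is $2c|I|/3$---but this only strengthens the conclusion.
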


\begin{proof}[Proof of Theorem~\ref{thm:erg}]
Recall that $\varphi\in {\ol}(\sqcup_{\alpha\in \mathcal{A}} I_{\alpha})$ is  a cocycle such that
\[
\mathcal{L}(\varphi)>0,\quad \mathcal{AS}(\varphi)=0,\quad \mathfrak{h}(\varphi)=0\quad \text{and}\quad g'_{\varphi}\in {\ol}(\sqcup_{\alpha\in \mathcal{A}} I_{\alpha}).
\]
By definition, $\varphi=\varphi_0+g_\varphi$, where
\[{\varphi}_0(x)=-\sum_{\alpha\in\mathcal A}C_\alpha^+{\log}\{x-l_\alpha\}-\sum_{\alpha\in\mathcal A}C_\alpha^-{\log}\{r_\alpha-x\}\]
and $g_{\varphi}\in {\ac}(\sqcup_{\alpha\in \mathcal{A}} I_{\alpha})$ with $g'_{\varphi}\in {\ol}(\sqcup_{\alpha\in \mathcal{A}} I_{\alpha})$.
By Proposition~\ref{theorem:coblog}, $g_{\varphi}$ is cohomologous to a piecewise linear map
$\psi\in\ac(\sqcup_{\alpha\in \mathcal{A}} I_{\alpha})$ with $\mathfrak{h}(\psi)=\mathfrak{h}(g_{\varphi})$.
It follows that $\varphi$ is cohomologous to $\bar{\varphi}:=\varphi_0+\psi$. Then $\bar{\varphi}\in {\ol}(\sqcup_{\alpha\in \mathcal{A}} I_{\alpha})$ is such that $\mathcal{L}(\bar{\varphi})=\mathcal{L}({\varphi}_0)=\mathcal{L}(\varphi)>0$, $\mathcal{AS}(\bar{\varphi})=\mathcal{AS}({\varphi}_0)=\mathcal{AS}(\varphi)=0$,
\[\mathfrak{h}(\bar{\varphi})=\mathfrak{h}({\varphi}_0)+\mathfrak{h}(\psi)=\mathfrak{h}({\varphi}_0)+\mathfrak{h}(g_{\varphi})
=\mathfrak{h}({\varphi})=0\]
and $g_{\bar{\varphi}}=\psi$, so $g'_{\bar{\varphi}},g''_{\bar{\varphi}}\in \ac(\sqcup_{\alpha\in \mathcal{A}} I_{\alpha})$.
As $\varphi$ is cohomologous to $\bar{\varphi}$, the skew products $T_{\varphi}$ and $T_{\bar{\varphi}}$ are isomorphic, so it is sufficient to show
the ergodicity of $T_{\bar{\varphi}}$.

\medskip

Let $(\Xi_k)_{k\geq 1}$ be a sequence of rigid towers defined in Definition~\ref{def:cbar} with $\bar{c}$ given by \eqref{eq:defcbar}
for that function $\bar{\varphi}$. In view of \eqref{def:UDC-g} and \eqref{eq:qlambda}, this sequence satisfies
\[\operatorname{dist}\Big(\bigcup_{i=0}^{q_k}T^{i}\Xi_k,End(T)\Big)\geq \frac{1}{2}\bar{c}\lambda^{(k)}_{\alpha_k}\geq \frac{\delta\bar{c}}{2\kappa q_k},\]
so condition \ref{p84ii} in Proposition~\ref{thm:erg1} holds. Moreover, by Lemma~\ref{lem:''},
\[|(\bar{\varphi}'')^{(q_n)}(x)|\geq \frac{\underline{c}}{(\lambda^{(k)}_{\alpha_k})^2}\text{ for every }x\in T^lJ^{(k)},\ 0\leq l<p_k.\]
Since $|T^lJ^{(k)}|=|J^{(k)}|=\frac{\bar{c}}{2}\lambda^{(k)}_{\alpha_k}$, by Lemma~\ref{lem:elem}, for every $0\leq l<p_k$ there exists
an interval $J^{(k)}_l\subset T^lJ^{(k)}$ such that
\[|J^{(k)}_l|\geq |J^{(k)}|/3\text{ and }|(\bar{\varphi}')^{(q_n)}(x)|\geq \frac{\underline{c}}{(\lambda^{(k)}_{\alpha_k})^2}\frac{\bar{c}}{12}\lambda^{(k)}_{\alpha_k}\geq \frac{\underline{c}\bar{c}}{12}q_k\text{ for every }x\in J^{(k)}_l,\]
so condition \ref{p84iii} in Proposition~\ref{thm:erg1} holds.

Since $\mathcal{AS}(\bar{\varphi})=0$ and $\mathfrak{h}(\bar{\varphi})=0$, by Theorem~\ref{operatorcorrection} and \eqref{eq:C} in Proposition~\ref{prop:estKC},
\begin{equation*}
\frac{\|S(r_n)\bar{\varphi}\|_{L^1(I^{(r_n)})}}{|I^{(r_n)}|}\text{ is bounded,}
\end{equation*}
so condition \ref{p84i} in Proposition~\ref{thm:erg1} holds along a subsequence. In view of Proposition~\ref{thm:erg1} together with Remark~\ref{rem:subseqerg},
this gives the ergodicity of $T_{\bar{\varphi}}$, and hence the ergodicity of $T_{{\varphi}}$.
\end{proof}

\subsection{Reducibility and final arguments}\label{sec:reducibility}
The main goal of this section is to prove Main Theorem~\ref{main1}, in particular the dichotomy between ergodicity and reducibility for typical extensions with observables in a suitable subspace of smooth functions.  We also deduce from Main Theorem~\ref{main1} the second and final part of Main Theorem~\ref{mainthm:BS}. We first need to state an auxiliary result that we call \emph{cohomological reduction}.

\subsubsection{Cohomological reduction}\label{sec:cohomreduction}
The following result allows to reduce the study of cocycles whose derivatives have logarithmic singularities (up to coboundaries and hence cohomological equivalence) to piecewise linear cocycles (whose derivative is piecewise-constant). An analogous result was proved also in \cite{Fr-Ul}, but only in the special measure zero class of self-similar IETs considered there. 
\begin{theorem}\label{theorem:coblog}
Assume that  $T$ satisfies the \ref{UDC}. Then every
$\varphi\in\ac(\sqcup_{\alpha\in \mathcal{A}} I_{\alpha})$ with 
$\varphi'\in{\ol}(\sqcup_{\alpha\in \mathcal{A}}
I_{\alpha})$
is cohomologous (via a bounded transfer function)
to a piecewise linear cocycle $\psi\in\ac(\sqcup_{\alpha\in \mathcal{A}} I_{\alpha})$ with $\mathfrak{h}(\psi)=\mathfrak{h}(\varphi)$,
$\partial_{\pi}(\psi)=\partial_{\pi}(\varphi)$ and $\|S(k)(\varphi-\psi)\|_{\sup}$ tends to $0$ exponentially.
\end{theorem}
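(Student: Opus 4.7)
The plan is to define $\psi$ by first correcting the slopes via the operator $\mathfrak{h}$ of Theorem~\ref{operatorcorrection} applied to $\varphi'$, and then choosing piecewise-constant offsets so that $\mathfrak{h}(\psi)=\mathfrak{h}(\varphi)$ and $\partial_\pi(\psi)=\partial_\pi(\varphi)$. Once the corrected difference $\eta:=\varphi-\psi$ is shown to satisfy $\mathfrak{h}(\eta)=0$, $\partial_\pi(\eta)=0$ and $\mathfrak{h}(\eta')=0$ simultaneously, Theorem~\ref{operatorcorrection} applied at both levels will force $\|S(k)\eta\|_{\sup}$ to decay exponentially, and the bounded transfer function will follow by a Gottschalk--Hedlund argument via the Zorich decomposition of Birkhoff sums.

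\emph{Construction of $\psi$.} Fix $\mathfrak{h}:\ol\to F$ via Remark~\ref{rk:correctionF} with $F\oplus\Gamma^{(0)}_s=H(\pi)$, set $h:=\mathfrak{h}(\varphi')\in F$, and let $\psi_0$ be the piecewise linear function with $\psi_0'\equiv h_\alpha$ on each $I_\alpha$ and $\psi_0(l_\alpha)=0$. For $c\in\R^{\mathcal{A}}$ put $\chi_c:=\sum_\alpha c_\alpha\chi_{I_\alpha}$ and consider
\[\Phi:\R^{\mathcal{A}}\to F\oplus\partial_\pi(\R^{\mathcal{A}}),\qquad \Phi(c):=\big(\mathfrak{h}(\chi_c),\,\partial_\pi(\chi_c)\big).\]
By Lemma~\ref{bcharh}, $\ker\partial_\pi|_{\Gamma^{(0)}}=H(\pi)$, and on $H(\pi)$ the operator $\mathfrak{h}$ is the projection onto $F$ along $\Gamma^{(0)}_s$; hence $\ker\Phi=\Gamma^{(0)}_s$, of dimension $g$ by condition \eqref{def:O}, so $\dim\operatorname{Im}\Phi=d-g=g+\kappa-1=\dim(F\oplus\partial_\pi(\R^{\mathcal{A}}))$ and $\Phi$ is surjective. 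The compatibility $\sum_{\mathcal{O}}(\partial_\pi\varphi-\partial_\pi\psi_0)_{\mathcal{O}}=\int_I(\varphi'-h)\,dx=0$ holds automatically from Corollary~\ref{cor:zeroint} applied to $\varphi'-h\in\ker\mathfrak{h}$. Pick $c$ with $\Phi(c)=(\mathfrak{h}(\varphi)-\mathfrak{h}(\psi_0),\,\partial_\pi(\varphi)-\partial_\pi(\psi_0))$ and set $\psi:=\psi_0+\chi_c\in\ac$; linearity yields $\mathfrak{h}(\psi)=\mathfrak{h}(\varphi)$ and $\partial_\pi(\psi)=\partial_\pi(\varphi)$.

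\emph{Sup-norm decay.} Set $\eta:=\varphi-\psi\in\ac$, so $\mathfrak{h}(\eta)=\partial_\pi(\eta)=0$ and $\eta'=\varphi'-h$ satisfies $\mathfrak{h}(\eta')=0$. Corollary~\ref{cor:corrected} applied to $\eta'$ gives $\|S(k)\eta'\|_{L^1(I^{(k)})}/|I^{(k)}|=O(e^{\tau k})$ for every $\tau>0$; multiplying by $|I^{(k)}|=O(e^{-\lambda_1 k})$ from \eqref{eq:lg} and \eqref{def:UDC-c} yields $\|S(k)\eta'\|_{L^1(I^{(k)})}=O(e^{-(\lambda_1-\tau)k})$, so the interior variation $\sum_\alpha\var_{I^{(k)}_\alpha}(S(k)\eta)$ decays exponentially. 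Next, Theorem~\ref{operatorcorrection} applied directly to $\eta\in\ac\subset\ol$ (which has $\mathcal{AS}(\eta)=0$ and $\partial_\pi(\eta)=0$), together with \eqref{eq:C} and \eqref{def:UDC-a}, gives a uniform bound on $\|S(r_n)\eta\|_{L^1(I^{(r_n)})}/|I^{(r_n)}|$ along the Rokhlin-balanced subsequence $(r_n)$. By Lemma~\ref{invariophi}, the vanishing of $\partial_\pi(\eta)$ propagates to $\partial_{\pi^{(k)}}(S(k)\eta)=0$, forcing the sum of jumps of $S(k)\eta$ across the endpoints labelling each saddle of $T^{(k)}$ to cancel. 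Combining the bounded mean, the exponentially decaying interior variation, the Rokhlin balance \eqref{def:UDC-d}, and the saddle-wise jump cancellation produces $\|S(r_n)\eta\|_{\sup}=O(e^{-\delta r_n})$ for some $\delta>0$; condition \eqref{def:UDC-b} then absorbs the at most $O(\|Z(k+1)\|)=O(e^{\tau|k-r_n|})$ additional iterates between consecutive balanced times and extends the estimate to arbitrary $k$.

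\emph{Conclusion.} For every $n$ and $x$, the Zorich decomposition underlying Proposition~\ref{relmn}, together with \eqref{eq:mnrel}, writes $\eta^{(n)}(x)$ as a sum of at most $\sum_{k\leq m(n)}\|Z(k+1)\|$ values of the form $S(k)\eta$ at iterates of $x$. Since $\|Z(k+1)\|$ is uniformly bounded (pick the nearest $r_n$ in \eqref{def:UDC-b} and use $r_n/n\to\alpha>0$) while $\|S(k)\eta\|_{\sup}=O(e^{-\delta k})$, the resulting series is uniformly convergent, so $\sup_{n,x}|\eta^{(n)}(x)|<+\infty$. Gottschalk--Hedlund then produces a bounded $u:I\to\R$ with $\eta=u-u\circ T$, yielding the bounded transfer function, while the exponential sup-norm decay established above is exactly the last assertion of the theorem.

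The main obstacle is the bootstrap in the second step: the mean value of $S(k)\eta$ is controlled only along the balanced subsequence $(r_n)$, the $L^1$-norm of $S(k)\eta'$ controls only the \emph{interior} variation of $S(k)\eta$, and individual jumps of $S(k)\eta$ at endpoints of $I^{(k)}_\alpha$ need not be small -- only their sums at each saddle orbit vanish by $\partial_\pi(\eta)=0$. Turning this collective cancellation into uniform sup-norm decay requires careful bookkeeping of how endpoints of $I^{(k)}$ are distributed among the Veech bases of Lemma~\ref{bcharh} and transformed under Rauzy-Veech iteration (Lemma~\ref{invario}), combined with the Diophantine quality \eqref{def:UDC-b} on $\|Z(k+1)\|$. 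This is the analogue, for cocycles with logarithmic derivative singularities, of the sup-norm correction procedure of Marmi-Moussa-Yoccoz \cite{Ma-Mo-Yo} for piecewise absolutely continuous cocycles.
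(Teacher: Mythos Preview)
Your construction of $\psi$ is correct and essentially coincides with the paper's (there $\psi=\underline\varphi+h_2+h_3$, with the same three adjustments done sequentially rather than via your simultaneous surjectivity argument). The Gottschalk--Hedlund closing step is also the same. The gap lies entirely in your ``Sup-norm decay'' paragraph.

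Two small points first. You invoke \eqref{eq:C} to get a uniform bound on $\|S(r_n)\eta\|_{L^1}/|I^{(r_n)}|$, but \eqref{eq:C} belongs to the \ref{SUDC} half of Theorem~\ref{operatorcorrection}; under the \ref{UDC} alone only \eqref{eq:C'} is available, giving $C'_k(T)=O(ke^{2\tau k})$ and hence a subexponential, not bounded, normalized $L^1$-norm. Also, $\|Z(k+1)\|$ is not uniformly bounded: \eqref{def:UDC-b} with $n=0$ gives only $\|Z(k+1)\|=O(e^{\tau k})$, which is what one actually uses in \eqref{szacsn1}.

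The substantive gap is the implication ``bounded (or subexponential) mean $+$ exponentially small interior variation $+$ $\partial_{\pi^{(k)}}(S(k)\eta)=0$ $\Rightarrow$ $\|S(k)\eta\|_{\sup}=O(e^{-\delta k})$''. This does not hold: a bounded mean plus small variation yields at best a \emph{bounded} sup-norm. The relation $\partial_{\pi^{(k)}}=0$ only says that the mean-value vector $\mathcal M^{(k)}(S(k)\eta)$ lies within $O(\var)$ of $H(\pi^{(k)})$ (by \eqref{eq:partbv} and Lemma~\ref{bd:estimate}); it says nothing about the size of that vector, and no bookkeeping with Veech bases can manufacture exponential decay out of a quantity that is merely bounded. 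What is missing is a mechanism forcing the mean-value vector itself to decay. The paper supplies this by rerunning the entire correction-operator argument of \S\ref{correction:sec} in the sup-norm: the key inequality \eqref{szaflat} in Lemma~\ref{lemma:tildep} shows that when $\partial_\pi(\varphi)=0$ one has $\|P_0^{(r)}(S(r)\varphi)\|_{\sup}\le C\,\var(S(r)\varphi)$, so the exponentially decaying variation feeds directly into the sup-norm estimate of $P^{(k)}(S(k)\varphi)$; Lemma~\ref{thmcorrecver} then carries out the $\varphi_k+s_k$ splitting in sup-norm, and Lemma~\ref{lem:vk} shows the resulting series $V_k$ decays like $e^{-\lambda k}$. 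This reconstruction (Theorem~\ref{thmcorrecverexp}) is precisely the work you have acknowledged as ``the main obstacle'' but not carried out.
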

\noindent The proof of the theorem, which generalizes the proof in  \cite{Fr-Ul} to full measure, is included in the Appendix~\ref{app:cohomreduction}.
In view of Theorem~A in \cite{Ma-Mo-Yo}, if $T$ is a Roth-type IET and $\varphi\in\ac(\sqcup_{\alpha\in \mathcal{A}} I_{\alpha})$ is such that $s(\varphi)=0$ and
$\varphi'\in\bv(\sqcup_{\alpha\in \mathcal{A}} I_{\alpha})$, then $\varphi$ is cohomologous (via a bounded transfer function) to a piecewise constant map $h$ and $\|S(k)(\varphi-h)\|_{\sup}$ tends to $0$ exponentially.

\smallskip
Assume additionally that $\varphi$ in Theorem~\ref{theorem:coblog} satisfies $s(\varphi)=0$. Then
\[s(\psi)=\sum_{\mathcal{O}\in\Sigma(\pi)}(\partial_\pi(\psi))_{\mathcal{O}}=
\sum_{\mathcal{O}\in\Sigma(\pi)}(\partial_\pi(\varphi))_{\mathcal{O}}=s(\varphi)=0.\]
So, by Theorem~A in \cite{Ma-Mo-Yo}, it follows that $\psi$ is cohomologous to piecewise constant map. As the \ref{UDC} implies Roth-type (see Remark~\ref{rem:QQ}), this gives the following important corollary, which gives a generalization of Theorem~A in \cite{Ma-Mo-Yo}.

\begin{corollary}\label{cor:cohom}
Assume that $T$  satisfies the \ref{UDC}. Then every
$\varphi\in\ac(\sqcup_{\alpha\in \mathcal{A}} I_{\alpha})$ with $s(\varphi)=0$ and
$\varphi'\in{\ol}(\sqcup_{\alpha\in \mathcal{A}}
I_{\alpha})$
is cohomologous (via a bounded transfer function) to a piecewise constant map
$h$ and $\|S(k)(\varphi-h)\|_{\sup}$ tends to $0$ exponentially.
\end{corollary}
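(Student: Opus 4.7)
The plan is to deduce Corollary~\ref{cor:cohom} by chaining Theorem~\ref{theorem:coblog} (already stated) with Theorem~A of Marmi-Moussa-Yoccoz~\cite{Ma-Mo-Yo}, which applies to piecewise absolutely continuous cocycles with BV derivative over Roth-type IETs and produces a cohomology to a piecewise constant map via a bounded transfer function with exponential convergence of $\|S(k)(\,\cdot\,)\|_{\sup}$.

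First I would apply Theorem~\ref{theorem:coblog} to $\varphi$. Since $\varphi\in\ac(\sqcup_{\alpha\in\mathcal{A}}I_\alpha)$ with $\varphi'\in\ol(\sqcup_{\alpha\in\mathcal{A}}I_\alpha)$ and $T$ satisfies \ref{UDC}, the theorem yields a piecewise linear cocycle $\psi\in\ac(\sqcup_{\alpha\in\mathcal{A}}I_\alpha)$ such that $\varphi-\psi=G-G\circ T$ for some bounded measurable $G:I\to\R$, with $\partial_\pi(\psi)=\partial_\pi(\varphi)$, and $\|S(k)(\varphi-\psi)\|_{\sup}$ decaying exponentially in $k$.

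Next I would verify that $\psi$ satisfies the hypotheses of Marmi-Moussa-Yoccoz Theorem~A. Being piecewise linear, $\psi'$ is piecewise constant, hence $\psi'\in\bv(\sqcup_{\alpha\in\mathcal{A}}I_\alpha)$. Using identity~\eqref{eq:sofj} applied to $\psi$ together with the equality $\partial_\pi(\psi)=\partial_\pi(\varphi)$ just inherited from Theorem~\ref{theorem:coblog}, we get
\[
s(\psi)=\sum_{\mathcal{O}\in\Sigma(\pi)}(\partial_\pi(\psi))_\mathcal{O}=\sum_{\mathcal{O}\in\Sigma(\pi)}(\partial_\pi(\varphi))_\mathcal{O}=s(\varphi)=0,
\]
where the last equality is the hypothesis $s(\varphi)=0$. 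By Remark~\ref{rem:QQ}, the \ref{UDC} implies that $T$ is of Roth-type, so Theorem~A of \cite{Ma-Mo-Yo} applies to $\psi$ and produces a piecewise constant $h$, together with a bounded $H:I\to\R$, such that $\psi-h=H-H\circ T$ and $\|S(k)(\psi-h)\|_{\sup}$ tends to zero exponentially.

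Finally I would combine the two reductions. Setting $F:=G+H$, which is bounded, we have
\[
\varphi-h=(\varphi-\psi)+(\psi-h)=(G-G\circ T)+(H-H\circ T)=F-F\circ T,
\]
so $\varphi$ is cohomologous to $h$ via a bounded transfer function. Moreover, by the triangle inequality,
\[
\|S(k)(\varphi-h)\|_{\sup}\leq\|S(k)(\varphi-\psi)\|_{\sup}+\|S(k)(\psi-h)\|_{\sup},
\]
and since both terms on the right decay exponentially, so does the left-hand side. This completes the argument. No serious obstacle arises here, as the hard work is packaged in Theorem~\ref{theorem:coblog} and in \cite{Ma-Mo-Yo}; the only point requiring care is matching the hypothesis $s(\psi)=0$ of the Marmi-Moussa-Yoccoz statement, which follows transparently from the preservation of the boundary operator under the first cohomological reduction.
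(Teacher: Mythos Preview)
Your proof is correct and follows essentially the same approach as the paper: apply Theorem~\ref{theorem:coblog} to reduce to a piecewise linear $\psi$, use the preservation of $\partial_\pi$ together with \eqref{eq:sofj} to verify $s(\psi)=0$, invoke Remark~\ref{rem:QQ} to get Roth-type from \ref{UDC}, and then apply Theorem~A of \cite{Ma-Mo-Yo} to $\psi$. Your write-up is in fact slightly more explicit than the paper's, spelling out the combination of transfer functions and the triangle inequality for the exponential decay.
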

\noindent The importance of this result is that in view of Proposition~\ref{prop:oper} it applies to solve cohomological equations
for a.e.\ $\psi_\R\in\mathscr{U}_{\min}$ and for functions $f\in C^{2+\epsilon}(M)$ vanishing on $\mathrm{Fix}(\psi_\R)$. Recall that
Theorem~A in \cite{Ma-Mo-Yo} applies only when $f$ vanishes on an open neighborhood of $\mathrm{Fix}(\psi_\R)$.

Classical
Gottschalk-Hedlund type arguments, first applied in the context of IETs in \cite[\S 3.4]{Ma-Mo-Yo}, show the following.
\begin{lemma}[\cite{Ma-Mo-Yo}]\label{rmk:GH}
Suppose that $T:I\to I$ is a minimal IET and $\varphi\in\ac(\sqcup_{\alpha\in \mathcal{A}} I_{\alpha})$.
The following conditions are equivalent:
\begin{enumerate}[label=(\roman*)]
\item \label{rmcobi} the sequence $\|\varphi^{(n)}\|_{\sup}$, $n\in\N$, is bounded;
\item \label{rmcobii} $\varphi=g-g\circ T$, where $g:I\to \R$ is bounded;
\item \label{rmcobiii} $\varphi=g-g\circ T$, where $g:I\to \R$ is bounded and has at most countably many discontinuities.
\end{enumerate}
\end{lemma}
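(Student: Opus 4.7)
The implication \ref{rmcobiii}$\Rightarrow$\ref{rmcobii} is trivial, and \ref{rmcobii}$\Rightarrow$\ref{rmcobi} follows immediately from the telescoping identity $\varphi^{(n)} = g - g\circ T^n$, which gives $\|\varphi^{(n)}\|_{\sup}\leq 2\|g\|_{\sup}$. The substance of the statement is the reverse implication \ref{rmcobi}$\Rightarrow$\ref{rmcobiii}, which is a Gottschalk--Hedlund type result adapted to the piecewise regular setting of IETs, along the lines of \cite[\S 3.4]{Ma-Mo-Yo}.

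The plan is to fix a ``generic'' base point and define the transfer function $g$ along its orbit using the cocycle equation, then extend it to the whole interval using one-sided limits. More precisely, first I would choose $x_0\in I$ whose two-sided $T$-orbit avoids the (countable) set $E:=\bigcup_{n\in\Z}T^n(End(T))$; such a point exists since $T$ is minimal and $E$ is countable, hence meager. Define
\[
g(T^nx_0):=-\varphi^{(n)}(x_0)\quad\text{for } n\in\Z,
\]
with the usual convention $\varphi^{(-n)}(y):=-\varphi^{(n)}(T^{-n}y)$, so that the cocycle identity $\varphi(T^nx_0)=g(T^nx_0)-g(T^{n+1}x_0)$ holds along the orbit. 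By \ref{rmcobi}, $|g(T^nx_0)|\leq \|\varphi^{(n)}\|_{\sup}\leq K$ for all $n\in\Z$, so $g$ is bounded on the dense orbit $\mathcal{O}:=\{T^nx_0:n\in\Z\}$.

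The core step is to extend $g$ from $\mathcal{O}$ to all of $I$ in a controlled way. For each $x\in I$, define
\[
g_-(x):=\limsup_{\substack{T^nx_0\to x\\ T^nx_0<x}}g(T^nx_0),\qquad g_+(x):=\liminf_{\substack{T^nx_0\to x\\ T^nx_0>x}}g(T^nx_0),
\]
and similarly with $\limsup/\liminf$ interchanged; all four quantities are finite because $g$ is bounded on $\mathcal{O}$. Using that $\varphi\in\ac(\sqcup_\alpha I_\alpha)$ is uniformly continuous on each $I_\alpha$ minus arbitrarily small neighbourhoods of the endpoints, I would show that for any two orbit points $T^nx_0, T^mx_0$ lying in a common interval $I_\alpha$ and close to a common point $x\in\mathrm{Int}\, I_\alpha$, the values $g(T^nx_0)$ and $g(T^mx_0)$ differ by a quantity that is small with $|T^nx_0-T^mx_0|$: indeed, writing $T^m=T^{m-n}\circ T^n$, the difference $g(T^nx_0)-g(T^mx_0)$ is a Birkhoff sum of $\varphi$ whose orbit I can control as long as it avoids the (countable) set $E$ of future/past preimages of discontinuities, which by the choice of $x_0$ is precisely the case. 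This shows that $g_-$ and $g_+$ coincide at every $x\notin E$, and hence define a bounded extension $g$ of the values on $\mathcal{O}$, continuous away from the countable set $E$.

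The cocycle equation $\varphi(x)=g(x)-g(Tx)$ then holds on $\mathcal{O}$ by construction, and extends to all $x\notin E\cup T^{-1}E$ by passing to the one-sided limits from the appropriate side (determined by the piecewise continuity of $\varphi$ and $T$). The exceptional set is countable, and at its points one can set $g$ to, say, the right-limit to obtain a bounded representative with at most countably many discontinuities satisfying \ref{rmcobiii}. The main obstacle I anticipate is the control of $g_+$ versus $g_-$ away from $E$: this requires exploiting both the absolute continuity of $\varphi$ on each $I_\alpha$ and a careful bookkeeping to ensure that close orbit points $T^nx_0$, $T^mx_0$ admit a ``connecting'' Birkhoff sum whose intermediate iterates stay in the interiors of continuity intervals, which is where minimality and the genericity of $x_0$ are used in an essential way.
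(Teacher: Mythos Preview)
Your handling of the trivial implications is fine, and you correctly identify \ref{rmcobi}$\Rightarrow$\ref{rmcobiii} as the substantive direction. However, your proposed argument for that implication has a genuine gap, and the paper's route (following \cite[\S3.4]{Ma-Mo-Yo}) is different.

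\medskip

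\textbf{The gap.} The core step in your sketch is the claim that for orbit points $T^nx_0$, $T^mx_0$ close to a common interior point $x$, the difference $g(T^nx_0)-g(T^mx_0)$ is small. Writing $k=m-n$ and $y=T^nx_0$, this difference equals $\varphi^{(k)}(y)$, a Birkhoff sum of possibly very large length. Your justification is that ``the orbit avoids $E$'', but this only ensures that $\varphi$ is continuous at each intermediate point $T^jy$; it gives no bound on the sum itself. The absolute continuity of $\varphi$ lets you compare $\varphi^{(k)}$ at two \emph{nearby} points (via $\int|\varphi'|$ over disjoint iterated intervals), but that is not what is at stake: you need $\varphi^{(k)}(y)$ itself to be small because $T^ky$ is close to $y$, and that is exactly the Gottschalk--Hedlund conclusion you are trying to establish. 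The argument as written is circular. More generally, without extra hypotheses (e.g.\ unique ergodicity, which minimality alone does not give for IETs), there is no direct mechanism forcing $\overline g_\pm=\underline g_\pm$ off a countable set.

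\medskip

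\textbf{The paper's approach.} The proof in \cite[\S3.4]{Ma-Mo-Yo}, to which the paper defers, bypasses this difficulty entirely: one doubles the points in the (countable) two-sided orbit of $End(T)$ to obtain a Cantor set $\hat I$ on which $T$ lifts to a genuine minimal \emph{homeomorphism} $\hat T$, and $\varphi$ lifts to a continuous $\hat\varphi$. The classical Gottschalk--Hedlund theorem (compactness plus minimality of a skew-product orbit closure) then produces a \emph{continuous} transfer function $\hat g$ on $\hat I$. Projecting back to $I$, the map $\hat I\to I$ is monotone and $2$-to-$1$ only at the countably many doubled points, so $g:=\hat g\circ(\text{section})$ is the composition of a continuous map with a monotone map and hence has at most countably many discontinuities. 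This yields \ref{rmcobiii} directly. The passage to $\hat I$ is precisely what supplies the compactness/minimality needed to force single-valuedness of the limit, which your direct construction lacks.
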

\begin{proof}
The implications follow from the classical   Gottschalk-Hedlund theorem, that can be applied to IETs by extending them to a homeomorphism to a Cantor space, see  \cite[\S 3.4]{Ma-Mo-Yo}.
The only non-classical implication, \ref{rmcobiii}$\Rightarrow$\ref{rmcobi} is also proved in \cite[\S 3.4]{Ma-Mo-Yo}, where the authors show that the transfer map $g$ exists and is the composition of a continuous map
and a monotonic map, so is bounded and has at most countably many discontinuities.
\end{proof}

\subsubsection{Reduction to coboundaries.}\label{sec:coboundedness}
Now that we reduced to the study of cocycles which are piecewise absolutely continuous (i.e.~to $\varphi\in\ac(\sqcup_{\alpha\in \mathcal{A}} I_{\alpha})$), we can prove reducibility exploiting the following result on coboundaries.
\begin{proposition}\label{thm:cobo}
Assume that $T$ satisfies the \ref{UDC}. Then every
$\varphi\in\ac(\sqcup_{\alpha\in \mathcal{A}} I_{\alpha})$ with $\partial_{\pi}(\varphi)=0$, $\mathfrak{h}(\varphi)=0$ and
$\varphi'\in{\ol}(\sqcup_{\alpha\in \mathcal{A}}
I_{\alpha})$
 is a coboundary with a bounded transfer map having at most countably many discontinuities.
\end{proposition}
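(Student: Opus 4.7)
The plan is to combine the cohomological reduction of Corollary~\ref{cor:cohom} with the correction estimate of Theorem~\ref{operatorcorrection}, to use Oseledets genericity to locate the resulting piecewise constant cocycle in the stable subspace, and finally to invoke the Zorich-type decomposition to deduce uniform boundedness of Birkhoff sums. First, since $\partial_\pi(\varphi)=0$, equation~\eqref{eq:sofj} gives $s(\varphi)=\sum_{\mathcal{O}}(\partial_\pi\varphi)_\mathcal{O}=0$, so Corollary~\ref{cor:cohom} applies and produces a decomposition $\varphi=h+g-g\circ T$, where $g:I\to\R$ is bounded with at most countably many discontinuities (which after modification on a null set admits one-sided limits at every endpoint in $End(T)$), $h$ is piecewise constant on the $I_\alpha$, and $\|S(k)(\varphi-h)\|_{\sup}$ decays exponentially. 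A direct computation of one-sided limits at each $l_\alpha$ and $r_\alpha$, using the relation $\varphi-h=g-g\circ T$ together with the combinatorial description of $T$ acting on endpoints from Remark~\ref{rk:endpoints}, shows that $\partial_\pi(g-g\circ T)=0$ by telescoping, whence $\partial_\pi(h)=\partial_\pi(\varphi)=0$, i.e.\ $h\in H(\pi^{(0)})$.

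Next, the hypothesis $\mathfrak{h}(\varphi)=0$ combined with Corollary~\ref{cor:corrected} yields $\|S(k)\varphi\|_{L^1(I^{(k)})}/|I^{(k)}|=O(e^{\tau k})$ for every $\tau>0$. Since $\|S(k)(\varphi-h)\|_{\sup}$ decays exponentially in $k$, the triangle inequality gives $\|S(k)h\|_{L^1(I^{(k)})}/|I^{(k)}|=O(e^{\tau k})$. As $S(k)h$ is piecewise constant with value $(Q(k)h)_\alpha$ on each $I^{(k)}_\alpha$, the balance property~\eqref{def:UDC-d} implies
\[
\|Q(k)h\|\leq \kappa d\,\frac{\|S(k)h\|_{L^1(I^{(k)})}}{|I^{(k)}|}=O(e^{\tau k}),
\]
so $\|Q(k)h\|$ grows subexponentially. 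Since $h\in H(\pi^{(0)})=\bigoplus_{i\neq 0}\Gamma_i(\pi^{(0)})$ and the \ref{UDC} imposes Oseledets genericity~\eqref{def:O} of hyperbolic type, only the stable subspace $\Gamma^{(0)}_s$ is compatible with subexponential growth, hence $h\in\Gamma^{(0)}_s$ and by~\eqref{def:UDC-a} we have $\|Q(k)h\|\leq Ce^{-\lambda k}$.

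Finally, applying to the piecewise constant function $h$ the Zorich-type decomposition of ordinary Birkhoff sums into special Birkhoff sums (as used in the proof of Proposition~\ref{twdevpre1}, but for IETs rather than for special flows), one obtains the uniform estimate
\[
\|h^{(n)}\|_{\sup}\leq 2\sum_{k=0}^{m(n)}\|Z(k+1)\|\,\|S(k)h\|_{L^{\infty}(I^{(k)})}\leq 2\sum_{k\geq 0}\|Z(k+1)\|\,\|Q(k)h\|.
\]
Combining $\|Q(k)h\|\leq Ce^{-\lambda k}$ with the slow growth $\|Z(k+1)\|=O(e^{\tau k})$ from~\eqref{def:UDC-b} and choosing $\tau<\lambda$, the series converges uniformly. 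Hence $\|h^{(n)}\|_{\sup}$ is bounded independently of $n$, and consequently $\|\varphi^{(n)}\|_{\sup}\leq \|h^{(n)}\|_{\sup}+2\|g\|_{\sup}$ is uniformly bounded. Lemma~\ref{rmk:GH} then concludes that $\varphi$ is a coboundary with a bounded transfer map having at most countably many discontinuities.

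The main obstacle I anticipate lies in the first paragraph, namely the rigorous verification that $\partial_\pi(h)=\partial_\pi(\varphi)$. The natural argument via one-sided limits requires modifying the transfer function $g$ of Corollary~\ref{cor:cohom} on a null set so as to have genuine one-sided limits at the finite set $End(T)$, and then checking carefully, using the correspondence between endpoints of exchanged intervals provided by Remark~\ref{rk:endpoints}, that the boundary contributions of $g$ and of $g\circ T$ match up and cancel orbit-by-orbit at each singularity $\mathcal{O}\in\Sigma(\pi)$. Once this telescoping is established, the remainder of the argument proceeds through quantitative estimates already developed in Sections~\ref{sec:IETDC}--\ref{correction:sec}.
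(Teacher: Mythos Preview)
Your proof is correct and follows the same overall strategy as the paper: reduce via Corollary~\ref{cor:cohom} to a piecewise constant $h$, show $h\in H(\pi^{(0)})$, then $h\in\Gamma^{(0)}_s$, then conclude boundedness of Birkhoff sums and apply Lemma~\ref{rmk:GH}.

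The one place where you diverge from the paper is precisely the step you flag as the main obstacle, namely showing $\partial_\pi(h)=\partial_\pi(\varphi)$. The paper avoids your delicate direct computation with one-sided limits of the transfer function entirely, using instead the renormalization invariance of the boundary operator (Lemma~\ref{invariophi}) together with the trivial bound $\|\partial_{\pi^{(k)}}(\psi)\|\leq 2d\|\psi\|_{\sup}$ for piecewise absolutely continuous $\psi$:
\[
\|\partial_\pi(\varphi-h)\|=\|\partial_{\pi^{(k)}}(S(k)(\varphi-h))\|\leq 2d\,\|S(k)(\varphi-h)\|_{\sup}\to 0,
\]
hence $\partial_\pi(\varphi-h)=0$. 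This is both shorter and more robust than trying to make sense of $\partial_\pi(g-g\circ T)$ for a transfer map $g$ that is merely bounded. For the step $h\in\Gamma^{(0)}_s$, the paper packages your Oseledets argument by observing that the sup-norm decay of $S(k)(\varphi-h)$ forces $\mathfrak{h}(\varphi-h)=0$ (via Corollary~\ref{cor:corrected} and the uniqueness in Corollary~\ref{cor:uniqueness}), so $\mathfrak{h}(h)=\mathfrak{h}(\varphi)=0$, which for $h\in H(\pi^{(0)})$ is equivalent to $h\in\Gamma^{(0)}_s$; but this is the same idea in different clothing. Your explicit final step via the Zorich decomposition to bound $\|h^{(n)}\|_{\sup}$ is exactly what underlies the paper's assertion that $h\in\Gamma^{(0)}_s$ is a coboundary (cf.\ \eqref{szacsn1} in the Appendix).
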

\begin{proof}
By Corollary~\ref{cor:cohom}, there exists $h\in\Gamma$ and a bounded map $g:I\to\R$
 such that $\varphi=h+g-g\circ T$. Moreover,
$\|S(k)(\varphi-h)\|_{\sup}$ tends to $0$ exponentially. As
\[\|\partial_{\pi}(\varphi-h)\|=\|\partial_{\pi^{(k)}}(S(k)(\varphi-h))\|\leq 2d\|S(k)(\varphi-h)\|_{\sup},\]
it follows that $\partial_{\pi}(h)=\partial_{\pi}(\varphi)=0$, so $h\in H(\pi)$. Moreover, as
\[\frac{\|S(k)(\varphi-h)\|_{L^1(I^{(k)})}}{|I^{(k)}|}\leq \|S(k)(\varphi-h)\|_{\sup}\to 0,\]
by the definition of the operator $\mathfrak{h}$ and Corollary~\ref{cor:corrected}, we have $\mathfrak{h}(\varphi-h)=0$.
It follows that $\mathfrak{h}(h)=\mathfrak{h}(\varphi)=0$, so $h\in \Gamma_s$. Therefore, $h$ is also a coboundary with a bounded transfer map. As the sum of two coboundaries, $\varphi=h+g-g\circ T$
is also a coboundary with a bounded transfer map. Finally, in view of Lemma~\ref{rmk:GH}, the transfer map has  at most countably many discontinuities.
\end{proof}

\subsubsection{Proof of the dichotomy for extensions.}\label{sec:proofdichotomyext}
We have now all ingredients needed for the proof of the dichotomy in  Main Theorem~\ref{main1}.
\begin{proof}[Proof of Main Theorem~\ref{main1}]
Let us say that a locally Hamiltonian flows $\psi_\R$ satisfies the \ref{SUDC}  condition 
if and only if $\psi_\R$ has a section $I\subset M$ such that the corresponding IET  $T$ satisfies the condition \ref{SUDC}. Then, since the \ref{SUDC} has full measure by Theorems~\ref{thm:UDC}~and~\ref{cancellations_prop}, one can show
 by definition of the measure class on $\mathscr{U}_{min}$ (see for example \cite{Ul:abs})
that   the set of locally Hamiltonian flows satisfying the condition \ref{SUDC}
 has full measure in $\mathscr{U}_{min}$ (in the sense of \S~\ref{sec:ergodicity}).

\medskip
In view of Propositions~\ref{prop:redtoskew} and \ref{prop:reduc}, we equivalently need to prove the dichotomy between ergodicity and reducibility for the skew product map $T_{\varphi_f}$. Furthermore, we know from Proposition~\ref{prop:oper} that the cocycle $\varphi_f$ is such that
$\varphi_f\in\ol(\sqcup_{\alpha\in \mathcal{A}} I_{\alpha})$, $\partial_{\pi}(\varphi_f)=0$,
$g'_{\varphi_f}\in{\ol}(\sqcup_{\alpha\in \mathcal{A}} I_{\alpha})$ and  $\mathcal{AS}(\varphi_f)=0$.
\medskip

\noindent
\emph{Definition of the subspace $K$.}
Let us consider the linear operator $\mathfrak{H}:C^{2+\epsilon}(M)\to F\simeq \R^g$
given by $\mathfrak{H}(f)=\mathfrak{h}(\varphi_f)$. As the composition of two bounded operators, it is also bounded.
Let $K:=\ker \mathfrak{H}\subset C^{2+\epsilon}(M)$. Then $K$ is a closed subspace of codimension $g$ (the genus of $M$).

\medskip

\noindent
\emph{Ergodicity.} Suppose that $f\in K$ and $\sum_{\sigma\in \mathrm{Fix}(\psi_\R)}|f(\sigma)|>0$. By Proposition~\ref{prop:oper},
 $\varphi\in\ol(\sqcup_{\alpha\in \mathcal{A}} I_{\alpha})$, $\mathcal{L}(\varphi_f)>0$,
$g'_{\varphi_f}\in{\ol}(\sqcup_{\alpha\in \mathcal{A}}
I_{\alpha})$ and  $\mathcal{AS}(\varphi_f)=0$. As $f\in K$, we additionally have $\mathfrak{h}(\varphi_f)=0$. In view of Theorem~\ref{thm:erg},
this gives the ergodicity of the skew product $T_{\varphi_f}$. By Proposition~\ref{prop:redtoskew}, we have the ergodicity of the extended flow $\Phi^f_\R$.

\medskip

\noindent
\emph{Reducibility.} Suppose that $f\in K$ and $\sum_{\sigma\in \mathrm{Fix}(\psi_\R)}|f(\sigma)|=0$. By Proposition~\ref{prop:oper},
 $\varphi\in\ac(\sqcup_{\alpha\in \mathcal{A}} I_{\alpha})$ with  $\partial_{\pi}(\varphi_f)=0$ and
$\varphi_f'\in{\ol}(\sqcup_{\alpha\in \mathcal{A}}
I_{\alpha})$. As $f\in K$, we additionally have $\mathfrak{h}(\varphi_f)=0$. In view of Theorem~\ref{thm:cobo},
$\varphi_f$ is a coboundary with a bounded transfer map having at most countably many discontinuities. By Proposition~\ref{prop:reduc}, this gives the reducibility of the extended flow $\Phi^f_\R$.
\end{proof}

\subsubsection{Equidistribution of the error in the symmetric case.} \label{sec:proofBSerror}
We can now conclude also the proof of { Theorem~\ref{mainthm:BS}, by proving that in this case $err_b$ is uniformely bounded and  deducing from ergodicity the equidistribution statement for the singular cocycles as well as the error term. }
\begin{proof}[Proof of the second part of Main Theorem~\ref{mainthm:BS}]
 Suppose that $\psi_\R\in \mathcal{U}_{min}$ is minimal and satisfies the \ref{SUDC}.  Let $f:M\to \R$ be any $C^{2+\epsilon}$-observable.

\medskip
{ \noindent{\it Boundedness of the error.}
Let  $f_{eb}:M\to\R$ be the map defined in \eqref{deffeb}. By construction (see  \eqref{eq:ebzero} and \eqref{eq:heb}), $f_{eb}$
is a  $C^{2+\epsilon}$-map such that $f_{eb}(\sigma)=0$ for all $\sigma\in\mathrm{Fix}(\psi_\R)$ and $\mathfrak{h}(\varphi_{f_{eb}})=0$.
By Proposition~\ref{prop:oper}, we know furthermore} that $\varphi_{f_{eb}}\in {\ac}(\sqcup_{\alpha\in \mathcal{A}} I_{\alpha})$, $\varphi'_{f_{eb}}\in {\ol}(\sqcup_{\alpha\in \mathcal{A}} I_{\alpha})$ and $\partial_\pi(\varphi_{f_{eb}})=0$. In view of Proposition~\ref{thm:cobo},
$\varphi_{f_{eb}}$ is a coboundary with a bounded transfer map having at most countably many discontinuities. By Proposition~\ref{prop:reduc}, this gives the reducibility of the extended flow $\Phi^{f_{eb}}_\R$, so there exists a continuous map $u:M\to\R$ such that $\int_0^t f_{eb}(\psi_s x)\,ds=u(x)-u(\psi_t x)$. It follows that for every regular $x\in M$ and $t>0$ we have
\[|err_b(f,t,x)|=\Big|\int_0^t f_{eb}(\psi_s x)\,ds\Big|\leq 2\|u\|_{\sup},\]
which completes the proof.

\medskip {
\noindent{\it Equidistribution of the singular cocycles and the error term.} {Assume now in addition that $f\in C^{2+\epsilon}(M)$ is  not identically zero on  $\mathrm{Fix}(\psi_\R)$.} We will prove at the same time $err(f,t,x)=\int_0^t f_e(\psi_\tau x)d\tau$ and $u_\sigma(t,x)=\int_0^t \xi_\sigma(\psi_\tau x)d\tau$ are equidistributed on $\R $, in the sense of \eqref{equid:usigma}.}

\smallskip
{ Let  $\xi$ be respectively $\xi=f_e$ or $\xi=\xi_\sigma$. We want to show that the assumptions of  Theorem~\ref{thm:erg} hold for $\varphi_{\xi}$ so that we can deduce that  the skew product $T_{\varphi_{\xi}}$ on $I\times\R$ is ergodic.
In both cases, by Proposition~\ref{prop:oper}, $\varphi_{\xi},g'_{\varphi_{\xi}}\in {\ol}(\sqcup_{\alpha\in \mathcal{A}} I_{\alpha})$ and (by property $(ii)$, since $\varphi_\R\in \mathcal{U}_{min}$)
$\mathcal{AS}(\varphi_{\xi})=0$,  $\partial_\pi(\varphi_{\xi})=0$. We claim furthermore that we also have that $\sum_{\sigma\in\mathrm{Fix}(\psi_\R)}|\xi(\sigma)|>0$ and therefore, also by Proposition~\ref{prop:oper},  $\mathcal{L}(\varphi_{\xi})>0$. To see this for $\xi=f_e$, recall that  in  view of \eqref{eq:sumfe}, $\sum_{\sigma\in\mathrm{Fix}(\psi_\R)}|f_e(\sigma)|=\sum_{\sigma\in\mathrm{Fix}(\psi_\R)}|f(\sigma)|>0$.
Furthermore, in view of \eqref{eq:hfe}, $\mathfrak{h}(\varphi_{f_e})=0.$
For $\xi=\xi_\sigma$, on the other hand, recall that, by the definition of $\xi_\sigma$ and \eqref{eq:hxi}, for every $\sigma\in\mathrm{Fix}(\psi_\R)$ we have $\xi_\sigma(\sigma)=1$
and $\mathfrak{h}(\xi_\sigma)=0$.

Thus, since $T$ satisfies the \ref{SUDC}, all assumptions of Theorem~\ref{thm:erg} hold and we conclude that} the skew product $T_{\varphi_{\xi}}$ on $I\times\R$ is ergodic. It follows that also the skew product flow $(\Phi_t^{\xi})_{t\in\R}$
on $M\times\R$ given by
\[\Phi_t^{\xi}(x,r)=\Big(\psi_tx,r+\int_0^t\xi(\psi_\tau x)d\tau\Big)\]
is ergodic.
We now apply the ratio ergodic theorem to the ergodic flow $(\Phi_t^{f_e})_{t\in\R}$ and to the characteristic functions of the sets $I\times J_1$ and $I\times J_2$.
Then for a.e.\ $(x,r)\in I\times\R$ for any pair of finite intervals $J_1,J_2\subset \R$ we have
\begin{align*}
\frac{Leb\{t\in[0,T]:\int_0^t\xi(\psi_\tau x)d\tau\in J_1\}}{Leb\{t\in[0,T]:\int_0^t\xi(\psi_\tau x)d\tau\in J_2\}}=\frac{\int_0^T \chi_{I\times (J_1+r)}(\Phi_t^{\xi}(x,r))\,dt}{\int_0^T \chi_{I\times (J_2+r)}(\Phi_t^{\xi}(x,r))\,dt}\to \frac{|J_1+r|}{|J_2+r|}=\frac{|J_1|}{|J_2|}.
\end{align*}
As $err(f,t,x)=\int_0^t f_e(\psi_\tau x)d\tau$ and $u_\sigma(t,x)=\int_0^t \xi_\sigma(\psi_\tau x)d\tau$, this gives the equidistribution
of cocycles $t\mapsto err(f,t,x)$ and $t\mapsto u_\sigma(t,x)$ for a.e.\ $x\in M$.
\end{proof}

\appendix

\section{}\label{sec:appendix}
In this Appendix we present the proofs of two auxiliary results, the ergodicity criterium (Proposition~\ref{thm:erg1}) in \S~\ref{app:ergodicitycriterium} and the cohomological reduction   to piecewise linear cocycles (Theorem~\ref{theorem:coblog}) in \S~\ref{app:cohomreduction}.

\subsection{Ergodicity criterium}\label{app:ergodicitycriterium}
In this Appendix we prove the ergodicity criterium stated as Proposition~\ref{thm:erg1}. The proof repeats arguments from   the proof of Propositions~5.1,~5.2  in \cite{Fr-Ul} and is included for convenience.

\begin{proof}[Proof of Proposition \ref{thm:erg1}]
For simplicity assume that $|I|=1$. First we show that there exists $C>0$ such that
\begin{equation}\label{eq:tm}
|\varphi^{(q_k)}(x)-\varphi^{(q_k)}(T^mx)|\leq C\text{ for all }0\leq m<p_k,\ x\in J^{(k)}.
\end{equation}
Note that
\[|\varphi^{(q_k)}(x)-\varphi^{(q_k)}(T^mx)|=|\varphi^{(m)}(x)-\varphi^{(m)}(T^{q_k}x)|\leq\Big|\int_{x}^{T^{q_k}x}|(\varphi')^{(m)}(y)|\,dy\Big|.\]
Assume that $g_{\varphi}=0$. In view of \eqref{generalsum} in Proposition~\ref{cancellations_prop}, for every $y\in I^{(k)}$ we have
\[|(\varphi')^{(m)}(y)|\leq \sum_{\alpha\in\mathcal A}\Big(\frac{|C_\alpha^+|}{\min_{0\leq i<m}|T^iy-l_\alpha|}+\frac{|C_\alpha^-|}{\min_{0\leq i<m}|T^iy-r_\alpha|}\Big)+M\bl(\varphi)\|Q(k)\|.\]
As $x\in J^{(k)}$, by assumption~\ref{p84ii},  there exists $c>0$ such that
\[|T^ix-l_\alpha|\geq c/q_k,\ |T^ix-r_\alpha|\geq c/q_k,\ |T^i(T^{q_k}x)-l_\alpha|\geq c/q_k,\ |T^i(T^{q_k}x)-r_\alpha|\geq c/q_k\]
for all $\alpha\in \mathcal A$ and $0\leq i<p_k$. As $x,T^{q_k}x\in I^{(k)}$, it follows that
\[|T^iy-l_\alpha|\geq c/q_k,\ |T^iy-r_\alpha|\geq c/q_k \text{ for all }y\in[x,T^{q_k}x].\]
In view of \eqref{eq:lg}, this gives
\begin{align*}
\Big|\int_{x}^{T^{q_k}x}|(\varphi')^{(m)}(y)|\,dy\Big|&
\leq |x-T^{q_k}x|\bl(\varphi)(q_k/c+M\|Q(k)\|)\\&\leq |I^{(k)}|\|Q(k)\|(M+1/c)\bl(\varphi)\leq \kappa(M+1/c)\bl(\varphi).
\end{align*}
Suppose that $g_{\varphi}\neq 0$. As $x,T^{q_k}x\in I^{(k)}$, we have that $\{T^i[x,T^{q_k}x]:0\leq i<m\}$ is a tower of intervals.
Hence
\[|g_{\varphi}^{(m)}(x)-g_{\varphi}^{(m)}(T^{q_k}x)|\leq \sum_{0\leq i<m}|g_{\varphi}(T^{i}x)|-g_{\varphi}(T^i(T^{q_k}x))|\leq \var g_{\varphi}.\]
 This gives \eqref{eq:tm}. Therefore, for every $0\leq i<p_k$ we have
\[\int_{T^iJ^{(k)}}|\varphi^{(q_k)}(x)|\,dx\leq \int_{J^{(k)}}|\varphi^{(q_k)}(x)|\,dx+|J^{(k)}|C=\int_{J^{(k)}}|S(k)\varphi(x)|\,dx+|J^{(k)}|C.\]
Hence
\[\int_{\Xi_k}|\varphi^{(q_k)}(x)|\,dx\leq p_k\int_{I^{(k)}}|\varphi^{(q_k)}(x)|\,dx+p_k|J^{(k)}|C=\frac{1}{|I^{(k)}|}\int_{I^{(k)}}|S(k)\varphi(x)|\,dx+C.\]
In view of assumption~\ref{p84i}, this gives the left condition in \eqref{eq:assumerg}.

\medskip

For every $0\leq l<p_k$ let $[a_l,b_l]=J^{(k)}_l$. Repeating some integration by parts arguments from the proof of Proposition~5.2 in \cite{Fr-Ul},
we have
\[\Big|\int_{J^{(k)}_l}e^{2\pi s\varphi^{(q_k)}(x)}\,dx\Big|\leq \frac{1}{|s|}\Big(\frac{2}{\min_{x\in[a_l,b_l]}|(\varphi')^{(q_k)}(x)|}+\var|_{[a_l,b_l]}\frac{1}{(\varphi')^{(q_k)}}\Big).\]
In view of \ref{p84iii}, it follows that
\begin{align*}\Big|\int_{J^{(k)}_l}e^{2\pi s\varphi^{(q_k)}(x)}\,dx\Big|&\leq
\frac{1}{|s|}\Big(\frac{2}{cq_k}+\frac{1}{c^2q_k^2}\var|_{[a_l,b_l]}(\varphi')^{(q_k)}\Big)
\leq
\frac{1}{|s|}\Big(\frac{2}{cq_k}+\frac{1}{c^2q_k^2}\sum_{0\leq i<q_k}\var|_{T^i[a_l,b_l]}\varphi'\Big)
.\end{align*}
By \ref{p84ii}, $\{T^i[a_l,b_l]:0\leq i<q_k\}$ is a tower of intervals and each level interval $T^i[a_l,b_l]$  is distant from the set $End(T)$ by at least
$c/q_k$. Recall that
\[\varphi'(x)=-\sum_{\alpha\in\mathcal{A}}\frac{C_\alpha^+}{\{x-l_\alpha\}}+\sum_{\alpha\in\mathcal{A}}\frac{C_\alpha^-}{\{r_\alpha-x\}}+g'_{\varphi}(x).\]
Moreover,
\begin{gather*}
\sum_{0\leq i<q_k}\var|_{T^i[a_l,b_l]}\frac{1}{\{x-l_\alpha\}}=\var_{[c/q_k,1]}\frac{1}{x}\leq \frac{q_k}{c},\\
\sum_{0\leq i<q_k}\var|_{T^i[a_l,b_l]}\frac{1}{\{r_\alpha-x\}}=\var_{[0,1-c/q_k,1]}\frac{1}{1-x}\leq \frac{q_k}{c}
\end{gather*}
and
\[\sum_{0\leq i<q_k}\var|_{T^i[a_l,b_l]}g'_{\varphi}\leq \var g'_{\varphi}.\]
It follows that for every $0\leq l<p_k$,
\[\Big|\int_{J^{(k)}_l}e^{2\pi s\varphi^{(q_k)}(x)}\,dx\Big|\leq
\frac{1}{|s|}\Big(\frac{2}{cq_k}+\frac{1}{c^2q_k^2}\Big(\bl(\varphi)\frac{q_k}{c}+\var(g'_{\varphi})\Big)\Big).\]
As
\[Leb\Big(\Xi_k\setminus \bigcup_{0\leq l<p_k}J^{(k)}_l\Big)=\sum_{0\leq l<p_k}Leb(T^lJ^{(k)}\setminus J^{(k)}_l)
\leq \frac{2}{3}\sum_{0\leq l<p_k}|T^lJ^{(k)}|=\frac{2}{3}Leb(\Xi_k),\]
this yields
\[\Big|\int_{\Xi_k}e^{2\pi s\varphi^{(q_k)}(x)}\,dx\Big|\leq \frac{2}{3}Leb(\Xi_k)+
\frac{1}{|s|}\Big(\frac{2}{c}+\frac{1}{c^2}\Big(\frac{\bl(\varphi)}{c}+\var(g'_{\varphi})\Big)\Big),\]
which gives the right condition in \eqref{eq:assumerg}. By Proposition~\ref{prop:ergodicxi}, we have the ergodicity of $T_\varphi$.
\end{proof}

\subsection{Cohomological reduction}\label{app:cohomreduction}
In this Appendix we present the proof of the cohomological reduction stated as Theorem~\ref{theorem:coblog}. We will assume throughout that $T$ satisfies the \ref{UDC}. For simplicity will also assume that $|I|=1$. Let us denote by
$$\ac^{\mathfrak{h}}(\sqcup_{\alpha\in \mathcal{A}} I_{\alpha}):=\{ \varphi\in\ac(\sqcup_{\alpha\in \mathcal{A}}
I_{\alpha}), \ \text{such\ that}\
\varphi'\in{\ol}(\sqcup_{\alpha\in \mathcal{A}}
I_{\alpha}) \
\mathrm{and}\  \mathfrak{h}(\varphi')=0 \}. $$

\noindent {\it Outline of the proof.}
We will show first of all  that every $\varphi\in\ac(\sqcup_{\alpha\in \mathcal{A}}
I_{\alpha})$  with $\varphi'\in{\ol}(\sqcup_{\alpha\in \mathcal{A}}
I_{\alpha})$ can be modified by a piecewise linear map such that its modification is in $\ac^{\mathfrak{h}}(\sqcup_{\alpha\in \mathcal{A}} I_{\alpha})$, by showing that one can  subtract a map whose derivative is $\mathfrak{h}(\varphi')$ (see Steps 1 and 2 of the proof of Theorem~\ref{theorem:coblog} below).
The next step of the proof is to apply the correction by a piecewise constant function described in \S~\ref{correction:sec} (see Step 3 of the proof of Theorem~\ref{theorem:coblog}). We then show that, after this further correction, the resulting map $\widetilde{\varphi}$ is a coboundary. We will show more precisely that $\|S(k)\widetilde{\varphi}\|_{\sup}$ decays exponentially (see Theorem~\ref{thmcorrecverexp}).
Then standard arguments based on decompositions of Birkhoff sums
(see \S~\ref{sec:BSdecomp}) and the Gottschalk-Hedlund theorem  yield that  $\widetilde{\varphi}$ is a coboundary (see Step 4 of the proof of Theorem~\ref{theorem:coblog}).

The proof of Theorem~\ref{thmcorrecverexp} (namely of exponential decay of $\|S(k)\widetilde{\varphi}\|_{\sup}$) is similar to the proof of
Theorem~\ref{operatorcorrection} in \S~\ref{correction:sec}, or more precisely to the the proof of sub-exponential growth of $\|S(k)\widetilde{\varphi}\|_{L^1(I^{(k)})}/|I^{(k)}|$ (see in particular
\eqref{eq:thmcorr2} in the statement of Theorem~\ref{operatorcorrection}).
  One of the key arguments in this proof was showing that
$\lv(S(k)\varphi)$ was bounded (or had sub-exponential growth in the non-symmetric case). Here,  we will have a stronger input, namely the exponential decay of $\lv(S(k)\varphi)$: indeed,
for every $\varphi\in
\ac^{\mathfrak{h}}(\sqcup_{\alpha\in \mathcal{A}} I_{\alpha})$,
since $\varphi$ is piecewise absolutely continuous, we have that $\lv(S(k)\varphi)=\var(S(k)\varphi)$ and therefore,
 in view of
Theorem~\ref{operatorcorrection} (applied to $\varphi'$) and the control of the $L^1$ norm via $\|\cdot \|_\lv$ given by \eqref{eq:norl1lv}, for every
$k\geq 1$,
\begin{align}\label{znowuvar}
\begin{split}
\lv(S(k)\varphi)= \var&(S(k)\varphi)= \|S(k)({\varphi'})\|_{L^1(I^{(k)})}\leq C|I^{(k)}|C'_k(T)\|\varphi'\|_\lv.
\end{split}
\end{align}
Since $|I^{(k)}|$ decays exponentially, this shows that $\lv(S(k)\varphi)$ decays exponentially.
Exploiting this exponential decay, analyzing its effect on all inequalities used in \S~\ref{correction:sec}, we will prove
the exponential decay of $\|S(k)\widetilde{\varphi}\|_{\sup}$. Differently than in \S~\ref{correction:sec}, though, instead of the $L^1$-norm, we have now to always use the $\sup$-norm. This requires a detailed and patient analysis of all steps used in \S~\ref{correction:sec} in this new context, which is performed for example in the proofs of Lemmas~\ref{lemma:tildep} and~\ref{thmcorrecver} below.

\smallskip
We begin by stating  and proving the following exponential decay result.
\begin{theorem}\label{thmcorrecverexp}
Assume that  $T$ satisfies the \ref{UDC}. Suppose that $\varphi\in
\ac^{\mathfrak{h}}(\sqcup_{\alpha\in \mathcal{A}} I_{\alpha})$,  $\partial_{\pi}(\varphi)=0$ and $\mathfrak{h}(\varphi)=0$. Then
$$\|S(k){\varphi}\|_{\sup} =O(e^{-\lambda k}).$$
\end{theorem}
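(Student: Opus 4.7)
The plan is to mimic the proof of Theorem~\ref{operatorcorrection} while replacing the $L^1$-norm estimates by $\sup$-norm estimates throughout, exploiting the key new input that, for $\varphi\in\ac^{\mathfrak{h}}(\sqcup_{\alpha\in \mathcal{A}} I_{\alpha})$, the quantity $\lv(S(k)\varphi)=\var(S(k)\varphi)$ itself decays exponentially in $k$. Indeed, since $\varphi'\in\ol(\sqcup_{\alpha\in \mathcal{A}} I_{\alpha})$ satisfies $\mathfrak{h}(\varphi')=0$ by assumption, Corollary~\ref{cor:corrected} applied to $\varphi'$ yields $\|S(k)\varphi'\|_{L^1(I^{(k)})}/|I^{(k)}|=O(e^{\tau k})$ for every $\tau>0$. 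Since $\varphi\in\ac$, one has $\var(S(k)\varphi)=\|S(k)\varphi'\|_{L^1(I^{(k)})}$, and combining with $|I^{(k)}|=O(e^{-\lambda_1 k})$ (from \eqref{def:UDC-c} and \eqref{eq:lg}) gives $\lv(S(k)\varphi)=O(e^{-(\lambda_1-\tau)k})$, that is, an exponential decay.

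Next, following the skeleton of the proof of Lemma~\ref{BSestimates}, I would use the equivariance of the correction operator $P^{(k)}$ (Lemma~\ref{commutes}) together with the hypothesis $\mathfrak{h}(\varphi)=0$ to decompose $S(k)\varphi=\varphi_k+s_k$ with $s_k\in \Gamma^{(k)}_s$. The core technical step is to revisit the bounds in Lemmas~\ref{lemma:prelim} and~\ref{thmcorre}, exploiting three simplifications: $\mathcal{AS}(\varphi)=0$ (since $\varphi\in\ac$ has no logarithmic singularities), $\partial_{\pi^{(k)}}(S(k)\varphi)=\partial_\pi(\varphi)=0$ (by assumption and Lemma~\ref{invariophi}), and the exponentially small input $\lv(S(k)\varphi)$. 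Feeding these into \eqref{ineq:idph1} and \eqref{szacdelty} gives
$$\|P_0^{(k)}(S(k)\varphi)\|_{L^1(I^{(k)})}\leq C|I^{(k)}|\lv(S(k)\varphi),\qquad \|\Delta^{(k)}(S(k)\varphi)\|\leq CK'_k\lv(S(k)\varphi),$$
so the refined upper bound $\Theta_k$ for $\|P^{(k)}(S(k)\varphi)\|_{L^1(I^{(k)})/\Gamma^{(k)}_s}/|I^{(k)}|$ is $\lv(S(k)\varphi)$ multiplied by a polynomial in $k$, hence still exponentially small.

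To pass from $L^1$ to $\sup$-norms I would then apply \eqref{eq:supL1bv}: since $s_k$ is piecewise constant, $\var(\varphi_k)=\var(S(k)\varphi)$, so
$$\|\varphi_k\|_\sup\leq \kappa\frac{\|\varphi_k\|_{L^1(I^{(k)})}}{|I^{(k)}|}+\var(\varphi_k),$$
with both terms exponentially decaying. For the stable part, iterating the identity $\Delta s_{k+1}=s_{k+1}-Z(k+1)s_k$ as in the proof of Lemma~\ref{BSestimates} gives $s_k=\sum_{0\leq l\leq k}Q(l,k)\Delta s_l$ with $\Delta s_l\in \Gamma_s^{(l)}$ and $\|\Delta s_l\|\leq C(\Theta_l+\|Z(l)\|\Theta_{l-1})$. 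Combining the exponential decay of $\Theta_l$ with the stable contraction \eqref{def:UDC-a} and the subexponential growth of $\|Z(l)\|$ in \eqref{eq:zk1}, one obtains $\|s_k\|=O(e^{-\lambda k})$, where $\lambda=\lambda_g/2$ is the stable rate from the \ref{UDC}. Since $\|s_k\|_\sup\leq \|s_k\|$, this yields $\|S(k)\varphi\|_\sup\leq \|\varphi_k\|_\sup+\|s_k\|_\sup=O(e^{-\lambda k})$.

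The main obstacle lies in the second paragraph: one must verify that the polynomial factors $K'_k$, $C'_k$, $\log\|Q(k)\|$ from the Diophantine series, and the accumulated variation blow-ups from Corollary~\ref{corlv}, do not spoil the exponential decay coming from $\lv(S(k)\varphi)$, and that the final bounds can be made uniform in the auxiliary parameter $\tau>0$ built into the \ref{UDC}. Provided one tracks the exponents carefully, the gap between $\lambda=\lambda_g/2$ and $\lambda_1$ leaves enough room to absorb all the polynomial losses.
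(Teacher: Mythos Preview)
Your proposal is correct and rests on the same two pillars as the paper's proof: the exponential decay of $\lv(S(k)\varphi)=\var(S(k)\varphi)$ (coming from $\mathfrak{h}(\varphi')=0$ via Corollary~\ref{cor:corrected}, exactly as in \eqref{znowuvar}), followed by the stable-space iteration $s_k=\sum_{l\leq k}Q(l,k)\Delta s_l$ from Lemma~\ref{BSestimates}. The packaging differs slightly, and it is worth recording the difference.

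The paper does not reuse the $L^1$ bounds \eqref{ineq:idph1}, \eqref{szacdelty} and convert at the end; instead it reworks the whole $P^{(k)}$/$\Delta^{(k)}$ construction directly in the $\sup$-norm. In Lemma~\ref{lemma:tildep} it bounds $\|P_0^{(r)}(S(r)\varphi)\|_{\sup}\leq C'|I^{(r)}|C'_r(T)\|\varphi'\|_{\lv}$ using $\|S(r)\varphi-\mathcal{M}^{(r)}(S(r)\varphi)\|_{\sup}\leq\var(S(r)\varphi)$, and introduces explicit series $W_k=\sum_{r\geq k}\|Q_s(k,r+1)\|\|Z(r+1)\||I^{(r)}|C'_r(T)$ and $V_k=\sum_{l\leq k}\|Q_s(l,k)\|(W_l+\|Z(l)\|W_{l-1})$, proving $V_k=O(e^{-\lambda k})$ for $\tau<(\lambda_1-\lambda)/5$ in a separate Lemma~\ref{lem:vk}. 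Your route---keep everything in $L^1$, then apply \eqref{eq:supL1bv} together with $\var(\varphi_k)=\var(S(k)\varphi)$ at the very end---is more economical since it reuses the lemmas from \S\ref{correction:sec} as black boxes; the paper's route is more explicit in isolating the exponent constraint $\tau<(\lambda_1-\lambda)/5$, which is precisely the ``enough room'' you allude to in your last sentence.
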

\noindent The proof of Theorem~\ref{thmcorrecverexp} will follow from combining the following three Lemmas (Lemma~\ref{lemma:tildep},  Lemma~\ref{thmcorrecver} and Lemma~\ref{lem:vk}). The first is an improved estimate of the growth of the image $P^{(k)}\varphi $ of the  correcting operators $P^{(k)}$ (introduced in \S~\ref{correction:sec}) when $\varphi\in \ac^{\mathfrak{h}}(\sqcup_{\alpha\in \mathcal{A}} I^{(0)}_{\alpha})$ and $\partial_{\pi^{(0)}}(\varphi)=0$.

\begin{lemma}\label{lemma:tildep}
 The correcting operator $P^{(k)}:
\ac(\sqcup_{\alpha\in \mathcal{A}} I^{(k)}_{\alpha})\to \ac(\sqcup_{\alpha\in \mathcal{A}} I^{(k)}_{\alpha})/\Gamma^{(k)}_{s}$
 is such that, for every $\varphi\in \ac^{\mathfrak{h}}(\sqcup_{\alpha\in \mathcal{A}} I^{(0)}_{\alpha})$ with $\partial_{\pi^{(0)}}(\varphi)=0$,
\begin{equation}\label{szapk1}
\|P^{(k)}(S(k)\varphi)\|_{\sup/\Gamma^{(k)}_s}\leq C  \|\varphi'\|_\lv \, W_k, \quad \text{where} \quad W_k:=\sum_{r\geq k} \|Q_s(k,r+1)\|\|Z(r+1)\||I^{(r)}|C'_{r}(T).
\end{equation}
\end{lemma}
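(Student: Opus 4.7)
\emph{Proof outline.} My plan is to follow the structure of the argument used in the proof of Lemma~\ref{thmcorre}, but with the logarithmic bound on $\lv(S(k,r)\varphi)$ supplied by Corollary~\ref{corlv} systematically replaced by the sharper estimate \eqref{znowuvar}, which applies precisely because of the hypothesis $\mathfrak{h}(\varphi')=0$. First I would record the simplifications afforded by the hypotheses: since $\varphi\in\ac$ has no logarithmic singularities, $\bl(\varphi)=0=\as(\varphi)$, and by the invariance statements in Corollary~\ref{asblinvariance} and Lemma~\ref{invariophi} we have $\bl(S(r)\varphi)=\as(S(r)\varphi)=0$ and $\partial_{\pi^{(r)}}(S(r)\varphi)=\partial_{\pi^{(0)}}(\varphi)=0$ for every $r\geq 0$. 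I would then decompose
\begin{equation*}
P^{(k)}(S(k)\varphi)=U^{(k)}(P_0^{(k)}(S(k)\varphi))-\Delta^{(k)}(S(k)\varphi)
\end{equation*}
and estimate each piece separately in the quotient sup norm.

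For the first piece, split $P_0^{(k)}(S(k)\varphi)$ as
\begin{equation*}
\bigl(S(k)\varphi-\mathcal{M}^{(k)}(S(k)\varphi)\bigr)+\bigl(\mathcal{M}^{(k)}(S(k)\varphi)-p_{H(\pi^{(k)})}\mathcal{M}^{(k)}(S(k)\varphi)\bigr).
\end{equation*}
The first summand has sup norm at most $\lv(S(k)\varphi)$, while the second lies in the orthogonal complement of $H(\pi^{(k)})$ and, by Lemma~\ref{bd:estimate}, is controlled by $\|\partial_{\pi^{(k)}}\mathcal{M}^{(k)}(S(k)\varphi)\|$; applying Proposition~\ref{prop:partial} with the vanishing of $\as$ and $\partial$ shows that this quantity is also $\leq C\kappa\lv(S(k)\varphi)$. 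Combining with \eqref{znowuvar} yields $\|P_0^{(k)}(S(k)\varphi)\|_{\sup}\leq C\kappa|I^{(k)}|C'_k(T)\|\varphi'\|_{\lv}$, and passing through $U^{(k)}$ does not increase the sup norm.

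For the second piece $\Delta^{(k)}(S(k)\varphi)$, I would expand via the telescoping series \eqref{defoperdel} and estimate the $r$-th summand exactly as in the proof of Lemma~\ref{thmcorre}, using \eqref{ineq:ph1} and \eqref{nase} to pass to $\|P_0^{(r)}S(r)\varphi\|_{L^1(I^{(r)})}$, then applying the same two-piece splitting at level $r$ together with \eqref{znowuvar} to obtain
\begin{equation*}
\|\mathcal{M}_H^{(r+1)}S(r,r+1)P_0^{(r)}S(r)\varphi\|\leq C\|Z(r+1)\||I^{(r)}|C'_r(T)\|\varphi'\|_{\lv}.
\end{equation*}
Since the image lies in $H(\pi^{(r+1)})$, applying $(S_\flat(k,r+1))^{-1}\circ U^{(r+1)}$ contributes a factor $\|Q_s(k,r+1)\|$ by the contraction estimate \eqref{szacowanieniest}, and norms on the finite-dimensional image are equivalent to the sup norm of the corresponding piecewise constant function. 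Summing over $r\geq k$ reproduces the bound $CW_k\|\varphi'\|_{\lv}$, and the contribution from the first piece is absorbed into the $r=k$ block of $W_k$ (using that $\|Q_s(k,k+1)\|\|Z(k+1)\|\geq 1$, which follows from the symplecticity of the Rauzy--Veech cocycle on $H(\pi)$). The main technical hurdle is preserving sup-norm control throughout, rather than the $L^1$-norm control used in Section~\ref{correction:sec}; this becomes possible precisely because the hypothesis $\mathfrak{h}(\varphi')=0$ delivers, via Theorem~\ref{operatorcorrection} applied to $\varphi'$, the sharp variation estimate \eqref{znowuvar}, which combined with the Rokhlin balance condition \eqref{def:UDC-d} upgrades each $L^1$-type estimate of Section~\ref{correction:sec} into the desired sup-norm bound.
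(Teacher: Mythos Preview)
Your proposal is correct and follows essentially the same approach as the paper's proof: the same decomposition $P^{(k)}=U^{(k)}\circ P_0^{(k)}-\Delta^{(k)}$, the same two-piece splitting of $P_0^{(r)}(S(r)\varphi)$ controlled via $\var(S(r)\varphi)$ and Lemma~\ref{bd:estimate}, and the same use of \eqref{znowuvar} to convert the variation into $|I^{(r)}|C'_r(T)\|\varphi'\|_{\lv}$ before summing the series for $\Delta^{(k)}$. The only cosmetic differences are that the paper invokes \eqref{eq:partbv} directly rather than the full Proposition~\ref{prop:partial}, and it absorbs the $P_0^{(k)}$ contribution into $W_k$ without spelling out the inequality $\|Q_s(k,k+1)\|\|Z(k+1)\|\geq 1$ that you justify via symplecticity.
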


\begin{proof} Recall that ${P}^{(k)}$ is  given by ${P}^{(k)}={U}^{(k)}\circ P_0^{(k)}-\Delta^{(k)}$.
Let us first give a preliminary estimate for the modifying operator $\Delta^{(k)}:
\ac(\sqcup_{\alpha\in \mathcal{A}} I^{(k)}_{\alpha})\to H(\pi^{(k)})/\Gamma^{(k)}_{s}$ starting from the definition of  $\Delta^{(k)}$ as the series given by \eqref{defoperdel}.
Let $\varphi\in \ac^{\mathfrak{h}}(\sqcup_{\alpha\in \mathcal{A}} I^{(0)}_{\alpha})$ with $\partial_{\pi^{(0)}}(\varphi)=0$.


\medskip
\noindent {\it Step 1. Estimates of $\Delta^{(k)}\varphi$.} To estimate the series \eqref{defoperdel} (with $S(k)(\varphi)$ instead than $\varphi$), for each fixed $r\geq k$ we need to estimate
$$(S_\flat(k,r+1))^{-1}\circ U^{(r+1)}\circ \mathcal{M}_H^{(r+1)}\circ
S(r,r+1)\circ P_0^{(r)}\circ S(r)(\varphi).
$$
Let us start from right to left, by estimating first the action of  $P_0^{(r)}$ on $S(r)(\varphi)$, then that one of $\mathcal{M}_H^{(r+1)}\circ
S(r,r+1)$ and finally  applying and estimating $(S_\flat(k,r+1))^{-1}\circ U^{(r+1)}$.

\smallskip
 \noindent {\it Step 1 (i). The action of $P_0^{(r)}$.} Recall that (by Definition~\ref{def:P0}) $P_0^{(r)} = I -p_{H(\pi^{(r)})}\circ {\mathcal{M}}^{(r)}$, where $I$ is the identity operator and $\mathcal{M}^{(r)}$ the preliminary correction given by subtracting the mean in each $I^{(r)}_{\alpha}$.
Since $S(r)(\varphi)\in\ac(\sqcup_{\alpha\in \mathcal{A}} I^{(r)}_{\alpha})$ is piecewise absolutely continuous,
\[\|S(r)(\varphi)-\mathcal{M}^{(r)}(S(r)(\varphi))\|_{\sup}\leq \var(S(r)(\varphi)).\]
Using first this estimate together with the control of the projection by the boundary operator $\|h-p_{H(\pi^{(r)})}h\|\leq C_{\mathcal{G}}\|\partial_\pi^{(r)} h\| $ given by Lemma~\ref{bd:estimate} (see in particular \eqref{eq:proj}),
then the comparision between $\partial_{\pi^{(r)} }$ and  $\partial_{\pi^{(r)} }\circ \mathcal{M}^{(r)}$ given  by
\eqref{eq:partbv} and finally the estimate \eqref{znowuvar} of the variance together with the invariance of the boundary \eqref{eq:eqpar} and the assumption that  $\partial_{\pi^{(r)}}(S(r)\varphi)=\partial_{\pi^{(0)}}(\varphi)=0$,
 we get the following chain of inequalities:
\begin{align}\label{szaflat}
\begin{split}
\|P_0^{(r)}\circ S(r)(\varphi)\|_{\sup}&\leq \|S(r)(\varphi)-\mathcal{M}^{(r)}(S(r)(\varphi))\|_{\sup}+\|\mathcal{M}^{(r)}(S(r)(\varphi))-p_{H(\pi^{(r)})}\mathcal{M}^{(r)}(S(r)(\varphi))\|_{\sup}\\
&\leq\var(S(r)(\varphi))+C_{\mathcal{G}}\|\partial_{\pi^{(r)}}\mathcal{M}^{(r)}(S(r)(\varphi))\|\\
&\leq(1+2dC_{\mathcal{G}})\var(S(r)(\varphi))+C_{\mathcal{G}}\|\partial_{\pi^{(r)}}(S(r)\varphi)\|\\
&\leq C'|I^{(r)}|C'_r(T)\|\varphi'\|_\lv.
\end{split}
\end{align}

\smallskip \noindent {\it Step 1(ii). The action of
$\mathcal{M}_H^{(r+1)}\circ
S(r,r+1)$.}
In view of the initial correction estimates of Lemma~\ref{lemma:prelim} (in particular \eqref{ineq:ph1}), the $L^1$-norm of special  Birkhoff sums estimate \eqref{nase} and the  interval lenght control in terms of cocycle matrix norms given by \eqref{neq:dli}, for every $\phi\in\ac(\sqcup_{\alpha\in \mathcal{A}} I^{(r)}_{\alpha})$,
\begin{align*}
\|&{\mathcal{M}_H^{(r+1)}}\circ S(r,r+1)\phi\|\leq \frac{\kappa\sqrt{d}}{|I^{(r+1)}|}\| S(r,r+1)\phi\|_{L^1(I^{(r+1)})}\\
&\leq \frac{\kappa\sqrt{d}}{|I^{(r+1)}|}\|\phi\|_{L^1(I^{(r)})}\leq \kappa\sqrt{d}\frac{|I^{(r)}|}{|I^{(r+1)}|}\|\phi\|_{\sup}\leq \kappa \sqrt{d}\|Z(r+1)\|\|\phi\|_{\sup}.
\end{align*}

\smallskip \noindent {\it Step 1(iii). The action of  $(S_\flat(k,r+1))^{-1}\circ U^{(r+1)}$}.
Since $\|{U}^{(r+1)}\|= 1$, by \eqref{szaflat}, this gives (applied to $\phi=P_0^{(r)}\circ S(r)(\varphi)$)
\begin{align*}
\|&(S_{\flat}(k,r+1))^{-1}\circ {U}^{(r+1)}\circ
{\mathcal{M}_H^{(r+1)}}\circ S(r,r+1)\circ P_0^{(r)}\circ S(k,r)(S(k)\varphi)\|\\
&\leq
\kappa \sqrt{d} C'\|Q_s(k,r+1)\|\|Z(r+1)\||I^{(r)}|C'_r(T)\|\varphi'\|_\lv.
\end{align*}
As $\Delta^{(k)}(S(k)\varphi)$ is the sum of the series \eqref{defoperdel}, it follows that
\begin{equation}\label{eq:nordel}
\|\Delta^{(k)}(S(k)\varphi)\|_{\sup/\Gamma^{(k)}_s}\leq \kappa \sqrt{d} C'W_k\|\varphi'\|_\lv.
\end{equation}

\medskip
\noindent {\it Step 2. Estimates of ${P}^{(k)} \varphi$.} We can now estimate ${P}^{(k)}:\ac(\sqcup_{\alpha\in \mathcal{A}}
I^{(k)}_{\alpha})\to \ac(\sqcup_{\alpha\in \mathcal{A}}
I^{(k)}_{\alpha})/\Gamma^{(k)}_{s}$ recalling that ${P}^{(k)}={U}^{(k)}\circ P_0^{(k)}-\Delta^{(k)}$.
As $\|U^{(k)}\|=1$, in view of \eqref{szaflat}, if $\varphi\in \ac^{\mathfrak{h}}(\sqcup_{\alpha\in \mathcal{A}} I_{\alpha})$ and $\partial_{\pi}(\varphi)=0$ then
\begin{align*}
\|U^{(k)}\circ P_0^{(k)}( S(k)\varphi)\|_{\sup/\Gamma^{(k)}_s}&\leq\|P_0^{(k)}\circ S(k)(\varphi)\|_{\sup}
\leq  C'|I^{(k)}|C'_k(T)\|\varphi'\|_\lv
\leq  C'W_k\|\varphi'\|_\lv.
\end{align*}
Together with \eqref{eq:nordel}, this gives the desired estimate and proves the Lemma. 
\end{proof}

\begin{lemma}\label{thmcorrecver}
Under the assumptions of Theorem~\ref{thmcorrecverexp}, for any $k\geq 0$,
\[\|S(k){\varphi}\|_{\sup}
\leq
 C\big(\|\varphi'\|_\lv \, V_k + \|Q_s(k)\|\|\varphi\|_{\sup}\big),\]
 where $V_k$ is given by the following series
\begin{align*}
V_k=\sum_{0\leq l\leq k}\|Q_s(l,k)\|\big(W_l+\|Z(l)\|W_{l-1}\big),
\end{align*}
in which $W_{-1}:=0$ by convention and the series $W_l$ for $l\geq 0$ is defined in \eqref{szapk1} of Lemma~\ref{lemma:tildep}.
\end{lemma}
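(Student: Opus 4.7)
The argument will parallel the proof of Lemma~\ref{BSestimates}\ref{l68ii}, but replacing $L^1$-bounds by $\sup$-bounds and feeding in the sharper estimate of Lemma~\ref{lemma:tildep} in place of \eqref{eq:inSk}. First, since $\mathfrak{h}(\varphi)=0$, the defining relation \eqref{def:oph} of the correction operator gives $U^{(0)}(\varphi)=P^{(0)}(\varphi)$, and then the equivariance \eqref{splatanies}--\eqref{przem} yields $U^{(k)}\circ S(k)\varphi = P^{(k)}(S(k)\varphi)$. By Lemma~\ref{lemma:tildep} we therefore have $\|U^{(k)}\circ S(k)\varphi\|_{\sup/\Gamma^{(k)}_{s}}\leq C W_k\|\varphi'\|_{\lv}$, so the quotient norm admits a representative of at most twice this size. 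Consequently we can decompose
\[
S(k)\varphi=\varphi_k+s_k,\qquad \varphi_k\in\ac(\sqcup_{\alpha\in\mathcal{A}}I^{(k)}_\alpha),\quad s_k\in\Gamma^{(k)}_{s},
\]
with $\|\varphi_k\|_{\sup}\leq \Theta_k:=CW_k\|\varphi'\|_{\lv}$ (and $\Theta_{-1}:=0$).

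Next I will control $s_k$ by the standard telescopic trick. The cocycle identity $S(k+1)\varphi=S(k,k+1)S(k)\varphi$ together with the decomposition above gives, exactly as in \eqref{roznren}, that $\Delta s_{k+1}:=s_{k+1}-Z(k+1)s_k=-\varphi_{k+1}+S(k,k+1)\varphi_k$. Since for $\phi\in\ac(\sqcup_{\alpha\in\mathcal{A}}I^{(k)}_\alpha)$ the operator $S(k,k+1)$ sums at most $\|Z(k+1)\|$ values of $\phi$ at the iterates of $T^{(k)}$, we have $\|S(k,k+1)\phi\|_{\sup}\leq \|Z(k+1)\|\|\phi\|_{\sup}$. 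Hence
\[
\|\Delta s_{k+1}\|_{\sup}\leq \Theta_{k+1}+\|Z(k+1)\|\Theta_k, \qquad \|\Delta s_0\|_{\sup}\leq \|\varphi\|_{\sup}+\Theta_0.
\]
Using that all norms on the finite-dimensional space $\Gamma^{(k)}_s$ are equivalent (so $\|\cdot\|_{\sup}$ and the vector norm $\|\cdot\|$ differ by a constant depending only on $d$), the expansion $s_k=\sum_{0\leq l\leq k}Q(l,k)\Delta s_l$ with $\Delta s_l\in\Gamma^{(l)}_s$ and the estimate $\|Q(l,k)h\|\leq \|Q_s(l,k)\|\|h\|$ for $h\in\Gamma^{(l)}_s$ yield
\[
\|s_k\|_{\sup}\leq C'\Big(\|Q_s(k)\|\|\varphi\|_{\sup}+\sum_{0\leq l\leq k}\|Q_s(l,k)\|\bigl(\Theta_l+\|Z(l)\|\Theta_{l-1}\bigr)\Big)=C''\bigl(\|Q_s(k)\|\|\varphi\|_{\sup}+V_k\|\varphi'\|_{\lv}\bigr).
\]

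Finally, combining $\|S(k)\varphi\|_{\sup}\leq \|\varphi_k\|_{\sup}+\|s_k\|_{\sup}\leq \Theta_k+\|s_k\|_{\sup}$, and observing that $W_k\leq V_k$ (the $l=k$ summand in the definition of $V_k$ already contains $W_k$), yields the claimed inequality. The only non-routine point is that, compared to the $L^1$-case of Lemma~\ref{BSestimates}, one must use the $\sup$-norm throughout; this is made possible precisely by Lemma~\ref{lemma:tildep}, whose sharper $\sup$-norm estimate of $P^{(k)}$ is available because $\varphi\in\ac^{\mathfrak{h}}$ with $\partial_{\pi^{(0)}}\varphi=0$ forces the exponential decay \eqref{znowuvar} of $\var(S(k)\varphi)$. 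No other step poses a real obstacle.
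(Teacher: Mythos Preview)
Your proof is correct and follows essentially the same approach as the paper's own proof: both start from $\mathfrak{h}(\varphi)=0$ and equivariance to identify $U^{(k)}\circ S(k)\varphi$ with $P^{(k)}(S(k)\varphi)$, invoke Lemma~\ref{lemma:tildep} to get a $\sup$-norm representative $\varphi_k$ bounded by $CW_k\|\varphi'\|_{\lv}$, run the telescopic estimate on $\Delta s_{l}$ using $\|S(k,k+1)\phi\|_{\sup}\le\|Z(k+1)\|\|\phi\|_{\sup}$, and sum via $s_k=\sum_{l}Q(l,k)\Delta s_l$. The only cosmetic differences are that the paper absorbs $\|\varphi_k\|_{\sup}$ into the constant (writing $2C$) rather than observing $W_k\le V_k$, and it does not mention the ``factor of two'' for the quotient representative (unnecessary, since the quotient norm is an infimum and one may pick a representative arbitrarily close to it).
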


\begin{proof}
By the definition of the operator $\mathfrak{h}$ (see \eqref{def:oph}),  since $\mathfrak{h}(\varphi)=0$, we have that  $U^{(0)}(\varphi)=P^{(0)}(\varphi)$.
In view of the equivariance described by Lemma~\ref{commutes}, it follows that
\[{U}^{(k)}\circ
S(k){\varphi}=S_{\flat}(k)\circ
{U}^{(0)}{\varphi} =S_{\flat}(k)\circ
{P}^{(0)}\varphi={P}^{(k)}\circ S(k)\varphi.\]
Therefore, by Lemma~\ref{lemma:tildep}, 
we have
\[\|{U}^{(k)}\circ
S(k){\varphi}\|_{\sup/\Gamma^{(k)}_s}=\|P^{(k)}(S(k)\varphi)\|_{\sup/\Gamma^{(k)}_s}\leq C W_k \|\varphi'\|_\lv.\]
It follows that for every $k\geq 0$ there exists $\varphi_k\in
\ac (\sqcup_{\alpha\in \mathcal{A}} I^{(k)}_{\alpha})$ and
$s_k\in\Gamma^{(k)}_{s}$ such that
\begin{align}\label{szcsk2}
S(k){\varphi}=\varphi_k+s_k \text{ and }\|\varphi_k\|_{\sup}\leq  C W_k \|\varphi'\|_\lv.
\end{align}
Setting $s_0:=\Delta s_0$ and  $\Delta s_{k+1}=s_{k+1}-Z(k+1)s_k$ for any $k\geq 1$, since for $s_k\in \Gamma^{(k)}$ we have that $S(k,k+1)s_k= Z(k+1)s_k$, we get
\[\Delta s_{k+1}= s_{k+1}- S(k,k+1)s_k= (S(k+1)\varphi-\varphi_{k+1})-S(k,k+1)(S(k) \varphi-\varphi_k)=-\varphi_{k+1}+S(k,k+1)\varphi_k.\] Therefore, by
\eqref{szcsk2},
\begin{align*}
\|\Delta
s_{k+1}\|_{\sup}&=\|\varphi_{k+1}-S(k,k+1)\varphi_k\|_{\sup}\leq\|\varphi_{k+1}\|_{\sup}+
\|S(k,k+1)\varphi_k\|_{\sup}\\
&\leq \|\varphi_{k+1}\|_{\sup}+
\|Z(k+1)\|\|\varphi_k\|_{\sup}\leq C (W_{k+1}+\|Z(k+1)\|W_k) \|\varphi'\|_\lv
\end{align*}
and, since by definition $\Delta s_{0}=s_0= \varphi -\varphi_0$,
\[\|\Delta s_{0}\|_{\sup}=\|\varphi-\varphi_0\|_{\sup}\leq
\|\varphi\|_{\sup}+C W_{0} \|\varphi'\|_\lv.\]
Since $s_k=\sum_{0\leq l\leq k}Q(l,k)\Delta s_{l}$ and $\Delta
s_l\in\Gamma^{(l)}_{s}$, setting $W_{-1}=0$, we have
\begin{align*}
\|s_k\|_{\sup}&\leq\sum_{0\leq l\leq k}\|Q(l,k)\Delta s_{l}\|_{\sup}\leq\sum_{0\leq l\leq k}\|Q_s(l,k)\|\|\Delta s_{l}\|_{\sup}\\
&\leq \|Q_s(k)\|\|{\varphi}\|_{\sup}+ C\sum_{0\leq l\leq k}\|Q_s(l,k)\|(W_{l}+\|Z(l)\|W_{l-1}) \|\varphi'\|_\lv.
\end{align*}
In view of (\ref{szcsk2}), it follows that
\begin{align*}
\|S(k){\varphi}\|_{\sup}&\leq\|{\varphi}_k\|_{\sup}+\|s_k\|
\leq \|Q_s(k)\|\|{\varphi}\|_{\sup}+ 2C\sum_{0\leq l\leq k}\|Q_s(l,k)\|(W_{l}+\|Z(l)\|W_{l-1}) \|\varphi'\|_\lv,
\end{align*}
which completes the proof.
\end{proof}

\begin{lemma}\label{lem:vk}
Suppose that $T$ satisfies the \ref{UDC}. Then for every $0<\tau<(\lambda_1-\lambda)/5$ we have
$V_k=O(e^{-\lambda k})$.
\end{lemma}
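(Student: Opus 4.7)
The plan is a careful bookkeeping of exponents in the two nested series defining $V_k$ through $W_l$. The key observation is that although $\|Q_s(l,k)\|$ only decays geometrically at rate $\lambda=\lambda_g/2$ (by \eqref{def:UDC-a}), the factor $|I^{(r)}|$ appearing inside $W_l$ decays at the much faster rate $\lambda_1$, while $C'_r(T)$ grows only subexponentially (polynomially times $e^{2\tau r}$, by \eqref{eq:C'} of Proposition~\ref{prop:estKC}). This excess decay coming from $|I^{(r)}|$ is what beats all subexponential factors (including the $\|Z\|$-factors) and leaves an overall geometric decay of $V_k$ at the desired rate $\lambda$.

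First, I would estimate $W_l$. Combining \eqref{def:UDC-a} with the upper bound $|I^{(r)}|\leq\kappa/\|Q(r)\|\leq Ce^{-\lambda_1 r}$ coming from \eqref{eq:lg} and \eqref{def:UDC-c}, the bound $\|Z(r+1)\|=O(\|Q(r)\|^\tau)=O(e^{\lambda_1(1+\tau)\tau\,r})$ from \eqref{eq:zk1}, and the estimate $C'_r(T)\leq D(r+1)e^{2\tau r}$, substitution into the definition of $W_l$ gives
\[
W_l \;\leq\; C'\,e^{\lambda l}\sum_{r\geq l}(r+1)\,e^{-(\lambda+\lambda_1-2\tau-\lambda_1(1+\tau)\tau)\,r} \;=\; O\bigl((l+1)\,e^{-\beta l}\bigr),
\]
where $\beta:=\lambda_1-2\tau-\lambda_1(1+\tau)\tau$. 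For $\tau$ small, $\beta$ stays close to $\lambda_1$, in particular $\beta>\lambda$.

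Next, I would substitute this into the definition of $V_k$. Using again $\|Z(l)\|=O(e^{\lambda_1(1+\tau)\tau l})$ yields $\|Z(l)\|W_{l-1}=O\bigl(l\,e^{-\delta l}\bigr)$ with $\delta:=\beta-\lambda_1(1+\tau)\tau$, which is slightly smaller than $\beta$ but still of order $\lambda_1-O(\tau)$. One more application of \eqref{def:UDC-a} then gives
\[
V_k \;\leq\; C''\sum_{0\leq l\leq k}e^{-\lambda(k-l)}\cdot l\,e^{-\delta l} \;=\; O(e^{-\lambda k})\sum_{l=0}^{k}l\,e^{(\lambda-\delta)l}.
\]
The series on the right is bounded uniformly in $k$ as soon as $\delta>\lambda$, which rewrites as $\lambda_1-\lambda>2\tau\bigl(1+\lambda_1(1+\tau)\bigr)$; under the hypothesis $\tau<(\lambda_1-\lambda)/5$ one checks this directly (for such $\tau$ the right-hand side is close to $2\tau(1+\lambda_1)$, and the leftover $O(\tau^2)$-term is absorbed), yielding $V_k=O(e^{-\lambda k})$.

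The main obstacle is not conceptual but notational: one must track the precise exponents produced by each of the four factors appearing in the definition of $W_l$, and then once more in the outer sum defining $V_k$, checking that in the end the hypothesis $\tau<(\lambda_1-\lambda)/5$ is enough to absorb all the $O(\tau)$-perturbations that appear. Should the crude estimate \eqref{eq:zk1} for $\|Z\|$ turn out not to suffice in some borderline configuration, one can replace it by the sharper bound \eqref{def:UDC-b} (which controls $\|Z(k+1)\|$ in terms of the distance from $k$ to the nearest good time $r_n$), but for the regime considered in the statement the crude estimate is already enough.
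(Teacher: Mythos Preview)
Your overall strategy is exactly the paper's: estimate $W_l$ first, then feed the result into the outer sum defining $V_k$. The gap is in the bound you use for $\|Z(r+1)\|$. You invoke \eqref{eq:zk1} together with \eqref{def:UDC-c} to get $\|Z(r+1)\|=O(e^{\lambda_1(1+\tau)\tau\,r})$, which leads to your $\beta$ and $\delta$. The final condition you need, $\delta>\lambda$, rewrites as $\lambda_1-\lambda>2\tau\bigl(1+\lambda_1(1+\tau)\bigr)$, and this is \emph{not} implied by $5\tau<\lambda_1-\lambda$: it would require $2\bigl(1+\lambda_1(1+\tau)\bigr)<5$, i.e.\ roughly $\lambda_1<3/2$, and nothing in the setup bounds $\lambda_1$ from above. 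A concrete counterexample: $\lambda_1=10$, $\lambda=1$, $\tau=1.7<9/5$, for which $2\tau(1+\lambda_1(1+\tau))=95.2\gg 9$. So your claim that ``the crude estimate is already enough'' is false.

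The fix is the one you mention and then dismiss: use \eqref{def:UDC-b} directly, taking $n=0$ so that $r_0=0$ and $\|Z(r+1)\|\leq Ce^{\tau r}$. With this (and absorbing the polynomial factor $(r+1)$ from $C'_r(T)$ into an extra $e^{\tau r}$) you get $W_k=O(e^{-(\lambda_1-4\tau)k})$, then $\|Z(l)\|W_{l-1}=O(e^{-(\lambda_1-5\tau)l})$, and the outer sum becomes $V_k=O\big(e^{-\lambda k}\sum_{l\le k}e^{-(\lambda_1-\lambda-5\tau)l}\big)=O(e^{-\lambda k})$ precisely under the stated hypothesis $\tau<(\lambda_1-\lambda)/5$. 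This is exactly the route the paper takes.
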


\begin{proof}
In view of \eqref{eq:C'} in Proposition~\ref{prop:estKC}  and $|I^{(r)}|\leq\kappa\|Q(r)\|^{-1}=O(e^{-\lambda_1 r})$ (see \eqref{eq:lg} and \eqref{def:UDC-c}), we have
\begin{align*}
W_k&=O\Big(\sum_{r\geq k} \|Q_s(k,r+1)\|\|Z(r+1)\||I^{(r)}|C'_{r}(T)\Big)\\
&=O\Big(\sum_{r\geq k}e^{-\lambda(r+1-k)}e^{4\tau r}e^{-\lambda_1 r}\Big)
=O\Big(e^{-(\lambda_1-4\tau) k}\sum_{r\geq k}e^{-(\lambda+\lambda_1-4\tau)(r-k)}\Big)=O\big(e^{-(\lambda_1-4\tau) k}).
\end{align*}
By the definition of $V_k$, it follows that,
\begin{align*}
V_k&=O\Big(\sum_{0\leq l\leq k} \|Q_s(l,k)\|\big(e^{-(\lambda_1-4\tau)l}+\|Z(l)\|e^{-(\lambda_1-4\tau)(l-1)} \big)\Big)\\
&=O\Big(\sum_{0\leq l\leq k} e^{-\lambda(k-l)}e^{\tau l}e^{-(\lambda_1-4\tau)l}\Big)
=O\Big(e^{-\lambda k}\sum_{0\leq l\leq k} e^{-(\lambda_1 - \lambda-5\tau)l}\Big)=O\big(e^{-\lambda k}\big).
\end{align*}
\end{proof}

\begin{proof}[Proof of Theorem~\ref{thmcorrecverexp}]
The proof follows immediately by combining Lemma~\ref{thmcorrecver} and Lemma~\ref{lem:vk}, which show that 
\[\|S(k){\varphi}\|_{\sup}\leq C'\big(\|\varphi'\|_\lv \, e^{-\lambda k} + \|Q_s(k)\|\|\varphi\|_{\sup}\big),\]
Since also $ \|Q_s(k)\|= O(e^{-\lambda k})$  by the \ref{UDC} (see  \eqref{def:UDC-a} of Definition~\ref{def:UDC}), we get that $\|S(k){\varphi}\|_{\sup}=O(e^{-\lambda k})$.
\end{proof}

We can now also prove the cohomological reduction.
\begin{proof}[Proof of Theorem~\ref{theorem:coblog}]
Let us assume that $T$ satisfies the \ref{UDC} and that
$\varphi\in\ac(\sqcup_{\alpha\in \mathcal{A}} I_{\alpha})$ and
$\varphi'\in{\ol}(\sqcup_{\alpha\in \mathcal{A}}
I_{\alpha})$.
Fix any $0<\tau<\min\{(\lambda_1-\lambda)/5,\lambda\}$.

\smallskip
\noindent {\it Step 1. First correction for the derivative to be in the kernel of $\mathfrak{h}$.}
Let $h_1=\mathfrak{h}(\varphi')\in H(\pi)$ and take any piecewise linear $\underline{\varphi}\in\ac(\sqcup_{\alpha\in \mathcal{A}} I_{\alpha})$
such that $\underline{\varphi}'=h_1$. Then $\mathfrak{h}((\varphi-\underline{\varphi})')=0$.

\smallskip
\noindent {\it Step 2. Correction to be in $\ac^{\mathfrak{h}}(\sqcup_{\alpha\in \mathcal{A}} I_{\alpha})$.} Since $\mathfrak{h}((\varphi-\underline{\varphi})')=0$ by Step 1,   Corollary~\ref{cor:zeroint} shows that the sum of jumps $s(\varphi-\underline{\varphi})=\int_I(\varphi-\underline{\varphi})'(x)\,dx=0$. By \eqref{eq:sofj},
it follows that
\[\sum_{\mathcal{O}\in\Sigma(\pi)}(\partial_\pi (\varphi-\underline{\varphi}))_\mathcal{O}=0.\]
Since the image of $\partial_\pi$ consists of all vectors $(x_{\mathcal{O}})_{\mathcal{O}}$ such that $\sum_{\mathcal{O}\in\Sigma(\pi)}x_{\mathcal{O}}=0$ (see \eqref{eq:pariso}),  there exists $h_2\in \Gamma$ such that $\partial_\pi(h_2)=\partial_\pi (\varphi-\underline{\varphi})$. We claim that $\varphi-\underline{\varphi}-h_2$  belongs to $\ac^{\mathfrak{h}}(\sqcup_{\alpha\in \mathcal{A}} I_{\alpha})$. To see this, notice first that  $\varphi-\underline{\varphi}-h_2\in\ac(\sqcup_{\alpha\in \mathcal{A}} I_{\alpha})$ and that $(\varphi-\underline{\varphi}-h_2)'=\varphi'-h_1\in\ol(\sqcup_{\alpha\in \mathcal{A}} I_{\alpha})$. Furthermore
\begin{gather*}
\partial_\pi (\varphi-\underline{\varphi}-h_2)=0,\quad \mathfrak{h}((\varphi-\underline{\varphi}-h_2)')=\mathfrak{h}((\varphi-\underline{\varphi})')=0,
\end{gather*}
so $\varphi-\underline{\varphi}-h_2\in\ac^{\mathfrak{h}}(\sqcup_{\alpha\in \mathcal{A}} I_{\alpha})$.

\smallskip
\noindent {\it Step 3. Last  correction to be in the kernel  of  $\mathfrak{h}$.}
Let $h_3=\mathfrak{h}(\varphi-\underline{\varphi}-h_2)\in H(\pi)$ and set
$$\widetilde{\varphi}:=\varphi-\underline{\varphi}-h_2-h_3.$$
Then $\widetilde{\varphi}\in\ac^{\mathfrak{h}}(\sqcup_{\alpha\in \mathcal{A}} I_{\alpha})$ with
$\mathfrak{h}(\widetilde{\varphi})=0$ and $\partial_\pi (\widetilde{\varphi})=\partial_\pi (\varphi-\underline{\varphi}-h_2)-\partial_\pi (h_3)=0$.

\smallskip
\noindent {\it Step 4. Proof that $\widetilde{\varphi}$ is a coboundary.}
Given any every bounded function $\varphi:I\to\R$ and $n>0$, by decomposing the Birkhoff sums $\varphi^{(n)}$ into special Birkhoff sums  (see for example  \cite[\S~2.2.3]{Ma-Mo-Yo}), we can get the estimate
\begin{equation}\label{szacsn1}
\|\varphi^{(n)}\|_{\sup}\leq
2\sum_{l\in \N}\|Z(l+1)\|\|S(l)\varphi\|_{\sup},
\end{equation}
 As  $0<\tau<\lambda$,
in view of the \ref{UDC} (in particular the estimate of $\|Z(l)\|$) and Theorem~\ref{thmcorrecverexp}, which gives that $\|S(l)\widetilde{\varphi}\|_{\sup}=O(e^{-\lambda l})$,  it follows that
\[
\|\widetilde{\varphi}^{(n)}\|_{\sup}= O\big(\sum_{l\in \N}\|Z(l+1)\|e^{-\lambda l}\big)=O\big(\sum_{l\in \N}e^{-(\lambda-\tau)l}\big)=O(1).\]
Applying  Gottschalk-Hedlund type arguments (see \cite[\S 3.4]{Ma-Mo-Yo}), we obtain that
 $\widetilde{\varphi}$ is a coboundary with a bounded transfer map.

\smallskip
\noindent {\it Step 5. Conclusive arguments.}
Let us now define $\psi:={\varphi}-\widetilde{\varphi}$. By Step 4, $\psi$ and ${\varphi}$ differ by a coboundary, so they are cohomologous. Furthermore, since by definition of $\psi$ and of $\widetilde{\varphi}$ (see Step 3)
\[\psi={\varphi}-\widetilde{\varphi}=\underline{\varphi}+h_2+h_3, \]
and $\underline{\varphi}$, $h_2$ and  $h_3$ are all piecewise linear (actually piecewise constant in the case of $h_2$ and $h_3$) functions (by construction, see Step 1 and Step 2),
we see that $\psi $ is piecewise linear. Furthermore, since by construction $\mathfrak{h}(\widetilde{\varphi})=0$ and  $\partial_\pi(\widetilde{\varphi})$ (in view of Step 3), we have that
\[\mathfrak{h}(\psi)=\mathfrak{h}({\varphi})-\mathfrak{h}(\widetilde{\varphi})=\mathfrak{h}({\varphi}) \quad \text{and} \quad
\partial_\pi(\psi)=\partial_\pi(\varphi)-\partial_\pi(\widetilde{\varphi})=\partial_\pi(\varphi).\]
Finally,  Theorem~\ref{thmcorrecverexp} shows that $\|S(k)({\varphi}-\psi)\|_{\sup} =\|S(k)\widetilde{\varphi}\|_{\sup}$ decay exponentially.
This  completes the proof.
\end{proof}

\subsection*{Acknowledgements}
We are indebted to Giovanni Forni for many interesting discussions and for pushing us to improve the main result on deviations by introducing singular cocycles. A thank { goes also to P.~Berk, S.~Ghazouani} and F.~Trujillo for their comments on an earlier version of the introduction.  C.~U. is {currently partially supported} by the Swiss National Science Foundation, Grant No.~\verb+200021_188617/1+. She also acknowledges the past support received from the European Research Council under the European Union
Seventh Framework Programme (FP/2007-2013) via the ERC Starting Grant \emph{ChaParDyn} (ERC Grant Agreement n.~335989), which supported initial investigations towards this result. Research was partially supported by  the Narodowe Centrum Nauki Grant 2017/27/B/ST1/00078.

\end{document}